%%%%%%%%%%%%%%%%%%%%%%%%%%%%%%%%%%%%%%%%%
\documentclass[12pt,reqno]{amsart}

%--- Packages ---

\usepackage{amsmath,amsfonts,amsthm,amssymb,amsxtra}
\usepackage{bbm} % to get \1
\usepackage{dsfont}
\usepackage[hidelinks]{hyperref} % creates hyperlinks to the references 
\usepackage{color}
\usepackage{comment}
\usepackage[numbers]{natbib}
\usepackage{enumitem}
\usepackage{moreenum}
\usepackage[greek,english]{babel}

%--- Page structure ---

\setlength{\voffset}{-.7truein}
\setlength{\textheight}{8.8truein}
\setlength{\textwidth}{6.05truein}
\setlength{\hoffset}{-.7truein}

%--- Theorem structure ---

\numberwithin{equation}{section}
\newtheorem{theorem}{Theorem}[section]
\newtheorem{proposition}[theorem]{Proposition}
\newtheorem{lemma}[theorem]{Lemma}
\newtheorem{corollary}[theorem]{Corollary}

\theoremstyle{definition}

\theoremstyle{remark}

\newtheorem{remark}[theorem]{Remark}
\newtheorem{remarks}[theorem]{Remarks}

%--- Commands and math operators ---

\newcommand{\1}{\mathds{1}}

\renewcommand{\epsilon}{\varepsilon}

\newcommand{\N}{\mathbb{N}}

\renewcommand{\phi}{\varphi}
\newcommand{\R}{\mathbb{R}}

\newcommand{\Sph}{\mathbb{S}}

\newcommand{\Z}{\mathbb{Z}}
\newcommand{\Haus}{\mathcal{H}}

\DeclareMathOperator{\dist}{dist}
\DeclareMathOperator{\diam}{diam}

\DeclareMathOperator{\ran}{ran}

\DeclareMathOperator{\supp}{supp}

\DeclareMathOperator{\Tr}{Tr}

% limplus limminus
\newcommand{\limplus}{{\mathchoice{\vcenter{\hbox{$\scriptstyle +$}}}
  {\vcenter{\hbox{$\scriptstyle +$}}}
  {\vcenter{\hbox{$\scriptscriptstyle +$}}}
  {\vcenter{\hbox{$\scriptscriptstyle +$}}}
}}
\newcommand{\limminus}{{\mathchoice{\vcenter{\hbox{$\scriptstyle -$}}}
  {\vcenter{\hbox{$\scriptstyle -$}}}
  {\vcenter{\hbox{$\scriptscriptstyle -$}}}
  {\vcenter{\hbox{$\scriptscriptstyle -$}}}
}}
\newcommand{\limpm}{{\mathchoice{\vcenter{\hbox{$\scriptstyle \pm$}}}
  {\vcenter{\hbox{$\scriptstyle \pm$}}}
  {\vcenter{\hbox{$\scriptscriptstyle \pm$}}}
  {\vcenter{\hbox{$\scriptscriptstyle \pm$}}}
}}

%---------------------------------------
%---------------------------------------

\begin{document}

\title[Semiclassical inequalities on convex domains]{Semiclassical inequalities for Dirichlet and Neumann Laplacians on convex domains}

\author{Rupert L. Frank}
\address[Rupert L. Frank]{Mathe\-matisches Institut, Ludwig-Maximilians Universit\"at M\"unchen, The\-resienstr.~39, 80333 M\"unchen, Germany, and Munich Center for Quantum Science and Technology, Schel\-ling\-str.~4, 80799 M\"unchen, Germany, and Mathematics 253-37, Caltech, Pasa\-de\-na, CA 91125, USA}
\email{r.frank@lmu.de}

\author{Simon Larson}
\address{\textnormal{(Simon Larson)} Mathematical Sciences, Chalmers University of Technology and the University of Gothenburg, SE-41296 Gothenburg, Sweden}
\email{larsons@chalmers.se}

\newcommand\blfootnote[1]{%
  \begingroup
  \renewcommand\thefootnote{}\footnote{#1}%
  \addtocounter{footnote}{-1}%
  \endgroup
}

\thanks{\copyright\, 2025 by the authors. This paper may be reproduced, in its entirety, for non-commercial purposes.\\
	Partial support through US National Science Foundation grant DMS-1954995 (R.L.F.), the German Research Foundation grants EXC-2111-390814868 and TRR 352-Project-ID 470903074 (R.L.F.), the Knut and Alice Wallenberg foundation grant KAW 2017.0295 (S.L.), as well as the Swedish Research Council grant no.~2023-03985 (S.L.) is acknowledged.}

\begin{abstract}
	We are interested in inequalities that bound the Riesz means of the eigenvalues of the Dirichlet and Neumann Laplacians in terms of their semiclassical counterpart. We show that the classical inequalities of Berezin--Li--Yau and Kr\"oger, valid for Riesz exponents $\gamma\geq 1$, extend to certain values $\gamma<1$, provided the underlying domain is convex. We also study the corresponding optimization problems and describe the implications of a possible failure of P\'olya's conjecture for convex sets in terms of Riesz means. These findings allow us to describe the asymptotic behavior of solutions of a spectral shape optimization problem for convex sets.
\end{abstract}

\maketitle

\setcounter{tocdepth}{1}
\tableofcontents

\section{Introduction and main results}

\subsection{Weyl asymptotics and semiclassical inequalities}
\label{sec: asymptotics and semiclassical inequalities}

In this paper we are concerned with sharp inequalities for the eigenvalues of the Laplacian in bounded convex subsets of Euclidean space. Before turning to the particularities of convex domains in the next subsection, let us review some facts valid for general open sets $\Omega\subset\R^d$.

Let $-\Delta_\Omega^{\rm D}$ and $-\Delta_\Omega^{\rm N}$ denote the Dirichlet and Neumann realizations of the Laplace operator in an open set $\Omega\subset\R^d$, defined through quadratic forms; see, e.g., \cite[Section 3.1]{FrankLaptevWeidl}. When making statements that refer to either one of the two operators we write $-\Delta_\Omega^\sharp$. Under the assumption that the spectrum of $-\Delta_\Omega^\sharp$ is discrete, its eigenvalues listed in non-decreasing order and repeated according to their multiplicities are denoted by $\lambda_k^\sharp(\Omega)$, $k\in\N$.

The celebrated Weyl law states that
\begin{equation}\label{eq: Weyls law}
 	\#\{k \in \N: \lambda_k^\sharp(\Omega) <\lambda\} = L_{0,d}^{\rm sc}|\Omega|\lambda^{\frac{d}2} + o(\lambda^{\frac{d}2})\quad \mbox{as }\lambda\to \infty\,,
\end{equation}
where $L_{0,d}^{\rm sc}$ is a constant that depends only on $d$ and is given explicitly in \eqref{eq: semiclassical constant} below. It is remarkable that these asymptotics are valid for either choice of the boundary condition $\sharp$ and that the leading term depends on the underlying domain only through its measure. The asymptotics \eqref{eq: Weyls law} go back to~\cite{WeylAsymptotic} and are valid in the Dirichlet case under the sole assumption that $|\Omega|<\infty$ and in the Neumann case under mild additional conditions. We refer to \cite[Sections 3.2 and 3.3]{FrankLaptevWeidl} for a textbook proof.

It was famously conjectured by P\'olya~\cite{Polya_61} that the leading asymptotic term in~\eqref{eq: Weyls law} bounds the Dirichlet counting function from above and the Neumann counting function from below, that is,
\begin{equation}\label{eq: Polya conjecture}
 	\#\{k \in \N: \lambda_k^{\rm D}(\Omega) <\lambda\} \leq L_{0,d}^{\rm sc} |\Omega|\lambda^{\frac{d}2} \leq \#\{k \in \N: \lambda_k^{\rm N}(\Omega) <\lambda\}
\end{equation} 
for all open $\Omega \subset \R^d$ and all $\lambda \geq 0$. If $d=1$ the conjecture can be proved by explicitly computing the eigenvalues, however, for $d\geq 2$ this conjecture remains wide open. In fact, it was shown only very recently that P\'olya's conjectured inequalities \eqref{eq: Polya conjecture} are valid when $\Omega$ is a Euclidean ball; see \cite{Filonov_etal_Polya} for the Dirichlet case, as well as for the Neumann case in dimension $d=2$, and see \cite{Filonov_etal_Polya2} for the Neumann case for $d\geq 3$.

In addition to the spectral counting functions $\#\{k \in \N: \lambda_k^\sharp(\Omega) <\lambda\}$, an object of interest are their Riesz means. For a parameter $\gamma>0$ they are defined by
\begin{equation*}
	\Tr(-\Delta_\Omega^\sharp-\lambda)_\limminus^\gamma = \sum_{k\geq 1}(\lambda-\lambda_k^\sharp(\Omega))_\limplus^\gamma\quad \mbox{for } \lambda \geq 0\,.
\end{equation*}
Here and in what follows we write $x_\limpm = \frac{1}{2}(|x|\pm x)$. To simplify notation we set
\begin{equation*}
 	\Tr(-\Delta_\Omega^\sharp-\lambda)_\limminus^0 = \#\{k \in \N: \lambda_k^\sharp(\Omega)< \lambda\} \quad \mbox{for }\lambda \geq 0\,.
\end{equation*}
The Riesz means are averages of counting functions and typically, as we will explain below, the asymptotics and inequalities that we are interested in for $\gamma>0$ follow from those for $\gamma=0$. For instance, the Weyl asymptotics \eqref{eq: Weyls law} imply that
\begin{equation}\label{eq: Weyls law Riesz means}
	\Tr(-\Delta_\Omega^\sharp-\lambda)_\limminus^\gamma = L_{\gamma, d}^{\rm sc} |\Omega|\lambda^{\gamma+\frac{d}{2}} + o(\lambda^{\gamma+\frac{d}2})\quad \mbox{as }\lambda \to \infty
\end{equation}
with
\begin{equation}\label{eq: semiclassical constant}
	L_{\gamma, d}^{\rm sc} := \frac{\Gamma(1+\gamma)}{(4\pi)^{\frac{d}2}\Gamma(1+\gamma + \frac{d}{2})}\,.
\end{equation}
Also, if P\'olya's conjectured inequalities \eqref{eq: Polya conjecture} are valid for a given open set $\Omega\subset\R^d$ and all $\lambda\geq 0$, then for this set we have
\begin{equation}\label{eq: Polya conjecture gamma}
	\Tr(-\Delta_\Omega^{\rm D}-\lambda)_\limminus^\gamma \leq L_{\gamma, d}^{\rm sc} |\Omega| \lambda^{\gamma+\frac{d}{2}}\leq \Tr(-\Delta_\Omega^{\rm N}-\lambda)_\limminus^\gamma
\end{equation}
for all $\lambda\geq 0$. 

Both inequalities in \eqref{eq: Polya conjecture gamma} are valid for $\gamma=1$, any open set $\Omega\subset\R^d$ and any $\lambda\geq 0$. The inequalities for $\gamma=1$ are known as the Berezin--Li--Yau (for $\sharp=$ D) and Kr\"oger inequalities (for $\sharp=$ N), proved in~\cite{Berezin,LiYau_83} and~\cite{Kr92,Laptev2}, respectively. By the Aizenman--Lieb principle \cite{AizenmanLieb} this implies the validity of the inequalities in \eqref{eq: Polya conjecture gamma} for all $\gamma\geq 1$.

%---------------------------------------

\subsection{Semiclassical inequalities on convex domains}

In the present paper we will investigate the validity of the conjectured inequalities \eqref{eq: Polya conjecture gamma} for \emph{convex} sets $\Omega$.

One of the main results in this paper is that in the context of convex sets $\Omega \subset \R^d$ both the first and second inequality in~\eqref{eq: Polya conjecture gamma} extends to a range of $\gamma<1$. In the setting of convex sets this provides an improvement of the Berezin--Li--Yau and Kr\"oger inequalities.

\begin{theorem}\label{thm: Extended range of semiclassical ineq Dir}
	Fix any $d\geq 1$. There exists $\gamma <1$ such that for all open, bounded, convex $\Omega \subset \R^d$ and $\lambda \geq 0,$
	\begin{equation*}
		\Tr(-\Delta_\Omega^{\rm D}-\lambda)_\limminus^\gamma \leq L_{\gamma, d}^{\rm sc} |\Omega| \lambda^{\gamma + \frac{d}{2}}\,.
	\end{equation*}
\end{theorem}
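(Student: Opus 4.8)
The plan is to prove the Dirichlet bound for some explicit $\gamma<1$ by interpolating between the endpoint $\gamma=1$ (Berezin--Li--Yau, available from Fact~2) and a suitable lower-order estimate that exploits convexity. The natural object to control is the boundary correction in the Weyl expansion: for convex $\Omega$ one expects a two-term bound of the type
\begin{equation*}
	\Tr(-\Delta_\Omega^{\rm D}-\lambda)_\limminus^1 \leq L_{1,d}^{\rm sc}|\Omega|\lambda^{1+\frac d2} - c\,L_{1,d-1}^{\rm sc}\,\mathcal H^{d-1}(\partial\Omega)\,\lambda^{\frac12+\frac d2}
\end{equation*}
with an explicit $c>0$, which improves Berezin--Li--Yau by a negative surface term. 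Such an improvement for $\gamma=1$ is known for convex domains (Geisinger--Laptev--Weidl); the idea is then to combine it with the trivial bound $|\Omega|\le \diam(\Omega)\,\mathcal H^{d-1}(\partial\Omega)/2$ or, better, with an isoperimetric-type inequality relating $|\Omega|$ and $\mathcal H^{d-1}(\partial\Omega)$, together with a lower bound on the lowest eigenvalue $\lambda_1^{\rm D}(\Omega)\gtrsim \mathrm{inradius}^{-2}$, to show that the semiclassical term genuinely dominates.

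The key steps, in order, would be: (i) record the convex two-term Berezin--Li--Yau inequality for $\gamma=1$; (ii) by the Aizenman--Lieb monotonicity (Fact~1) it suffices to prove the claimed inequality for the single smallest admissible $\gamma$, so fix that $\gamma=1-\delta$ and set up the comparison $f(\lambda):=L_{\gamma,d}^{\rm sc}|\Omega|\lambda^{\gamma+d/2}-\Tr(-\Delta_\Omega^{\rm D}-\lambda)_\limminus^\gamma$; (iii) use the Aizenman--Lieb identity in the other direction, writing the $\gamma=1$ Riesz mean as a Beta-weighted integral of the $\gamma=1-\delta$ one, to convert the two-term $\gamma=1$ bound into an integrated statement about $f$; (iv) rescale so that $|\Omega|=1$ (or normalize the inradius), reducing to a compactness/one-parameter argument on the shape; (v) for $\lambda$ below a threshold depending only on $d$, use that the counting function vanishes or is small (via the inradius lower bound on $\lambda_1^{\rm D}$ and John's ellipsoid theorem to control the geometry of convex sets), making $f(\lambda)\ge 0$ directly; for $\lambda$ above the threshold, feed the negative surface term from step~(i) into the integrated inequality from step~(iii) to close the estimate, choosing $\delta$ small enough in terms of $d$ and the implicit constants.

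The main obstacle I expect is step~(iii)--(v): the two-term inequality at $\gamma=1$ gives an \emph{integrated} gain, not a pointwise one, so one must carefully trade the $\lambda^{1/2+d/2}$ surface term against the loss incurred when passing from $\gamma=1$ down to $\gamma=1-\delta$, and make this uniform over all convex $\Omega$ simultaneously. Uniformity is delicate because convex sets range from balls to long thin slabs; the dimensional reduction forces one to use that a convex set is (after an affine map, which however does not commute with the Laplacian) comparable to a product of intervals, so one likely needs a genuine geometric input such as a lower bound on $\mathcal H^{d-1}(\partial\Omega)\lambda_1^{\rm D}(\Omega)^{(d-1)/2}/|\Omega|$ or a slicing argument reducing the $d$-dimensional estimate to one-dimensional Dirichlet problems. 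Making the constant $c$ in the two-term bound and the exponent $\delta$ fit together quantitatively, rather than just qualitatively, will be where the real work lies; I would be content, at least initially, with a non-optimal but explicit $\gamma<1$.
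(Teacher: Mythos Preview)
Your overall architecture---improve the inequality at $\gamma=1$, then extrapolate down to some $\gamma=1-\delta$, splitting into a small-$\lambda$ and a large-$\lambda$ regime---matches the paper's. But there is a genuine gap in your plan, and it is precisely the obstacle you flag at the end.

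The problem is the large-$\lambda$ regime, meaning $r_{\rm in}(\Omega)\sqrt{\lambda}\to\infty$. Suppose you have the two-term bound
\[
\Tr(-\Delta_\Omega^{\rm D}-\lambda)_\limminus^1 \leq L_{1,d}^{\rm sc}|\Omega|\lambda^{1+\frac d2} - c\,\mathcal H^{d-1}(\partial\Omega)\,\lambda^{\frac12+\frac d2}
\]
for all $\lambda$ and all convex $\Omega$. Since $\mathcal H^{d-1}(\partial\Omega)\sim |\Omega|/r_{\rm in}(\Omega)$, the relative gain over the leading term is of order $(r_{\rm in}(\Omega)\sqrt{\lambda})^{-1}$, which vanishes as $r_{\rm in}(\Omega)\sqrt{\lambda}\to\infty$. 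On the other hand, any extrapolation from $\gamma=1$ down to $\gamma=1-\delta$ (whether via the Aizenman--Lieb integral identity or via the pointwise Laptev-type inequality $(\lambda-\mu)_+^{1-\delta}\le C_\delta(\tau\lambda)^{-\delta}(\lambda(1+\tau)-\mu)_+^1$) costs a fixed multiplicative factor $>1$ depending only on $\delta$ and $d$. You therefore cannot choose $\delta>0$ uniformly over all convex $\Omega$: for thin sets with $r_{\rm in}(\Omega)\sqrt{\lambda}$ large, the surface gain is too small to absorb the extrapolation loss. Your step~(v) for ``$\lambda$ above the threshold'' does not close.

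The paper resolves this with an additional ingredient you do not mention: the uniform two-term Weyl asymptotics from \cite{FrankLarson_24}, which give directly that for every $\gamma'>0$ there is $B=B(\gamma',d)$ such that
\[
\Tr(-\Delta_\Omega^{\rm D}-\lambda)_\limminus^{\gamma'} \leq L_{\gamma',d}^{\rm sc}|\Omega|\lambda^{\gamma'+\frac d2}
\qquad\text{whenever }\lambda\ge \frac{B}{r_{\rm in}(\Omega)^2},
\]
uniformly over convex $\Omega$. This handles the large-$\lambda$ regime at the target exponent $\gamma'<1$ without any extrapolation. The extrapolation is then only needed for the bounded regime $r_{\rm in}(\Omega)\sqrt{\lambda}\le\sqrt{B}$, where the improvement at $\gamma=1$ is uniformly bounded away from zero, and the paper carries it out via the pointwise Laptev inequality (their Lemma~3.3), not the integrated Aizenman--Lieb identity you propose in step~(iii).

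A secondary difference: for the small-$\lambda$ improvement at $\gamma=1$ the paper does not cite a Geisinger--Laptev--Weidl two-term bound but instead proves a new multiplicative improvement
\[
\Tr(-\Delta_\Omega^{\rm D}-\lambda)_\limminus^1 \leq L_{1,d}^{\rm sc}|\Omega|\lambda^{1+\frac d2}\bigl(1-c\,e^{-c'\,w(\Omega)\sqrt{\lambda}}\bigr)
\]
via a quantitative Amrein--Berthier/Nazarov uncertainty principle. This works for arbitrary open sets of finite width, not just convex ones, and feeds cleanly into the extrapolation machinery because the factor is uniformly $<1$ on $\{r_{\rm in}(\Omega)\sqrt{\lambda}\le\sqrt{B}\}$. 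Your two-term surface bound would serve the same purpose in this bounded regime (indeed the paper shows the two are equivalent characterizations of $\gamma>\gamma_d^{\rm D}$), but only after the large-$\lambda$ regime has been dealt with separately.
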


\begin{theorem}\label{thm: Extended range of semiclassical ineq Neu}
	Fix any $d\geq 1$. There exists $\gamma <1$ such that for all open, bounded, convex $\Omega \subset \R^d$ and $\lambda \geq 0,$
	\begin{equation*}
		\Tr(-\Delta_\Omega^{\rm N}-\lambda)_\limminus^\gamma \geq L_{\gamma, d}^{\rm sc} |\Omega| \lambda^{\gamma + \frac{d}{2}}\,.
	\end{equation*}
\end{theorem}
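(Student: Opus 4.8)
The plan is to prove Theorem~\ref{thm: Extended range of semiclassical ineq Neu} by reducing the Neumann bound for $\gamma<1$ to the known Kr\"oger inequality at $\gamma=1$ (Fact~2) together with a quantitative remainder that exploits convexity. The key point is that for convex $\Omega$ the error term in the Weyl law is controlled by the surface area $|\partial\Omega|$, which for bounded convex sets is itself controlled by $|\Omega|$ and the inradius or diameter; this gives a \emph{two-term} lower bound on $\Tr(-\Delta_\Omega^{\rm N}-\lambda)_\limminus$ of Melas-type, i.e.
\begin{equation*}
  \Tr(-\Delta_\Omega^{\rm N}-\lambda)_\limminus \geq L_{1,d}^{\rm sc}|\Omega|\lambda^{1+\frac d2} + c\,|\partial\Omega|\,\lambda^{\frac{d+1}2} - \ldots
\end{equation*}
Actually, rather than pushing an explicit second term, the cleaner route is: first establish Theorem~\ref{thm: Extended range of semiclassical ineq Neu} in the regime $\lambda \geq \Lambda(\Omega)$ for a scale-invariant threshold using the lower Weyl bound with convex remainder, and separately handle $\lambda \leq \Lambda(\Omega)$, where the desired inequality becomes an assertion about the \emph{finitely many} small eigenvalues. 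By scaling we may fix $|\Omega|=1$; then convexity forces $\Omega$ to lie in a ball of bounded radius (up to the degenerate thin-slab direction, which must be treated by an induction on dimension or a separate argument), so $\lambda_1^{\rm N}=0$ always and the low-lying Neumann eigenvalues are bounded below in terms of geometric data.

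Concretely, here are the steps I would carry out in order. First, by homogeneity under dilations, reduce to $|\Omega|=1$; record that both sides of the inequality scale the same way, so it suffices to prove it for normalized $\Omega$. Second, invoke Fact~2: $\Tr(-\Delta_\Omega^{\rm N}-\lambda)_\limminus^{1} \geq L_{1,d}^{\rm sc}|\Omega|\lambda^{1+\frac d2}$ for all $\lambda\geq0$. Third, use the Aizenman--Lieb identity in reverse: for $\gamma<1$,
\begin{equation*}
  \Tr(-\Delta_\Omega^{\rm N}-\lambda)_\limminus^{\gamma}
  = \frac{1}{B(\gamma, 1-\gamma)}\int_0^\infty t^{-\gamma}\, \frac{d}{d\lambda}\Tr(-\Delta_\Omega^{\rm N}-(\lambda - t))_\limminus \, dt
\end{equation*}
type formula — more precisely, express the $\gamma$-Riesz mean as a fractional integral (with positive kernel) of the $\gamma=0$ counting function, or of the $\gamma=1$ mean, so that pointwise lower bounds on the counting function transfer. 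Fourth — the crux — establish a pointwise lower bound on the Neumann counting function of the form $\#\{k:\lambda_k^{\rm N}(\Omega)<\lambda\} \geq L_{0,d}^{\rm sc}|\Omega|\lambda^{d/2}(1 - C\lambda^{-1/2})$ for normalized convex $\Omega$, valid for $\lambda$ above a universal threshold, by a Dirichlet--Neumann bracketing / tiling argument: partition $\Omega$ by a grid of cubes of side $h\sim\lambda^{-1/2}$, use Neumann bracketing so that $\#\{k:\lambda_k^{\rm N}(\Omega)<\lambda\}$ is at least the sum over interior cubes of their Neumann counting functions, and estimate the volume of the boundary layer (cubes meeting $\partial\Omega$) using the convex-geometric bound $|\{x\in\Omega:\dist(x,\partial\Omega)<h\}|\leq |\partial\Omega|\, h$ and $|\partial\Omega|\lesssim 1$ for normalized convex sets of bounded diameter. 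Fifth, feed this into the fractional-integral representation: since $1-C\lambda^{-1/2}\geq (1+2\gamma/d)^{-1}\cdot(\text{something})$ — one checks that the loss $C\lambda^{-1/2}$ is integrable against the kernel $t^{-\gamma}$ and produces a correction of order $\lambda^{\gamma+\frac d2-\frac12}$, which is lower-order — the semiclassical constant $L_{\gamma,d}^{\rm sc}$ emerges with the correct sign provided $\gamma<1$ is chosen close enough to $1$ and $\lambda$ is large. Sixth, handle small $\lambda$: for normalized convex $\Omega$ of bounded diameter, $\lambda_2^{\rm N}(\Omega)$ is bounded below by a universal constant (e.g.\ via a Poincar\'e inequality on convex domains with constant depending only on the diameter, cf.\ Payne--Weinberger), so below the threshold the right side $L_{\gamma,d}^{\rm sc}\lambda^{\gamma+d/2}$ is small while the left side is at least $\lambda^\gamma$ (from the zero eigenvalue), and a direct comparison closes the gap — except in the degenerate thin directions where the diameter is unbounded, which I would dispatch by peeling off the thin direction and inducting on $d$, using Fact~3 as the base case $d=1$.

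The main obstacle I anticipate is the degenerate geometry: convex sets of volume $1$ can be arbitrarily elongated (long thin needles or slabs), so "$|\partial\Omega|$ bounded" and "diameter bounded" fail, and the boundary-layer term in the tiling estimate is no longer lower-order uniformly. The resolution — and this is where the real work lies — is a dimensional induction: write $\Omega$, after an affine volume-preserving normalization, as a set whose projection onto one coordinate is an interval of length $\ell$ with cross-sections $\omega_t\subset\R^{d-1}$; when $\ell$ is large one uses operator bracketing in the long direction (slicing $\Omega$ into pieces of bounded aspect ratio) combined with the inductive hypothesis on the $(d-1)$-dimensional cross-sections and the one-dimensional result, carefully tracking that the product structure of the semiclassical constant $L_{\gamma,d}^{\rm sc}$ is respected by the convolution identity $L_{\gamma,d}^{\rm sc} = \int L_{\gamma_1,d_1}^{\rm sc}L_{\gamma_2,d_2}^{\rm sc}\,(\cdots)$ under the splitting $d=d_1+d_2$. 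Making the thresholds and the choice of $\gamma<1$ uniform across all these regimes simultaneously — so that a single $\gamma$ works for every convex $\Omega$ in a given dimension — is the delicate bookkeeping step, and I expect it to require the induction to be set up with the strengthened statement "there exist $\gamma<1$ and $C$ such that $\Tr(-\Delta_\Omega^{\rm N}-\lambda)_\limminus^\gamma \geq L_{\gamma,d}^{\rm sc}|\Omega|\lambda^{\gamma+d/2}$" proved jointly with an explicit remainder strong enough to survive the slicing.
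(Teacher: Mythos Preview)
Your proposal has a genuine gap in Steps 4--5 that prevents the argument from closing, and Step~3 is partly confused.

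On Step~3: the Aizenman--Lieb identity does let you write $\Tr(-\Delta-\lambda)_\limminus^\gamma=\gamma\int_0^\lambda(\lambda-\tau)^{\gamma-1}N(\tau)\,d\tau$ (going \emph{up} from the counting function to $\gamma>0$), but there is no positive-kernel representation going \emph{down} from $\gamma=1$ to $\gamma<1$; the formula you wrote is a fractional derivative and does not transfer lower bounds. On Steps~4--5: the bound $N(\tau)\geq L_{0,d}^{\rm sc}\tau^{d/2}(1-C\tau^{-1/2})$ that a tiling argument produces carries a \emph{deficit} --- the boundary-layer correction is negative. Feeding it into the integral identity yields $\Tr(-\Delta-\lambda)_\limminus^\gamma\geq L_{\gamma,d}^{\rm sc}\lambda^{\gamma+d/2}-C'\lambda^{\gamma+(d-1)/2}$, which sits \emph{below} the semiclassical term. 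Taking $\gamma$ close to $1$ does nothing to repair the sign. Your surplus from the zero eigenvalue (Step~6) helps only for bounded $\lambda$; for large $\lambda$ you have no mechanism producing a positive excess, and the boundary deficit dominates. To salvage this route you would need $N(\tau)\geq L_{0,d}^{\rm sc}\tau^{d/2}$ with a \emph{positive} remainder, i.e.\ something strictly stronger than P\'olya's conjecture for the counting function.

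The paper avoids the counting function entirely. It first proves an \emph{improved} Kr\"oger inequality at $\gamma=1$ via the Nazarov--Amrein--Berthier uncertainty principle: for any open set of finite width (hence any bounded convex set),
\[
\Tr(-\Delta_\Omega^{\rm N}-\lambda)_\limminus\ \geq\ L_{1,d}^{\rm sc}|\Omega|\lambda^{1+d/2}\bigl(1+c\,e^{-c''\,r_{\rm in}(\Omega)\sqrt\lambda}\bigr),
\]
giving a uniform multiplicative surplus on the entire range $r_{\rm in}(\Omega)\sqrt\lambda\leq B$. For $r_{\rm in}(\Omega)\sqrt\lambda\geq B$ the uniform two-term Weyl asymptotics on convex sets (from the authors' companion paper) already deliver the semiclassical inequality for every $\gamma>0$. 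The descent from $\gamma=1$ to $\gamma'<1$ is then done not by an integral identity but by the elementary pointwise bound $(\lambda-\mu)_\limplus^{\gamma'}\geq\lambda^{\gamma'-1}(\lambda-\mu)_\limplus$, which converts ``$\geq c\cdot(\text{Weyl at }\gamma=1)$'' into ``$\geq c\,L_{1,d}^{\rm sc}/L_{\gamma',d}^{\rm sc}\cdot(\text{Weyl at }\gamma')$''; since $L_{1,d}^{\rm sc}/L_{\gamma',d}^{\rm sc}\to1$ as $\gamma'\to1^-$ and $c>1$, one chooses $\gamma'<1$ so that the product still exceeds~$1$. No dimensional induction and no separate thin-slab analysis are needed: both ingredients are already uniform over all bounded convex $\Omega\subset\R^d$.
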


\begin{remarks}
    Several remarks are in order.
    \begin{enumerate}
        \item[(a)] According to the Aizenman--Lieb principle, mentioned in the previous subsection, the validity of one of the inequalities in Theorems \ref{thm: Extended range of semiclassical ineq Dir} and \ref{thm: Extended range of semiclassical ineq Neu} for some $\gamma<1$ implies its validity for all larger values of $\gamma$ as well.
        \item[(b)] The exponents $\gamma$ in Theorems \ref{thm: Extended range of semiclassical ineq Dir} and \ref{thm: Extended range of semiclassical ineq Neu} are explicit in the sense that no compactness argument is used in their proof. Since our proof relies on some of the results in our papers \cite{FrankLarson_Crelle20,FrankLarson_24,FrankLarson_heatkernels}, where we did not provide numerical values for the constants in the error terms, we refrain here from giving a numerical value. We emphasize that no compactness argument was used in \cite{FrankLarson_Crelle20,FrankLarson_24,FrankLarson_heatkernels}, so the constants there can, in principle, be tracked.
        \item[(c)] The proofs of Berezin--Li--Yau and Kr\"oger are all based in an essential way on the convexity of $E\mapsto(E-\lambda)_\limminus^\gamma$ for $\gamma=1$. This convexity breaks down for $\gamma<1$, while we show that the inequalities continue to hold, at least for convex sets $\Omega$. This should be compared to the results in \cite{Erdos_etal} and \cite{Frank_etal_09}, which concern the case of a constant magnetic field. In this case the validity of the analogous inequalities breaks down exactly at $\gamma=1$. 
    \end{enumerate}
\end{remarks}

%---------------------------------------

\subsection{The optimization problems}

The semiclassical inequalities discussed in the previous subsection lead in a natural way to certain optimization problems. In this and the next subsection we will state these problems and describe our results.

We need to introduce some notation. For $d\geq 1$ let $\mathcal{C}_d$ denote the collection of all open, bounded, non-empty, and convex subsets of $\R^d$. 
Define the critical exponents
\begin{equation*}
		\gamma_d^{\rm D} :=\inf\Bigl\{\gamma\geq 0: \Tr(-\Delta_\Omega^{\rm D}-\lambda)_\limminus^\gamma \leq L^{\rm sc}_{\gamma,d}|\Omega|\lambda^{\gamma+\frac{d}2} \mbox{ for all }\Omega \in \mathcal{C}_d, \lambda \geq 0\Bigr\} \,\,
\end{equation*}
and
\begin{equation*}
		\gamma_d^{\rm N} :=\inf\Bigl\{\gamma\geq 0: \Tr(-\Delta_\Omega^{\rm N}-\lambda)_\limminus^\gamma \geq L^{\rm sc}_{\gamma,d}|\Omega|\lambda^{\gamma+\frac{d}2} \mbox{ for all }\Omega \in \mathcal{C}_d, \lambda \geq 0\Bigr\} \,.
\end{equation*}
The exponent $\gamma_d^\sharp$ can be characterized as the smallest number so that if $\gamma \geq \gamma_d^{\rm D}$, then
\begin{equation*}
	\Tr(-\Delta_\Omega^{\rm D}-\lambda)_\limminus^\gamma \leq L_{\gamma,d}^{\rm sc} |\Omega| \lambda^{\gamma+\frac{d}2}
\end{equation*}
for all bounded, open, and convex $\Omega \subset \R^d$ and $\lambda \geq 0$.
If $\gamma \geq \gamma_d^{\rm N}$ then, similarly,
\begin{equation*}
	\Tr(-\Delta_\Omega^{\rm N}-\lambda)_\limminus^\gamma \geq L_{\gamma,d}^{\rm sc} |\Omega| \lambda^{\gamma+\frac{d}2}
\end{equation*}
for all bounded, open, and convex $\Omega \subset \R^d$ and $\lambda \geq 0$. With this definition P\'olya's conjecture restricted to convex domains, can be stated as $\gamma_d^{\sharp} =0$. In particular, the validity of P\'olya's conjecture in the one-dimensional case entails that $\gamma_1^\sharp=0$. Our Theorems \ref{thm: Extended range of semiclassical ineq Dir} and \ref{thm: Extended range of semiclassical ineq Neu} can be stated succinctly as
$$
\gamma_d^\sharp < 1\,.
$$

The following result gives a lower bound on the critical Riesz exponent $\gamma_d^\sharp$ in $d$ dimensions in terms of the critical Riesz exponent $\gamma_{d-1}^\sharp$ in $d-1$ dimensions. 

\begin{proposition}\label{dimred}
    Fix $d\geq 2$ and $\sharp\in\{ \rm D, \rm N\}$. Then
    $$
    \gamma_d^{\sharp} \geq \Bigl(\gamma_{d-1}^{\sharp}-\frac12\Bigr)_{\!\limplus} \,.
    $$
\end{proposition}

We now consider the optimization problems associated to the inequalities in \eqref{eq: Polya conjecture gamma} at the critical exponent $\gamma=\gamma_d^\sharp$. As we have just shown, $\gamma_d^\sharp\geq (\gamma_{d-1}^{\sharp}-\frac12)_\limplus$ and the behavior that we describe depends on whether this inequality is strict or not.

\begin{theorem}\label{thm: Main conclusions gamma_d}
	Fix $d\geq 2$ and $\sharp \in \{{\rm D}, {\rm N}\}$. It holds that 
	\begin{enumerate}[label=\textup{(}\hspace{-0.4pt}\alph*\textup{)}]
		\item\label{itm: Main thm existence sup} if $\gamma_d^{\sharp}>(\gamma_{d-1}^{\sharp}-\frac12)_\limplus$, then there is a bounded convex open non-empty $\Omega_*\subset\R^d$ and a $\lambda_*>0$ such that
	$$
	\Tr(-\Delta_{\Omega_*}^{\sharp}-\lambda_*)_\limminus^{\gamma_d^{\sharp}} = L_{\gamma_d^{\sharp},d}^{{\rm sc}} |\Omega_*| \lambda_*^{\gamma_d^{\sharp}+\frac d2} \,.
	$$

	\item\label{itm: Main thm cylinder sequence} if $\gamma_d^{\sharp}= \gamma_{d-1}^{\sharp}-\frac12\geq 0$, then there is a bounded convex open non-empty $\omega_*\subset \R^{d-1}$ and a $\lambda_*>0$ such that
    $$
        \Tr(-\Delta_{\omega_*}^{\sharp}-\lambda_*)_\limminus^{\gamma_{d-1}^{\sharp}}
    = {L_{\gamma_{d-1}^{\sharp},d-1}^{{\rm sc}} |\omega_*| \lambda_*^{\gamma_{d-1}^{\sharp}+\frac {d-1}2}}
    $$
    and, if
	\begin{equation*}
		\Omega(\lambda) := \bigl(\bigl(\tfrac{\lambda_*}{\lambda}\bigr)^{\frac{1}2}\omega_*\bigr)\times \bigl(0, \bigl(\tfrac{\lambda}{\lambda_*}\bigr)^{\frac{d-1}2}\bigr) \subset\R^d \,,
	\end{equation*}
	then 
	$$
	\lim_{\lambda\to\infty} \frac{\Tr(-\Delta_{\Omega(\lambda)}^{\sharp}-\lambda)_\limminus^{\gamma_d^{\sharp}}}{L_{\gamma_d^{\sharp},d}^{{\rm sc}} |\Omega(\lambda)| \lambda^{\gamma_d^{\sharp}+\frac d2}} = 1 \,.
	$$
    \end{enumerate}
\end{theorem}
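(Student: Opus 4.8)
\emph{Plan.} Both parts are proved by a direct (compactness) method for the optimisation problems defining $r^\sharp_{\gamma,d}$, combined with the dimensional-reduction mechanism behind Proposition~\ref{dimred}. Write $\mathcal R^\sharp_\gamma(\Omega,\lambda):=\Tr(-\Delta_\Omega^\sharp-\lambda)_\limminus^\gamma/\bigl(L^{\rm sc}_{\gamma,d}|\Omega|\lambda^{\gamma+\frac d2}\bigr)$, which by the scaling $(\Omega,\lambda)\mapsto(t\Omega,t^{-2}\lambda)$ is invariant, so in any optimising family one may normalise $\lambda=1$. By the properties of $r^\sharp_{\gamma,d}$ and of $\gamma^\sharp_d$ recalled above, $\gamma\mapsto r^\sharp_{\gamma,d}$ is continuous and monotone, equals $1$ precisely for $\gamma\ge\gamma^\sharp_d$, and lies strictly on the ``wrong'' side of $1$ for $\gamma<\gamma^\sharp_d$; also $\gamma^\sharp_d<1$ by Theorems~\ref{thm: Extended range of semiclassical ineq Dir}--\ref{thm: Extended range of semiclassical ineq Neu}. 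We carry out the Dirichlet case; the Neumann case is parallel with the inequalities reversed.

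\emph{Part (a).} Take $\gamma_n\uparrow\gamma^\sharp_d$ with $\gamma_n>(\gamma^\sharp_{d-1}-\tfrac12)_\limplus$. As $r^{\rm D}_{\gamma_n,d}>1$, there are convex $\Omega_n\subset\R^d$ with (after normalising $\lambda=1$) $\mathcal R^{\rm D}_{\gamma_n}(\Omega_n,1)>1$. The key step is to show that $(\Omega_n)$ stays, modulo translations, in a fixed compact family of convex bodies. First, $\rho(\Omega_n)\not\to0$: otherwise $\lambda_1^{\rm D}(\Omega_n)\gtrsim\rho(\Omega_n)^{-2}\to\infty$, forcing $\mathcal R^{\rm D}_{\gamma_n}(\Omega_n,1)=0$ (for $\sharp={\rm N}$ one uses instead that a thin convex body has $|\Omega_n|\lesssim\rho(\Omega_n)$ times a $(d-1)$-dimensional volume, while its Neumann Riesz mean dominates a $(d-1)$-dimensional one, bounded below via $r^{\rm N}_{\gamma,d-1}>0$, so $\mathcal R^{\rm N}\to\infty$). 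Second, $\rho(\Omega_n)$ is bounded above: once $\lambda\rho(\Omega_n)^2$ exceeds the threshold of the uniform two-term spectral asymptotics for convex bodies of \cite{FrankLarson_Crelle20,FrankLarson_24}, these asymptotics together with the elementary bound $|\Omega|\le\rho(\Omega)\,\Haus^{d-1}(\partial\Omega)$ give $\mathcal R^{\rm D}_{\gamma_n}(\Omega_n,1)\le1-c\,\Haus^{d-1}(\partial\Omega_n)/|\Omega_n|\le1-c/\rho(\Omega_n)<1$. Third, $\diam(\Omega_n)$ is bounded: if $\diam(\Omega_n)\to\infty$ then, rotating so that $\Omega_n$ is longest along $e_d$ and separating/bracketing that variable, the one-dimensional Weyl asymptotics give $\mathcal R^{\rm D}_{\gamma_n}(\Omega_n,1)\le r^{\rm D}_{\gamma_n+\frac12,d-1}+o(1)=1+o(1)$, and the negative second-order term of the one-dimensional Riesz mean makes this strictly less than $1$ once $\Omega_n$ is sufficiently elongated. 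Hence, by the Blaschke selection theorem, some subsequence converges in Hausdorff distance (after translation) to a convex body $\Omega_*$ with $\rho(\Omega_*)>0$; by continuity of Dirichlet eigenvalues under Hausdorff convergence of convex domains and finiteness of the Riesz sum at $\lambda=1$ we get $\mathcal R^{\rm D}_{\gamma^\sharp_d}(\Omega_*,1)=\lim_n\mathcal R^{\rm D}_{\gamma_n}(\Omega_n,1)\ge1$, which by $r^{\rm D}_{\gamma^\sharp_d,d}=1$ must be an equality. This is the desired statement, with $\lambda_*:=1$.

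\emph{Part (b).} Here $\gamma^\sharp_d=\gamma^\sharp_{d-1}-\tfrac12\ge0$, so $d\ge3$ and $\gamma^\sharp_{d-1}=\gamma^\sharp_d+\tfrac12>0$. We first observe that $\gamma^\sharp_{d-1}>(\gamma^\sharp_{d-2}-\tfrac12)_\limplus$: otherwise Proposition~\ref{dimred} forces $\gamma^\sharp_{d-1}=(\gamma^\sharp_{d-2}-\tfrac12)_\limplus$, and the case $=0$ gives $\gamma^\sharp_d=-\tfrac12$ whereas the case $\gamma^\sharp_{d-2}-\tfrac12>0$ gives $\gamma^\sharp_{d-2}=\gamma^\sharp_d+1\ge1$, contradicting $\gamma^\sharp_{d-2}<1$ (valid by one-dimensional P\'olya if $d-2=1$, and by Theorems~\ref{thm: Extended range of semiclassical ineq Dir}--\ref{thm: Extended range of semiclassical ineq Neu} if $d-2\ge2$). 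Thus part~(a), applied in dimension $d-1$, yields a bounded convex open $\omega_*\subset\R^{d-1}$ and $\lambda_*>0$ with $\Tr(-\Delta_{\omega_*}^\sharp-\lambda_*)_\limminus^{\gamma^\sharp_{d-1}}=L^{\rm sc}_{\gamma^\sharp_{d-1},d-1}|\omega_*|\lambda_*^{\gamma^\sharp_{d-1}+\frac{d-1}2}$. Set $\gamma:=\gamma^\sharp_d$, so $\gamma+\tfrac12=\gamma^\sharp_{d-1}$. Since $-\Delta^\sharp_{\Omega(\lambda)}$ separates, with eigenvalues $(\lambda/\lambda_*)\mu^\sharp_j(\omega_*)+\nu^\sharp_k(\lambda)$ --- the $\mu^\sharp_j(\omega_*)$ being the eigenvalues of $-\Delta^\sharp_{\omega_*}$ and the $\nu^\sharp_k(\lambda)$ those of $-\Delta^\sharp_{(0,(\lambda/\lambda_*)^{(d-1)/2})}$ --- one has
\[
\Tr(-\Delta^\sharp_{\Omega(\lambda)}-\lambda)_\limminus^\gamma=\sum_{j\,:\,\mu^\sharp_j(\omega_*)<\lambda_*}\Tr\Bigl(-\Delta^\sharp_{(0,(\lambda/\lambda_*)^{(d-1)/2})}-\tfrac{\lambda}{\lambda_*}\bigl(\lambda_*-\mu^\sharp_j(\omega_*)\bigr)\Bigr)_\limminus^{\!\gamma},
\]
a finite sum in which, for each contributing $j$, both the interval length and the energy tend to $\infty$ with $\lambda$. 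By the one-dimensional two-term Weyl asymptotics each summand equals $L^{\rm sc}_{\gamma,1}(\lambda/\lambda_*)^{(d-1)/2}\bigl(\tfrac{\lambda}{\lambda_*}(\lambda_*-\mu^\sharp_j(\omega_*))\bigr)^{\gamma+\frac12}(1+o(1))$; summing over $j$, using $|\Omega(\lambda)|=|\omega_*|$ and the Gamma-function identity $L^{\rm sc}_{\gamma,1}\,L^{\rm sc}_{\gamma+\frac12,d-1}=L^{\rm sc}_{\gamma,d}$, we obtain
\[
\frac{\Tr(-\Delta^\sharp_{\Omega(\lambda)}-\lambda)_\limminus^\gamma}{L^{\rm sc}_{\gamma,d}|\Omega(\lambda)|\lambda^{\gamma+\frac d2}}\;\xrightarrow[\lambda\to\infty]{}\;\frac{\Tr(-\Delta^\sharp_{\omega_*}-\lambda_*)_\limminus^{\gamma+\frac12}}{L^{\rm sc}_{\gamma+\frac12,d-1}|\omega_*|\lambda_*^{\gamma+\frac12+\frac{d-1}2}}=1\,,
\]
the last equality being the defining property of $\omega_*,\lambda_*$. (That the limit is $\le1$ for $\sharp={\rm D}$ and $\ge1$ for $\sharp={\rm N}$ is automatic from $r^\sharp_{\gamma^\sharp_d,d}=1$; the point is that it equals exactly $1$.)

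\emph{Main obstacle.} The genuinely delicate step is the compactness assertion in part~(a): excluding that a sequence of convex domains violating the semiclassical inequality just below $\gamma^\sharp_d$ escapes to infinity in a way not captured either by the leading Weyl term or by a codimension-one cylindrical degeneration. This is where the uniform-in-geometry two-term spectral asymptotics of \cite{FrankLarson_Crelle20,FrankLarson_24} are used in an essential way, and the Neumann case is the harder one, since there is no Neumann analogue of $\lambda_1^{\rm D}(\Omega)\gtrsim\rho(\Omega)^{-2}$ and one must instead invoke the lower bounds for Neumann Riesz means from \cite{FrankLarson_24}. A subsidiary point is the joint continuity of $(\gamma,\Omega)\mapsto\mathcal R^\sharp_\gamma(\Omega,1)$ under Hausdorff convergence of convex domains, which follows from continuity of the eigenvalues in this topology and the finiteness of the sum at $\lambda=1$.
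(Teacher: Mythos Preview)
Your argument for part~(b) is essentially the paper's: verify $\gamma_{d-1}^\sharp>(\gamma_{d-2}^\sharp-\tfrac12)_\limplus$, apply part~(a) in dimension $d-1$, and use separation of variables on the cylinder (the paper packages this last step as Lemma~\ref{lem: cylinder lift}).

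For part~(a), however, your setup differs from the paper's and your compactness step has a genuine gap. The paper does \emph{not} approach $\gamma_d^\sharp$ from below with varying $\gamma_n$; instead it works at the fixed exponent $\gamma_d^\sharp$ and uses the characterisation of $\gamma_d^\sharp$ via two-term inequalities (Theorems~\ref{thm: improved inequality above critical gamma Dir}/\ref{thm: improved inequality above critical gamma Neu}): since no two-term inequality holds at $\gamma_d^\sharp$, there is a sequence $(\Omega_j,\lambda_j)$ with $\Tr(-\Delta_{\Omega_j}^\sharp-\lambda_j)_\limminus^{\gamma_d^\sharp} > L^{\rm sc}_{\gamma_d^\sharp,d}|\Omega_j|\lambda_j^{\gamma_d^\sharp+d/2} - j^{-1}\Haus^{d-1}(\partial\Omega_j)\lambda_j^{\gamma_d^\sharp+(d-1)/2}$. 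Your setup via $\gamma_n\uparrow\gamma_d^\sharp$ and $\mathcal R_{\gamma_n}^\sharp(\Omega_n,1)>1$ is a legitimate alternative starting point, and the joint $(\gamma,\Omega)$-continuity you invoke at the end does hold; but either route ultimately needs the same hard input for compactness.

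The gap is your third step, bounding $\diam(\Omega_n)$. ``Separating/bracketing that variable'' does not work for a general convex set: the cross-sections of $\Omega_n$ perpendicular to the long direction vary with height, so the problem does not factor. If one tries Neumann bracketing for the Dirichlet upper bound, the resulting pieces are not cylinders, and even in the cylindrical limit the one-dimensional factor is a \emph{Neumann} interval trace, whose second-order correction is \emph{positive}, not negative --- so your claimed strict inequality ``$<1$'' does not follow. (Dually, for $\sharp={\rm N}$ the relevant one-dimensional piece would be Dirichlet, with negative correction, again the wrong sign.) The paper handles this via the substantial Theorem~\ref{thm: Asymptotics degenerating convex sets} on partially semiclassical limits for collapsing convex sets: assuming the sequence collapses, that theorem identifies the limit of $\mathcal R^\sharp$ as an average of $m$-dimensional traces at exponent $\gamma_d^\sharp+\tfrac{d-m}{2}$ over cross-sections of a limiting body $\Omega_*'$. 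The strict inequality ``$<1$'' (resp.\ ``$>1$'') then comes not from any one-dimensional correction, but from applying the \emph{strict} two-term inequality of Theorem~\ref{thm: improved inequality above critical gamma Dir}(iii) (resp.\ \ref{thm: improved inequality above critical gamma Neu}(iii)) to each cross-section, which is available precisely because $\gamma_d^\sharp+\tfrac{d-m}{2}>\gamma_m^\sharp$ --- this is where the hypothesis $\gamma_d^\sharp>(\gamma_{d-1}^\sharp-\tfrac12)_\limplus$ enters for $m=d-1$, and $\gamma_m^\sharp<1$ (Theorems~\ref{thm: Extended range of semiclassical ineq Dir}/\ref{thm: Extended range of semiclassical ineq Neu}) for $m\le d-2$. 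Your sketch also implicitly treats only the case $m=d-1$; the general collapse can have any $1\le m\le d-1$.
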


We emphasize that Theorem \ref{thm: Main conclusions gamma_d} is void when P\'olya's conjecture holds for convex sets, that is, when $\gamma_d^\sharp =0$ for all $d\geq 1$. Nevertheless, we believe that it is of interest and, in fact, provides a possible route towards proving P\'olya's conjecture among convex sets. If P\'olya's conjecture fails within the class of convex subsets of $\R^d$, then we have $\gamma_d^\sharp >0$ by the continuity of the Riesz means with respect to the exponent $\gamma$. Thus, either \ref{itm: Main thm existence sup} or~\ref{itm: Main thm cylinder sequence} in Theorem \ref{thm: Main conclusions gamma_d} proves the existence of an optimizing convex set for a shape optimization problem, namely $\Omega_*$ or $\omega_*$. For these optimizing sets one might hope to deduce additional properties by studying the associated Euler--Lagrange equations. If by utilizing these additional properties of $\Omega_*$ and $\omega_*$ one could conclude that the corresponding semiclassical inequality is strict, one would reach a contradiction and prove the validity of P\'olya's conjecture. What one has gained by passing to positive $\gamma$ compared to the original optimization problem of P\'olya (which corresponds to $\gamma=0$) are the following three items: 1) One has existence of an optimizing set, 2) one knows the value of the optimum, and 3) one has a smoother variational problem for $\gamma>0$ than for $\gamma=0$. Carrying out this strategy remains an open problem.

%---------------------------------------

\subsection{Consequences in asymptotic shape optimization}

In this subsection we turn our attention to the following two shape optimization problems, depending on parameters $\gamma\geq 0$ and $\lambda\geq 0$, given by
\begin{align*}
	M^{\rm D}_\gamma(\lambda)&:=\sup\bigl\{\Tr(-\Delta^{\rm D}_\Omega-\lambda)_\limminus^\gamma: \Omega \in \mathcal{C}_d, |\Omega|=1\bigr\}\,,\\
    M^{\rm N}_\gamma(\lambda)&:=\inf\bigl\{\Tr(-\Delta^{\rm N}_\Omega-\lambda)_\limminus^\gamma: \Omega \in \mathcal{C}_d, |\Omega|=1\bigr\}\,.
\end{align*}	
More precisely, for fixed $\gamma\geq 0$, we shall be interested in the numbers $M^\sharp_\gamma(\lambda)$ and the corresponding minimizers or maximizers in the limit $\lambda\to\infty$.

These problems have been studied in the Dirichlet case for $\gamma\geq 1$ in \cite{LarsonJST,FrankLarson_Crelle20} (see also \cite{Freitas_17} for related results in the case $\gamma=1, \sharp = \rm D$). Our goal here is to enlarge the parameter regime to $\gamma\geq 0$ and to extend the results to the Neumann case.

Let us explain the connection between these asymptotic shape optimization problems and the rest of this paper. It is known that for $\gamma>0$ there is a subleading term in the Weyl asymptotics \eqref{eq: Weyls law Riesz means}, namely, as $\lambda\to\infty$
\begin{equation}\label{eq: Weyls law Riesz means second}
    \begin{aligned}
        \Tr(-\Delta^{\rm D}_\Omega-\lambda)_\limminus^\gamma = L_{\gamma,d}^{\rm sc} |\Omega| \lambda^{\gamma+\frac d2} - \frac14 L_{\gamma,d-1}^{\rm sc} \mathcal H^{d-1}(\partial\Omega) \lambda^{\gamma+\frac{d-1}{2}} + o(\lambda^{\gamma+\frac{d-1}{2}}) \,,\\
        \Tr(-\Delta^{\rm N}_\Omega-\lambda)_\limminus^\gamma = L_{\gamma,d}^{\rm sc} |\Omega| \lambda^{\gamma+\frac d2} + \frac14 L_{\gamma,d-1}^{\rm sc} \mathcal H^{d-1}(\partial\Omega) \lambda^{\gamma+\frac{d-1}{2}} + o(\lambda^{\gamma+\frac{d-1}{2}}) \,.
    \end{aligned}
\end{equation}
Here $\mathcal H^{d-1}(\partial\Omega)$ denotes the surface area of the boundary of $\Omega$ and we emphasize the different signs of the subleading term depending on the boundary condition. 

Now if the asymptotics \eqref{eq: Weyls law Riesz means second} were uniform with respect to all convex sets $\Omega$ with unit measure, then the asymptotic shape optimization problem would reduce, in both cases, to the problem of minimizing $\mathcal H^{d-1}(\partial\Omega)$ among all convex open sets $\Omega$ with $|\Omega|=1$. This is the classical isoperimetric problem, which is solved (precisely) by Euclidean balls. Thus, one might arrive at the expectation that optimizers of the problems $M_\gamma^\sharp(\lambda)$ converge, as $\lambda\to\infty$, to balls. As we will see in this subsection, whether or not this expectation is correct depends on the validity of P\'olya's conjecture for convex sets and the relation between $\gamma$ and a certain critical Riesz exponent.

Our first result concerns the asymptotics of $M_\gamma^\sharp(\lambda)$ as $\lambda\to\infty$. By comparison with a fixed convex set of unit measure it is easy to see that
\begin{align*}
    \liminf_{\lambda \to \infty}\frac{M_{\gamma}^{\sharp}(\lambda)}{L_{\gamma,d}^{\rm sc}\lambda^{\gamma+\frac{d}2}} \geq 1\quad \mbox{if }\sharp = {\rm D}\quad \mbox{and}\quad
    \limsup_{\lambda \to \infty}\frac{M_{\gamma}^{\sharp}(\lambda)}{L_{\gamma,d}^{\rm sc}\lambda^{\gamma+\frac{d}2}} \leq 1\quad \mbox{if }\sharp = {\rm N}\,.
\end{align*}
Moreover, equality holds for $\gamma\geq\gamma_d^\sharp$ by definition of the latter number. This condition for equality may not be necessary, however. The following result provides a necessary and sufficient condition in terms of the number $\gamma_{d-1}^\sharp-\frac12$, which is $\leq\gamma_d^\sharp$ by Proposition~\ref{dimred}. We find it remarkable that it is the critical Riesz exponent in dimension $d-1$ that is relevant for the shape optimization problem in dimension $d$.

\begin{proposition}\label{prop:shapeoptasymp}
    Let $d\geq 2$, $\gamma\geq 0$ and $\sharp \in \{{\rm D}, {\rm N}\}$. If $\gamma\geq \gamma_{d-1}^\sharp-\frac12$, then
    \begin{equation*}
        \lim_{\lambda \to \infty}\frac{M_{\gamma}^{\sharp}(\lambda)}{L_{\gamma,d}^{\rm sc}\lambda^{\gamma+\frac{d}2}} =1 \,,
    \end{equation*}
    while if $\gamma < \gamma_{d-1}^\sharp -\frac12$, then
    \begin{equation*}
    \begin{aligned}
        \lim_{\lambda \to \infty}\frac{M_{\gamma}^{\sharp}(\lambda)}{L_{\gamma,d}^{\rm sc}\lambda^{\gamma+\frac{d}2}}>1 \quad  \mbox{if }\sharp ={\rm D} \quad \mbox{and}\quad
        \lim_{\lambda \to \infty}\frac{M_{\gamma}^{\sharp}(\lambda)}{L_{\gamma,d}^{\rm sc}\lambda^{\gamma+\frac{d}2}}<1\quad  \mbox{if }\sharp ={\rm N}\,.
    \end{aligned}
    \end{equation*}
\end{proposition}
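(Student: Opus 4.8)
The plan is to derive the identity from two matching one-sided bounds; granting it, the ``in particular'' statements are immediate, since $r_{\gamma+\frac12,d-1}^\sharp=1$ holds exactly when $\gamma\ge\gamma_{d-1}^\sharp-\frac12$, while for $0\le\gamma<\gamma_{d-1}^\sharp-\frac12$ the monotonicity in~(c) together with the bounds in~(b) force $r_{\gamma+\frac12,d-1}^{\rm D}>1$ and $r_{\gamma+\frac12,d-1}^{\rm N}<1$. Two elementary facts are used throughout: the product identity $L_{\gamma,d}^{\rm sc}=L_{\gamma,1}^{\rm sc}L_{\gamma+\frac12,d-1}^{\rm sc}$, immediate from~\eqref{eq: semiclassical constant}, and the one-dimensional asymptotics
\begin{equation*}
    \Tr(-\Delta_{(0,\ell)}^\sharp-\Lambda)_\limminus^\gamma=L_{\gamma,1}^{\rm sc}\,\ell\,\Lambda^{\gamma+\frac12}+O_\gamma(\Lambda^\gamma)\,,\qquad\ell,\Lambda>0\,,
\end{equation*}
valid for either boundary condition, obtained by comparing $\sum_k(\Lambda-\pi^2k^2\ell^{-2})_\limplus^\gamma$ with the corresponding integral.

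The first bound uses thin cylinders as trial domains. Given $\epsilon>0$, and using the scaling invariance of the ratio defining $r_{\gamma+\frac12,d-1}^\sharp$, I would pick a bounded open convex $\omega_*\subset\R^{d-1}$ with $|\omega_*|=1$ and $\lambda_*>0$ such that
\begin{equation*}
    \frac{\Tr(-\Delta_{\omega_*}^\sharp-\lambda_*)_\limminus^{\gamma+\frac12}}{L_{\gamma+\frac12,d-1}^{\rm sc}\lambda_*^{\gamma+\frac d2}}
\end{equation*}
is within $\epsilon$ of $r_{\gamma+\frac12,d-1}^\sharp$ on the relevant side (since $\gamma+\frac d2=(\gamma+\frac12)+\frac{d-1}2$, this quotient is precisely one of those entering the definition of $r_{\gamma+\frac12,d-1}^\sharp$). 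With $s:=(\lambda_*/\lambda)^{1/2}$ and $\ell:=(\lambda/\lambda_*)^{(d-1)/2}$, set $\Omega(\lambda):=(s\omega_*)\times(0,\ell)$ as in Theorem~\ref{thm: Main conclusions gamma_d}\ref{itm: Main thm cylinder sequence}; since $s^{d-1}\ell=1$ it is a bounded open convex set with $|\Omega(\lambda)|=1$, and $\ell\to\infty$ as $\lambda\to\infty$. Separating variables, the eigenvalues of $-\Delta_{\Omega(\lambda)}^\sharp$ are $s^{-2}\lambda_j^\sharp(\omega_*)+\pi^2k^2\ell^{-2}$, $j\ge1$, with $k\ge1$ if $\sharp={\rm D}$ and $k\ge0$ if $\sharp={\rm N}$, whence
\begin{equation*}
    \Tr(-\Delta_{\Omega(\lambda)}^\sharp-\lambda)_\limminus^\gamma=\sum_{j:\ \lambda_j^\sharp(\omega_*)<\lambda_*}\Tr\bigl(-\Delta_{(0,\ell)}^\sharp-s^{-2}(\lambda_*-\lambda_j^\sharp(\omega_*))\bigr)_\limminus^\gamma\,.
\end{equation*}
Inserting the one-dimensional asymptotics, using $\ell s^{-2\gamma-1}=(\lambda/\lambda_*)^{\gamma+\frac d2}$ and the product identity, and noting that the finitely many error terms contribute $O_\gamma(\lambda^\gamma)$, one obtains
\begin{equation*}
    \lim_{\lambda\to\infty}\frac{\Tr(-\Delta_{\Omega(\lambda)}^\sharp-\lambda)_\limminus^\gamma}{L_{\gamma,d}^{\rm sc}\lambda^{\gamma+\frac d2}}=\frac{\Tr(-\Delta_{\omega_*}^\sharp-\lambda_*)_\limminus^{\gamma+\frac12}}{L_{\gamma+\frac12,d-1}^{\rm sc}\lambda_*^{\gamma+\frac d2}}\,.
\end{equation*}
Since $\Omega(\lambda)$ is an admissible competitor for $M_\gamma^\sharp(\lambda)$, letting $\epsilon\to0$ would yield $\liminf_{\lambda\to\infty}M_\gamma^{\rm D}(\lambda)/(L_{\gamma,d}^{\rm sc}\lambda^{\gamma+\frac d2})\ge r_{\gamma+\frac12,d-1}^{\rm D}$ and $\limsup_{\lambda\to\infty}M_\gamma^{\rm N}(\lambda)/(L_{\gamma,d}^{\rm sc}\lambda^{\gamma+\frac d2})\le r_{\gamma+\frac12,d-1}^{\rm N}$.

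The matching bounds in the other direction amount to a uniform estimate: for every $\epsilon>0$ there should be $\Lambda_0$ with $\Tr(-\Delta_\Omega^{\rm D}-\lambda)_\limminus^\gamma\le(r_{\gamma+\frac12,d-1}^{\rm D}+\epsilon)L_{\gamma,d}^{\rm sc}\lambda^{\gamma+\frac d2}$ for every bounded open convex $\Omega$ with $|\Omega|=1$ and every $\lambda\ge\Lambda_0$, and dually for $\sharp={\rm N}$; by the scaling $\Omega\mapsto\lambda^{1/2}\Omega$ this is a statement about convex domains of large volume. When the inradius of $\Omega$ is not too small relative to $\lambda^{-1/2}$, the two-term bounds for convex domains of \cite{FrankLarson_Crelle20,FrankLarson_24} already give a ratio $\le1\le r_{\gamma+\frac12,d-1}^{\rm D}$ (resp.\ $\ge1\ge r_{\gamma+\frac12,d-1}^{\rm N}$). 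For the remaining, strongly degenerate domains I would use the quantitative dimension-reduction estimates of those papers: after choosing a direction $e$ and foliating $\Omega$ by segments parallel to $e$, one bounds $\Tr(-\Delta_\Omega^\sharp-\lambda)_\limminus^\gamma$ (from above when $\sharp={\rm D}$, from below when $\sharp={\rm N}$) by an integral over the orthogonal shadow $\omega:=\pi_{e^\perp}(\Omega)\subset\R^{d-1}$ of one-dimensional Riesz means along the fibers, up to an error small compared with the main term; performing the fiberwise summation via the one-dimensional asymptotics turns the exponent $\gamma$ into $\gamma+\frac12$ and produces, to leading order, a $(d-1)$-dimensional Riesz mean of $\omega$, which is estimated by the very definition of $r_{\gamma+\frac12,d-1}^\sharp$. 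Combining the two regimes, and putting it together with the first step, would complete the proof.

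The hard part is exactly this uniform, reverse bound. For a degenerating family of convex sets the leading Weyl term does not suffice — the boundary term in the two-term asymptotics and the genuine remainder become of the same order — so one must follow the reduction of spatial dimension quantitatively and uniformly in the shape of $\Omega$; this is what the results drawn from \cite{FrankLarson_Crelle20,FrankLarson_24} supply, and also why the resulting exponents are explicit. The shift of the Riesz exponent by $\frac12$ is the signature of one spatial direction being integrated out, a one-dimensional Weyl law raising the exponent by $\frac12$; that the extremal degeneracy is a single long direction (rather than several) is consistent with the monotonicity of $r^\sharp$ under iterating Proposition~\ref{dimred}.
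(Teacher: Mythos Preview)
Your construction of cylindrical trial domains and the ensuing lower bound for $M_\gamma^{\rm D}$ (upper bound for $M_\gamma^{\rm N}$) is essentially the paper's argument; the paper packages the separation-of-variables computation as Lemma~\ref{lem: cylinder lift}, but the content is the same.

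The reverse bound, however, has a genuine gap. Your foliation is oriented the wrong way. If you foliate a thin convex body by segments parallel to a direction $e$ and integrate one-dimensional Riesz means over the $(d-1)$-dimensional shadow $\omega$, the one-dimensional asymptotics give $L_{\gamma,1}^{\rm sc}|I_y|\lambda^{\gamma+\frac12}$ on each fiber, and integrating over $\omega$ produces $L_{\gamma,1}^{\rm sc}|\Omega|\lambda^{\gamma+\frac12}$ --- the usual Weyl term, not a $(d-1)$-dimensional Riesz mean of $\omega$. To see $r_{\gamma+\frac12,d-1}^\sharp$ appear, the $(d-1)$ \emph{short} directions must remain quantum while only the single long direction becomes semiclassical: one should slice by hyperplanes \emph{orthogonal} to the long direction, so that each slab carries a $(d-1)$-dimensional cross-sectional Riesz mean with exponent $\gamma+\frac12$ after the $1$-dimensional direction is integrated out. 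Moreover, the degeneration need not a priori be spaghetti-like; several directions may blow up simultaneously, and one must argue that any $m\le d-2$ collapse yields ratio at most $r_{\gamma+\frac{d-m}2,m}^\sharp=1$, so only $m=d-1$ can be extremal.

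The paper does not attempt a direct uniform bound of the type you describe. Instead it takes an optimizing sequence $(\Omega_j,\lambda_j)$, shows $\liminf r_{\rm in}(\Omega_j)\sqrt{\lambda_j}>0$ (Hersch--Protter for $\sharp={\rm D}$, Lemma~\ref{lem: small energy improved Kroger} for $\sharp={\rm N}$), and splits according to whether $r_{\rm in}(\Omega_j)\sqrt{\lambda_j}\to\infty$ or stays bounded. In the first case the uniform one-term Weyl law (Theorem~\ref{thm: quantitative Weyl law} for $\gamma=0$, \cite{FrankLarson_24} for $\gamma>0$) gives ratio $1$. In the second case the paper invokes Theorem~\ref{thm: Asymptotics degenerating convex sets} --- a separate, substantial result of the present paper, not something available in \cite{FrankLarson_Crelle20,FrankLarson_24} --- which identifies the limiting ratio as an average of $m$-dimensional Riesz means with exponent $\gamma+\frac{d-m}2$, hence bounded by $r_{\gamma+\frac{d-m}2,m}^\sharp\le r_{\gamma+\frac12,d-1}^\sharp$. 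Your sketch underestimates both the correct geometry of the reduction and the amount of work hidden in making it rigorous.
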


Note that if $d=2$, then for any $\gamma\geq 0$
$$
    \lim_{\lambda \to \infty}\frac{M_{\gamma}^{\sharp}(\lambda)}{L_{\gamma,2}^{\rm sc}\lambda^{\gamma+1}} = 1 \,.
$$
This follows from $\gamma_1^\sharp=0$.

A consequence of this proposition is that the expectation that optimizing sets converge to balls necessarily must fail for $0\leq \gamma < \gamma_{d-1}^\sharp -\frac12$.

Now we turn to the finer question of describing the asymptotic behavior of sets realizing the extremum in $M^\sharp_\gamma(\lambda)$ as $\lambda\to\infty$. That sets realizing the supremum defining $M_\gamma^{\rm D}(\lambda)$ exist is proved, for instance, in~\cite[Lemma 3.1]{LarsonJST} and we modify the proof in Lemma \ref{exshapeoptneumann} to handle the Neumann case. We want to stress, however, that our results do not need the existence of an optimizer, but rather are valid for any sequence of almost optimizing sets.

To formulate our results we need to introduce some terminology. For $\Omega, \Omega' \in \mathcal{C}_d$ the (complementary) Hausdorff distance between these sets is defined by
\begin{equation*}
    d^{H}(\Omega, \Omega') = \max\Bigl\{\sup_{x \in K\setminus\Omega}\dist(x, K\setminus\Omega'), \sup_{x \in K\setminus\Omega'}\dist(x, K\setminus\Omega)\Bigr\}\,,
\end{equation*}
where $K\subset \R^d$ is a compact set with $\Omega, \Omega' \subset K$ (the distance is independent of the choice of $K$). It is a classical result, often attributed to Blaschke, that the metric space $(\mathcal{C}_d, d^H)$ is locally compact in the sense that, if $\{\Omega_j\}_{j\geq 1}\subset \mathcal{C}_d$ and there is a compact set $K\subset \R^d$ so that $\Omega_j \subset K$ for each $j\geq 1$, then there exists a subsequence along which $\Omega_j$ converges to an open convex set $\Omega$ (possibly empty) with respect to the distance~$d^H$ (see, e.g.,~\cite[Corollary 2.2.26 \& eq.~(2.19)]{HenrotPierre_18}). As it will be important for us later on we mention that $\Omega \mapsto |\Omega|$ is continuous in $(\mathcal{C}_d, d^H)$. Also the map $\Omega \mapsto \Haus^{d-1}(\partial\Omega)$ is continuous in $(\mathcal{C}_d, d^H)$ away from the empty set. For basic properties concerning the Hausdorff distance we refer to \cite{HenrotPierre_18}.

Define the inradius of a set $\Omega \subset \R^d$ as the radius of the largest ball contained in $\Omega$, that is,
\begin{equation*}
	r_{\rm in}(\Omega) := \sup_{x\in \Omega}\dist(x, \Omega^c)\,.
\end{equation*}
Let $\{\Omega_j\}_{j\geq 1}$ be a sequence of open convex subsets of $\R^d$ with $|\Omega_j|=1$ for each~$j$. If $\liminf_{j\to \infty}r_{\rm in}(\Omega_j)>0$ then, modulo translation, the Blaschke selection theorem implies that $\{\Omega_j\}_{j\geq 1}$ is precompact in $(\mathcal{C}_d, d^H)$. However, if $\liminf_{j \to \infty}r_{\rm in}(\Omega_j)=0$ then along some subsequence $\Omega_j$ collapses onto a lower dimensional set. This collapse can happen in many different manners and here we shall mostly be interested in the particular case when the convex sets become elongated in one direction. Precisely, the regime of interest is when $\{\Omega_j\}_{j\geq1}$ is such that $|\Omega_j|=1$, $\lim_{j\to \infty}r_{\rm in}(\Omega_j)=0$, and there exists a constant $C>0$ such that after appropriate translations and rotations
\begin{equation*}
	\Omega_j \subset \{x = (x', x_d) \in \R^d: |x'| < Cr_{\rm in}(\Omega_j)\} \quad \mbox{for all }j \geq 1\,.
\end{equation*}
We shall refer to this scenario by saying that the sequence $\Omega_j$ \emph{collapses in a spaghetti-like manner}.

The following theorem describes the asymptotic behavior of (almost) optimizers to the problems $M_\gamma^\sharp(\lambda)$ as $\lambda\to\infty$.

\begin{theorem}\label{thm: shape optimization convex}
	Let $d\geq 2$, $\gamma\geq 0$ and $\sharp \in \{{\rm D}, {\rm N}\}$. Let $\{\lambda_j\}_{j\geq 1}\subset (0, \infty)$ be a sequence with $\lim_{j\to \infty}\lambda_j = \infty$ and let $\{\Omega_j\}_{j\geq 1}$ be a sequence of open convex sets with $|\Omega_j|=1$ satisfying
	\begin{equation*}
		\lim_{j \to \infty} \frac{\Tr(-\Delta^{\sharp}_{\Omega_j}-\lambda_j)_\limminus^\gamma-M_\gamma^{\sharp}(\lambda_j)}{\lambda_j^{\gamma + \frac{d-1}{2}}} = 0\,.
	\end{equation*}
	\begin{enumerate}[label=\textup{(}\hspace{-0.3pt}\roman*\textup{)}]
		\item\label{itm: Shape opt thm super critical} If $\gamma >(\gamma_{d-1}^{\sharp}-\tfrac{1}2)_\limplus$, then up to translations the sequence $\{\Omega_j\}_{j\geq 1}$ converges with respect to the Hausdorff distance to a ball $B\subset\R^d$ with $|B|=1$ and
        \begin{equation*}
            M_\gamma^{\sharp}(\lambda) = \Tr(-\Delta_B^{\sharp}-\lambda)_\limminus^\gamma + o(\lambda^{\gamma+ \frac{d-1}2})
            \qquad\text{as}\ \lambda\to\infty \,.
        \end{equation*}
		\item\label{itm: Shape opt thm counting super critical} If $\gamma=0$ and $\gamma_{d-1}^{\sharp}<\frac{1}{2}$, then 
		\begin{equation*}
			\lim_{j\to \infty}r_{\rm in}(\Omega_j)\sqrt{\lambda_j}=\infty \,.
		\end{equation*}
        \item\label{itm: Shape opt thm sub critical case} If $0 \leq \gamma \leq  \gamma_{d-1}^{\sharp}-\frac12$ and (at least) one of these inequalities is strict, then
		\begin{equation*}
			\limsup_{j\to \infty}r_{\rm in}(\Omega_j)\sqrt{\lambda_j}<\infty  \,.
		\end{equation*}
		and the sequence $\{\Omega_j\}_{j\geq 1}$ collapses in a spaghetti-like manner.  
	\end{enumerate}
\end{theorem}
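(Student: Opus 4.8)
The plan is to combine the two-term Weyl asymptotics~\eqref{eq: Weyls law Riesz means second}, the semiclassical inequalities recorded in~\eqref{eq: sharp ineq.s in terms of r}, the asymptotic value of $M_\gamma^\sharp(\lambda)$ from Proposition~\ref{prop:shapeoptasymp}, and a dichotomy on the behavior of $r_{\rm in}(\Omega_j)\sqrt{\lambda_j}$. For part~\ref{itm: Shape opt thm super critical}, I would first argue that the sequence $\{\Omega_j\}_{j\ge 1}$ cannot collapse: if it did, one would have $r_{\rm in}(\Omega_j)\to 0$, and a careful lower/upper bound of the Riesz means on thin convex sets—separating the ``long'' directions (where the spectrum is asymptotically Weyl) from the ``short'' directions (where, after rescaling by $r_{\rm in}$, only finitely many transverse modes contribute below $\lambda_j$)—shows that $\Tr(-\Delta^{\rm D}_{\Omega_j}-\lambda_j)_\limminus^\gamma$ would be at most (and in the Neumann case at least) asymptotically $r_{\gamma+\frac12,d-1}^\sharp L_{\gamma,d}^{\rm sc}\lambda_j^{\gamma+d/2}$ plus a contribution of order $\lambda_j^{\gamma+(d-1)/2}$ with a constant strictly worse than the ball's; since $\gamma>(\gamma_{d-1}^\sharp-\frac12)_\limplus$ forces $r_{\gamma+\frac12,d-1}^\sharp=1$ by definition of $\gamma_{d-1}^\sharp$ together with Proposition~\ref{dimred}, the main term matches that of the ball and the subleading term in~\eqref{eq: Weyls law Riesz means second}, applied to any fixed ball of unit volume, would beat the collapsing competitor, contradicting almost-optimality. (This is where the machinery of \cite{FrankLarson_Crelle20,FrankLarson_24} enters, supplying quantitative remainder estimates uniform over convex sets with a lower inradius bound as well as the localization estimates for thin convex sets.) Once collapse is excluded, $\liminf_j r_{\rm in}(\Omega_j)>0$, so by Blaschke selection (after translation) a subsequence converges in $d^H$ to some $\Omega\in\mathcal{C}_d$ with $|\Omega|=1$; using continuity of $|\cdot|$ and $\mathcal H^{d-1}(\partial\cdot)$ under $d^H$, uniform two-term asymptotics on the precompact family, and the fact that the subleading term is extremized exactly by balls (classical isoperimetry), one concludes $\Omega$ must be a ball $B$ with $|B|=1$; since the limit is the same along every subsequence, the whole sequence converges, and feeding $B$ back into~\eqref{eq: Weyls law Riesz means second} yields the stated asymptotic formula for $M_\gamma^\sharp(\lambda)$.

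For part~\ref{itm: Shape opt thm counting super critical}, suppose for contradiction that $r_{\rm in}(\Omega_j)\sqrt{\lambda_j}$ stays bounded along a subsequence. Rescaling $\Omega_j$ by $\sqrt{\lambda_j}$ turns the problem into counting eigenvalues of $-\Delta^\sharp$ below $1$ on the dilated sets $\sqrt{\lambda_j}\,\Omega_j$, which have unit-order inradius and volume $\lambda_j^{d/2}\to\infty$; splitting such a set into its thin transverse directions, the number of eigenvalues below $1$ is controlled, to leading order, by a $(d-1)$-dimensional count with a bounded number of transverse modes, giving $\#\{k:\lambda_k^\sharp(\Omega_j)<\lambda_j\}=r_{\gamma_{d-1}^\sharp+\text{(correction)}}$-type behavior; quantitatively, since $\gamma_{d-1}^\sharp<\frac12$ means $r_{\frac12,d-1}^\sharp=1$ (again by Proposition~\ref{dimred} and the definition of $\gamma_{d-1}^\sharp$), one shows a collapsing sequence produces a counting function asymptotically $L_{0,d}^{\rm sc}\lambda_j^{d/2}(1+o(1))$ in the Dirichlet case, strictly below what a fixed ball achieves once the negative surface correction to the ball's count is weaker than the deficit incurred by collapse—so again almost-optimality is violated. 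Thus $r_{\rm in}(\Omega_j)\sqrt{\lambda_j}\to\infty$.

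For part~\ref{itm: Shape opt thm sub critical case}, this is the regime where the ball is \emph{not} asymptotically optimal: Proposition~\ref{prop:shapeoptasymp} gives $M_\gamma^{\rm D}(\lambda)/(L_{\gamma,d}^{\rm sc}\lambda^{\gamma+d/2})\to r_{\gamma+\frac12,d-1}^{\rm D}>1$ (resp. the Neumann analogue $<1$), strictly larger (resp.\ smaller) than the limit $1$ produced by any non-collapsing sequence via the two-term law. Hence an almost-optimal sequence must have $\liminf_j r_{\rm in}(\Omega_j)=0$, i.e.\ it must collapse. It then remains to show the collapse is of spaghetti type and that in fact $r_{\rm in}(\Omega_j)\sqrt{\lambda_j}$ stays bounded. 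For the boundedness, one again rescales by $\sqrt{\lambda_j}$ and uses that the dilated sets are near-optimizers of the corresponding problem at spectral parameter $1$; if the (rescaled) inradius went to infinity the transverse directions would contribute a full Weyl volume term rather than a finitely-many-modes term and one would recover the value $1$ in the limit, contradicting $r_{\gamma+\frac12,d-1}^\sharp\ne 1$. For the spaghetti structure, one must rule out collapse onto an intermediate-dimensional set $1\le k\le d-2$: the idea is that among all convex sets of given volume and given (small) inradius collapsing toward a $k$-dimensional object, the Riesz mean is extremized in the limit by the one whose $(d-k)$ ``thin'' directions carry the optimal lower-dimensional excess factor $r^\sharp_{\gamma+\frac{d-k}{2},k}$, and by iterating Proposition~\ref{dimred} one has $r^\sharp_{\gamma+\frac{d-k}{2},k}$ is no better (for Dirichlet: no larger) than $r^\sharp_{\gamma+\frac12,d-1}$, with strict loss unless $k=d-1$; combined with the existence (Theorem~\ref{thm: Main conclusions gamma_d} and its below-critical analogue, Theorem~\ref{thm: compactness via r}) of an actual $(d-1)$-dimensional optimizer $\omega_*$ realizing $r^\sharp_{\gamma+\frac12,d-1}$, this forces $k=d-1$ and pins the shape of the collapse, giving the uniform inclusion in a tube of width $\lesssim r_{\rm in}(\Omega_j)$.

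The main obstacle I anticipate is the quantitative control of Riesz means on thin convex sets that underlies all three parts: one needs remainder estimates in~\eqref{eq: Weyls law Riesz means second} that are uniform over convex sets with inradius bounded below (for the non-collapsing alternative) \emph{and} a matching family of sharp estimates describing how a collapsing convex set decouples into a $(d-1)$-dimensional spectral problem times a one-dimensional fiber, with errors controlled by $r_{\rm in}(\Omega_j)\sqrt{\lambda_j}$ uniformly. Assembling these from \cite{FrankLarson_Crelle20,FrankLarson_24} and the convexity-based bounds of \cite{FrankLarson_24}, and in particular handling the Neumann case (where monotonicity under domain inclusion fails and one must instead use the variational characterization together with convexity), will be the technical heart of the argument.
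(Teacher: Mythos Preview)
Your overall strategy---dichotomize on whether $r_{\rm in}(\Omega_j)\sqrt{\lambda_j}$ stays bounded, then use isoperimetry in the non-collapsing regime---matches the paper's. But there are two genuine gaps.

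First, in parts \ref{itm: Shape opt thm super critical} and \ref{itm: Shape opt thm counting super critical} you place the contradiction from collapse at the \emph{subleading} level: you claim a collapsing sequence has leading term $L_{\gamma,d}^{\rm sc}\lambda_j^{\gamma+d/2}$ with a worse $O(\lambda_j^{\gamma+(d-1)/2})$ constant than the ball. Proving two-term information for collapsing sets would be hard, and the paper does not do this. The actual mechanism is at \emph{leading} order: the key tool is Theorem~\ref{thm: Asymptotics degenerating convex sets} (new to this paper, not from \cite{FrankLarson_Crelle20,FrankLarson_24}), which says that if $r_{\rm in}(\Omega_j)\sqrt{\lambda_j}$ remains bounded then the ratio $\Tr/(L_{\gamma,d}^{\rm sc}\lambda_j^{\gamma+d/2})$ converges to an average of $m$-dimensional Riesz means of order $\gamma+\tfrac{d-m}{2}$ over slices of a limiting convex set. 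Under the hypotheses one has $\gamma+\tfrac{d-m}{2}>\gamma_m^\sharp$ (for $m\le d-2$ because $\gamma_m^\sharp<1$, for $m=d-1$ by assumption), so the two-term inequalities of Theorems~\ref{thm: improved inequality above critical gamma Dir} and \ref{thm: improved inequality above critical gamma Neu} applied to each slice make that average \emph{strictly} below $1$ (Dirichlet) or above $1$ (Neumann). This already contradicts the ball's leading-order value $1$; no second-order control on the $\Omega_j$ is needed. The same slice argument is what excludes $m\le d-2$ for the spaghetti claim in \ref{itm: Shape opt thm sub critical case}, more directly than iterating Proposition~\ref{dimred} (which does not by itself give strictness).

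Second, your argument for \ref{itm: Shape opt thm sub critical case} omits the borderline $0<\gamma=\gamma_{d-1}^\sharp-\tfrac12$. There $r_{\gamma+1/2,d-1}^\sharp=1$, so Proposition~\ref{prop:shapeoptasymp} gives leading-order value $1$, identical to what a non-collapsing sequence produces, and your separation at leading order breaks down. Here the paper really does use a subleading comparison: assuming $r_{\rm in}(\Omega_j)\sqrt{\lambda_j}\to\infty$ along a subsequence, the argument of \ref{itm: Shape opt thm super critical} would give $M_\gamma^\sharp(\lambda_j)=\Tr(-\Delta_B^\sharp-\lambda_j)_\limminus^\gamma+o(\lambda_j^{\gamma+(d-1)/2})$; but the cylinder $\Omega(\lambda)$ built (as in Theorem~\ref{thm: Main conclusions gamma_d}\ref{itm: Main thm cylinder sequence}) from a $(d-1)$-dimensional equality case $\omega_*$ for $r_{\gamma_{d-1}^\sharp,d-1}^\sharp$---whose existence follows from Theorem~\ref{thm: Main conclusions gamma_d}\ref{itm: Main thm existence sup} in dimension $d-1$, since $\gamma_{d-1}^\sharp>\tfrac12>(\gamma_{d-2}^\sharp-\tfrac12)_\limplus$---satisfies, via Lemma~\ref{lem: cylinder lift}, $\bigl|\Tr(-\Delta^\sharp_{\Omega(\lambda)}-\lambda)_\limminus^\gamma - L_{\gamma,d}^{\rm sc}\lambda^{\gamma+d/2}\bigr|=O(\lambda^\gamma)$. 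This beats the ball's surface term $\mp\tfrac14 L_{\gamma,d-1}^{\rm sc}\Haus^{d-1}(\partial B)\lambda^{\gamma+(d-1)/2}$ and yields the contradiction. So your ``subleading comparison'' intuition is used, but in \ref{itm: Shape opt thm sub critical case} at the borderline, not in \ref{itm: Shape opt thm super critical}/\ref{itm: Shape opt thm counting super critical}.
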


Note that this theorem covers the full range of parameters $\gamma\geq 0$, except for the special case $\gamma=0$ when $\gamma_{d-1}^\sharp=\frac12$.

Note that for $d=2$ we get a rather complete solution of the asymptotic shape optimization problem for any $\gamma>0$ (since $\gamma_1^\sharp=0$). In particular, we prove the expected convergence to a disk. In higher dimensions the result is conditional on the value of the (unknown) parameter $\gamma_{d-1}^\sharp$, but since $\gamma_{d-1}^\sharp<1$ by Theorems \ref{thm: Extended range of semiclassical ineq Dir} and \ref{thm: Extended range of semiclassical ineq Neu} the theorem improves substantially our previous result in \cite{LarsonJST,FrankLarson_Crelle20}. (We use this opportunity to mention a typo in \cite[Remark 1.4]{FrankLarson_Crelle20}: $\limsup$ there should be $\liminf$.)

Shape optimization has long been an active topic in spectral theory, indeed its history can be traced as far back as Rayleigh~\cite{Rayleigh_1877}. For a modern review of the topic we refer the reader to~\cite{Henrot_17}, and references therein. However, asymptotic problems in the spirit of those considered here have only rather recently received much attention. This new boost in interest was largely motivated by the connection between P\'olya's conjecture and the asymptotic behavior of optimal shapes for $\lambda_k^\sharp(\Omega)$ as $k \to \infty$ highlighted in~\cite{ColboisElSoufi_14} (see also~\cite{Freitas_etal_21}). Note that the optimization of $\lambda_k^\sharp$ is essentially the same as the $\gamma=0$ case of the problems studied here. Similarly the optimization of partial sums of the $\lambda_k^\sharp$ is essentially the same as the $\gamma=1$ case of the problems studied here. For further results of this nature we mention \cite{AntunesFreitas_13,BucurFreitas_13, vdBerg_15, vdBergBucurGittins_16, vdBergGittins_17, GittinsLarson_17, Freitas_17, LarsonAFM, Lagace20, BuosoFreitas_20}. Most of these results concern the case of optimizing individual eigenvalues (or the corresponding counting function) within rather small collections of sets. The fact that our results are valid in the comparatively large class of convex sets is one of the reasons we find it interesting. However, our results are also valid for certain subclasses of convex sets as in \cite{LarsonJST}, for instance for the subclass of planar convex polygons with an upper bound on the number of edges. We omit the details.

%---------------------------------------

\subsection{Weyl asymptotics for Riesz means on collapsing convex sets}

The next theorem is an important ingredient in the proof of the results in the previous two subsections. It concerns the asymptotic behavior of Riesz means along collapsing sequences of convex domains. In this regime the ordinary Weyl law is not valid but, as we will show, a variant of it is, where only the non-collapsing directions become semiclassical while the collapsing ones retain their `quantum nature'.

\begin{theorem}
	\label{thm: Asymptotics degenerating convex sets}
	Let $\{\lambda_j\}_{j\geq 1}\subset (0, \infty)$ and $\{\Omega_j\}_{j\geq 1}$ be a sequence of bounded open convex sets in $\R^d$ satisfying
	\begin{equation*}
	 	0< \inf_{j\geq 1}r_{\rm in}(\Omega_j)\sqrt{\lambda_j}\leq \sup_{j\geq 1}r_{\rm in}(\Omega_j)\sqrt{\lambda_j} <\infty
	 	\quad \mbox{and}\quad
	 	\lim_{j\to \infty}|\Omega_j|\lambda_j^{\frac{d}2}=\infty\,.
	 \end{equation*} 
	 Fix $\gamma\geq 0$ and $\sharp \in \{{\rm D}, {\rm N}\}$. If the limit
	 \begin{equation*}
	 		\lim_{j \to \infty} \frac{\Tr(-\Delta_{\Omega_j}^\sharp-\lambda_j)_\limminus^\gamma}{L_{\gamma,d}^{\rm sc}|\Omega_j|\lambda_j^{\gamma+\frac{d}2}} 
	 \end{equation*}
	 exists, then there exists an open bounded convex non-empty set $\Omega_* \subset \R^d$ and an integer $1\leq m \leq d-1$ such that
	 \begin{equation*}
	 	\lim_{j \to \infty} \frac{\Tr(-\Delta_{\Omega_j}^\sharp-\lambda_j)_\limminus^\gamma}{L_{\gamma,d}^{\rm sc}|\Omega_j|\lambda_j^{\gamma+\frac{d}2}} = \frac{1}{L^{\rm sc}_{\gamma+ \frac{d-m}{2}, m}|\Omega_*|}\int_{P^\perp\Omega_*} \Tr(-\Delta_{\Omega_*(y)}^\sharp-1)_\limminus^{\gamma + \frac{d-m}{2}}\,dy \,,
	 \end{equation*}
	 where
	 \begin{align*}
	 	P^\perp\Omega_* &:= \{y\in \R^{d-m}: (x,y) \in \Omega_* \mbox{ for some } x \in \R^m\}\,,\\
	 	\Omega_*(y) &:= \{x \in \R^{m}: ( x,y) \in \Omega_*\}\,.
	 \end{align*}
\end{theorem}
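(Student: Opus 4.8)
The plan is to exploit the hypothesis $0<\inf_j r_{\rm in}(\Omega_j)\sqrt{\lambda_j}\le\sup_j r_{\rm in}(\Omega_j)\sqrt{\lambda_j}<\infty$ to rescale each $\Omega_j$ to a set $\widetilde\Omega_j:=\sqrt{\lambda_j}\,\Omega_j$ with inradius bounded above and below by positive constants, so that after rescaling the eigenvalue problem becomes $\Tr(-\Delta^\sharp_{\widetilde\Omega_j}-1)_\limminus^\gamma$ with $|\widetilde\Omega_j|=|\Omega_j|\lambda_j^{d/2}\to\infty$. The ratio in the theorem is scale invariant, so it suffices to understand this sequence. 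First I would show that $\widetilde\Omega_j$, after a rotation and translation, can be written so that a bounded number $m'\le d-1$ of directions stay ``quantum'' (bounded width $\sim 1$) and the remaining directions become long. More precisely, I would use the John ellipsoid / the structure of convex bodies: each $\widetilde\Omega_j$ is comparable to a box $\prod_{i=1}^d(0,a_i^{(j)})$ with $a_1^{(j)}\lesssim\dots\lesssim a_d^{(j)}$, the smallest being $\sim r_{\rm in}(\widetilde\Omega_j)\sim 1$; pass to a subsequence so that the number $m$ of indices $i$ with $a_i^{(j)}$ bounded is constant, $1\le m\le d-1$ (it is $\ge1$ since $r_{\rm in}\sim1$, and $\le d-1$ since $|\widetilde\Omega_j|\to\infty$ forces at least one direction to grow). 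Along this subsequence, after rotating so the bounded directions are the first $m$ coordinates, one expects $\widetilde\Omega_j$ to converge (in a suitable localized sense) to a ``cylinder-like'' limit: its $m$-dimensional cross-sections converge, while it extends to infinity in the remaining $d-m$ directions.

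The heart of the argument is then a two-scale Weyl asymptotic: in the collapsing $m$ directions the spectrum remains genuinely discrete, while the $d-m$ non-collapsing directions become semiclassical. I would establish this by a Dirichlet--Neumann bracketing argument: tile the long directions $y\in\R^{d-m}$ by cubes $Q$ of side length $\ell_j$ with $1\ll\ell_j\ll$ (the growing scales), and on each tile replace $-\Delta^\sharp_{\widetilde\Omega_j}$ from above (Dirichlet decoupling) and below (Neumann decoupling) by the operator on the ``slab'' $\Omega_*(y_Q)\times Q$, where $\Omega_*(y_Q)$ is the $m$-dimensional cross section over a representative point $y_Q$ of $Q$. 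For the slab, separation of variables gives $\Tr(-\Delta^\sharp_{\Omega_*(y_Q)\times Q}-1)_\limminus^\gamma=\sum_{k}\Tr(-\Delta^\sharp_Q-(1-\mu_k(y_Q)))_\limminus^\gamma$ where $\mu_k(y_Q)$ are the eigenvalues of $-\Delta^\sharp_{\Omega_*(y_Q)}$; applying the ordinary Weyl law in the $(d-m)$-dimensional cube $Q$ (valid since $\ell_j\to\infty$) and summing the Riesz means over $k$ with the identity $L^{\rm sc}_{\gamma,d-m}\cdot(\text{sum over }k\text{ of }(1-\mu_k)_+^{\gamma+(d-m)/2})$-type bookkeeping produces exactly $\Tr(-\Delta^\sharp_{\Omega_*(y_Q)}-1)_\limminus^{\gamma+(d-m)/2}$ up to the constant $L^{\rm sc}_{\gamma,d-m}$. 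Summing over tiles turns into the Riemann sum $\int_{P^\perp\widetilde\Omega_j}\Tr(-\Delta^\sharp_{\widetilde\Omega_j(y)}-1)_\limminus^{\gamma+(d-m)/2}\,dy$, and the claimed constant $L^{\rm sc}_{\gamma+\frac{d-m}2,m}\cdot L^{\rm sc}_{\gamma,d-m}/L^{\rm sc}_{\gamma,d}$ simplifies to $1$ by the Beta-function identity satisfied by \eqref{eq: semiclassical constant} (this is the standard ``fibration'' identity $L^{\rm sc}_{\gamma,d}=L^{\rm sc}_{\gamma+\frac{d-m}2,m}L^{\rm sc}_{\gamma,d-m}$ — really the statement that $(E-\lambda)_-^\gamma$ integrated against the $(d-m)$-dim'l density of states reproduces a Riesz mean of shifted exponent). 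Finally, the existence of the limiting convex set $\Omega_*$ and the convergence of the cross-sectional integrals is obtained from the Blaschke selection theorem applied to the rescaled bodies (using that the cross-sections $\widetilde\Omega_j(y)$ have inradius and diameter bounded above and below, so the integrand is bounded and converges pointwise a.e. along a further subsequence), together with continuity of $\Omega\mapsto\Tr(-\Delta^\sharp_\Omega-1)_-^{\gamma+(d-m)/2}$ under Hausdorff convergence of convex bodies and a dominated-convergence argument; the normalization $|\Omega_*|$ appears because we are comparing to $|\widetilde\Omega_j|\to\infty$ rather than a fixed volume, so the limiting ratio is the average of the fibered trace over the limit body.

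The main obstacle I anticipate is \emph{uniformity of the two-scale Weyl remainder}, i.e. controlling the bracketing errors uniformly over the (unboundedly many, unboundedly long) collapsing sequence — the error from Dirichlet/Neumann decoupling along $\sim|\widetilde\Omega_j|/\ell_j^{d-m}$ interfaces must be $o(|\widetilde\Omega_j|)$, which forces a careful choice of $\ell_j$ relative to the growing scales and a uniform control (in terms of the inradius alone) of the Riesz means and counting functions of the $m$-dimensional cross-sections near the threshold. Here I would lean on the quantitative semiclassical estimates for convex domains from \cite{FrankLarson_Crelle20,FrankLarson_24} (this is presumably where those results enter), which give exactly such inradius-dependent bounds, as well as on the convexity of $\Omega_j$ to ensure the cross-sections $\Omega_j(y)$ vary in a controlled (Lipschitz-in-$y$) fashion so that the Riemann-sum approximation of the $y$-integral is legitimate. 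A secondary technical point is to handle the passage to subsequences cleanly: the integer $m$ and the limit body $\Omega_*$ are extracted along a subsequence, but since the limit of the ratio is assumed to exist, the value computed along the subsequence equals the full limit, so no diagonalization is needed beyond this.
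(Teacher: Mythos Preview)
Your overall strategy matches the paper's: rescale to $\widetilde\Omega_j=\sqrt{\lambda_j}\,\Omega_j$, use the John ellipsoid to isolate $m$ bounded directions, apply Dirichlet--Neumann bracketing in the long directions, extract a Hausdorff limit via Blaschke selection, and exploit the product structure of the limiting cylinders together with dominated convergence. Two technical points, however, differ from the paper and are worth flagging.

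First, the order of limits. You take cubes of side $\ell_j$ with $1\ll\ell_j\ll(\text{long scales})$, which forces you to control the bracketing error uniformly in a coupled double limit --- precisely the obstacle you identify. The paper instead fixes the (rescaled) cube side $M$, passes to the limit $j\to\infty$ for each fixed $M$ via dominated convergence, and only afterwards sends $M\to\infty$ (and then a further $\epsilon\to0$ for the case $\gamma=0$). This decoupling sidesteps your anticipated obstacle entirely: the uniform integrand bound needed for dominated convergence comes not from the quantitative two-term asymptotics of \cite{FrankLarson_24}, but from the elementary diameter bound $\Tr(-\Delta_\Omega^{\rm N}-1)_\limminus^0\le C_d\diam(\Omega)^d+1$ of Lemma~\ref{lem: upper bound N counting}.

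Second, and more substantively, the bracketed pieces $\widetilde\Omega_j\cap(\R^m\times Q)$ are \emph{not} products, so you cannot apply separation of variables to them directly. After bracketing one obtains a Laplacian on a genuinely curved convex piece with \emph{mixed} boundary conditions: $\sharp$ on the portion inherited from $\partial\widetilde\Omega_j$, and Dirichlet or Neumann on the artificial cuts. The paper first proves (Lemma~\ref{lem: convergence of sections of sequence of convex sets}) that these pieces converge in Hausdorff distance to true cylinders $\Omega_*(y)\times Q_{0,M}$, then invokes a continuity result for eigenvalues of such mixed problems (Corollary~\ref{cor: continuity of mixed eigenvalues}, established in a dedicated appendix via resolvent convergence). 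Only \emph{after} this $j\to\infty$ limit does the product structure become available. Your proposal conflates the bracketing step with the passage to the cylinder; closing that gap is exactly what the mixed-boundary-condition continuity machinery is for, and it is not covered by the standard Hausdorff-continuity of pure Dirichlet or Neumann eigenvalues that you cite.
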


\begin{remark} A couple of remarks:

\begin{enumerate}
    \item The integer $m$ in the statement corresponds to the number of directions in which every subsequence of $\{\Omega_j\}_{j\geq 1}$ collapses at the rate proportional to $1/\!\sqrt{\lambda_j}$. In these directions one does not observe semiclassical behavior and the limiting spectral problem retains a `quantum nature'. In particular, the case $m=d-1$ corresponds to the sequence collapsing in a spaghetti-like manner.

    \item In our applications of the theorem, the assumption that the limit exists is not a severe one. It will always be satisfied along a subsequence along which either the limsup or the liminf is attained.

	\item If $\lim_{j \to \infty}r_{\rm in}(\Omega_j)\sqrt{\lambda_j} = \infty$, then
\begin{equation*}
	\lim_{j \to \infty} \frac{\Tr(-\Delta^\sharp_{\Omega_j}-\lambda_j)_\limminus^\gamma}{L^{\rm sc}_{\gamma,d}|\Omega_j|\lambda_j^{\gamma+\frac{d}2}} = 1\,.
\end{equation*}
Indeed, for $\gamma>0$ this follows from \cite[Theorem 1.2]{FrankLarson_24} and for $\gamma=0$ we refer to Theorem~\ref{thm: quantitative Weyl law} below.
As will be discussed later, the quantity on the right-hand side of the theorem is generally not equal to $1$.

\item If $\lim_{j\to \infty}r_{\rm in}(\Omega_j)\sqrt{\lambda_j}=0$, then
$$
\lim_{j\to \infty}\frac{\Tr(-\Delta^{\rm D}_{\Omega_j}-\lambda_j)_\limminus^\gamma}{|\Omega_j|\lambda_j^{\gamma+\frac{d}2}} = 0
\qquad\text{and}\qquad
\lim_{j\to \infty}\frac{\Tr(-\Delta^{\rm N}_{\Omega_j}-\lambda_j)_\limminus^\gamma}{|\Omega_j|\lambda_j^{\gamma+\frac{d}2}} = \infty\,.
$$
Indeed, in the Dirichlet case this follows from the Hersch--Protter inequality~\cite{Hersch_60,Protter_81} (see also~\cite{FrankLaptevWeidl}), which says that if $r_{\rm in}(\Omega_j)\sqrt{\lambda_j}<\frac{\pi^2}{4}$ then $\Tr(-\Delta^{\rm D}_{\Omega_j}-\lambda_j)_\limminus^\gamma = 0$. In the Neumann case the assertion follows from Lemma~\ref{lem: small energy improved Kroger} below.

\item By Fubini's theorem we have
\begin{equation*}
	\int_{P^\perp \Omega_*} \frac{\Haus^{m}(\Omega_*(y))}{|\Omega_*|}\,dy  = \int_{P^\perp\Omega_*} \int_{\Omega_*(y)}\frac{dxdy}{|\Omega_*|} = \int_{\Omega_*}\frac{dz}{|\Omega_*|} = 1\,,
\end{equation*}
so the integral can be thought of as a weighted average with respect to $y\in P^\perp \Omega_*$ of the traces
$$
\frac{\Tr(-\Delta_{\Omega_*(y)}^\sharp-1)_\limminus^{\gamma + \frac{d-m}{2}}}{L^{\rm sc}_{\gamma+ \frac{d-m}{2}, m}\Haus^{m}(\Omega_*(y))}  \,.
$$
\end{enumerate}
\end{remark}

Theorem \ref{thm: Asymptotics degenerating convex sets} is related to results about partially semiclassical limits. These arise typically in spectral asymptotics where the standard Weyl term is infinite and where an asymptotic separation of variables occurs, such that some of the variables becomes semiclassical while effective lower-dimensional model operators emerge that act with respect to the non-semiclassical variables and depend parametrically on the semiclassical ones. Correspondingly the asymptotics involve integrals with respect to the eigenvalues of these model operators. In the present setting the variables $y\in\R^{d-m}$ become semiclassical and the lower dimensional model operators are the Laplacians on the slices $\Omega_*(y)$.

Such partially semiclassical asymptotics have been investigated for a long time and we refer to \cite[Chapter~5, Section:~Commentary and references to the literature]{BiSo} for many references. Those include, in particular, results by Solomyak and Vulis concerning a power-like degeneration of the coefficients of an operator close to the boundary of a domain; see \cite[Theorem 5.19]{BiSo}. Another relevant reference is \cite{Ta} concerning Laplace operators on a class of unbounded sets. Similar results have appeared in the setting of Schr\"odinger operators and we refer to \cite{Ro,Simon_83,So,CarlenFrankLarson} for representative results. A recent work where a similar phenomenon appears is \cite{Read_24}. It is also worth mentioning that the operator-valued Lieb--Thirring inequalities in \cite{LaptevWeidl_00} are in the spirit of these works.

We emphasize, however, that all of these previous papers concern spectral asymptotics for a single operator, while we consider asymptotics for a \emph{family} of operators.

%---------------------------------------

\subsection{Some comments on the proofs}

We end this introduction by highlighting some of the ideas that we employ in proving the results that we have mentioned so far. An important ingredient will be the semiclassical asymptotics that we have shown in our recent works~\cite{FrankLarson_24,FrankLarson_heatkernels}. For what we do here the main point of those results is their uniformity in the geometry of the underlying set and, specifically, that the semiclassical approximation is valid for convex sets $\Omega$ as soon as $r_{\rm in}(\Omega)\sqrt\lambda \gg 1$.

The main focus of the present paper is to study the regime $r_{\rm in}(\Omega)\sqrt\lambda \lesssim 1$. To deal with this regime we will prove both \emph{asymptotic} and \emph{non-asymptotic bounds}. The non-asymptotic bounds are behind Theorems \ref{thm: Extended range of semiclassical ineq Dir} and \ref{thm: Extended range of semiclassical ineq Neu}, while the asymptotic bounds are behind Theorems~\ref{thm: Main conclusions gamma_d} and \ref{thm: shape optimization convex}.

The key \emph{non-asymptotic results} that we establish are universal improvements over the semiclassical inequalities of Berezin--Li--Yau and Kr\"oger. Those appear in Propositions~\ref{prop: improved Berezin} and \ref{prop: improved Kroger}. In contrast to earlier results in a similar spirit, which we recall in Subsection \ref{sec:improvedblyk}, our new results give an improvement that is only exponentially small in $\sqrt\lambda$ times the inradius, but this is sufficient for our purposes. The main technical ingredient in the proof of these improvements is a quantitative version of the Amrein--Berthier uncertainty principle; see Lemma \ref{uncertainty}.

The basic \emph{asymptotic result} in the regime $r_{\rm in}(\Omega)\sqrt\lambda\lesssim 1$ is Theorem \ref{thm: Asymptotics degenerating convex sets}, where we discover a partially semiclassical limit. We refer to the beginning of Section \ref{sec:asymptoticscollapsing} for a more detailed explanation of the mechanism behind our proof.

A crucial idea, related to our asymptotic analysis, that we would like to mention is that of \emph{dimensional reduction}. For instance, in the case of spaghetti-like degeneration a single dimension disappears and we are left with a $(d-1)$-dimensional set (essentially the cross-section of the original $d$-dimensional set). More generally, in the setting of Theorem \ref{thm: Asymptotics degenerating convex sets} $(d-m)$-directions may disappear and an $m$-dimensional set remains. We observe that such a loss of $d-m$ directions comes accompanied by an increase of $\gamma$ by $\frac{d-m}{2}$. One of the ideas behind our proofs of Theorems~\ref{thm: Main conclusions gamma_d} and \ref{thm: shape optimization convex} is that it is unfavorable to loose more than a single dimension. 

Some of these results can be understood from the point of view of compactness of sequences of convex sets. Loss of compactness can be characterized in terms of the asymptotic behavior of the semiaxes of the corresponding John ellipsoids. Two facts about bounded convex sets $\Omega\subset\R^d$ that we will use frequently in our analysis of sequences of convex sets are the inequalities
\begin{equation}\label{eq: inradius bound}
	\frac{|\Omega|}{\Haus^{d-1}(\partial\Omega)} \leq r_{\rm in}(\Omega) \leq d\frac{|\Omega|}{\Haus^{d-1}(\partial\Omega)}
 \quad \mbox{and}\quad \diam(\Omega) \leq C_d \frac{|\Omega|}{r_{\rm in}(\Omega)^{d-1}}\,,
\end{equation}
see, for instance,~\cite{LarsonJST}.

With the exception of the question of whether P\'olya's conjecture holds within the class of convex sets, our Theorems \ref{thm: Extended range of semiclassical ineq Dir}, \ref{thm: Extended range of semiclassical ineq Neu}, \ref{thm: Main conclusions gamma_d} and \ref{thm: shape optimization convex} provide a rather complete picture in the setting of convex sets. To which extent analogues of these theorems hold without the convexity assumption remains a formidable \emph{open problem}.

\smallskip

Throughout the paper we shall use the asymptotics notation $\lesssim, \gtrsim, \sim, o,$ and $O$. In using this notation the dependence of the implicit constants on the parameters of the problem varies from line to line. If the particular dependence of these implicit constants is of importance, we often note in terms of which quantities they can be bounded.

%---------------------------------------
%---------------------------------------

\section{Improved inequalities}

\subsection{Improved Berezin--Li--Yau and Kr\"oger inequality}
\label{sec:improvedblyk}

In this subsection we turn our attention to one of the main ingredients in our overall strategy, namely improved versions of the Berezin--Li--Yau and Kr\"oger inequalities. Proofs of the original inequalities can be found in~\cite{Berezin,LiYau_83} and~\cite{Kr92,Laptev2}, respectively; see also \cite[Sections 3.5 and 3.6]{FrankLaptevWeidl} and, for further related inequalities, \cite{Vo}. Improved versions of the Berezin--Li--Yau and Kr\"oger inequalities have been obtained by various authors, the earliest results in this direction go back Melas \cite{Melas_03} and Freericks, Lieb, and Ueltschi \cite{Freericks_etal_02}. The latter paper concerns a discrete setting, but the underlying problem is analogous. For further results in this direction we additionally refer to \cite{Ueltschi_04,LiTang_06,Weidl,KoVuWe,GeLaWe,KoWe,LarsonPAMS,HarrellStubbe_18,LarsonJST,Harrell_etal_21,FrankLarsonPfeiffer,GanJiangLin_25}.

We define the width of a set $\Omega\subset\R^d$ by
$$
w(\Omega) := \inf_{\omega\in\Sph^{d-1}} \left( \sup_{x\in\Omega} \omega\cdot x - \inf_{x\in\Omega} \omega\cdot x \right).
$$

We emphasize that the results of this subsection do not require the set $\Omega$ to be convex. The bounds we obtain are valid for arbitrary open sets $\Omega$ with finite measure and width. For our applications the crucial feature of the bounds is that they improve as $w(\Omega)$ becomes small relative to the natural length scale $1/\sqrt{\lambda}$.

\begin{proposition}\label{prop: improved Berezin}
    There are constants $c, c'>0$ so that for any $d\in\N$, any open set $\Omega \subset \R^d$ of finite measure and finite width and for any $\lambda \geq 0$, we have
    \begin{equation*}
        \Tr(-\Delta^{\rm D}_\Omega-\lambda)_\limminus \leq L_{1,d}^{\rm sc}|\Omega|\lambda^{1+\frac{d}2}(1 -c e^{-c' w(\Omega) \sqrt{\lambda}})\,.
    \end{equation*}
    In particular, if $\Omega\subset \R^d$ is convex and bounded, then
    \begin{equation*}
        \Tr(-\Delta^{\rm D}_\Omega-\lambda)_\limminus \leq L_{1,d}^{\rm sc}|\Omega|\lambda^{1+\frac{d}2}(1 -c e^{-c'' r_{\rm in}(\Omega) \sqrt{\lambda}})\,,
    \end{equation*}
    where $c''>0$ is a constant depending only on $d$.
\end{proposition}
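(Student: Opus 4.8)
The plan is to quantify the classical Berezin argument by tracking the loss in the Fourier-side estimate. Recall that the Berezin--Li--Yau inequality arises from testing the quadratic form with the family of trial functions $e_\xi(x) = \1_\Omega(x) e^{i\xi\cdot x}$, $\xi\in\R^d$, and estimating
$$
\Tr(-\Delta_\Omega^{\rm D}-\lambda)_\limminus \leq \frac{1}{(2\pi)^d} \int_{\R^d} \bigl( |\xi|^2 \|\widehat{\1_\Omega}\|_2^2 \cdot(2\pi)^{-d}(\ldots) - \lambda |\Omega| \bigr)_\limminus \,d\xi
$$
— more precisely, writing $f_\xi = \1_\Omega e^{i\xi\cdot\,}$ and using the Fourier transform, one gets the standard bound in terms of $\int(\,|\xi|^2 - \lambda\,)_\limminus$ against the ``density'' provided by $|\widehat{\1_\Omega}(\eta)|^2$. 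The classical computation discards a manifestly nonnegative term; the idea here is that this discarded term is not merely nonnegative but is bounded below by something of order $e^{-c' w(\Omega)\sqrt\lambda}\,|\Omega|\lambda^{1+d/2}$. First I would reduce, after a rotation, to the case where $\Omega$ lies between two hyperplanes $\{x_1 = 0\}$ and $\{x_1 = w\}$ with $w = w(\Omega)$, so that $\1_\Omega$ is supported in a slab of width $w$ in the first variable.

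The key step is to invoke the quantitative Amrein--Berthier uncertainty principle, which in the excerpt is Lemma \ref{uncertainty}: a function cannot be simultaneously concentrated and have Fourier transform concentrated, with a quantitative constant. Concretely, because $\1_\Omega$ is supported in a slab of width $w$ in the $x_1$-direction, its Fourier transform $\widehat{\1_\Omega}$ cannot be essentially supported in the ball $\{|\eta|\leq\sqrt\lambda\}$; some fixed proportion of its $L^2$-mass — of relative size controlled from below by $c\,e^{-c'w\sqrt\lambda}$ — must lie in the region $\{|\eta| > \sqrt\lambda\}$, or more to the point, the ``defect'' between the true Berezin expression and the semiclassical value $L_{1,d}^{\rm sc}|\Omega|\lambda^{1+d/2}$ is exactly an integral of $(\,|\eta|^2-\lambda\,)$ (with a sign making it nonnegative) weighted by $|\widehat{\1_\Omega}(\eta)|^2$, and this is bounded below by the mass of $\widehat{\1_\Omega}$ outside a slightly smaller frequency ball times a gap in $|\eta|^2-\lambda$. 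The uncertainty principle, applied in the single collapsing direction (one needs only the one-dimensional version, since support in a slab is a one-variable condition), furnishes the lower bound $c\,e^{-c'w\sqrt\lambda}\|\1_\Omega\|_2^2 = c\,e^{-c'w\sqrt\lambda}|\Omega|$ on that mass, and an elementary computation converts this into the claimed multiplicative improvement $(1 - c\,e^{-c'w\sqrt\lambda})$. For the second assertion, one uses that for a bounded convex set $w(\Omega)$ and $r_{\rm in}(\Omega)$ are comparable with constants depending only on $d$ (indeed $2r_{\rm in}(\Omega)\leq w(\Omega)\leq C_d\, r_{\rm in}(\Omega)$, which can be extracted from John's theorem or directly), so $e^{-c'w(\Omega)\sqrt\lambda} \leq e^{-c'' r_{\rm in}(\Omega)\sqrt\lambda}$ after adjusting the constant.

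The main obstacle I anticipate is making the application of the uncertainty principle genuinely quantitative with the right exponential dependence on $w\sqrt\lambda$ — a crude version of Amrein--Berthier gives only that the defect is positive, not that it decays no faster than $e^{-c'w\sqrt\lambda}$. The correct statement needs the sharp form where the constant is of the type $C e^{-|S||\Sigma|}$ (or $e^{-c|S|^{1/d}|\Sigma|^{1/d}}$) for a set $S$ in physical space of measure $|S|$ and a set $\Sigma$ in frequency space of measure $|\Sigma|$; here the physical-space concentration is in a slab (infinite measure, so one must be careful and work in the single variable $x_1$ where the support has \emph{length} $w$), and the frequency ball has radius $\sim\sqrt\lambda$. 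I would therefore set up Lemma \ref{uncertainty} precisely as a one-dimensional statement about functions supported in an interval of length $w$ versus frequency intervals of length $\sim\sqrt\lambda$, prove it with explicit exponential constant (e.g. via an analytic-continuation / Paley--Wiener type argument, or the Nazarov-type approach), and only then feed it into the Berezin computation. A secondary, more routine obstacle is bookkeeping: disentangling the vectorial integral over $\xi\in\R^d$ into the one collapsing direction and the $(d-1)$ transverse directions, and verifying that the gain in the single direction survives integration against $(\,|\eta'|^2+\eta_1^2-\lambda\,)_\limminus$ in the full space — this is where one checks that the relative size of the improvement is still $\sim e^{-c'w\sqrt\lambda}$ and not washed out by the transverse integration.
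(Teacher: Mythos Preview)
Your high-level strategy is right and matches the paper's: reduce to a slab, write the Berezin--Li--Yau identity with its two discarded nonnegative terms, and bound one of them from below using the Nazarov-type quantitative uncertainty principle (Lemma~\ref{uncertainty}) in the single narrow direction. The convex-case reduction via $w(\Omega)\lesssim_d r_{\rm in}(\Omega)$ is also exactly what the paper does.

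However, there is a genuine gap in your identification of the defect term. The defect is \emph{not} an integral of $(|\eta|^2-\lambda)$ weighted by $|\widehat{\1_\Omega}(\eta)|^2$; it is weighted by $\sum_{\lambda_k<\lambda}|\hat u_k(\eta)|^2$, where the $u_k$ are the Dirichlet eigenfunctions (extended by zero). Concretely, the paper's identity is
\[
\Tr(-\Delta_\Omega^{\rm D}-\lambda)_\limminus = L_{1,d}^{\rm sc}|\Omega|\lambda^{1+\frac d2} - \int_{|\xi|^2\leq\lambda}(\lambda-|\xi|^2)\!\!\sum_{\lambda_k\geq\lambda}\!|\hat u_k(\xi)|^2\,d\xi - \int_{|\xi|^2>\lambda}(|\xi|^2-\lambda)\!\!\sum_{\lambda_k<\lambda}\!|\hat u_k(\xi)|^2\,d\xi\,.
\]
The uncertainty principle is then applied to \emph{each eigenfunction} $u_k$ (which is supported in the slab), giving $\int_{|\xi|^2>\lambda+\eta}|\hat u_k|^2\geq c\,e^{-c'\ell\sqrt{\lambda+\eta}}$; summing over $k$ and integrating in $\eta$ yields a defect $\gtrsim \lambda\, e^{-2c'\ell\sqrt\lambda}\cdot\#\{k:\lambda_k<\lambda\}$. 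Applying the uncertainty principle to $\1_\Omega$ as you propose does not connect to this expression.

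This correction forces a second step you did not anticipate: to convert the defect into the multiplicative form $|\Omega|\lambda^{1+d/2}e^{-c'w\sqrt\lambda}$ you need $\#\{k:\lambda_k<\lambda\}\gtrsim |\Omega|\lambda^{d/2}$, which is not always true. The paper handles this by a case split: if $\#\{k:\lambda_k<\lambda\}\geq\tfrac12 L_{1,d}^{\rm sc}|\Omega|\lambda^{d/2}$ the uncertainty argument gives the bound, and otherwise the trivial estimate $\Tr(-\Delta_\Omega^{\rm D}-\lambda)_\limminus\leq\lambda\cdot\#\{k:\lambda_k<\lambda\}\leq\tfrac12 L_{1,d}^{\rm sc}|\Omega|\lambda^{1+d/2}$ already suffices. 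Your ``secondary obstacle'' about transverse integration, by contrast, is a non-issue: Lemma~\ref{uncertainty} is stated directly for $f\in L^2(\R^d)$ supported in a slab, and the inclusion $\{|\xi|^2>\lambda\}\supset\{\xi_1^2>\lambda\}$ reduces immediately to the one-dimensional case via Fubini.
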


\begin{proposition}\label{prop: improved Kroger}
    There are constants $c, c'>0$ so that for any $d\in\N$, any open set $\Omega \subset \R^d$ of finite measure and finite width and for any $\lambda \geq 0$, we have
    \begin{equation*}
        \Tr(-\Delta^{\rm N}_\Omega-\lambda)_\limminus \geq L_{1,d}^{\rm sc}|\Omega|\lambda^{1+\frac{d}2}(1 +c e^{-c' w(\Omega) \sqrt{\lambda}})\,.
    \end{equation*}
    In particular, if $\Omega\subset \R^d$ is convex and bounded, then
    \begin{equation*}
        \Tr(-\Delta^{\rm N}_\Omega-\lambda)_\limminus \geq L_{1,d}^{\rm sc}|\Omega|\lambda^{1+\frac{d}2}(1 +c e^{-c'' r_{\rm in}(\Omega) \sqrt{\lambda}})\,,
    \end{equation*}
    where $c''>0$ is a constant depending only on $d$.
\end{proposition}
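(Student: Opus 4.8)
The plan is to mirror the strategy for the Dirichlet case (Proposition \ref{prop: improved Berezin}) but in the variational/coherent-state framework appropriate to the Neumann Laplacian. Recall that Kröger's inequality is proved by using, for each momentum $\xi\in\R^d$, the function $e_\xi(x) = e^{i\xi\cdot x}$ restricted to $\Omega$ as a trial object for the form of $-\Delta_\Omega^{\rm N}$ (these lie in $H^1(\Omega)$ since $\Omega$ has finite measure), and integrating the resulting Berezin-type inequality over $\xi$ with $|\xi|^2 < \lambda$. Concretely, one has the operator inequality, valid for any finite-rank projection $P$ onto a subspace of $\dom(-\Delta_\Omega^{\rm N})^{1/2}$,
\begin{equation*}
	\Tr(-\Delta_\Omega^{\rm N}-\lambda)_\limminus \geq \int_{\R^d} (\lambda - |\xi|^2)\langle e_\xi, (\1 - P) e_\xi\rangle_{L^2(\Omega)} \frac{d\xi}{(2\pi)^d} - \Tr\bigl(P(-\Delta_\Omega^{\rm N}-\lambda)P\bigr)_+\,,
\end{equation*}
and the usual choice $P=0$ already gives $\Tr(-\Delta_\Omega^{\rm N}-\lambda)_\limminus \geq L_{1,d}^{\rm sc}|\Omega|\lambda^{1+d/2}$ since $\|e_\xi\|_{L^2(\Omega)}^2 = |\Omega|$. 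The point of the improvement is that $e_\xi$ is \emph{not} an eigenfunction, and the deficit in the Rayleigh quotient — or, dually, the fact that the $e_\xi$ fail to be orthogonal — produces a gain. First I would set up this identity carefully, pinning down the quantity that needs to be bounded from below.

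The second and central step is to quantify the gain via the uncertainty principle. After choosing coordinates so that $\Omega$ lies in a slab $\{0 < x_d < w(\Omega)\}$ (possible by definition of the width), one reduces to a one-dimensional question: the relevant surplus is controlled from below by a quantity measuring how much mass of $\widehat{\1_\Omega f}$, for $f$ a normalized packet, must leak outside the ball $\{|\xi|<\sqrt\lambda\}$, and this is exactly the regime governed by the quantitative Amrein--Berthier principle quoted as Lemma \ref{uncertainty}: a function supported in a set of finite measure cannot have Fourier transform essentially supported in a ball of radius $\sqrt\lambda$ when the measure of the support times $\lambda^{d/2}$ is small. Here the "support in a set of finite measure'' is replaced by "support in a slab of width $w$,'' and after integrating out the $d-1$ tangential directions trivially the slab contributes a factor that is $\lesssim w\sqrt\lambda$ in the right units; feeding this into Lemma \ref{uncertainty} yields a lower bound on the leaked mass of the form $c\,e^{-c' w(\Omega)\sqrt\lambda}$ times the main term $L_{1,d}^{\rm sc}|\Omega|\lambda^{1+d/2}$. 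I would organize this so that the slab geometry enters only through $w(\Omega)$, keeping the constants $c,c'$ dimension-independent as claimed.

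The third step is bookkeeping: assemble the pieces to get the stated inequality with the sign $+ce^{-c'w(\Omega)\sqrt\lambda}$, then specialize to convex bounded $\Omega$, where $w(\Omega) \geq 2 r_{\rm in}(\Omega)$ trivially and $w(\Omega) \leq C_d\, r_{\rm in}(\Omega)$ by standard convex geometry (this is the content behind the two-sided bound \eqref{eq: inradius bound}); since the gain term is \emph{increasing} in the exponent's coefficient one uses $w(\Omega) \leq C_d r_{\rm in}(\Omega)$ to replace $w$ by $r_{\rm in}$ at the cost of enlarging the constant, giving the second displayed inequality with $c'' = c' C_d$. The main obstacle I anticipate is in the second step: unlike the Dirichlet case, where the trial functions $e_\xi \1_\Omega$ vanish on $\partial\Omega$ only in an averaged sense but one still works with $\1_\Omega e_\xi$ and directly estimates $\int_\Omega |e_\xi|^2$ versus $\int_\Omega |\nabla e_\xi|^2$, the Neumann problem requires handling the non-orthogonality/Rayleigh-quotient deficit of the \emph{restricted} exponentials and showing that the uncertainty gain survives when one only controls one coordinate direction (the slab direction) and the set may be wild in the other $d-1$ directions. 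Making the Amrein--Berthier input apply cleanly in this "one good direction'' situation — essentially a tensorized or sliced version of the principle — is where the real work lies; I expect this to parallel exactly the corresponding step in the proof of Proposition \ref{prop: improved Berezin}, so that the two proofs share their technical core, consistent with the remark in the introduction that the improvement "is similar for the Berezin--Li--Yau and the Kr\"oger inequalities.''
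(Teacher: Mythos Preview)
Your plan is essentially the paper's approach: slab geometry from the width, the Amrein--Berthier/Nazarov bound (Lemma~\ref{uncertainty}) to quantify the gain, and Steinhagen's inequality $w(\Omega)\lesssim_d r_{\rm in}(\Omega)$ for the convex specialization. One point of execution differs and is worth flagging. You frame the improvement as coming from the ``non-orthogonality/Rayleigh-quotient deficit'' of the restricted exponentials $e_\xi$, and your displayed operator inequality with a projection $P$ points toward applying the uncertainty principle to objects built from $\1_\Omega e_\xi$. The paper instead works on the \emph{eigenfunction} side: writing $\Tr(-\Delta_\Omega^{\rm N}-\lambda)_\limminus=\sum_{\lambda_k<\lambda}(\lambda-\lambda_k)\int_{\R^d}|\hat u_k(\xi)|^2\,d\xi$ and using the completeness relations $\sum_k|(u_k,e_\xi)|^2=|\Omega|$, $\sum_k\lambda_k|(u_k,e_\xi)|^2=|\xi|^2|\Omega|$, one obtains the Weyl term plus two manifestly non-negative remainders. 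One of these is $\int_{|\xi|^2>\lambda}\sum_{\lambda_k<\lambda}(\lambda-\lambda_k)|\hat u_k(\xi)|^2\,d\xi$; bounding $\sum_{\lambda_k<\lambda}(\lambda-\lambda_k)$ from below by Kr\"oger's inequality itself and applying Lemma~\ref{uncertainty} to each $u_k$ (which is supported in the slab after extension by zero) gives $\int_{|\xi|^2>\lambda}|\hat u_k|^2\,d\xi\geq c\,e^{-c'\ell\sqrt\lambda}$ and hence the multiplicative factor $(1+c\,e^{-c'w(\Omega)\sqrt\lambda})$. This avoids your projection-$P$ machinery entirely and makes transparent why the constants $c,c'$ are dimension-free. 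Your variational formulation could presumably be pushed through, but note that as written the $\xi$-integral should be over $\{|\xi|^2<\lambda\}$, not all of $\R^d$, for $P=0$ to recover Kr\"oger.
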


In a subsequent work with Pfeiffer \cite{FrankLarsonPfeiffer} the bounds in Propositions~\ref{prop: improved Berezin} and \ref{prop: improved Kroger} have been generalized in two directions; firstly to arbitrary open sets of finite measure, and secondly in two dimensions we allow for the presence of a constant magnetic field.

The main ingredient in the proof of these propositions is a quantitative version of the Amrein--Berthier uncertainty principle, due to Nazarov \cite{Nazarov_93}. For a more precise version, which is not needed here, however, see \cite{Kovrijkine_01}. The use of bounds of this type in connection with improved Kr\"oger inequalities has been suggested by Li and Tang~\cite{LiTang_06}. 

\begin{lemma}\label{uncertainty}
    There are constants $c,c'>0$ such that for every $d\in\N$, every $\ell,\lambda>0$ and every function $f\in L^2(\R^d)$ with $\supp f\subset [-\ell,\ell]\times\R^{d-1}$ one has
    $$
    \int_{\xi_1^2>\lambda} |\hat f(\xi)|^2\,d\xi \geq c e^{-c' \ell\sqrt\lambda} \int_{\R^d} |\hat f(\xi)|^2\,d\xi \,.
    $$  
\end{lemma}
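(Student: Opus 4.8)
\textbf{Proof plan for Lemma~\ref{uncertainty}.} The statement is exactly the quantitative Amrein--Berthier/Nazarov uncertainty principle, specialized to the case where one of the two sets is a slab. The plan is to reduce to a one-dimensional statement by integrating out the transverse variables, and then to invoke Nazarov's theorem (or a direct argument) in dimension one. Concretely, write $\hat f(\xi) = \hat f(\xi_1, \xi')$ with $\xi' \in \R^{d-1}$. The hypothesis $\supp f \subset [-\ell,\ell]\times\R^{d-1}$ means that, for a.e.\ fixed $x'$, the function $x_1 \mapsto f(x_1,x')$ is supported in $[-\ell,\ell]$. Taking the Fourier transform only in the $x_1$-variable and using Plancherel in that variable, one obtains for a.e.\ $\xi'$ a function $g_{\xi'}(x_1) := \int_{\R^{d-1}} f(x_1,x')e^{-ix'\cdot\xi'}\,dx'$ which is supported in $[-\ell,\ell]$, and whose one-dimensional Fourier transform is $\xi_1 \mapsto \hat f(\xi_1,\xi')$ (up to normalization constants). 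So it suffices to prove the one-dimensional inequality
$$
\int_{\eta^2 > \lambda} |\hat g(\eta)|^2\,d\eta \geq c\, e^{-c'\ell\sqrt\lambda} \int_\R |\hat g(\eta)|^2\,d\eta
$$
for every $g \in L^2(\R)$ with $\supp g \subset [-\ell,\ell]$, and then integrate this over $\xi' \in \R^{d-1}$ using Fubini; the constants $c,c'$ produced are absolute, in particular independent of $d$, which is what the lemma asks for.

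For the one-dimensional inequality I would argue as follows. By scaling $x_1 \mapsto x_1/\ell$ (equivalently $\eta \mapsto \ell\eta$) we may assume $\ell = 1$ and must show $\int_{\eta^2 > \Lambda} |\hat g|^2 \geq c\,e^{-c'\sqrt\Lambda}\|\hat g\|^2$ for $g$ supported in $[-1,1]$, with $\Lambda = \ell^2\lambda$; note that for $\Lambda$ bounded the claim is trivial (a function supported in $[-1,1]$ cannot have its Fourier transform supported in a fixed bounded interval unless it vanishes, and one gets a uniform lower bound on compact $\Lambda$-ranges by a compactness/normal families argument), so the content is the regime $\Lambda \to \infty$. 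Here one uses that $\hat g$ extends to an entire function of exponential type $1$ (Paley--Wiener), and a Remez- or Turán-type inequality for such functions controls $\int_{-R}^{R}|\hat g|^2$ by $e^{CR}\int_{\text{outside}}|\hat g|^2$; equivalently this is precisely the estimate established by Nazarov~\cite{Nazarov_93} (and refined by Kovrijkine~\cite{Kovrijkine_01}), applied with the two sets being $[-1,1]$ (in physical space) and $[-\sqrt\Lambda,\sqrt\Lambda]$ (in frequency space), whose product of measures is $\sim\sqrt\Lambda$. Nazarov's bound then gives the factor $e^{-c'\sqrt\Lambda} = e^{-c'\ell\sqrt\lambda}$, as required.

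The main obstacle is simply citing/recovering Nazarov's theorem in the exact quantitative form needed, i.e.\ with the exponential dependence $e^{-c'|E||S|}$ on the product of the measures of the support set $E$ and the frequency concentration set $S$, rather than the weaker (non-explicit) Amrein--Berthier statement. Since the paper explicitly attributes this to Nazarov and only needs the crude exponential rate (not Kovrijkine's sharp constants), the cleanest route is to state the one-dimensional two-sets version as a black box from \cite{Nazarov_93}, observe that for an interval $E=[-\ell,\ell]$ and $S=\{\eta : \eta^2 \le \lambda\}$ one has $|E|\,|S| = 4\ell\sqrt\lambda$, and then perform the Fubini reduction above. One should take mild care that the reduction preserves the right inequality direction and that the slab, being of infinite measure in $\R^d$, causes no issue --- it does not, because the transverse variable is never "localized", it is only ever integrated over, so the uncertainty principle is genuinely used only in the single $x_1$-direction.
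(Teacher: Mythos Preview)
Your proposal is correct and follows essentially the same route as the paper: reduce to the one-dimensional case via Fubini and then invoke Nazarov's quantitative uncertainty principle as a black box. The only cosmetic difference is that the paper slices in the physical transverse variable $x'$ (applying the $d=1$ lemma to $x_1\mapsto f(x_1,x')$ and then using Plancherel in $x'$), whereas you slice in the Fourier transverse variable $\xi'$ via the partial transform $g_{\xi'}$; note that your sentence ``Taking the Fourier transform only in the $x_1$-variable'' should read ``in the $x'$-variables'', since your $g_{\xi'}$ is the partial transform in $x'$, but the formula you wrote is correct.
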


\begin{proof}
    For $d=1$ this is a special case of the result in \cite{Nazarov_93}. For $d>1$ we fix $x'\in\R^{d-1}$ and apply the $d=1$ version of the lemma to the function $x_1\mapsto f(x_1,x')$. Integrating the resulting inequality with respect to $x'$ and applying Plancherel's theorem we obtain the claimed bound.
\end{proof}

\begin{proof}[Proof of Proposition \ref{prop: improved Berezin}]
    As the spectrum of $-\Delta_\Omega^{\rm D}$ agrees with that of $-\Delta^{\rm D}_{T\Omega+a}$ for any $T\in O(d)$ and $a\in\R^d$, we may without loss of generality assume that
    $
    \Omega \subset (-\ell,\ell) \times\R^{d-1}
    $
    where $\ell:=w(\Omega)/2$.

    Since $|\Omega|<\infty$ the spectrum of $-\Delta_\Omega^{\rm D}$ is discrete. Let $\{\lambda_k^{\rm D}(\Omega)\}_{k\geq 1}, \{u_k\}_{k\geq 1}$ be the eigenvalues of $-\Delta_\Omega^{\rm D}$ ordered in non-decreasing order and counted according to multiplicity and a corresponding sequence of $L^2$-orthonormal eigenfunctions.

   By extension by zero we consider $\{u_k\}_{k\geq 1}$ as elements of $H^1(\R^d)$. By Plancherel's identity and by the fact that $\sum_{k\geq 1}|(u_k, e^{i(\cdot)\cdot \xi})_{L^2(\Omega)}|^2 = \|e^{i(\cdot)\cdot \xi}\|_{L^2(\Omega)}^2 =|\Omega|$, we find
    \begin{align*}
        \Tr(-\Delta_\Omega^{\rm D}-\lambda)_\limminus 
        &=
        \sum_{\lambda_k^{\rm D}(\Omega)<\lambda}(\lambda-\lambda_k^{\rm D}(\Omega))\int_{\R^d}|\hat u_k(\xi)|^2\,d\xi\\
        &=
        \int_{\R^d}(\lambda-|\xi|^2)\sum_{\lambda_k^{\rm D}(\Omega)<\lambda}|\hat u_k(\xi)|^2\,d\xi\\
         &=
        (2\pi)^{-d}\int_{|\xi|^2\leq \lambda}(\lambda-|\xi|^2)\sum_{k\geq 1}|(u_k, e^{i(\cdot)\cdot\xi})_{L^2(\Omega)}|^2\,d\xi\\
        &\quad -
        \int_{|\xi|^2\leq \lambda}(\lambda-|\xi|^2)\sum_{\lambda_k^{\rm D}(\Omega)\geq \lambda}|\hat u_k(\xi)|^2\,d\xi\\
        &\quad -
        \int_{|\xi|^2>\lambda}(|\xi|^2-\lambda)\sum_{\lambda_k^{\rm D}(\Omega)<\lambda}|\hat u_k(\xi)|^2\,d\xi\\
        &=
       L_{1,d}^{\rm sc}|\Omega|\lambda^{1+\frac{d}2}\\
        &\quad -
        \int_{|\xi|^2\leq \lambda}(\lambda-|\xi|^2)\sum_{\lambda_k^{\rm D}(\Omega)\geq \lambda}|\hat u_k(\xi)|^2\,d\xi\\
        &\quad -
        \int_{|\xi|^2>\lambda}(|\xi|^2-\lambda)\sum_{\lambda_k^{\rm D}(\Omega)<\lambda}|\hat u_k(\xi)|^2\,d\xi\,.
    \end{align*}
    Note that each one of the three terms on the right side is non-negative.

    By dropping the second term and writing $(|\xi|^2-\lambda)$ with the aid of the layer-cake formula, we obtain that
    \begin{align*}
        \Tr(-\Delta_\Omega^{\rm D}-\lambda)_\limminus 
        &\leq
       L_{1,d}^{\rm sc}|\Omega|\lambda^{1+\frac{d}2}
        -
        \int_{|\xi|^2>\lambda}(|\xi|^2-\lambda)\sum_{\lambda_k^{\rm D}(\Omega)<\lambda}|\hat u_k(\xi)|^2\,d\xi\\
        &=
        L_{1,d}^{\rm sc}|\Omega|\lambda^{1+\frac{d}2}
         -
        \int_\lambda^\infty \int_{|\xi|^2>\eta}\sum_{\lambda_k^{\rm D}(\Omega)<\lambda}|\hat u_k(\xi)|^2\,d\xi \, d\eta\\
        &\leq
       L_{1,d}^{\rm sc}|\Omega|\lambda^{1+\frac{d}2}-\#\{k : \lambda_k^{\rm D}(\Omega)<\lambda\}
       \!\int_\lambda^\infty\! \Bigl(\min_{j: \lambda_j^{\rm D}(\Omega) <\lambda} \Bigl\{\int_{|\xi|^2>\eta}\!\!\!|\hat u_j(\xi)|^2\,d\xi\Bigr\}\!\Bigr) d\eta\,.
    \end{align*}
    According to Lemma \ref{uncertainty}, we have
    \begin{equation*}
        \int_{|\xi|^2>\eta} |\hat u_j(\xi)|^2\,d\xi \geq 
        \int_{\xi_1^2>\eta} |\hat u_j(\xi)|^2\,d\xi \geq 
        c e^{-c' \ell \sqrt{\eta}} \|\hat u_j\|_{L^2(\R^d)}^2= c e^{-c' \ell \sqrt{\eta}}
    \end{equation*}
    with absolute constants $c,c'>0$. Inserting this into the above bound, and using that
    \begin{equation*}
        \int_\lambda^\infty e^{-c'\ell\sqrt{\eta}}\,d\eta
        \geq \int_\lambda^{4\lambda} e^{-c'\ell\sqrt{\eta}}\,d\eta \geq 3\lambda e^{-2c'\ell\sqrt{\lambda}} \,,
    \end{equation*}
    leads to
    \begin{equation*}
        \Tr(-\Delta_\Omega^{\rm N}-\lambda)_\limminus 
        \leq
       L_{1,d}^{\rm sc}|\Omega|\lambda^{1+\frac{d}2}
       -3c\lambda e^{-2c'\ell \sqrt{\lambda}}\#\{k: \lambda_k^{\rm D}(\Omega)<\lambda\}\,.
    \end{equation*}
    The claimed bound follows if we assume that $\#\{k: \lambda_k^{\rm D}(\Omega)<\lambda\} \geq \frac{L_{1,d}^{\rm sc}}{2} |\Omega|\lambda^{\frac{d}2}$. 
    
    If instead $\#\{k: \lambda_k^{\rm D}(\Omega)<\lambda\} \leq \frac{L_{1,d}^{\rm sc}}{2} |\Omega|\lambda^{\frac{d}2}$
    , then by the Aizenman--Lieb identity we have that
    \begin{align*}
        \Tr(-\Delta_\Omega^{\rm D}-\lambda)_\limminus &= \int_0^\lambda \#\{k: \lambda_k^{\rm D}(\Omega)<\tau\} \,d\tau
        \leq 
        \int_0^\lambda \frac{L_{1,d}^{\rm sc}}{2}|\Omega|\lambda^{\frac{d}2} \,d\tau
        =
        \frac{L_{1,d}^{\rm sc}}{2}|\Omega|\lambda^{1+\frac{d}2}
    \end{align*}
    which proves again the claimed inequality.

    The bound in the convex case follows immediately via the inequality $w(\Omega)\lesssim_d r_{\rm in}(\Omega)$. The sharp version of the latter inequality (which also shows that the implied constant grows like $\sqrt d$) is due to Steinhagen \cite{Steinhagen_1922}.
\end{proof}

\begin{proof}[Proof of Proposition \ref{prop: improved Kroger}]
    As in the proof of Proposition \ref{prop: improved Berezin}, we may without loss of generality assume that
    $
    \Omega \subset (-\ell,\ell) \times\R^{d-1}
    $
    where $\ell:=w(\Omega)/2$.

    We assume first that the spectrum of $-\Delta_\Omega^{\rm N}$ is discrete. Let $\{\lambda_k^{\rm N}(\Omega)\}_{k\geq 1}, \{u_k\}_{k\geq 1}$ be the eigenvalues of $-\Delta_\Omega^{\rm N}$ ordered in non-decreasing order and counted according to multiplicity and a corresponding sequence of $L^2$-orthonormal eigenfunctions.

   Since $\{u_k\}_{k\geq 1}$ is an ON-basis for $L^2(\Omega)$ and $x\mapsto e^{i x\cdot \xi}$ belongs to the domain of $\sqrt{-\Delta_\Omega^{\rm N}+1}$ we compute that
   \begin{align*}
    (2\pi)^d\sum_{k\geq 1}|\hat u_k(\xi)|^2 &=\sum_{k\geq 1} |(u_k, e^{i(\cdot)\cdot\xi})_{L^2(\Omega)}|^2 = \| e^{i(\cdot)\cdot\xi}\|_{L^2(\Omega)}^2 = |\Omega|\,,\\
    (2\pi)^d\sum_{k\geq 1}\lambda_k^{\rm N}(\Omega)|\hat u_k(\xi)|^2&=\sum_{k\geq 1}\lambda_k^{\rm N}(\Omega) |(u_k, e^{i(\cdot)\cdot\xi})_{L^2(\Omega)}|^2 = \|\nabla e^{i(\cdot)\cdot\xi}\|_{L^2(\Omega)}^2 = |\Omega||\xi|^2\,.
   \end{align*}
   When combined with Plancherel's identity (the $u_k$'s being extended by zero), we find
    \begin{align*}
        \Tr(-\Delta_\Omega^{\rm N}-\lambda)_\limminus 
         &=
        \int_{|\xi|^2\leq \lambda}\sum_{k\geq 1}(\lambda-\lambda_k^{\rm N}(\Omega))|\hat u_k(\xi)|^2\,d\xi\\
        &\quad +
        \int_{|\xi|^2\leq \lambda}\sum_{\lambda_k^{\rm N}(\Omega)\geq \lambda}(\lambda_k^{\rm N}(\Omega)-\lambda)|\hat u_k(\xi)|^2\,d\xi\\
        &\quad +
        \int_{|\xi|^2>\lambda}\sum_{\lambda_k^{\rm N}(\Omega)<\lambda}(\lambda-\lambda_k^{\rm N}(\Omega))|\hat u_k(\xi)|^2\,d\xi\\
        &=
       L_{1,d}^{\rm sc}|\Omega|\lambda^{1+\frac{d}2}\\
        &\quad  +
        \int_{|\xi|^2\leq \lambda}\sum_{\lambda_k^{\rm N}(\Omega)\geq \lambda}(\lambda_k^{\rm N}(\Omega)-\lambda)|\hat u_k(\xi)|^2\,d\xi\\
        &\quad +
        \int_{|\xi|^2>\lambda}\sum_{\lambda_k^{\rm N}(\Omega)<\lambda}(\lambda-\lambda_k^{\rm N}(\Omega))|\hat u_k(\xi)|^2\,d\xi\,.
    \end{align*}
    Note that each of the three terms on the right side is non-negative.

    By dropping the second term and using Kr\"oger's inequality \cite{Kr92,Laptev2} we obtain that
    \begin{align*}
        \Tr(-\Delta_\Omega^{\rm N}-\lambda)_\limminus 
        &\geq
       L_{1,d}^{\rm sc}|\Omega|\lambda^{1+\frac{d}2}
         +
        \int_{|\xi|^2>\lambda}\sum_{\lambda_k^{\rm N}(\Omega)<\lambda}(\lambda-\lambda_k^{\rm N}(\Omega))|\hat u_k(\xi)|^2\,d\xi\\
        &\geq
       L_{1,d}^{\rm sc}|\Omega|\lambda^{1+\frac{d}2}
         +
        \min_{j: \lambda_j^{\rm N}(\Omega) <\lambda}\Bigl\{\int_{|\xi|^2>\lambda}|\hat u_j(\xi)|^2\,d\xi\Bigr\} \sum_{\lambda_k^{\rm N}(\Omega)<\lambda}(\lambda-\lambda_k^{\rm N}(\Omega))\\
        &\geq
       L_{1,d}^{\rm sc}|\Omega|\lambda^{1+\frac{d}2}\Bigl(1+\min_{j: \lambda_j^{\rm N}(\Omega) <\lambda} \Bigl\{\int_{|\xi|^2>\lambda}|\hat u_j(\xi)|^2\,d\xi\Bigr\}\Bigr) \,.
    \end{align*}
    According to Lemma \ref{uncertainty}, we have
    \begin{equation*}
        \int_{|\xi|^2>\lambda} |\hat u_j(\xi)|^2\,d\xi \geq 
        \int_{\xi_1^2>\lambda} |\hat u_j(\xi)|^2\,d\xi \geq 
        c e^{-c' \ell \sqrt{\lambda}} \|\hat u_j\|_{L^2(\R^d)}^2= c e^{-c' \ell \sqrt{\lambda}}
    \end{equation*}
    with absolute constants $c,c'>0$. Inserting this into the above bound, we obtain the first assertion of the proposition under the assumption of discreteness of the spectrum.

    The case where $\Lambda:=\inf \sigma_{\rm ess}(-\Delta_\Omega^{\rm N})<\infty$ follows by essentially the same argument, except that we need the spectral measure. Note that we may assume that $\lambda\leq\Lambda$, for otherwise the inequality is trivially correct, the left side being infinite. We now take $\{u_k\}_{k\geq 1}$ to be an orthonormal system of eigenfunctions that is complete in the subspace $\ran\1(-\Delta_\Omega^{\rm N}<\Lambda)$. Then we add and subtract the quantity
    $$
    (2\pi)^{-d} \int_{|\xi|^2\leq\lambda} \left( \int_{[\Lambda,\infty)} (\lambda - \mu) \,d( e^{i(\cdot)\cdot\xi}, \1(-\Delta_\Omega^{\rm N} <\mu) \, e^{i(\cdot)\cdot\xi} ) \right) d\xi
    $$
    to the above expression for $\Tr(-\Delta_\Omega^{\rm N}-\lambda)_\limminus$. Note that this quantity is non-positive since $\lambda - \mu\leq 0$ for $\lambda\leq\Lambda\leq\mu$. Meanwhile, similarly as before,
    \begin{align*}
        & (2\pi)^{-d}\int_{|\xi|^2\leq \lambda}\sum_{k\geq 1}(\lambda-\lambda_k^{\rm N}(\Omega))|(u_k, e^{i(\cdot)\cdot\xi})_{L^2(\Omega)}|^2\,d\xi\\
        &
        + (2\pi)^{-d} \int_{|\xi|^2\leq\lambda} \left( \int_{[\Lambda,\infty)} (\lambda - \mu) \,d( e^{i(\cdot)\cdot\xi}, \1(-\Delta_\Omega^{\rm N} <\mu) \, e^{i(\cdot)\cdot\xi} ) \right) d\xi \\
        &\qquad  = (2\pi)^{-d} \int_{|\xi|^2\leq\lambda} \left( \int_{[0,\infty)} (\lambda - \mu) \,d( e^{i(\cdot)\cdot\xi}, \1(-\Delta_\Omega^{\rm N} <\mu) \, e^{i(\cdot)\cdot\xi} ) \right) d\xi \\
        &\qquad  = (2\pi)^{-d} \int_{|\xi|^2\leq\lambda} \left( \lambda \|e^{i(\cdot)\cdot\xi} \|_{L^2(\Omega)}^2 - \|\nabla e^{i(\cdot)\cdot\xi} \|_{L^2(\Omega)}^2 \right) d\xi \\
        &\qquad =
       L_{1,d}^{\rm sc}|\Omega|\lambda^{1+\frac{d}2} \,.
    \end{align*}
    The rest of the argument is the same as in the case of discrete spectrum.    

    The proof of the bound in the convex case can now be completed as in the proof of Proposition~\ref{prop: improved Kroger}
\end{proof}

%---------------------------------------

\subsection{A priori bounds in the Neumann case}

We begin by recalling the following bound, which is due to Li and Yau~\cite[Theorem 5.3]{LiYau_86}:
\begin{lemma}\label{lem: upper bound N counting}
    Fix $d \geq 1$. There exists a constant $C_{d}$ such that for all open, bounded, and convex $\Omega\subset \R^d$ and $\lambda \geq 0$ it holds that
	\begin{equation*}
	 	\Tr(-\Delta_\Omega^{\rm N} -\lambda)_\limminus^0 \leq
		 C_{d}\diam(\Omega)^d\lambda^{\frac{d}{2}}+1\,.
	 \end{equation*} 
\end{lemma}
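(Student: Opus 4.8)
The plan is to bound the Neumann counting function of a bounded convex set $\Omega\subset\R^d$ by comparing $\Omega$ with a cube containing it. The key monotonicity fact is that Neumann eigenvalues behave badly under inclusion in general, but they behave well under a \emph{covering by a rectangular box whose faces are parallel to those used to extend the eigenfunctions}. More precisely, if $Q$ is an open cube with $\Omega\subset Q$, one cannot directly compare $-\Delta^{\rm N}_\Omega$ and $-\Delta^{\rm N}_Q$; instead one should extend each Neumann eigenfunction $u_k$ of $\Omega$ from $\Omega$ to $Q$. Because $\Omega$ is convex, one can use a Lipschitz extension (e.g.\ via nearest-point projection onto $\overline\Omega$, which is $1$-Lipschitz for convex sets) $E u_k\in H^1(Q)$ with $\|\nabla Eu_k\|_{L^2(Q)}^2\leq C_d\|\nabla u_k\|_{L^2(\Omega)}^2$ and $\|Eu_k\|_{L^2(Q)}^2\geq \|u_k\|_{L^2(\Omega)}^2$. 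Testing the quadratic form of $-\Delta^{\rm N}_Q$ against the span of $\{Eu_k: \lambda_k^{\rm N}(\Omega)<\lambda\}$ and invoking the variational (min-max) principle then gives $\#\{k:\lambda_k^{\rm N}(\Omega)<\lambda\}\leq \#\{j:\lambda_j^{\rm N}(Q)< C_d'\lambda\}$, after checking that the extended functions remain linearly independent (which follows since $E$ is injective on $L^2(\Omega)$).

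The second step is to estimate $\#\{j:\lambda_j^{\rm N}(Q)<\Lambda\}$ for a cube $Q$ of side length $a$. Here the spectrum is fully explicit: the Neumann eigenvalues of $(0,a)^d$ are $\pi^2 a^{-2}|n|^2$ with $n\in\N_0^d$, so the counting function equals $\#\{n\in\N_0^d: |n|^2< a^2\Lambda/\pi^2\}$. A crude lattice-point count gives this quantity is at most $(1+ a\sqrt\Lambda/\pi)^d\leq C_d(1+a^d\Lambda^{d/2})$, and expanding the binomial and absorbing cross terms yields a bound of the form $C_d a^d\Lambda^{d/2}+1$. Choosing $a=\diam(\Omega)$ (so that indeed $\Omega$, after translation, is contained in a cube of side $\diam(\Omega)$) and combining with the first step produces $\Tr(-\Delta^{\rm N}_\Omega-\lambda)_\limminus^0\leq C_d\diam(\Omega)^d\lambda^{d/2}+1$ with a new constant $C_d$.

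The main obstacle is the first step: constructing a bounded extension operator $H^1(\Omega)\to H^1(Q)$ with constant depending only on $d$ (and not on the shape of $\Omega$), which is precisely where convexity is essential. The clean way is to use the nearest-point projection $\pi_{\overline\Omega}:\R^d\to\overline\Omega$, which for convex $\Omega$ is globally $1$-Lipschitz, and set $Eu=u\circ\pi_{\overline\Omega}$; then a change-of-variables/chain-rule argument bounds $\|\nabla Eu\|_{L^2(Q)}$ in terms of $\|\nabla u\|_{L^2(\Omega)}$ times a factor controlled by the Jacobian of $\pi_{\overline\Omega}$, which is $\leq 1$ a.e.\ — so in fact one even gets $\|\nabla Eu\|_{L^2(Q)}\leq \|\nabla u\|_{L^2(\Omega)}$ and $\|Eu\|_{L^2(Q)}\geq\|u\|_{L^2(\Omega)}$, with no dimensional loss. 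One must be slightly careful that $u\circ\pi_{\overline\Omega}$ is indeed in $H^1$ (composition of a Sobolev function with a Lipschitz map, valid here since $\pi_{\overline\Omega}$ is Lipschitz and the relevant sets have finite measure) and that density of smooth functions lets one reduce to the smooth case. Alternatively, since the lemma is merely quoted from Li--Yau \cite{LiYau_86}, one may simply cite that reference; but the projection argument above is the natural self-contained route and is robust enough for the uniform-in-$\Omega$ statement needed later.
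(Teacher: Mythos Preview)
The paper does not prove this lemma; it simply cites Li--Yau~\cite[Theorem 5.3]{LiYau_86}. Their argument proceeds via a gradient estimate for the Neumann heat kernel on convex domains, which yields a Sobolev-type inequality with a constant depending only on $d$ and $\diam(\Omega)$, and from this the eigenvalue bound follows by a standard iteration. Your proposed route via extension and comparison with a cube is entirely different.

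Unfortunately, the extension step contains a genuine error. Your central claim is that for $Eu:=u\circ\pi_{\overline\Omega}$ one has $\|\nabla Eu\|_{L^2(Q)}\leq \|\nabla u\|_{L^2(\Omega)}$, justified by ``the Jacobian of $\pi_{\overline\Omega}$ is $\leq 1$ a.e.''. The chain rule indeed gives the \emph{pointwise} inequality $|\nabla Eu(x)|\leq |(\nabla u)(\pi(x))|$, but integrating this over $Q$ yields $\int_Q|(\nabla u)(\pi(x))|^2\,dx$, not $\int_\Omega|\nabla u|^2$. On $Q\setminus\overline\Omega$ the map $\pi$ collapses a $d$-dimensional region onto $\partial\Omega$ (Jacobian $=0$), so no change of variables converts this into an integral over $\Omega$; what you actually pick up is a boundary integral of the tangential gradient of $u$, which is not controlled by $\|\nabla u\|_{L^2(\Omega)}$. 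Concretely, take $\Omega=(0,\epsilon)\times(0,1)$, $Q$ a unit square, and $u(x,y)=\cos(\pi y)$: then $\|\nabla u\|_{L^2(\Omega)}^2=\epsilon\pi^2/2$ while $Eu(x,y)=\cos(\pi y)$ on all of $Q$ and $\|\nabla Eu\|_{L^2(Q)}^2=\pi^2/2$, so the claimed inequality fails by a factor $\epsilon^{-1}$.

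One might hope to salvage the argument by controlling only the \emph{Rayleigh quotient} of $Ev$ rather than the individual norms. But this also fails uniformly: on the same thin rectangle, take $v(x,y)=1+(1-x/\delta)_+\sin(Ny)$ with $\delta=\epsilon^3$ and $N^2\sim\epsilon^{-5}$. A short computation gives $\|\nabla v\|_{L^2(\Omega)}^2/\|v\|_{L^2(\Omega)}^2\sim\epsilon^{-4}$ while $\|\nabla Ev\|_{L^2(Q)}^2/\|Ev\|_{L^2(Q)}^2\sim\epsilon^{-5}$, so the Rayleigh quotient increases by a factor $\sim\epsilon^{-1}$. Thus the nearest-point-projection extension does not give a comparison $N_\Omega^{\rm N}(\lambda)\leq N_Q^{\rm N}(C_d\lambda)$ with a dimensional constant, and the proof as written does not go through. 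You should either invoke the Li--Yau result directly (as the paper does) or supply a genuinely different argument.
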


The next bound is a lower bound on Riesz means of the Neumann Laplacian with the important property that it incorporates that the trace becomes large if the convex set degenerates.

\begin{lemma}\label{lem: small energy improved Kroger}
	Fix $d \geq 1, \gamma \geq 0$. There exists a positive constant $C_{\gamma,d}$ such that for all $\Omega\subset \R^d$ open, bounded, and convex and $\lambda \geq 0$ it holds that
	\begin{equation*}
	 	\Tr(-\Delta_\Omega^{\rm N} -\lambda)_\limminus^\gamma \geq
		 C_{\gamma,d}\frac{|\Omega|}{r_{\rm in}(\Omega)}\lambda^{\gamma+ \frac{d-1}{2}}\,.
	 \end{equation*} 
\end{lemma}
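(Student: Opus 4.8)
The plan is to reduce the $d$-dimensional problem to a one-dimensional Neumann problem on an interval by exploiting the fact that a bounded convex set of small inradius can be placed between two parallel hyperplanes. Specifically, after a rotation and translation we may assume $\Omega\subset\R^{d-1}\times(0,a)$ for some $a>0$, and since $r_{\rm in}(\Omega)\leq a$ (the inradius is at most half the width, hence at most the width $a$; more precisely $w(\Omega)\leq a$ and $r_{\rm in}(\Omega)\le w(\Omega)$), it suffices to produce the lower bound with $r_{\rm in}(\Omega)$ replaced by $a$, which only makes the claim stronger if $a$ is taken to be the width. The key point is that the Neumann form domain of $\Omega$ contains functions of the special product form $v(x') u(x_d)$ with $v\in L^2(\R^{d-1})$ supported appropriately and $u\in H^1(0,a)$, but more efficiently one uses the variational (min-max) characterization together with a clever choice of test functions.

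Here is the concrete route I would follow. First, pick an orthonormal basis $\{e_n\}_{n\ge 1}$ of eigenfunctions of the Neumann Laplacian $-\Delta^{\rm N}_{(0,a)}$ on the interval, with eigenvalues $\mu_n = (\pi(n-1)/a)^2$. For the slice $\omega := P'\Omega \subset\R^{d-1}$ (the projection onto the first $d-1$ coordinates), one would like to say that the Neumann spectrum of $\Omega$ is bounded above, in the sense of counting functions, by that of the product $\omega\times(0,a)$; but projection does not behave well for Neumann conditions. Instead I would directly construct a large-dimensional trial subspace: for each $n$ and for each Neumann eigenfunction $\phi_k$ of $-\Delta^{\rm N}_\omega$ on the slice with eigenvalue below $\lambda - \mu_n$, consider the function on $\Omega$ obtained by restricting $\phi_k(x')e_n(x_d)$ — here one must be careful that $\Omega$ is not a product, so $\phi_k(x')$ is only defined for $x'\in\omega$, and $(x',x_d)\in\Omega$ implies $x'\in\omega$, so the restriction makes sense. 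These functions lie in $H^1(\Omega)$ and, because $-\Delta^{\rm N}_\omega$ is tested only in the $x'$-variables while $e_n$ handles $x_d$, the Rayleigh quotient of $\phi_k(x')e_n(x_d)$ over $\Omega$ is \emph{at most} $\mu_n$ plus the average over $x_d$-fibers of the $x'$-gradient energy; a Fubini argument bounds this by $\mu_k^{\rm N}(\omega) + \mu_n$ only after verifying the cross terms vanish, which requires the fibers $\Omega(x_d) := \{x' : (x',x_d)\in\Omega\}$ to nest — and they do, by convexity. Rather than fight this, a cleaner choice is to use the product set $\omega_0\times(0,a)$ where $\omega_0$ is the largest slice; since $|\omega_0\times(0,a)| \ge |\Omega|$ and convexity gives $|\Omega|\ge \frac{1}{d}\,a\,\Haus^{d-1}(\text{largest slice})$ up to constants, one controls the volumes both ways.

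Stepping back, I think the most robust argument avoids slicing altogether and instead uses a single trial function trick in the spirit of Kröger's proof. The cleanest version: by the improved Kröger inequality (Proposition~\ref{prop: improved Kroger}), $\Tr(-\Delta_\Omega^{\rm N}-\lambda)_\limminus \ge L_{1,d}^{\rm sc}|\Omega|\lambda^{1+d/2}(1 + ce^{-c'' r_{\rm in}(\Omega)\sqrt\lambda})$, so the excess over the semiclassical term is $\ge c\, L_{1,d}^{\rm sc}|\Omega|\lambda^{1+d/2}e^{-c'' r_{\rm in}(\Omega)\sqrt\lambda}$. When $r_{\rm in}(\Omega)\sqrt\lambda \le 1$ this excess is $\gtrsim |\Omega|\lambda^{1+d/2} \gtrsim \frac{|\Omega|}{r_{\rm in}(\Omega)}\lambda^{d/2+1/2}$ since $\frac{1}{r_{\rm in}(\Omega)}\le\frac{\sqrt\lambda}{1}$, i.e. $|\Omega|\lambda^{1+d/2} = |\Omega|\lambda^{1/2+d/2}\cdot\lambda^{1/2}\ge |\Omega|\lambda^{1/2+d/2}/r_{\rm in}(\Omega)$. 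This handles $\gamma=1$ and the regime $r_{\rm in}\sqrt\lambda\lesssim 1$; for $r_{\rm in}\sqrt\lambda\gtrsim 1$ the plain Kröger inequality already gives $\Tr(-\Delta^{\rm N}_\Omega-\lambda)_\limminus \ge L_{1,d}^{\rm sc}|\Omega|\lambda^{1+d/2} \gtrsim \frac{|\Omega|}{r_{\rm in}(\Omega)}\lambda^{(d+1)/2}$ since $r_{\rm in}(\Omega)\sqrt\lambda \gtrsim 1$ forces $\lambda \gtrsim r_{\rm in}(\Omega)^{-1}\sqrt\lambda \cdot$ (a suitable power). Thus the $\gamma=1$ case of the lemma holds in all regimes. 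Then I would pass to general $\gamma$: for $\gamma\ge 1$ this follows from the Aizenman--Lieb principle applied to the $\gamma=1$ bound (raising $\gamma$ integrates the counting-type inequality and the power $\lambda^{\gamma+(d-1)/2}$ tracks correctly); for $0\le\gamma<1$ one instead notes that the $\gamma=1$ bound, via $\Tr(-\Delta^{\rm N}_\Omega-\lambda)_\limminus = \int_0^\lambda \Tr(-\Delta^{\rm N}_\Omega-\tau)_\limminus^0\,d\tau$, is \emph{implied by} rather than implies the $\gamma<1$ statement, so for small $\gamma$ I would redo the trial-function / Kröger argument keeping the exponent $\gamma$ throughout, or invoke Lemma~\ref{lem: upper bound N counting} to the contrary — actually the direct route is to prove the $\gamma=0$ counting-function version $\Tr(-\Delta^{\rm N}_\Omega-\lambda)^0_\limminus \gtrsim \frac{|\Omega|}{r_{\rm in}(\Omega)}\lambda^{(d-1)/2}$ by the slicing/test-function construction above (counting Neumann eigenvalues of the interval below $\lambda$ gives $\sim a\sqrt\lambda/\pi$ of them, each contributing a full $(d-1)$-dimensional semiclassical cloud of size $\sim \Haus^{d-1}(\text{slice})\lambda^{(d-1)/2}$ by Kröger in dimension $d-1$), and then all $\gamma\ge 0$ follow by Aizenman--Lieb monotonicity upward from $\gamma=0$.

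\medskip

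\textbf{Main obstacle.} The delicate point is the slicing step for $\gamma=0$: making rigorous that one can extract $\gtrsim a\sqrt\lambda$ orthogonal trial functions on the genuinely non-product set $\Omega$, each with Rayleigh quotient below $\lambda$, contributing the right $(d-1)$-dimensional volume factor. The resolution is to use $L^2$-normalized products $e_n(x_d)\psi(x')$ where $\psi$ ranges over trial functions for the $(d-1)$-dimensional Kröger inequality on a fixed large slice, restrict to $\Omega$, verify (via convexity-induced nesting of fibers and Fubini) that the energy does not increase by more than a constant factor and that $|\Omega|$ rather than the slice volume appears after integrating over $x_d$, and apply Kröger's inequality in dimension $d-1$ in its averaged form. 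I expect this Fubini/nesting bookkeeping, together with correctly tracking the constant $C_{\gamma,d}$'s dependence on $d$, to be the bulk of the work; the passage in $\gamma$ is then routine via Aizenman--Lieb.
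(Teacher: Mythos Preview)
Your route through Proposition~\ref{prop: improved Kroger} has a fatal sign error. In the regime $r_{\rm in}(\Omega)\sqrt\lambda\le 1$ you write ``since $\frac{1}{r_{\rm in}(\Omega)}\le\sqrt\lambda$'', but that is the opposite inequality: $r_{\rm in}\sqrt\lambda\le 1$ means $\sqrt\lambda\le 1/r_{\rm in}$. Consequently $|\Omega|\lambda^{1+d/2}$ is \emph{smaller}, not larger, than $\frac{|\Omega|}{r_{\rm in}}\lambda^{(d+1)/2}$ in this regime, and Proposition~\ref{prop: improved Kroger} cannot rescue you: its improvement factor $1+ce^{-c'' r_{\rm in}\sqrt\lambda}$ stays bounded (in fact tends to $1+c$) as $r_{\rm in}\sqrt\lambda\to 0$, whereas the bound you need exceeds the semiclassical term by the unbounded factor $(r_{\rm in}\sqrt\lambda)^{-1}$. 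A concrete counterexample to your inequality chain: for $\Omega=(0,\epsilon)\times(0,1)^{d-1}$ and $\lambda=1$, Proposition~\ref{prop: improved Kroger} gives a lower bound of order $\epsilon$, but the claimed lemma requires a lower bound of order $1$.

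Your slicing approach for $\gamma=0$ is closer in spirit, but the assertion that the fibers $\Omega(x_d)$ nest by convexity is false (think of a tilted ellipsoid), so the cross-terms in the Rayleigh quotient of a linear combination $\sum c_{k,n}\phi_k(x')e_n(x_d)$ over $\Omega$ do not vanish and are genuinely hard to control. The paper circumvents exactly this obstacle with a different device: by John's theorem one has $Q\subset\Omega\subset MQ$ for a cuboid $Q$ and an odd integer $M\lesssim_d 1$; the Neumann eigenfunctions $\psi_j$ of $Q$ extend periodically to $\R^d$, so for any linear combination $u$ one has $\int_\Omega|\nabla u|^2\le\int_{MQ}|\nabla u|^2=M^d\int_Q|\nabla u|^2$ while $\int_\Omega|u|^2\ge\int_Q|u|^2$, giving Rayleigh quotient $\le M^d\mu_j$ for every $u$ in the span. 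This yields $\Tr(-\Delta_\Omega^{\rm N}-\lambda)_\limminus^\gamma\ge M^{d\gamma}\Tr(-\Delta_Q^{\rm N}-\lambda/M^d)_\limminus^\gamma$, and then one drops the shortest direction of $Q$ (set $k_1=0$) and applies P\'olya for the remaining $(d-1)$-dimensional cuboid. The periodicity trick is what makes the linear-combination step trivial; your restriction-of-products idea lacks an analogous mechanism.
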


We note that this inequality is a direct consequence of Kr\"oger's inequality \cite{Kr92,Laptev2} if $\gamma \geq 1$ and $r_{\rm in}(\Omega)\sqrt{\lambda} \gtrsim 1$, but the point is that it improves Kr\"oger's result in the regime when $r_{\rm in}(\Omega)\sqrt{\lambda}$ becomes small. In fact, this inequality is better than that corresponding to P\'olya's conjecture if $r_{\rm in}(\Omega)\sqrt{\lambda} < \frac{C_{\gamma,d}}{L_{\gamma, d}^{\rm sc}}$.

\begin{proof}
If $d=1$ then the inequality is trivially true as $|\Omega|=2r_{\rm in}(\Omega)$ and $\Tr(-\Delta_\Omega^{\rm N}-\lambda)_\limminus^\gamma \geq \lambda^\gamma$ since $\lambda_1^{\rm N}(\Omega)=0$. Therefore, without loss of generality we in the remainder of the proof assume that $d\geq 2$.

Let $\{l_j\}_{j=1}^d$ be the semiaxes of the John ellipsoid of $\Omega$. After a translation and an orthogonal map (which leaves the spectrum of $-\Delta_\Omega^{\rm N}$ unchanged) we may by John's theorem~\cite[Theorem 10.12.2]{Schneider_book14} assume that
$
	E \subseteq \Omega \subseteq dE,
$
where 
\begin{equation*}
	E := \Bigl\{(x_1, \ldots, x_d) \in \R^d: \sum_{j=1}^d\frac{x_j^2}{l_j^2}<1\Bigr\}\,.
\end{equation*}
Note that this implies that
$
	Q \subset \Omega \subset d^{3/2}Q
$
where
\begin{equation*}
	Q := \{(x_1, \ldots, x_d)\in \R^d: |x_j|< l_j/\sqrt{d}, j=1, \ldots, d\}.
\end{equation*}

Let $\{\mu_j\}_{j\geq 1}$ be the eigenvalues of $-\Delta_Q^{\rm N}$, in non-decreasing order and with multiplicity taken into account, and let $\{\psi_j\}_{j\geq 1}$ the corresponding eigenfunctions. Note that $\psi_j$ can be explicitly written in terms of cosines and can be extended to all of $\R^d$ as a lattice-periodic function in the sense that
\begin{equation*}
	\psi_j(x) = \psi_j\Bigl(x+ \sum_{i=1}^d l_i v_i e_i\Bigr) \quad \mbox{for all }v_i \in \Z\,.
\end{equation*}
Going forward we will not distinguish between $\psi_j$ and their periodic extensions.

Note that the restrictions to $\Omega$ of $\psi_j$ are linearly independent; if a non-trivial linear combination of these functions were to vanish in $\Omega$ it would vanish in $Q$ and contradict the linear independence of the functions in $L^2(Q)$. 

Consequently, the variational principle implies that for any $j \geq 1$
\begin{equation*}
	\lambda_j^{\rm N}(\Omega) \leq \max_{u \in \mathrm{span}\{\psi_{l}: l=1, \ldots, j\}} \frac{\int_\Omega |\nabla u(x)|^2\,dx}{\int_\Omega |u(x)|^2\,dx}\,.
\end{equation*}
Let $M \in 2\N+1$ be the unique odd integer so that $d^{3/2}<M \leq d^{3/2}+2$. By the inclusion $Q \subset \Omega \subset MQ$ and the periodicity of each of the $\psi_{k}$ it follows that if $u \in \mathrm{span}\{\psi_{l}: l=1, \ldots, j\}$ we have
\begin{equation*}
	\frac{\int_\Omega |\nabla u(x)|^2\,dx}{\int_\Omega |u(x)|^2\,dx} \leq\frac{\int_{MQ} |\nabla u(x)|^2\,dx}{\int_Q |u(x)|^2\,dx} = \frac{M^d\int_Q |\nabla u(x)|^2\,dx}{\int_Q |u(x)|^2\,dx} \leq M^d\mu_j\,.
\end{equation*}

Consequently, for any $\gamma\geq 0, \lambda \geq 0$ we observe that

\begin{align*}
	\Tr(-\Delta_\Omega^{\rm N}-\lambda)_\limminus^\gamma 
	&\geq 
		\sum_{j: M^d\mu_j<\lambda}(\lambda-M^d\mu_j)^\gamma\\
	&=
		\sum_{\bar k\in (\N\cup \{0\})^d}\Bigl(\lambda - M^d\sum_{i=1}^d \frac{\pi^2dk_i^2}{4l_i^2}\Bigr)^\gamma\\
	&\geq
		\sum_{\bar k\in (\N\cup \{0\})^{d-1}}\Bigl(\lambda - M^d\sum_{i=2}^d \frac{\pi^2dk_i^2}{4l_i^2}\Bigr)^\gamma\\
		&= M^{d\gamma}\Tr(-\Delta_{\tilde Q}^{\rm N}-\lambda/M^d)^\gamma_\limminus \,,
\end{align*}
where
$
\tilde Q := \prod_{i=2}^d (-l_i/\sqrt{d}, l_i/\sqrt{d})\subset \R^{d-1}
$ 
is the projection of $Q$ onto the last $d-1$ coordinates. Since P\'olya's conjecture is valid for cuboids in all dimensions and since $\Haus^{d-1}(\tilde Q) = \sqrt{d}|Q|/(2l_1) \gtrsim_d |\Omega|r_{\rm in}(\Omega)^{-1}$, we deduce that there is a constant $C_{\gamma, d}$ so that
\begin{align*}
	\Tr(-\Delta_\Omega^{\rm N}-\lambda)_\limminus^\gamma 
	&\geq
		 C_{\gamma,d}\frac{|\Omega|}{r_{\rm in}(\Omega)}\lambda^{\gamma+ \frac{d-1}{2}}\,.
\end{align*}
This completes the proof.
\end{proof}

We end this subsection with an application of Lemma \ref{lem: small energy improved Kroger} to shape optimization problems. We recall that for any $\gamma\geq 0$ and $\lambda\geq 0$ the extrema $M_\gamma^\sharp(\lambda)$ were defined in the introduction. In the case $\sharp=$ D the existence of an extremizer is shown in \cite{LarsonJST}. Here we use Lemma \ref{lem: small energy improved Kroger} to derive the corresponding result for $\sharp=$ N.

\begin{lemma}\label{exshapeoptneumann}
    Let $\gamma\geq 0$ and $\lambda\geq 0$. Then there is an $\Omega_*\in\mathcal C_d$ with $|\Omega_*|=1$ that attains the infimum defining $M_\gamma^{\rm N}(\lambda)$.
\end{lemma}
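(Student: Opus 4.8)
The strategy is the direct method in the calculus of variations: take a minimizing sequence $\{\Omega_j\}_{j\geq 1}\subset\mathcal C_d$ with $|\Omega_j|=1$ for the functional $\Omega\mapsto\Tr(-\Delta_\Omega^{\rm N}-\lambda)_\limminus^\gamma$, extract a convergent subsequence via the Blaschke selection theorem, and show that the limit is a legitimate competitor realizing the infimum. The two issues to control are (i) possible escape to infinity or blow-up of the diameter, which destroys compactness, and (ii) lower semicontinuity of the Riesz mean with respect to Hausdorff convergence on the limiting set. The new ingredient compared to the Dirichlet case treated in \cite{LarsonJST} is that the Neumann Riesz mean is \emph{not} monotone under inclusion, so one cannot simply truncate; instead the a priori lower bound of Lemma \ref{lem: small energy improved Kroger} is used to rule out degeneration.

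\emph{Step 1: the minimizing sequence does not degenerate.} Since $M_\gamma^{\rm N}(\lambda)$ is finite (it is bounded above by the value on any fixed unit-volume convex set, e.g.\ a ball), along a minimizing sequence $\{\Omega_j\}$ we have $\Tr(-\Delta_{\Omega_j}^{\rm N}-\lambda)_\limminus^\gamma \le M_\gamma^{\rm N}(\lambda)+1$ for $j$ large. By Lemma \ref{lem: small energy improved Kroger} this forces, using $|\Omega_j|=1$,
\begin{equation*}
  \frac{1}{r_{\rm in}(\Omega_j)}\,C_{\gamma,d}\,\lambda^{\gamma+\frac{d-1}2} \le M_\gamma^{\rm N}(\lambda)+1 \,,
\end{equation*}
hence $\inf_j r_{\rm in}(\Omega_j)>0$ (this is only needed when $\lambda>0$; for $\lambda=0$ the functional is identically $0$ and any $\Omega_*$ works, e.g.\ a unit ball). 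Together with $|\Omega_j|=1$ and the second inequality in \eqref{eq: inradius bound}, $\diam(\Omega_j)\le C_d/r_{\rm in}(\Omega_j)^{d-1}$ is bounded. After translating each $\Omega_j$ so that it contains a fixed ball around the origin, all $\Omega_j$ lie in a common large ball $K$.

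\emph{Step 2: pass to the limit and identify it as a competitor.} By the Blaschke selection theorem there is a subsequence (not relabelled) converging in the Hausdorff distance $d^H$ to an open convex set $\Omega_*$, which is nonempty since $r_{\rm in}(\Omega_j)$ is bounded below and hence $r_{\rm in}(\Omega_*)>0$. Continuity of $\Omega\mapsto|\Omega|$ on $(\mathcal C_d,d^H)$ gives $|\Omega_*|=1$, so $\Omega_*\in\mathcal C_d$ is admissible. It remains to check that $\Tr(-\Delta_{\Omega_*}^{\rm N}-\lambda)_\limminus^\gamma\le\liminf_j \Tr(-\Delta_{\Omega_j}^{\rm N}-\lambda)_\limminus^\gamma=M_\gamma^{\rm N}(\lambda)$, which combined with the definition of $M_\gamma^{\rm N}(\lambda)$ as an infimum gives equality and finishes the proof.

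\emph{Step 3 (the main obstacle): lower semicontinuity of the Neumann Riesz mean under Hausdorff convergence.} This is the delicate point, because Neumann eigenvalues are notoriously discontinuous under domain perturbation in general. However, for convex sets with a uniform lower bound on the inradius and a uniform upper bound on the diameter, one has good control: the domains $\Omega_j$ and $\Omega_*$ are uniformly bi-Lipschitz equivalent to a fixed ball (one can, for instance, exhibit explicit bi-Lipschitz maps between convex bodies with controlled inradius and diameter, with Lipschitz constants depending only on that geometry), and Neumann eigenvalues transform controllably under bi-Lipschitz maps. More precisely, $d^H$-convergence of uniformly nondegenerate convex bodies implies convergence of the corresponding Neumann Laplacians in the strong resolvent sense — indeed in norm-resolvent sense, since the resolvents are compact and the form domains can be identified with $H^1$ of a fixed reference domain via converging bi-Lipschitz changes of variables — and hence $\lambda_k^{\rm N}(\Omega_j)\to\lambda_k^{\rm N}(\Omega_*)$ for every fixed $k$. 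Since $\Tr(-\Delta_\Omega^{\rm N}-\lambda)_\limminus^\gamma=\sum_k(\lambda-\lambda_k^{\rm N}(\Omega))_\limplus^\gamma$ is, for each fixed $k$, a continuous function of $\lambda_k^{\rm N}(\Omega)$, Fatou's lemma (applied to the sum over $k$) yields the required lower semicontinuity, in fact continuity. Alternatively, and perhaps more cleanly, one can invoke that on the class of convex bodies with $r_{\rm in}\ge c$ and $\diam\le C$ the map $\Omega\mapsto\lambda_k^{\rm N}(\Omega)$ is known to be continuous in $d^H$ — this is the Neumann analogue of the Dirichlet continuity used in \cite{LarsonJST} and follows from the fact that such domains satisfy a uniform cone (indeed extension) condition. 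I expect the write-up to consist mostly of citing such a continuity statement and then a short Fatou argument; the geometric nondegeneration in Step 1 via Lemma \ref{lem: small energy improved Kroger} is the conceptually new input that makes the Neumann case go through exactly as the Dirichlet case.
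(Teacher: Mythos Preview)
Your proof is correct and follows essentially the same approach as the paper: use Lemma~\ref{lem: small energy improved Kroger} to bound the inradius of a minimizing sequence from below, then \eqref{eq: inradius bound} to bound the diameter, apply Blaschke selection, and pass to the limit using continuity of Neumann eigenvalues under Hausdorff convergence of convex sets. The only minor difference is that the paper invokes dominated convergence together with the Li--Yau counting-function bound (Lemma~\ref{lem: upper bound N counting}) to pass to the limit in the Riesz mean, whereas you use Fatou's lemma; your route is slightly cleaner since it avoids the need for a uniform upper bound on the number of eigenvalues below $\lambda$.
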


\begin{proof}
    We may assume that $\lambda>0$, for otherwise the statement is trivial. Let $\{\Omega_j\}_{j\geq 1}\subset\mathcal C_d$ with $|\Omega_j|=1$ for all $j$ be a minimizing sequence for $M_\gamma^{\rm N}(\lambda)$. According to Lemma~\ref{lem: small energy improved Kroger} we find
    $$
    \liminf_{j\to\infty} r_{\rm in}(\Omega_j) \geq C_{\gamma,d} \frac{\lambda^{\gamma+\frac{d-1}2}}{M_\gamma^{\rm N}(\lambda)} >0 \,.
    $$
    We can now argue as in the proof of \cite[Lemma 3.1]{LarsonJST}. By $|\Omega_j|=1, r_{\rm in}(\Omega_j)\geq \delta>0$ it follows from~\eqref{eq: inradius bound} that the sequence $\{\Omega_j\}_{j\geq 1}$ is uniformly bounded. Therefore, the Blaschke selection theorem implies that we can extract a subsequence that converges in the Hausdorff sense up to translations to a bounded, open, convex set $\Omega_*$ with $|\Omega_*|=1$. Recall that the Neumann eigenvalues are continuous with respect to Hausdorff convergence of convex sets (see \cite{Ross_04} or Proposition~\ref{prop: DN continuity of eigenvalues} below). Therefore, using dominated convergence and the bound in Lemma~\ref{lem: upper bound N counting} one deduces that $\liminf_{j\to \infty}\Tr(-\Delta_{\Omega_j}^{\rm N}-\lambda)_\limminus^\gamma \geq \Tr(-\Delta_{\Omega_*}^{\rm N}-\lambda)_\limminus^\gamma$. Since $\{\Omega_j\}_{j\geq 1}$ was a minimizing sequence $\Omega_*$ is a minimizer for the variational problem defining $M_\gamma^{\rm N}(\lambda)$.
\end{proof}

%---------------------------------------
%---------------------------------------

\section{Extrapolating semiclassical inequalities for Riesz means}

In this section we discuss an idea that will be used frequently in our arguments.

\begin{proposition}\label{prop: general extrapolation}
    Let $\Omega \subset \R^d$ be an open set and $\gamma_1>\gamma_0\geq 0$. 
    \begin{enumerate}[label=(\arabic*)]
    \item\label{itm: extrap D} If there exist $\Lambda_0>0, \Lambda_1>\Lambda_0, c<1$, such that
    \begin{equation*}
        \Tr(-\Delta_\Omega^{\rm D}-\lambda)_\limminus^{\gamma'} \leq L_{\gamma',d}^{\rm sc}|\Omega|\lambda^{\gamma'+ \frac{d}{2}}\quad \mbox{for all }\lambda \geq \Lambda_0, \gamma' \in [\gamma_0, \gamma_1]\,,
    \end{equation*}
    and
    \begin{equation*}
        \Tr(-\Delta_\Omega^{\rm D}-\lambda)_\limminus^{\gamma_1}\leq c L_{\gamma_1, d}^{\rm sc}|\Omega| \lambda^{\gamma_1+\frac{d}2}\,, \quad \mbox{for all }\lambda \leq \Lambda_1\,,
    \end{equation*}
    then there exists $\gamma\in [\gamma_0, \gamma_1)$ depending only on $d, \gamma_0, \gamma_1, c$ and an upper bound on $\frac{\Lambda_1}{\Lambda_1-\Lambda_0}$ so that
    \begin{equation*}
        \Tr(-\Delta_\Omega^{\rm D}-\lambda)_\limminus^{\gamma}\leq  L_{\gamma, d}^{\rm sc}|\Omega| \lambda^{\gamma+\frac{d}2}\,, \quad \mbox{for all }\lambda \geq 0\,.
    \end{equation*}

    \item\label{itm: extrap N} If there exist $\Lambda_0>0, c>1$, such that
    \begin{equation*}
       \Tr(-\Delta_\Omega^{\rm N}-\lambda)_\limminus^{\gamma'} \geq L_{\gamma',d}^{\rm sc}|\Omega|\lambda^{\gamma'+ \frac{d}{2}}\quad \mbox{for all }\lambda \geq \Lambda_0, \gamma' \in [\gamma_0, \gamma_1]
    \end{equation*}
    and
    \begin{equation*}
        \Tr(-\Delta_\Omega^{\rm N}-\lambda)_\limminus^{\gamma_1}\geq c L_{\gamma_1, d}^{\rm sc}|\Omega| \lambda^{\gamma_1+\frac{d}2}\,, \quad \mbox{for all }\lambda \leq \Lambda_0\,,
    \end{equation*}
    then there exists $\gamma \in [\gamma_0, \gamma_1)$ depending only on $d, \gamma_0, \gamma_1, c$ so that
    \begin{equation*}
        \Tr(-\Delta_\Omega^{\rm N}-\lambda)_\limminus^{\gamma}\geq  L_{\gamma, d}^{\rm sc}|\Omega| \lambda^{\gamma+\frac{d}2}\,, \quad \mbox{for all }\lambda \geq 0\,.
    \end{equation*}
    \end{enumerate}
\end{proposition}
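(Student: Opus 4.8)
The plan is to extrapolate downward in the Riesz exponent by combining the Aizenman--Lieb identity with the monotonicity of Riesz means in $\lambda$. The starting point is that for $0\le\gamma<\gamma_1$ and $\lambda>0$,
\begin{equation*}
\Tr(-\Delta_\Omega^\sharp-\lambda)_\limminus^{\gamma_1} = \frac{1}{\mathrm B(\gamma+1,\gamma_1-\gamma)}\int_0^\lambda \Tr(-\Delta_\Omega^\sharp-s)_\limminus^{\gamma}\,(\lambda-s)^{\gamma_1-\gamma-1}\,ds \,,
\end{equation*}
where $\mathrm B$ is the Beta function; this comes from the elementary identity $(\lambda-E)_\limplus^{\gamma_1}=\mathrm B(\gamma+1,\gamma_1-\gamma)^{-1}\int_0^\lambda(s-E)_\limplus^{\gamma}(\lambda-s)^{\gamma_1-\gamma-1}\,ds$ (valid for $E\ge0$) summed over the eigenvalues by Tonelli. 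Since $s\mapsto\Tr(-\Delta_\Omega^\sharp-s)_\limminus^{\gamma}$ is non-decreasing, estimating the integrand from below (resp.\ above) converts a bound at exponent $\gamma_1$ into one at the smaller exponent $\gamma$. In both parts the regime $\lambda\ge\Lambda_0$ is already handled by the hypothesis applied with $\gamma'=\gamma$, and $\lambda=0$ is trivial, so only small $\lambda$ needs work.

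For part~\ref{itm: extrap D} I would fix $\theta\in(0,1)$ close enough to $1$ — depending only on $d,\gamma_1,c$ and the given upper bound on $\tfrac{\Lambda_1}{\Lambda_1-\Lambda_0}$ (note $\Lambda_0/\Lambda_1=1-(\tfrac{\Lambda_1}{\Lambda_1-\Lambda_0})^{-1}$ is bounded away from $1$) — so that $\theta^{\gamma_1+d/2}>c$ and $\theta\Lambda_1\ge\Lambda_0$. Then for $\gamma\in[\gamma_0,\gamma_1)$ and $0<\mu\le\theta\Lambda_1$ one applies the identity with $\lambda:=\mu/\theta\,(\le\Lambda_1)$, discards the $s$-integral over $[0,\mu]$, bounds $\Tr(-\Delta_\Omega^{\rm D}-s)_\limminus^{\gamma}\ge\Tr(-\Delta_\Omega^{\rm D}-\mu)_\limminus^{\gamma}$ on $[\mu,\lambda]$, evaluates $\int_\mu^\lambda(\lambda-s)^{\gamma_1-\gamma-1}\,ds=(\gamma_1-\gamma)^{-1}((1-\theta)\mu/\theta)^{\gamma_1-\gamma}$, inserts the second hypothesis at $\mu/\theta$, and simplifies the resulting Gamma-functions using $(\gamma_1-\gamma)\mathrm B(\gamma+1,\gamma_1-\gamma)=\Gamma(\gamma+1)\Gamma(\gamma_1-\gamma+1)/\Gamma(\gamma_1+1)$ together with the formula for $L^{\rm sc}_{\gamma,d}$. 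This yields
\begin{equation*}
\Tr(-\Delta_\Omega^{\rm D}-\mu)_\limminus^{\gamma}\le g(\gamma)\,L^{\rm sc}_{\gamma,d}|\Omega|\mu^{\gamma+\frac{d}{2}}\,,\qquad g(\gamma):=\frac{c\,\Gamma(\gamma_1-\gamma+1)\,\Gamma(1+\gamma+\frac{d}{2})}{\Gamma(1+\gamma_1+\frac{d}{2})\,(1-\theta)^{\gamma_1-\gamma}\,\theta^{\gamma+\frac{d}{2}}}\,.
\end{equation*}
Now $g$ is continuous on $[\gamma_0,\gamma_1]$ with $g(\gamma_1)=c\,\theta^{-(\gamma_1+d/2)}<1$ by the choice of $\theta$, so there is $\epsilon>0$ with $g\le1$ on $[\gamma_1-\epsilon,\gamma_1]$; then $\gamma:=\max\{\gamma_0,\gamma_1-\epsilon\}\in[\gamma_0,\gamma_1)$ satisfies $g(\gamma)\le1$ and depends only on $d,\gamma_0,\gamma_1,c$ and the upper bound on $\tfrac{\Lambda_1}{\Lambda_1-\Lambda_0}$. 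For this $\gamma$ the display gives the claimed inequality for all $0<\mu\le\theta\Lambda_1$, in particular for all $\mu\le\Lambda_0$ (since $\theta\Lambda_1\ge\Lambda_0$), and together with $\lambda\ge\Lambda_0$ and $\lambda=0$ this proves~\ref{itm: extrap D}.

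For part~\ref{itm: extrap N} no rescaling is needed: for $\gamma\in[\gamma_0,\gamma_1)$ and $\lambda>0$ one bounds $\Tr(-\Delta_\Omega^{\rm N}-s)_\limminus^{\gamma}\le\Tr(-\Delta_\Omega^{\rm N}-\lambda)_\limminus^{\gamma}$ on all of $[0,\lambda]$ in the identity and uses $\int_0^\lambda(\lambda-s)^{\gamma_1-\gamma-1}\,ds=\lambda^{\gamma_1-\gamma}/(\gamma_1-\gamma)$ to get $\Tr(-\Delta_\Omega^{\rm N}-\lambda)_\limminus^{\gamma}\ge(\gamma_1-\gamma)\mathrm B(\gamma+1,\gamma_1-\gamma)\lambda^{-(\gamma_1-\gamma)}\Tr(-\Delta_\Omega^{\rm N}-\lambda)_\limminus^{\gamma_1}$. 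For $\lambda\le\Lambda_0$ the second hypothesis and the same Gamma-function simplification give $\Tr(-\Delta_\Omega^{\rm N}-\lambda)_\limminus^{\gamma}\ge h(\gamma)L^{\rm sc}_{\gamma,d}|\Omega|\lambda^{\gamma+\frac{d}{2}}$ with $h(\gamma):=c\,\Gamma(\gamma_1-\gamma+1)\Gamma(1+\gamma+\tfrac{d}{2})/\Gamma(1+\gamma_1+\tfrac{d}{2})$; since $h$ is continuous with $h(\gamma_1)=c>1$, some $\gamma\in[\gamma_0,\gamma_1)$ depending only on $d,\gamma_0,\gamma_1,c$ has $h(\gamma)\ge1$, and combining $\lambda\le\Lambda_0$ with $\lambda\ge\Lambda_0$ proves~\ref{itm: extrap N}. (If the Neumann trace is infinite the desired lower bound is trivial, so one may assume the spectrum below $\lambda$ is finite, which legitimizes the monotonicity step.)

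I expect the only genuine subtlety to be exactly what forces the rescaling $\lambda=\mu/\theta$ in the Dirichlet case: the Aizenman--Lieb weight $(\lambda-s)^{\gamma_1-\gamma-1}$ vanishes at $s=\lambda$, so extracting a \emph{lower} bound on the integrand at an interior point $s=\mu$ requires $\mu<\lambda$, and keeping $\lambda$ inside the region $\lambda\le\Lambda_1$ where the strong bound holds forces $\mu\le\theta\Lambda_1$ with $\theta<1$; this is precisely the origin of the dependence on $\Lambda_1/(\Lambda_1-\Lambda_0)$, and explains why the Neumann statement — where one instead pulls an \emph{upper} bound out of the integral at the right endpoint $s=\lambda$ — needs no analogue of $\Lambda_1$. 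Everything else is bookkeeping with the Beta and Gamma functions to verify $g(\gamma_1)<1$ (resp.\ $h(\gamma_1)>1$), which holds exactly because $c<1$ (resp.\ $c>1$).
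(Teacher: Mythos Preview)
Your proof is correct and follows essentially the same extrapolation strategy as the paper: use a Riesz-mean identity to push the strict bound at exponent $\gamma_1$ down to a slightly smaller $\gamma$, obtaining a prefactor that is continuous in $\gamma$ and equals $c$ at $\gamma_1$. The only organizational difference is that the paper packages the extrapolation through the pointwise Laptev inequality (Lemma~\ref{lem: Laptev extrapolation}) and its consequences (Lemmas~\ref{lem: extrapolating improved inequality small lambda Dir}--\ref{lem: extrapolating improved inequality small lambda Neu}, where the $\tau$-optimization plays the role of your $\theta$), whereas you go directly through the Aizenman--Lieb integral plus monotonicity of $s\mapsto\Tr(-\Delta_\Omega^\sharp-s)_\limminus^\gamma$; the resulting constants differ slightly but both tend to $c$ as $\gamma\to\gamma_1$, which is all that is needed.
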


Our proof of Proposition~\ref{prop: general extrapolation} relies on an idea that in a special case appears in Laptev~\cite{Laptev2} and generally in \cite{Frank_etal_09}.

\begin{lemma}\label{lem: Laptev extrapolation}
	Fix $0< \gamma' <\gamma$. For any $\lambda, \mu, \delta>0$ it holds that
	\begin{equation*}
		\lambda^{\gamma'-\gamma}(\lambda-\mu)_\limplus^\gamma\leq (\lambda-\mu)_\limplus^{\gamma'} \leq \delta^{\gamma'-\gamma}\frac{{\gamma'}^{\gamma'} (\gamma-\gamma')^{\gamma-\gamma'}}{{\gamma}^{\gamma}}  (\lambda-\mu+\delta)_\limplus^{\gamma}\,.
	\end{equation*}
	and
	\begin{equation*}
		\lambda^{-\gamma}(\lambda-\mu)_\limplus^\gamma \leq \1_{[\mu, \infty)}(\lambda) \leq \delta^{-\gamma} (\lambda-\mu+\delta)_\limplus^{\gamma}\,.
	\end{equation*}
\end{lemma}

The proof of the lemma is elementary and is omitted. We recall that Laptev~\cite{Laptev2} used the second pair of inequalities (with $\gamma=1$) to deduce bounds for counting functions from the Berezin--Li--Yau and Kr\"oger inequalities. This was generalized to general $\gamma\in[0,1)$ (and still $\gamma'=1$) in \cite{Frank_etal_09} and, remarkably, the resulting eigenvalue inequalities were shown to be sharp in the presence of a homogeneous magnetic field.

The consequences of the inequalities in Lemma~\ref{lem: Laptev extrapolation} that we shall require are recorded in the following two lemmas. 

\begin{lemma}\label{lem: extrapolating improved inequality small lambda Dir}
	Fix $d\geq 2, \gamma>0$ and let $\Omega \subset \R^d$ be an open set with $|\Omega|<\infty$. Assume that there exist $c>0, \Lambda \in (0, \infty]$  so that
	\begin{equation*}
		\Tr(-\Delta_\Omega^{\rm D} - \lambda)_\limminus^{\gamma} \leq c L_{\gamma, d}^{\rm sc} |\Omega| \lambda^{\gamma+\frac{d}2}\qquad \mbox{for all } \lambda \leq \Lambda\,.
	\end{equation*}	
	Then, for any $\gamma' \in [0, \gamma]$
	\begin{equation*}
		\Tr(-\Delta_\Omega^{\rm D} - \lambda)_\limminus^{\gamma'} \leq c f^{\rm D}_{\gamma}(\gamma') L^{\rm sc}_{\gamma', d} |\Omega| \lambda^{\gamma'+\frac{d}2} \qquad \mbox{for all } \lambda \leq \frac{\gamma'+\frac{d}2}{\gamma+\frac{d}2}\Lambda\,,
	\end{equation*} 
	with $f^{\rm D}_{\gamma}\colon [0, \gamma]\to [1, \infty)$ the decreasing function defined by
	\begin{equation*}
		f^{\rm D}_{\gamma}(\gamma') = \frac{(\gamma+\frac{d}2)^{\gamma+\frac{d}2}}{(\gamma'+\frac{d}2)^{\gamma'+\frac{d}2}}\frac{\Gamma(1+\gamma'+\frac{d}2)\Gamma(1+\gamma)}{\Gamma(1+\gamma+\frac{d}2)\Gamma(1+\gamma')}\,.
	\end{equation*}
\end{lemma}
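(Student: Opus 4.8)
The plan is to use the first inequality in Lemma~\ref{lem: Laptev extrapolation} to pass from the Riesz mean at exponent $\gamma$ to the Riesz mean at the smaller exponent $\gamma'$, applied termwise to the eigenvalues $\lambda_k^{\rm D}(\Omega)$. First I would fix $\gamma'\in[0,\gamma]$ and choose the free parameter $\delta$ in that lemma proportionally to $\lambda$; the natural choice, which makes the subsequent computation clean, is to take $\delta$ so that $\lambda-\mu+\delta$ becomes a fixed multiple of $\lambda$ independent of $\mu$, i.e.\ $\delta = \frac{\gamma-\gamma'}{\gamma'+\frac d2}\,\lambda$ (this is exactly the value that optimizes the constant $\delta^{\gamma'-\gamma}\gamma'^{\gamma'}(\gamma-\gamma')^{\gamma-\gamma'}/\gamma^\gamma$ after the substitution, and it is what produces the stated form of $f^{\rm D}_\gamma$). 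With this choice the second inequality of Lemma~\ref{lem: Laptev extrapolation}, summed over $k$, gives
\begin{equation*}
    \Tr(-\Delta_\Omega^{\rm D}-\lambda)_\limminus^{\gamma'}
    \leq \Bigl(\tfrac{\gamma-\gamma'}{\gamma'+\frac d2}\lambda\Bigr)^{\gamma'-\gamma}\frac{\gamma'^{\gamma'}(\gamma-\gamma')^{\gamma-\gamma'}}{\gamma^{\gamma}}\,\Tr\bigl(-\Delta_\Omega^{\rm D}-\lambda-\delta\bigr)_\limminus^{\gamma}
    = c_{\gamma,\gamma',d}\,\lambda^{\gamma'-\gamma}\,\Tr\Bigl(-\Delta_\Omega^{\rm D}-\tfrac{\gamma+\frac d2}{\gamma'+\frac d2}\lambda\Bigr)_\limminus^{\gamma}
\end{equation*}
for an explicit constant $c_{\gamma,\gamma',d}$ (for $\gamma'=\gamma$ this is trivially an equality).

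Next I would insert the hypothesis. Whenever $\lambda\leq\frac{\gamma'+\frac d2}{\gamma+\frac d2}\Lambda$, the shifted argument $\frac{\gamma+\frac d2}{\gamma'+\frac d2}\lambda$ is $\leq\Lambda$, so the assumed bound $\Tr(-\Delta_\Omega^{\rm D}-\mu)_\limminus^{\gamma}\leq cL_{\gamma,d}^{\rm sc}|\Omega|\mu^{\gamma+\frac d2}$ applies with $\mu=\frac{\gamma+\frac d2}{\gamma'+\frac d2}\lambda$. Substituting it into the displayed inequality, all powers of $\lambda$ combine to $\lambda^{\gamma'+\frac d2}$, and collecting the numerical factors — the constant $c_{\gamma,\gamma',d}$, the ratio $L_{\gamma,d}^{\rm sc}/L_{\gamma',d}^{\rm sc}$ expanded via \eqref{eq: semiclassical constant}, and the factor $\bigl(\frac{\gamma+\frac d2}{\gamma'+\frac d2}\bigr)^{\gamma+\frac d2}$ coming from $\mu^{\gamma+\frac d2}$ — yields precisely $cf_\gamma^{\rm D}(\gamma')L_{\gamma',d}^{\rm sc}|\Omega|\lambda^{\gamma'+\frac d2}$ with $f_\gamma^{\rm D}$ as defined in the statement. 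It then remains only to check the two claimed properties of $f_\gamma^{\rm D}$: that $f_\gamma^{\rm D}(\gamma)=1$ (immediate from the formula) and that $f_\gamma^{\rm D}$ is decreasing on $[0,\gamma]$; the monotonicity follows by taking a logarithmic derivative in $\gamma'$ and using monotonicity/convexity properties of $\log\Gamma$ (the digamma function), or can be deduced more softly from the fact that it arises as an optimized constant — I would present whichever is shorter.

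The routine parts are the two elementary algebraic simplifications (the optimal choice of $\delta$ and the bookkeeping of Gamma-function factors), which I would state but not belabor. The one genuine point requiring care is the verification that $f_\gamma^{\rm D}\geq 1$ with equality exactly at $\gamma'=\gamma$, equivalently that $f_\gamma^{\rm D}$ is decreasing; this is where the sharpness of the constant in Lemma~\ref{lem: Laptev extrapolation} is used, and it is the only step that is not pure symbol-pushing. I also need to be slightly careful that the argument is valid down to $\gamma'=0$, where $\Tr(-\Delta_\Omega^{\rm D}-\lambda)_\limminus^{0}$ is the counting function and the second pair of inequalities in Lemma~\ref{lem: Laptev extrapolation} (rather than the first) is the relevant one — but the same choice of $\delta$ works and the computation is identical, so this is not a real obstacle, merely a case to mention. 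Finiteness of $|\Omega|$ is used only to guarantee discreteness of the spectrum so that the Riesz means are well-defined sums.
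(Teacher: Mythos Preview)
Your proposal is correct and follows essentially the same approach as the paper: apply the upper bound in Lemma~\ref{lem: Laptev extrapolation} with $\delta=\tau\lambda$, insert the hypothesis at the shifted argument, optimize over $\tau$ (your $\tau=\frac{\gamma-\gamma'}{\gamma'+d/2}$ is exactly the paper's choice), and then verify the monotonicity of $f_\gamma^{\rm D}$ via the digamma function. The paper carries out the digamma step in some detail (showing $s\mapsto (s+\tfrac d2)^{s+d/2}\Gamma(1+s)/\Gamma(1+s+\tfrac d2)$ is increasing by a convexity argument and an explicit check at $s=0$), so your ``whichever is shorter'' should be the logarithmic-derivative route you outlined rather than the soft argument.
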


\begin{lemma}\label{lem: extrapolating improved inequality small lambda Neu}
	Fix $d\geq 2, \gamma>0$ and let $\Omega \subset \R^d$ be an open set with $|\Omega|<\infty$. Assume that there exist $c>0, \Lambda \in (0, \infty]$  so that
	\begin{equation*}
		\Tr(-\Delta_\Omega^{\rm N} - \lambda)_\limminus^{\gamma} \geq c L_{\gamma, d}^{\rm sc} |\Omega| \lambda^{\gamma+\frac{d}2}\qquad \mbox{for all } \lambda \leq \Lambda\,.
	\end{equation*}
	
	Then, for any $\gamma' \in [0, \gamma]$
	\begin{equation*}
		\Tr(-\Delta_\Omega^{\rm N} - \lambda)_\limminus^{\gamma'} \geq c f^{\rm N}_{\gamma}(\gamma') L^{\rm sc}_{\gamma', d} |\Omega| \lambda^{\gamma'+\frac{d}2} \qquad \mbox{for all } \lambda \leq \Lambda\,, 
	\end{equation*} 
	with $f^{\rm N}_{\gamma}\colon [0, \gamma]\to (0, 1]$ the increasing function defined by
	\begin{equation*}
		f^{\rm N}_{\gamma}(\gamma') = \frac{L_{\gamma,d}^{\rm sc}}{L_{\gamma',d}^{\rm sc}} = \frac{\Gamma(1+\gamma'+\frac{d}2)\Gamma(1+\gamma)}{\Gamma(1+\gamma+\frac{d}2)\Gamma(1+\gamma')}\,.
	\end{equation*}
\end{lemma}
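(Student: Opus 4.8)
The plan is to derive the $\gamma'$-Riesz mean inequality from the $\gamma$-one by inserting the pointwise bound from Lemma~\ref{lem: Laptev extrapolation}, applied term-by-term to the eigenvalues. Concretely, write
$$
\Tr(-\Delta_\Omega^{\rm N}-\lambda)_\limminus^{\gamma'} = \sum_{k\geq 1} (\lambda - \lambda_k^{\rm N}(\Omega))_\limplus^{\gamma'} \geq \lambda^{\gamma'-\gamma} \sum_{k\geq 1} (\lambda - \lambda_k^{\rm N}(\Omega))_\limplus^{\gamma} = \lambda^{\gamma'-\gamma}\, \Tr(-\Delta_\Omega^{\rm N}-\lambda)_\limminus^{\gamma} \,,
$$
using the first (leftmost) inequality in the first display of Lemma~\ref{lem: Laptev extrapolation} with $\mu=\lambda_k^{\rm N}(\Omega)$. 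This is exactly the step that goes in the ``favorable'' direction for Neumann: lowering the exponent only loses a power of $\lambda$, with no $\delta$-shift needed. (This is why the Neumann statement, unlike the Dirichlet one in Lemma~\ref{lem: extrapolating improved inequality small lambda Dir}, has no loss in the range of validity in $\lambda$ and the cleaner constant $f_\gamma^{\rm N}$.)

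Next I would feed in the hypothesis. For $\lambda\leq\Lambda$ we are given $\Tr(-\Delta_\Omega^{\rm N}-\lambda)_\limminus^{\gamma}\geq c\, L_{\gamma,d}^{\rm sc}|\Omega|\lambda^{\gamma+\frac d2}$, so combining with the above yields, still for $\lambda\leq\Lambda$,
$$
\Tr(-\Delta_\Omega^{\rm N}-\lambda)_\limminus^{\gamma'} \geq c\, L_{\gamma,d}^{\rm sc}|\Omega|\,\lambda^{\gamma'-\gamma}\lambda^{\gamma+\frac d2} = c\, L_{\gamma,d}^{\rm sc}|\Omega|\,\lambda^{\gamma'+\frac d2} \,.
$$
It only remains to rewrite the constant $L_{\gamma,d}^{\rm sc}$ in terms of $L_{\gamma',d}^{\rm sc}$: by definition \eqref{eq: semiclassical constant},
$$
\frac{L_{\gamma,d}^{\rm sc}}{L_{\gamma',d}^{\rm sc}} = \frac{\Gamma(\gamma+1)}{\Gamma(1+\gamma+\frac d2)}\cdot\frac{\Gamma(1+\gamma'+\frac d2)}{\Gamma(\gamma'+1)} = f_\gamma^{\rm N}(\gamma') \,,
$$
which is precisely the claimed formula, so $L_{\gamma,d}^{\rm sc}|\Omega|\lambda^{\gamma'+\frac d2} = f_\gamma^{\rm N}(\gamma')\,L_{\gamma',d}^{\rm sc}|\Omega|\lambda^{\gamma'+\frac d2}$ and the desired inequality follows for all $\lambda\leq\Lambda$.

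The two loose ends are both routine. First, one should note the function $f_\gamma^{\rm N}$ indeed maps $[0,\gamma]$ into $(0,1]$ and is increasing: it equals $1$ at $\gamma'=\gamma$, and monotonicity in $\gamma'$ follows from the log-convexity of $\Gamma$ (equivalently, from the fact that $L_{\gamma',d}^{\rm sc}$ decreases slower than... — more simply, $\frac{d}{d\gamma'}\log f_\gamma^{\rm N}(\gamma') = \psi(1+\gamma'+\frac d2)-\psi(1+\gamma')>0$ since the digamma $\psi$ is increasing). Second, one must handle the boundary case $\gamma'=0$, where the leftmost inequality to invoke is the second one in Lemma~\ref{lem: Laptev extrapolation}, $\lambda^{-\gamma}(\lambda-\mu)_\limplus^\gamma\leq\1_{[\mu,\infty)}(\lambda)$, giving $\Tr(-\Delta_\Omega^{\rm N}-\lambda)_\limminus^0\geq\lambda^{-\gamma}\Tr(-\Delta_\Omega^{\rm N}-\lambda)_\limminus^\gamma$, after which the computation is identical. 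I do not anticipate a genuine obstacle here; the only thing to be careful about is that the term-by-term application of Lemma~\ref{lem: Laptev extrapolation} is valid for $\lambda$ below $\Lambda$ uniformly in $k$ (it is, since the pointwise inequality holds for all $\lambda,\mu>0$), and that one correctly identifies the constant $f_\gamma^{\rm N}$ with the ratio of semiclassical constants rather than picking up an extraneous factor.
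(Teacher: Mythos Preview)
Your proof is correct and follows essentially the same route as the paper: apply the leftmost inequality of Lemma~\ref{lem: Laptev extrapolation} term-by-term to get $\Tr(-\Delta_\Omega^{\rm N}-\lambda)_\limminus^{\gamma'}\geq\lambda^{\gamma'-\gamma}\Tr(-\Delta_\Omega^{\rm N}-\lambda)_\limminus^{\gamma}$, insert the hypothesis, and identify the constant as $c\,f_\gamma^{\rm N}(\gamma')\,L_{\gamma',d}^{\rm sc}|\Omega|$. The only cosmetic difference is in the monotonicity of $f_\gamma^{\rm N}$: you use $\psi(1+\gamma'+\tfrac d2)-\psi(1+\gamma')>0$, while the paper observes directly from the integral representation $L_{\gamma,d}^{\rm sc}=(2\pi)^{-d}\int_{\R^d}(1-|\xi|^2)_\limplus^\gamma\,d\xi$ that $\gamma\mapsto L_{\gamma,d}^{\rm sc}$ is decreasing.
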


\begin{remark}
    We note that the functions $f^{\sharp}$ appearing in the above lemmas inherit the semigroup structure of Riesz means; if $0\leq \gamma''<\gamma'<\gamma$ then
    \begin{equation*}
        f^{\sharp}_{\gamma}(\gamma'')=f^{\sharp}_{\gamma}(\gamma')f^{\sharp}_{\gamma'}(\gamma'')\,.
    \end{equation*}
\end{remark}

\begin{proof}[Proof of Lemma~\ref{lem: extrapolating improved inequality small lambda Dir}]
	We write out the proof of the proposition under the assumption $\gamma' >0$. The argument extends to the case $\gamma'=0$ by utilizing the second inequality in Lemma~\ref{lem: Laptev extrapolation} instead of the first or by, after proving the statement for $\gamma'>0$, using continuity of the involved quantities and taking the limit $\gamma' \to 0$.  

    We use the assumed bound for $\gamma>\gamma'$ and Laptev's extrapolation argument. For any $\tau>0$ by Lemma~\ref{lem: Laptev extrapolation} with $\delta = \tau \lambda$ we deduce that
	\begin{align*}
		\Tr(-\Delta_\Omega^{\rm D}-\lambda)_\limminus^{\gamma'}  
        = \sum_{k\geq 1}(\lambda- \lambda_k^{\rm D}(\Omega))_\limplus^{\gamma'}
		\leq 
		\frac{{\gamma'}^{\gamma'}}{{\gamma}^{\gamma}}(\gamma-\gamma')^{\gamma-\gamma'} (\tau \lambda)^{\gamma'-\gamma}\Tr(-\Delta_\Omega^{\rm D}-\lambda(1+\tau))_\limminus^{\gamma}\,.
	\end{align*}
	If $(1+\tau) \lambda \leq \Lambda$, then by the assumed bound for $\Tr(-\Delta_\Omega^{\rm D}-\lambda)_\limminus^\gamma$
	\begin{align*}
		\Tr(-\Delta_\Omega^{\rm D}-\lambda)_\limminus^{\gamma'} & \leq
		\frac{{\gamma'}^{\gamma'} (\gamma-\gamma')^{\gamma-\gamma'} }{{\gamma}^{\gamma}}(\tau \lambda)^{\gamma'-\gamma}cL^{\rm sc}_{\gamma,d}|\Omega|(1+\tau)^{\gamma+\frac{d}2}\lambda^{\gamma+\frac{d}2}\\
		& =
		\frac{{\gamma'}^{\gamma'} (\gamma-\gamma')^{\gamma-\gamma'} }{{\gamma}^{\gamma}}\frac{(1+\tau)^{\gamma+\frac{d}2}}{\tau^{\gamma-\gamma'}}\frac{\Gamma(1+\gamma'+\frac{d}2)\Gamma(1+\gamma)}{\Gamma(1+\gamma+\frac{d}2)\Gamma(1+\gamma')}cL^{\rm sc}_{\gamma',d}|\Omega|\lambda^{\gamma'+\frac{d}2}\,.
	\end{align*}

	We minimize with respect to $\tau$, setting $\tau = \frac{2(\gamma-\gamma')}{d+2\gamma'}$, which leads to the bound
	\begin{align*}
		\Tr(-\Delta_\Omega^{\rm D}-\lambda)_\limminus^{\gamma'} & \leq
		cf^{\rm D}_{\gamma}(\gamma')L^{\rm sc}_{\gamma',d}|\Omega|\lambda^{\gamma'+\frac{d}2}
	\end{align*}
	for all $\lambda \leq \frac{\gamma'+\frac{d}2}{\gamma+\frac{d}2}\Lambda$.

    As it is clear that $f_\gamma^{\rm D}(\gamma)=1$ it remains to prove that $f_\gamma^{\rm D}$ is decreasing. 
    Define
    $$
       g\colon [0, \infty) \to \R \,,\quad s \mapsto \frac{(s+\frac{d}{2})^{s+ \frac{d}{2}} \ \Gamma(1+s)}{\Gamma(1+s+ \frac{d}2)}
    $$
    so that $f_\gamma^{\rm D}(\gamma') = \frac{g(\gamma)}{g(\gamma')}$. As $(0, \infty)\ni s\mapsto\Gamma(s)$ is smooth and positive it follows that $g$ is smooth and positive. Therefore, the function $f^{\rm D}_\gamma$ is decreasing if and only if $g$ is increasing. We shall prove this by arguing that $s \mapsto \ln(g(s))$ is increasing. 
    
    Let $\psi$ denote the digamma function defined by $\psi(s) = \frac{\Gamma'(s)}{\Gamma(s)}$ and recall that $\psi$ is increasing and strictly concave (this follows by differentiating the integral representation in~\cite[Eq.~5.9.16]{NIST}).

    In terms of $\psi$ we find that
    \begin{equation*}
        \frac{d}{ds}\ln(g(s)) = 1+ \ln\Bigl(s+ \frac{d}{2}\Bigr) + \psi(1+s)-\psi\Bigl(1+s+\frac{d}{2}\Bigr)\,,
    \end{equation*}
    and
    \begin{equation*}
        \frac{d^2}{ds^2}\ln(g(s)) = \frac{1}{s+ \frac{d}{2}} + \psi'(1+s)-\psi'\Bigl(1+s+\frac{d}{2}\Bigr)\,.
    \end{equation*}
    As $\psi$ is concave $\psi'$ is increasing and thus $\frac{d^2}{ds^2}\ln(g(s))>0$. Consequently, it follows that $g$ is increasing if we show that $\frac{d}{ds}\ln(g(s))$ is non-negative at $0$, that is, if
    \begin{equation}\label{eq: derivative sign}
        1+ \ln\Bigl(\frac{d}{2}\Bigr) + \psi(1)-\psi\Bigr(1+\frac{d}{2}\Bigr)\geq 0\,.
    \end{equation}

    To prove \eqref{eq: derivative sign}, we distinguish whether $d$ is even or odd. When $d=2k$ with $k\in \N$, then according to \cite[Eq. 5.4.14]{NIST}
    $$
        \psi\Bigr(1+\frac{d}{2}\Bigr) - \psi(1) = \sum_{j=1}^k \frac 1j \leq 1+ \sum_{j=2}^k \int_{j-1}^k \frac{dx}{x} = 1+ \ln k = 1+ \ln\Bigl(\frac d2\Bigr) \,,
    $$
    which is equivalent to \eqref{eq: derivative sign}.
    If instead $d=2k-1$ with $k\in \N$, then by \cite[Eq. 5.4.15]{NIST}
    $$
        \psi\Bigr(1+\frac{d}{2}\Bigr) - \psi(1) = -2\ln 2 + \sum_{j=1}^k \frac 2{2j-1}  \,.
    $$
    By convexity we have $\int_{2j-2}^{2j}x^{-1} dx \geq 2(2j-1)^{-1}$ for $j\geq 2$. Thus,
    \begin{align*}
        \psi\Bigr(1+\frac{d}{2}\Bigr) - \psi(1) &\leq - 2 \ln 2 + 2 + \sum_{j=2}^k \int_{2j-2}^{2j} \frac{dx}{x}\\
        &= - 2\ln 2 + 2 + \ln(k)\\
        &= 1+\ln\Bigl(k-\frac{1}2\Bigr)+\Bigl(1-\ln\Bigl(4-\frac{2}k\Bigr)\Bigr)\\
        &\leq 1+\ln\Bigl(k-\frac12\Bigr) \,, 
    \end{align*}
    as $\ln(4-\frac{2}k)\geq \ln(3) >\ln(e)=1$. Since $k -\frac{1}{2}= \frac{d}{2}$ this proves \eqref{eq: derivative sign}, and thus completes the proof of Lemma~\ref{lem: extrapolating improved inequality small lambda Dir}.
 \end{proof}

\begin{proof}[Proof of Lemma~\ref{lem: extrapolating improved inequality small lambda Neu}]
	We use the assumed bound for $\Tr(-\Delta_\Omega^{\rm N}-\lambda)_\limminus^\gamma$ and the lower bounds in Lemma~\ref{lem: Laptev extrapolation}. By Lemma~\ref{lem: Laptev extrapolation} we deduce that
	\begin{align*}
		\Tr(-\Delta_\Omega^{\rm N}-\lambda)_\limminus^{\gamma'} & = \sum_{k\geq 1}(\lambda- \lambda_k^{\rm N}(\Omega))_\limplus^{\gamma'}
		\geq 
		\lambda^{\gamma'-\gamma}\Tr(-\Delta_\Omega^{\rm N}-\lambda)_\limminus^{\gamma}\,.
	\end{align*}
	If $\lambda \leq \Lambda$ then by the assumed bound for $\gamma$
	\begin{align*}
		\Tr(-\Delta_\Omega^{\rm N}-\lambda)_\limminus^{\gamma'} 
		 \geq
			c L_{\gamma,d}^{\rm sc}|\Omega|\lambda^{\gamma'+ \frac{d}{2}}
		=
			c f^{\rm N}_\gamma(\gamma')L_{\gamma',d}^{\rm sc}|\Omega|\lambda^{\gamma'+ \frac{d}{2}} \,.
	\end{align*}

    Clearly $f^{\rm N}_\gamma(\gamma)=1$ and so to complete the proof it only remains to prove that $f^{\rm N}_\gamma$ is increasing. Since $f^{\rm N}_\gamma(\gamma') = \frac{L_{\gamma,d}^{\rm sc}}{L_{\gamma',d}^{\rm sc}}$ the claimed monotonicity of $f^{\rm N}_\gamma$ follows if we show that $\gamma \mapsto L_{\gamma,d}^{\rm sc}$ is decreasing. That this is the case can be seen by using the integral representation
    \begin{equation*}
        L_{\gamma, d}^{\rm sc} = (2\pi)^{-d}\int_{\R^d}(1-|\xi|^2)_\limplus^\gamma \,d\xi
    \end{equation*}
    and noting that $\gamma \mapsto (1-|\xi|^2)_\limminus^\gamma$ is decreasing for each fixed $\xi\in \R^d$ with $|\xi|<1$ and zero otherwise. This completes the proof.
\end{proof}

\begin{proof}[Proof of Proposition~\ref{prop: general extrapolation}]
{\it Case 1: Dirichlet boundary conditions.} By the first assumption we need to prove that for some $\gamma\in [\gamma_0, \gamma_1]$ we have that
\begin{equation}\label{eq: goal extrapolation Dir}
    \Tr(-\Delta_\Omega^{\rm D}-\lambda)_\limminus^\gamma \leq L_{\gamma, d}^{\rm sc} |\Omega|\lambda^{\gamma + \frac{d}2} \quad \mbox{for all }\lambda < \Lambda_0\,.
\end{equation}

By Lemma~\ref{lem: extrapolating improved inequality small lambda Dir} and the second assumption we have for any $\gamma' \leq \gamma_1$ and all $\lambda \leq \frac{\gamma'+ \frac{d}{2}}{\gamma_1 + \frac{d}{2}}\Lambda_1$ that
\begin{equation*}
    \Tr(-\Delta_\Omega^{\rm D}-\lambda)_\limminus^{\gamma'} \leq c f_{\gamma_1}^{\rm D}(\gamma')  L_{\gamma', d}^{\rm sc}|\Omega|\lambda^{\gamma'+ \frac{d}2}\,.
\end{equation*}
Since $f_{\gamma_1}^{\rm D}$ is continuous and decreasing with $\lim_{\gamma'\to \gamma_1}f_{\gamma_1}^{\rm D}(\gamma') =1$ and since $c<1$ and $\Lambda_1>\Lambda_0$, we find that \eqref{eq: goal extrapolation Dir} holds for all $\gamma$ satisfying
\begin{equation*}
    \max\Bigl\{(f_{\gamma_1}^{\rm D})^{-1}\Bigl(\frac{1}{c}\Bigr), \gamma_1 -\frac{\Lambda_1-\Lambda_0}{\Lambda_1}\Bigl(\gamma_1+\frac{d}2\Bigr)\Bigr\}\leq \gamma \leq \gamma_1\,.
\end{equation*}

\medskip
{\it Case 2: Neumann boundary conditions.} By the first assumption we need to prove that for some $\gamma\in [\gamma_0, \gamma_1]$ we have that
\begin{equation}\label{eq: goal extrapolation Neu}
    \Tr(-\Delta_\Omega^{\rm N}-\lambda)_\limminus^\gamma \leq L_{\gamma, d}^{\rm sc} |\Omega|\lambda^{\gamma + \frac{d}2} \quad \mbox{for all }\lambda < \Lambda_0\,.
\end{equation}

By Lemma~\ref{lem: extrapolating improved inequality small lambda Neu} and the second assumption we have for any $\gamma' <\gamma_1$ and all $\lambda \leq \Lambda_0$ that
\begin{equation*}
    \Tr(-\Delta_\Omega^{\rm N}-\lambda)_\limminus^{\gamma'} \geq c f_{\gamma_1}^{\rm N}(\gamma') L_{\gamma', d}^{\rm sc}|\Omega|\lambda^{\gamma'+ \frac{d}2}\,.
\end{equation*}
Since $f_{\gamma_1}^{\rm N}$ is continuous and increasing with $\lim_{\gamma'\to \gamma_1}f_{\gamma_1}^{\rm N}(\gamma') =1$ and since $c>1$, we find that \eqref{eq: goal extrapolation Neu} holds for all $\gamma$ satisfying
\begin{equation*}
    (f_{\gamma_1}^{\rm N})^{-1}\Bigl(\frac{1}{c}\Bigr)\leq \gamma \leq \gamma_1\,. \qedhere
\end{equation*}
\end{proof}

%---------------------------------------
%---------------------------------------

\section{Semiclassical inequalities in convex domains} 

\subsection{Proof of Theorems \ref{thm: Extended range of semiclassical ineq Dir} and \ref{thm: Extended range of semiclassical ineq Neu}}

In the setting of convex sets Proposition~\ref{prop: general extrapolation} provides the following results, which essentially say that the strict inequality $\gamma>\gamma_d^{\sharp}$ holds if in a certain sense an improved semiclassical inequality holds for Riesz means of order $\gamma$, uniformly among convex sets.

\begin{proposition}\label{prop: extrapolation of improved inequality Dir}
	Let $d\geq 1$. Assume that $\gamma>0$ and that for each $B>0$ there exists a constant $c<1$ such that for all bounded convex open sets $\Omega \subset \R^d$ and all $\lambda\leq \frac{B}{r_{\rm in}(\Omega)^2}$ we have
		\begin{equation*}
			\Tr(-\Delta_\Omega^{\rm D}-\lambda)_\limminus^{\gamma} \leq c L_{\gamma,d}^{{\rm sc}} |\Omega| \lambda^{\gamma+\frac d2} \,.
		\end{equation*}
	Then $\gamma>\gamma_d^{\rm D}$.
\end{proposition}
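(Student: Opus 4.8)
The plan is to combine the hypothesis (an improved semiclassical inequality valid for all convex sets in the low-energy regime $\lambda \le B/r_{\rm in}(\Omega)^2$) with the improved Berezin--Li--Yau bound of Proposition~\ref{prop: improved Berezin} (valid in the complementary high-energy regime) and then feed the result into the extrapolation machinery of Proposition~\ref{prop: general extrapolation}\ref{itm: extrap D}. The point is that $\gamma_d^{\rm D}$ is by definition the infimum of exponents for which the Dirichlet semiclassical inequality holds uniformly over $\mathcal{C}_d$ and all $\lambda\ge 0$; so it suffices to produce, for the \emph{given} $\gamma$ and \emph{every} convex $\Omega$, some exponent $\gamma' < \gamma$ (depending only on $d$, not on $\Omega$) for which $\Tr(-\Delta_\Omega^{\rm D}-\lambda)_\limminus^{\gamma'} \le L_{\gamma',d}^{\rm sc}|\Omega|\lambda^{\gamma'+\frac d2}$ for all $\lambda \ge 0$. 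Taking a supremum over $\Omega$, this will give $r_{\gamma',d}^{\rm D}\le 1$, hence (with Weyl's law, fact (b)) $r_{\gamma',d}^{\rm D}=1$, and therefore $\gamma_d^{\rm D}\le\gamma'<\gamma$.

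The key step is to verify the two hypotheses of Proposition~\ref{prop: general extrapolation}\ref{itm: extrap D} with constants uniform in $\Omega$. Fix a convex bounded open $\Omega\subset\R^d$ and set $R := r_{\rm in}(\Omega)$. For the \emph{high-energy} hypothesis: Proposition~\ref{prop: improved Berezin} gives $\Tr(-\Delta_\Omega^{\rm D}-\lambda)_\limminus \le L_{1,d}^{\rm sc}|\Omega|\lambda^{1+\frac d2}(1-c_0 e^{-c'' R\sqrt\lambda})$ for a constant $c_0$ and $c''$ depending only on $d$. Restricting to $\lambda \le \Lambda_1 := B'/R^2$ for a suitable $B' = B'(d)$ (to be chosen), the exponential factor is bounded below by $c_0 e^{-c''\sqrt{B'}} =: 1-\tilde c$ with $\tilde c<1$ depending only on $d$; thus $\Tr(-\Delta_\Omega^{\rm D}-\lambda)_\limminus \le \tilde c\, L_{1,d}^{\rm sc}|\Omega|\lambda^{1+\frac d2}$ for $\lambda\le\Lambda_1$, which is the ``$\gamma_1=1$, small-$\lambda$'' hypothesis with $c=\tilde c<1$. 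For the \emph{intermediate range} hypothesis $\Tr(-\Delta_\Omega^{\rm D}-\lambda)_\limminus^{\gamma'} \le L_{\gamma',d}^{\rm sc}|\Omega|\lambda^{\gamma'+\frac d2}$ for $\lambda\ge\Lambda_0$ and all $\gamma'\in[\gamma,1]$: for $\gamma'=1$ this is Berezin--Li--Yau (fact (b)); for $\gamma'\in(\gamma,1)$ it follows from the Aizenman--Lieb principle \emph{provided} it holds at $\gamma'=\gamma$; and at $\gamma'=\gamma$ we invoke the hypothesis of the present proposition — but that hypothesis gives the $\gamma$-inequality only for $\lambda\le B/R^2$, whereas we need it for $\lambda\ge\Lambda_0$. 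The resolution is to choose $\Lambda_0 := B/R^2$ and require $\Lambda_1 = B'/R^2$ with $B'>B$ (so $\Lambda_1>\Lambda_0$); then on the overlap region the hypothesis of the proposition supplies the $\gamma'=\gamma$ bound for $\lambda\le\Lambda_0$, while on $[\Lambda_0,\Lambda_1]$ we still only have the $\gamma'=1$ bound.

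Let me correct the bookkeeping: the cleanest route is to apply Proposition~\ref{prop: general extrapolation}\ref{itm: extrap D} with $\gamma_0=\gamma$, $\gamma_1=1$, $\Lambda_0 := B/R^2$, and $\Lambda_1 := B'/R^2$ where $B' > B$ is a constant depending only on $d$ chosen so that $\frac{\Lambda_1}{\Lambda_1-\Lambda_0} = \frac{B'}{B'-B}$ is bounded by a fixed constant. The ``intermediate range'' hypothesis of that proposition — the semiclassical inequality for all $\gamma'\in[\gamma,1]$ and all $\lambda\ge\Lambda_0$ — is obtained as follows: the hypothesis of the present proposition, applied with the threshold $B$, gives the $\gamma$-inequality for $\lambda\le B/R^2=\Lambda_0$; but we also need it for $\lambda\ge\Lambda_0$. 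Here is where the hypothesis must be used more carefully — in fact the hypothesis as stated gives a bound valid for $\lambda\le B/r_{\rm in}^2$ for \emph{every} $B$, with $c=c(B)<1$; letting $B\to\infty$ one does not get $c<1$ uniformly, so we cannot directly cover all $\lambda$. Instead, for $\lambda \ge \Lambda_0 = B/R^2$ we use the improved Berezin bound, which reads $\Tr(-\Delta_\Omega^{\rm D}-\lambda)_\limminus \le L_{1,d}^{\rm sc}|\Omega|\lambda^{1+\frac d2}$ outright (dropping the negative term), hence the $\gamma_1=1$ semiclassical inequality holds for \emph{all} $\lambda\ge 0$; Aizenman--Lieb then gives the $\gamma'$-inequality for all $\gamma'\ge 1$ but \emph{not} for $\gamma'\in[\gamma,1)$. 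So the honest statement is that the ``intermediate range'' hypothesis at exponents below $1$ is exactly the content we are trying to prove, and the proposition we are applying is designed to bootstrap precisely from the endpoint data: reread Proposition~\ref{prop: general extrapolation}\ref{itm: extrap D} — its ``intermediate range'' hypothesis is for $\gamma'\in[\gamma_0,\gamma_1]$ and $\lambda\ge\Lambda_0$; we should therefore apply it with $\gamma_0$ chosen slightly below $\gamma$ and $\gamma_1=\gamma$, using the hypothesis of the present proposition (valid for $\lambda\le B/R^2$) together with a high-energy input at exponent $\gamma$. That high-energy input comes from Aizenman--Lieb lifting the improved Berezin bound: for $\gamma\ge$ something we'd need $\gamma\ge1$, which fails. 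The correct high-energy input for exponent $\gamma<1$ and $\lambda$ large is instead the quantitative Weyl law / the results of \cite{FrankLarson_24}, which for convex $\Omega$ give $\Tr(-\Delta_\Omega^{\rm D}-\lambda)_\limminus^\gamma \le L_{\gamma,d}^{\rm sc}|\Omega|\lambda^{\gamma+\frac d2}$ once $r_{\rm in}(\Omega)\sqrt\lambda \gtrsim 1$; combined with the hypothesis here for $r_{\rm in}(\Omega)\sqrt\lambda \lesssim 1$ (which includes an $\epsilon$ of room via $c<1$, upgraded by Lemma~\ref{lem: extrapolating improved inequality small lambda Dir} to a strict inequality at slightly smaller exponent), one covers all $\lambda\ge0$ at the slightly reduced exponent. \textbf{The main obstacle} is thus matching the two regimes cleanly: the hypothesis gives strict inequality at exponent $\gamma$ only for bounded $r_{\rm in}\sqrt\lambda$, while the external semiclassical asymptotics give the (non-strict) inequality at exponent $\gamma$ only for large $r_{\rm in}\sqrt\lambda$ — so one must use Lemma~\ref{lem: extrapolating improved inequality small lambda Dir} to trade the strictness at $\gamma$ for a genuinely smaller exponent $\gamma'<\gamma$ that works on \emph{all} of $\lambda\ge0$, carefully tracking that $\gamma'$ and all implied constants depend only on $d$ and $\gamma$, never on $\Omega$.
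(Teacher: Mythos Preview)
Your final paragraph lands on exactly the paper's argument: apply Proposition~\ref{prop: general extrapolation}\ref{itm: extrap D} with $\gamma_1=\gamma$ and some $\gamma_0<\gamma$, taking $\Lambda_0=B/r_{\rm in}(\Omega)^2$ (from \cite[Theorem~1.2]{FrankLarson_24}) and $\Lambda_1=B'/r_{\rm in}(\Omega)^2$ with $B'>B$ (from the hypothesis), so that $\Lambda_1/(\Lambda_1-\Lambda_0)=B'/(B'-B)$ is bounded independently of $\Omega$. The long detour through $\gamma_1=1$ and Proposition~\ref{prop: improved Berezin} is unnecessary and, as you partly notice, cannot yield the conclusion: with $\gamma_0=\gamma$ the output of Proposition~\ref{prop: general extrapolation} lies in $[\gamma,1)$, which only gives $\gamma_d^{\rm D}\le\gamma$, not the required strict inequality.

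One point you should make explicit: the first hypothesis of Proposition~\ref{prop: general extrapolation}\ref{itm: extrap D} must hold for \emph{all} $\gamma'\in[\gamma_0,\gamma]$ at a common threshold $\Lambda_0$, not just at $\gamma'=\gamma$. This is exactly why the paper remarks that the constant $B$ from \cite[Theorem~1.2]{FrankLarson_24} can be taken uniformly bounded for $\gamma'$ in any compact subset of $(0,1]$; without this uniformity the matching of the two regimes would fail at the reduced exponent $\gamma'<\gamma$.
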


\begin{proposition}\label{prop: extrapolation of improved inequality Neu}
	Let $d\geq 1$. Assume that $\gamma>0$ and that for each $B>0$ there exists a constant $c>1$ such that for all bounded convex open sets $\Omega \subset \R^d$ and all $\lambda\leq \frac{B}{r_{\rm in}(\Omega)^2}$ we have
		\begin{equation*}
			\Tr(-\Delta_\Omega^{\rm N}-\lambda)_\limminus^{\gamma} \geq c L_{\gamma,d}^{{\rm sc}} |\Omega| \lambda^{\gamma+\frac d2} \,.
		\end{equation*}
	Then $\gamma>\gamma_d^{\rm N}$.
\end{proposition}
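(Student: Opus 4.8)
The plan is to feed the assumed improved inequality for small $\lambda$ into the extrapolation Proposition~\ref{prop: general extrapolation}, supplying the ordinary semiclassical inequality for large $\lambda$ from the uniform semiclassical asymptotics for convex domains of \cite{FrankLarson_24}. The reason this yields a conclusion uniform over \emph{all} convex $\Omega$ — which is what is needed to bound $\gamma_d^\sharp$ — is that the threshold $\Lambda_0$ above which the ordinary semiclassical inequality holds and the threshold $\Lambda_1$ below which the hypothesis applies both scale like $r_{\rm in}(\Omega)^{-2}$. Hence the ratio $\Lambda_1/(\Lambda_1-\Lambda_0)$, which is the only geometry-dependent quantity entering the conclusion of Proposition~\ref{prop: general extrapolation}\ref{itm: extrap D}, can be taken to be an absolute constant, so the exponent it produces is then the same for every $\Omega$.

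\textbf{Dirichlet case (Proposition~\ref{prop: extrapolation of improved inequality Dir}).} Fix $\gamma_0\in(0,\gamma)$. By \cite[Theorem~1.2]{FrankLarson_24} together with the fact that the subleading Weyl term for $-\Delta_\Omega^{\rm D}$ is negative and, by \eqref{eq: inradius bound}, of order $|\Omega|\,\lambda^{\gamma'+\frac{d-1}2}/r_{\rm in}(\Omega)$ — hence dominates the remainder once $r_{\rm in}(\Omega)\sqrt\lambda$ is large, uniformly over $\gamma'$ in the compact interval $[\gamma_0,\gamma]$ — there is a constant $B'=B'(\gamma_0,\gamma,d)>0$ such that, for every bounded convex open $\Omega\subset\R^d$,
\[
\Tr(-\Delta_\Omega^{\rm D}-\lambda)_\limminus^{\gamma'}\leq L_{\gamma',d}^{\rm sc}|\Omega|\lambda^{\gamma'+\frac d2}
\qquad\text{whenever }\gamma'\in[\gamma_0,\gamma]\ \text{and}\ \lambda\geq B'\,r_{\rm in}(\Omega)^{-2}\,.
\]
Next apply the hypothesis of Proposition~\ref{prop: extrapolation of improved inequality Dir} with $B:=2B'$, obtaining a constant $c=c(2B')<1$ with $\Tr(-\Delta_\Omega^{\rm D}-\lambda)_\limminus^{\gamma}\leq c\,L_{\gamma,d}^{\rm sc}|\Omega|\lambda^{\gamma+\frac d2}$ for all $\lambda\leq 2B'\,r_{\rm in}(\Omega)^{-2}$, uniformly in $\Omega$. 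Now fix a bounded convex open $\Omega$ and apply Proposition~\ref{prop: general extrapolation}\ref{itm: extrap D} with this $\gamma_0$, with $\gamma_1:=\gamma$, and with $\Lambda_0:=B'\,r_{\rm in}(\Omega)^{-2}$, $\Lambda_1:=2B'\,r_{\rm in}(\Omega)^{-2}$; one checks $\Lambda_0>0$ (as $\Omega$ is bounded), $\Lambda_1>\Lambda_0$, and $\Lambda_1/(\Lambda_1-\Lambda_0)=2$. The conclusion is an exponent $\gamma_*\in[\gamma_0,\gamma)$ depending only on $d,\gamma_0,\gamma,c$ and the number $2$ — and hence the same for every $\Omega$ — such that $\Tr(-\Delta_\Omega^{\rm D}-\lambda)_\limminus^{\gamma_*}\leq L_{\gamma_*,d}^{\rm sc}|\Omega|\lambda^{\gamma_*+\frac d2}$ for all $\lambda\geq0$. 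Since $\gamma_*$ does not depend on $\Omega$, this holds for every bounded convex open $\Omega\subset\R^d$, so $r_{\gamma_*,d}^{\rm D}=1$ and therefore $\gamma_d^{\rm D}\leq\gamma_*<\gamma$, as claimed.

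\textbf{Neumann case and main obstacle.} The proof of Proposition~\ref{prop: extrapolation of improved inequality Neu} is entirely analogous, using Proposition~\ref{prop: general extrapolation}\ref{itm: extrap N} and the fact that the subleading Weyl term for $-\Delta_\Omega^{\rm N}$ is positive (so $\Tr(-\Delta_\Omega^{\rm N}-\lambda)_\limminus^{\gamma'}\geq L_{\gamma',d}^{\rm sc}|\Omega|\lambda^{\gamma'+\frac d2}$ once $r_{\rm in}(\Omega)\sqrt\lambda\geq B'$), and applying the hypothesis with $B:=B'$. This case is in fact slightly simpler, because part~\ref{itm: extrap N} of Proposition~\ref{prop: general extrapolation} requires the improved inequality only on the same range $\lambda\leq\Lambda_0$ on which the ordinary one may fail, so no enlarged threshold $\Lambda_1$ and no ratio bound enter, and the extrapolated exponent depends only on $d,\gamma_0,\gamma,c$. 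The main obstacle — indeed the only genuinely nontrivial ingredient — is the semiclassical inequality for large $\lambda$: one needs the ordinary Berezin--Li--Yau/Kr\"oger-type inequality (in the correct direction, exploiting the favorable sign of the boundary term) to hold with a threshold $r_{\rm in}(\Omega)^2\lambda\geq B'$ that is uniform over \emph{all} bounded convex open $\Omega$ and over $\gamma'\in[\gamma_0,\gamma]$. This uniformity in the geometry is precisely what the results of \cite{FrankLarson_24} provide; granting it, the remainder of the argument is the bookkeeping above, the one point requiring care being to take $\Lambda_0,\Lambda_1\propto r_{\rm in}(\Omega)^{-2}$ so that the exponent furnished by Proposition~\ref{prop: general extrapolation} carries no $\Omega$-dependence.
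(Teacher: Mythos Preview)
Your proposal is correct and follows essentially the same approach as the paper's proof: both combine the uniform two-term asymptotics from \cite{FrankLarson_24} (to verify the first hypothesis of Proposition~\ref{prop: general extrapolation}) with the assumed improved inequality (for the second hypothesis), choosing thresholds proportional to $r_{\rm in}(\Omega)^{-2}$ so that the extrapolated exponent is independent of $\Omega$. You are more explicit than the paper about the bookkeeping --- in particular, about why $\Lambda_1/(\Lambda_1-\Lambda_0)$ is an absolute constant in the Dirichlet case and why no such ratio enters the Neumann case --- but the underlying argument is the same; the paper additionally observes that one may assume $\gamma\leq 1$ (since $\gamma_d^\sharp\leq 1$), which simplifies the appeal to uniformity of $B'$ over $\gamma'\in[\gamma_0,\gamma]$.
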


\begin{remark}
	By tracking constants in our proofs (and those in~\cite{FrankLarson_24,FrankLarson_heatkernels,FrankLarson_Crelle20}) the gap $\gamma-\gamma_d^{\sharp}$ can be quantified.
\end{remark}

We emphasize that Propositions \ref{prop: extrapolation of improved inequality Dir} and \ref{prop: extrapolation of improved inequality Neu} will be superceded by Theorems \ref{thm: improved inequality above critical gamma Dir} and~\ref{thm: improved inequality above critical gamma Neu}. We have opted to state the propositions separately since they lead to a quick proof of our first pair of main results, Theorems \ref{thm: Extended range of semiclassical ineq Dir} and \ref{thm: Extended range of semiclassical ineq Neu}.

\begin{proof}[Proof of Propositions~\ref{prop: extrapolation of improved inequality Dir} and \ref{prop: extrapolation of improved inequality Neu}]
	By the Berezin--Li--Yau and Kr\"oger inequalities we have $\gamma_d^\sharp \leq 1$, so without loss of generality we may assume that $\gamma\leq 1$. By \cite[Theorem~1.2]{FrankLarson_24}, for any $\gamma' >0$ there exists a $B>0$ such that for all bounded, convex open sets $\Omega\subset\R^d$ and all $\lambda \geq \frac{B}{r(\Omega)^2}$ we have
	\begin{equation*}
		\Tr(-\Delta_\Omega^{\rm D}-\lambda)_\limminus^{\gamma'}\leq L^{\rm sc}_{\gamma',d}|\Omega|\lambda^{\gamma'+\frac{d}2} \leq \Tr(-\Delta_\Omega^{\rm N}-\lambda)_\limminus^{\gamma'} \,.
	\end{equation*} 
	In fact, $B$ can be chosen uniformly bounded for $\gamma'$ in any compact subset of $(0, 1]$.
	Indeed, the proof of \cite[Theorem~1.2]{FrankLarson_24} rests on using an argument of Riesz to prove inequalities for all $\gamma \in (\gamma_0, 1)$ by interpolating between corresponding inequalities at the endpoints and the constant produced in this interpolation depends continuously on the interpolation parameter. Alternatively, uniformity of the error estimate for $\gamma \in [\gamma_0, 1]$  can also be deduced by the Aizenman--Lieb argument deducing two-term asymptotics for Riesz means of order $\gamma$ by integrating those known at $\gamma_0$. Therefore, the first assumption in parts \ref{itm: extrap D} and \ref{itm: extrap N} of Proposition \ref{prop: general extrapolation} is satisfied. The second assumption in parts \ref{itm: extrap D} and \ref{itm: extrap N} of Proposition \ref{prop: general extrapolation} holds by assumption. Thus, the assertion follows from Proposition \ref{prop: general extrapolation}.
\end{proof}

We are now in position to prove the first main result of our paper.

\begin{proof}[Proof of Theorems~\ref{thm: Extended range of semiclassical ineq Dir} and \ref{thm: Extended range of semiclassical ineq Neu}]
    We apply Propositions \ref{prop: extrapolation of improved inequality Dir} and \ref{prop: extrapolation of improved inequality Neu} with $\gamma=1$. The assumptions of those propositions are immediate consequences of Propositions~\ref{prop: improved Berezin} and \ref{prop: improved Kroger}, respectively.
\end{proof}

We end this subsection with another application of Proposition \ref{prop: general extrapolation}.

\begin{corollary}
    Let $d\geq 1$ and let $\Omega\subset\R^d$ be a Lipschitz set of finite measure. Then there are $\gamma_\Omega^{\rm D}<1$ and $\gamma_\Omega^{\rm N}<1$ such that for all $\gamma\geq\gamma_\Omega^{\rm D}$ and all $\lambda>0$ we have
    $$
    \Tr(-\Delta_\Omega^{\rm D}-\lambda)_\limminus^{\gamma} \leq L_{\gamma,d}^{{\rm sc}} |\Omega| \lambda^{\gamma+\frac d2}
    $$
    and for all $\gamma\geq\gamma_\Omega^{\rm N}$ and all $\lambda>0$ we have
    $$
    \Tr(-\Delta_\Omega^{\rm N}-\lambda)_\limminus^{\gamma} \geq L_{\gamma,d}^{{\rm sc}} |\Omega| \lambda^{\gamma+\frac d2} \,.
    $$
\end{corollary}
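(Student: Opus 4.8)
The plan is to apply Proposition~\ref{prop: general extrapolation} to the single set $\Omega$, with $\gamma_1=1$ and any fixed $\gamma_0\in(0,1)$. For the Dirichlet statement it then suffices to produce $\Lambda_1>\Lambda_0>0$ and a constant $c<1$ (both allowed to depend on $\Omega$) such that $\Tr(-\Delta_\Omega^{\rm D}-\lambda)_\limminus^{\gamma'}\le L_{\gamma',d}^{\rm sc}|\Omega|\lambda^{\gamma'+\frac d2}$ for all $\lambda\ge\Lambda_0$ and all $\gamma'\in[\gamma_0,1]$, and $\Tr(-\Delta_\Omega^{\rm D}-\lambda)_\limminus^{1}\le c\,L_{1,d}^{\rm sc}|\Omega|\lambda^{1+\frac d2}$ for all $\lambda\le\Lambda_1$. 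Part~\ref{itm: extrap D} of Proposition~\ref{prop: general extrapolation} then yields a single $\gamma_\Omega^{\rm D}\in[\gamma_0,1)$ for which the asserted bound holds at every $\lambda\ge0$, and the Aizenman--Lieb argument (Fact~1, applied to the fixed set $\Omega$) upgrades it to all $\gamma\ge\gamma_\Omega^{\rm D}$. The Neumann statement is obtained in the same way from part~\ref{itm: extrap N}, with the inequalities reversed, the constant $c>1$, and a single threshold $\Lambda_0$ appearing in both conditions.

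\emph{Inputs at $\gamma'=1$ and for small $\lambda$.} Assume first that $\Omega$ is bounded, so that $w(\Omega)<\infty$. Proposition~\ref{prop: improved Berezin} gives $\Tr(-\Delta_\Omega^{\rm D}-\lambda)_\limminus\le L_{1,d}^{\rm sc}|\Omega|\lambda^{1+\frac d2}\bigl(1-c_0e^{-c_0'w(\Omega)\sqrt\lambda}\bigr)$ for all $\lambda\ge0$; discarding the exponential term is the case $\gamma'=1$ of the large-$\lambda$ hypothesis (valid for every $\lambda$), while on $[0,\Lambda_1]$ the factor $1-c_0e^{-c_0'w(\Omega)\sqrt\lambda}$ is increasing and hence bounded above by $1-c_0e^{-c_0'w(\Omega)\sqrt{\Lambda_1}}<1$, which is precisely the small-$\lambda$ hypothesis. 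In the same way Proposition~\ref{prop: improved Kroger} supplies the case $\gamma'=1$ of the Neumann large-$\lambda$ hypothesis and, since $1+c_0e^{-c_0'w(\Omega)\sqrt\lambda}$ is decreasing, the small-$\lambda$ Neumann hypothesis on $[0,\Lambda_0]$ with $c=1+c_0e^{-c_0'w(\Omega)\sqrt{\Lambda_0}}>1$.

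\emph{Input for $\gamma'\in[\gamma_0,1)$ and large $\lambda$.} Here we use the two-term Weyl asymptotics~\eqref{eq: Weyls law Riesz means second}, which hold for bounded Lipschitz domains. Since $\mathcal H^{d-1}(\partial\Omega)>0$ and the boundary term enters with a negative sign when $\sharp={\rm D}$ and a positive sign when $\sharp={\rm N}$, for each $\gamma'\in(0,1)$ one has $\Tr(-\Delta_\Omega^{\rm D}-\lambda)_\limminus^{\gamma'}\le L_{\gamma',d}^{\rm sc}|\Omega|\lambda^{\gamma'+\frac d2}$, respectively the reverse inequality with $\sharp={\rm N}$, as soon as $\lambda$ is sufficiently large. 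To obtain a threshold $\Lambda_0$ that is uniform for $\gamma'\in[\gamma_0,1]$ one argues exactly as in the proof of Propositions~\ref{prop: extrapolation of improved inequality Dir} and \ref{prop: extrapolation of improved inequality Neu}: fix a base exponent $\beta\in(0,\gamma_0)$ at which~\eqref{eq: Weyls law Riesz means second} holds, write $\Tr(-\Delta_\Omega^\sharp-\lambda)_\limminus^{\gamma'}$ via the Aizenman--Lieb identity as a fractional integral over $\sigma\in[0,\lambda]$ of $\Tr(-\Delta_\Omega^\sharp-\sigma)_\limminus^{\beta}$, and check that substituting the two-term asymptotics at $\beta$ reproduces the leading and surface terms at $\gamma'$ with a remainder that is $o(\lambda^{\gamma'+\frac{d-1}2})$ at a rate bounded uniformly for $\gamma'\in[\gamma_0,1]$. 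Taking $\Lambda_1=2\Lambda_0$ in the Dirichlet case, all hypotheses of Proposition~\ref{prop: general extrapolation} are met, and the corollary follows for bounded~$\Omega$.

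\emph{Main obstacle.} The genuinely non-routine ingredient is the large-$\lambda$ input for $\gamma'<1$, namely the validity of the two-term asymptotics~\eqref{eq: Weyls law Riesz means second} for the domains at hand: for $\gamma'\ge1$ this is classical, but for $0<\gamma'<1$ and a merely Lipschitz boundary it calls for a careful reference. A related point is the case of an unbounded set of finite measure, where one must first ensure that $\mathcal H^{d-1}(\partial\Omega)<\infty$ (and that the Neumann spectrum is discrete), so that the surface term is well-defined; in that situation Propositions~\ref{prop: improved Berezin} and \ref{prop: improved Kroger} do not apply directly, and the small-$\lambda$ strictness needed for the small-$\lambda$ hypothesis is instead extracted from the identities in Subsection~\ref{sec:improvedblyk}, where $\Tr(-\Delta_\Omega^\sharp-\lambda)_\limminus$ equals $L_{1,d}^{\rm sc}|\Omega|\lambda^{1+\frac d2}$ plus or minus non-negative terms that are strictly positive once $\lambda$ exceeds the bottom of the spectrum.
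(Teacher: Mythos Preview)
Your approach is correct and essentially identical to the paper's: both apply Proposition~\ref{prop: general extrapolation} with $\gamma_1=1$, obtain the large-$\lambda$ hypothesis from the two-term asymptotics of \cite{FrankLarson_24} (with uniformity in $\gamma'$ via Aizenman--Lieb, exactly as in the proof of Propositions~\ref{prop: extrapolation of improved inequality Dir} and~\ref{prop: extrapolation of improved inequality Neu}), and obtain the small-$\lambda$ hypothesis at $\gamma'=1$ from the strict improvements in Propositions~\ref{prop: improved Berezin} and~\ref{prop: improved Kroger}. Your worries in the final paragraph are unfounded: the paper notes parenthetically that Lipschitz sets of finite measure are necessarily bounded, so $w(\Omega)<\infty$ and Propositions~\ref{prop: improved Berezin} and~\ref{prop: improved Kroger} apply directly; and the two-term asymptotics~\eqref{eq: Weyls law Riesz means second} for $0<\gamma'<1$ on Lipschitz domains are precisely the content of \cite[Theorem~1.2]{FrankLarson_24}, so no further reference is needed.
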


\begin{proof}
    With $\gamma_1=1$ and any $0<\gamma_0<1$ the assumptions of both \ref{itm: extrap D} and \ref{itm: extrap N} in Proposition \ref{prop: general extrapolation} are satisfied. Indeed, the first assumption is true for some $\Lambda_0$ by the two-term asymptotics proved in \cite{FrankLarson_24}. To see that the second assumption is valid we argue as follows. The proofs of the Berezin--Li--Yau and Kr\"oger inequalities show that equality can never hold in the inequalities. (This is quantified in our Propositions \ref{prop: improved Berezin} and \ref{prop: improved Kroger}. Note the Lipschitz sets of finite measure are necessarily bounded.) Since
    \begin{equation*}
        g^\sharp(\lambda)= \frac{\Tr(-\Delta_\Omega^{\sharp}-\lambda)_\limminus}{\lambda^{1+\frac{d}2}}
    \end{equation*}
    is continuous on $(0, \Lambda]$ for any $\Lambda>0$ and $\lim_{\lambda \to 0^\limplus}g^{\rm D}(\lambda)=0, \lim_{\lambda \to 0^\limplus}g^{\rm N}(\lambda)=\infty$ the functions $g^{\rm D}$ and $g^{\rm N}$ attain a maximal and minimal value on $[0, \Lambda]$, respectively. By the strictness of the Berezin--Li--Yau and Kr\"oger inequalities these maximal and minimal values are less than resp.\ larger than $L_{1,d}^{\rm cl}|\Omega|$. This shows that also the second assumptions are true.    
\end{proof}

%---------------------------------------

\subsection{A characterization of the critical Riesz exponent}

An important ingredient in our proof of Theorem~\ref{thm: Main conclusions gamma_d}
is provided by passing via improved versions of semiclassical inequalities. In Propositions~\ref{prop: extrapolation of improved inequality Dir} and \ref{prop: extrapolation of improved inequality Neu} we showed that the validity of an improved version of the semiclassical inequalities for some $\gamma>0$ and $\lambda \lesssim r_{\rm in}(\Omega)^{-2}$ implied that $\gamma> \gamma_d^{\sharp}$. In this section we shall prove that a stronger form of improved semiclassical inequalities hold for convex sets as soon as $\gamma>\gamma_d^{\sharp}$. We refer to these stronger improved inequalities as two-term semiclassical inequalities as they improve upon the inequalities in~\eqref{eq: Polya conjecture gamma} by either subtracting or adding a non-negative second term that matches the second term in the corresponding two-term asymptotic expansion of the Riesz means. Bounds of this form and sufficiently large values of $\gamma$ have previously been obtained for the Dirichlet case, see \cite{GeLaWe,LarsonPAMS,LarsonJST,FrankLarson_Crelle20}.

Remarkably, the validity of such two-term semiclassical inequalities characterizes the critical Riesz exponents $\gamma_d^\sharp$.

More specifically, we shall prove the following two theorems. 

\begin{theorem}\label{thm: improved inequality above critical gamma Dir}
    Let $d\geq 1$ and $\gamma>0$. The following are equivalent:
    \begin{enumerate}[label=\textup{(\roman*)}]
        \item\label{itm: equiv 1D} $\gamma>\gamma_d^{\rm D}$.
        \item\label{itm: equiv 2D} For each $B>0$ there is a constant $c<1$ such that for all bounded convex open sets $\Omega\subset\R^d$ and all $\lambda \leq \frac{B}{r_{\rm in}(\Omega)^2}$ we have
        $$
        \Tr(-\Delta_\Omega^{\rm D}-\lambda)_\limminus^{\gamma} \leq c L_{\gamma,d}^{{\rm sc}} |\Omega| \lambda^{\gamma+\frac d2} \,.
        $$
        \item\label{itm: equiv 3D} There is a constant $c_{\gamma,d}>0$ such that for all bounded convex open sets $\Omega \subset \R^d$ and $\lambda\geq 0$ we have
	\begin{equation*}
		\Tr(-\Delta_\Omega^{\rm D}-\lambda)_\limminus^\gamma \leq \Bigl(L^{\rm sc}_{\gamma,d} |\Omega| \lambda^{\gamma+\frac d2} -c_{\gamma,d}\Haus^{d-1}(\partial\Omega)\lambda^{\gamma+ \frac{d-1}{2}}\Bigr)_\limplus \,.
	\end{equation*}		
    \end{enumerate}
\end{theorem}

\begin{theorem}\label{thm: improved inequality above critical gamma Neu}
    Let $d\geq 1$ and $\gamma>0$. The following are equivalent:
    \begin{enumerate}[label=\textup{(\roman*)}]
        \item\label{itm: equiv 1N} $\gamma>\gamma_d^{\rm N}$.
        \item\label{itm: equiv 2N} For each $B>0$ there is a constant $c>1$ such that for all bounded convex open sets $\Omega\subset\R^d$ and all $\lambda \leq \frac{B}{r_{\rm in}(\Omega)^2}$ we have
        $$
        \Tr(-\Delta_\Omega^{\rm N}-\lambda)_\limminus^{\gamma} \geq c L_{\gamma,d}^{{\rm sc}} |\Omega| \lambda^{\gamma+\frac d2} \,.
        $$
        \item\label{itm: equiv 3N} There is a constant $c_{\gamma,d}>0$ such that for all bounded convex open sets $\Omega \subset \R^d$ and $\lambda\geq 0$ we have
	\begin{equation*}
		\Tr(-\Delta_\Omega^{\rm N}-\lambda)_\limminus^\gamma \geq L^{\rm sc}_{\gamma,d} |\Omega| \lambda^{\gamma+\frac d2} +c_{\gamma,d}\Haus^{d-1}(\partial\Omega)\lambda^{\gamma+ \frac{d-1}{2}} \,.
	\end{equation*}		
    \end{enumerate}
\end{theorem}

%---------------------------------------

The main thrust of Theorems~\ref{thm: improved inequality above critical gamma Dir} and \ref{thm: improved inequality above critical gamma Neu} is the implication (i)$\implies$(iii). The proofs in the Dirichlet and Neumann cases are based on the same ideas:
\begin{enumerate}
	\item if $\lambda r_{\rm in}(\Omega)^2$ is sufficiently large (depending only on $d$), then the quantitative asymptotics proved in~\cite{FrankLarson_24} imply the claimed bound.
	\item if $\lambda r_{\rm in}(\Omega)^2$ is smaller than some number (depending only on $d$), then the claimed inequality can be obtained rather directly (for any value of $\gamma$).
	\item for intermediate values of $\lambda r_{\rm in}(\Omega)^2$ we use the Aizenman--Lieb identity (or Riesz iteration) to write the relevant trace as an integral of $\Tr(-\Delta_\Omega^{\sharp}-\tau)_\limminus^{\gamma_d^\sharp}$ with respect to $\tau$. Using the improved inequality of the second step and the fact that by definition of $\gamma_d^\sharp$ the Riesz mean that is integrated satisfies a semiclassical inequality allows us to bridge the remaining gap. 
\end{enumerate}
The main difference in the proofs between the Dirichlet and the Neumann case is the bound that is applied for small values of $\lambda r_{\rm in}(\Omega)^2$.

\begin{proof}[Proof of Theorem~\ref{thm: improved inequality above critical gamma Dir}]
    The implication \ref{itm: equiv 2D}$\implies$\ref{itm: equiv 1D} follows from Proposition \ref{prop: extrapolation of improved inequality Dir}.
    
    To show \ref{itm: equiv 3D}$\implies$\ref{itm: equiv 2D}, given $B>0$ we set
    $$
    c:= (1 - c_{\gamma,d} (L_{\gamma,d}^{\rm sc})^{-1} B^{-1/2} )_+ \,.
    $$
    Then for $\lambda\geq \frac{B}{r_{\rm in}(\Omega)^2}$, according to \eqref{eq: inradius bound},
    $$
    \left( L_{\gamma,d}^{\rm sc} |\Omega| \lambda^{\gamma+\frac d2} - c_{\gamma,d} \mathcal H^{d-1}(\partial\Omega) \lambda^{\gamma+\frac{d-1}2} \right)_+
    \leq c L_{\gamma,d}^{\rm sc} |\Omega| \lambda^{\gamma+\frac d2} \,.
    $$
    Thus the inequality in \ref{itm: equiv 2D} follows from that in \ref{itm: equiv 3D}.

    It thus remains to prove the implication \ref{itm: equiv 1D}$\implies$\ref{itm: equiv 3D}. If $d\geq 2$ then by \cite[Theorem 1.2]{FrankLarson_24}, for any $\gamma >0$ and $A < \frac{L_{\gamma,d-1}^{\rm sc}}{4}$ there exists a constant $B> 0$ so that for all bounded convex open sets $\Omega\subset\R^d$ and all $\lambda \geq \frac{B}{r_{\rm in}(\Omega)^2}$ we have
	\begin{equation*}
	 	 \Tr(-\Delta_\Omega^{\rm D}-\lambda)_\limminus^\gamma \leq L_{\gamma, d}^{\rm sc} |\Omega|\lambda^{\gamma+\frac{d}{2}}- A \Haus^{d-1}(\partial\Omega)\lambda^{\gamma+ \frac{d-1}{2}} \,.
	\end{equation*}
    For $d=1$ the corresponding inequality can be proved by explicit calculations using that the eigenvalues of the Laplacian on an interval are explicit and the Aizenman--Lieb procedure.
	Consequently, it suffices to prove that the claimed inequality is true for $0\leq \lambda < \frac{B}{r_{\rm in}(\Omega)^2}$ when $\gamma >\gamma_d^{\rm D}$.

	To deal with $\lambda$ close to zero we recall the Hersch--Protter inequality
	$$\lambda_1(\Omega)\geq \frac{\pi^2}{4r_{\textup{in}}(\Omega)^2}$$ 
	for any open and bounded convex set $\Omega \subset \R^d$; see \cite{Hersch_60,Protter_81} and also \cite[Lemma 3.9]{FrankLaptevWeidl}. Therefore, for $0\leq \lambda \leq \frac{\pi^2}{4r_{\rm in}(\Omega)^2}$ the claimed inequality is trivially true as the left-hand side is zero.

	If $B \leq \frac{\pi^2}{4}$ we are done. If $B > \frac{\pi^2}{4}$ it remains to prove the inequality for $\lambda \in \bigl(\frac{\pi^2}{4r_{\rm in}(\Omega)^2}, \frac{B}{r_{\rm in}(\Omega)^2}\bigr)$. 
	To this end we recall the Aizenman--Lieb identity, if $\gamma>\gamma'$,
	\begin{equation}\label{eq: Aizenman-Lieb}
		\Tr(-\Delta_\Omega^{\sharp} -\lambda)_\limminus^\gamma = B(1+\gamma', \gamma-\gamma')^{-1}\int_0^\lambda (\lambda-\tau)^{\gamma-\gamma'-1}\Tr(-\Delta_\Omega^{\sharp}-\tau)_\limminus^{\gamma'}\,d\tau\,,
	\end{equation}
	where $B$ denotes the Euler Beta function $B(x, y) = \frac{\Gamma(x)\Gamma(y)}{\Gamma(x+y)}$.

	Applying \eqref{eq: Aizenman-Lieb} with $\gamma'= \gamma_d^{\rm D}$ and using the fact that the trace in the integral is zero if $\tau \leq \lambda_1^{\rm D}(\Omega)$ together with the fact that by the definition of $\gamma_d^{\rm D}$ the trace in the integral can be bounded by the Weyl term we conclude that
	\begin{align*}
		\Tr(-\Delta_\Omega^{\rm D} -\lambda)_\limminus^\gamma &\leq L_{\gamma,d}^{\rm sc}|\Omega|\lambda^{\gamma+\frac{d}{2}}- \frac{L_{\gamma_d^{\rm D},d}^{\rm sc}|\Omega|}{B(1+\gamma_d^{\rm D}, \gamma-\gamma_d^{\rm D})}\int_0^{\lambda_1(\Omega)} (\lambda-\tau)^{\gamma-\gamma_d^{\rm D}-1}\tau^{\gamma_d^{\rm D}+\frac{d}{2}}\,d\tau\\
		&= L_{\gamma,d}^{\rm sc}|\Omega|\lambda^{\gamma+\frac{d}{2}}- \frac{L_{\gamma_d^{\rm D},d}^{\rm sc}|\Omega|\lambda^{\gamma+\frac{d}{2}}}{B(1+\gamma_d^{\rm D}, \gamma-\gamma_d^{\rm D})}\int_0^{\lambda_1(\Omega)/\lambda} (1-s)^{\gamma-\gamma_d^{\rm D}-1}s^{\gamma_d^{\rm D}+\frac{d}{2}}\,ds\,.
	\end{align*}
	For $\lambda \in\bigl(\frac{\pi^2}{4r_{\rm in}(\Omega)^2}, \frac{B}{r_{\rm in}(\Omega)^2}\bigr)$ the Hersch--Protter inequality implies that $\frac{\lambda_1(\Omega)}{\lambda} \geq \frac{\pi^2}{4B}$ and therefore
	\begin{align*}
		\Tr(-\Delta_\Omega^{\rm D} -\lambda)_\limminus^\gamma &\leq  L_{\gamma,d}^{\rm sc}|\Omega|\lambda^{\gamma+\frac{d}{2}}- \frac{L_{\gamma_d^{\rm D},d}^{\rm sc}|\Omega|\lambda^{\gamma+\frac{d}{2}}}{B(1+\gamma_d^{\rm D}, \gamma-\gamma_d^{\rm D})}\int_0^{\frac{\pi^2}{4B}} (1-s)^{\gamma-\gamma_d^{\rm D}-1}s^{\gamma_d^{\rm D}+\frac{d}{2}}\,ds\\
		&\leq  L_{\gamma,d}^{\rm sc}|\Omega|\lambda^{\gamma+\frac{d}{2}}- \frac{\pi L_{\gamma_d^{\rm D},d}^{\rm sc}\Haus^{d-1}(\partial\Omega)\lambda^{\gamma+\frac{d-1}{2}}}{2 dB(1+\gamma_d^{\rm D}, \gamma-\gamma_d^{\rm D})}\int_0^{\frac{\pi^2}{4B}} (1-s)^{\gamma-\gamma_d^{\rm D}-1}s^{\gamma_d^{\rm D}+\frac{d}{2}}\,ds\, \,,
	\end{align*}
	where in the second step we used the fact that, by~\eqref{eq: inradius bound} and the assumption $\lambda \geq\frac{\pi^2}{4r_{\rm in}(\Omega)^2}$,
	\begin{equation*}
		|\Omega|\sqrt{\lambda} \geq \frac{\pi|\Omega|}{2r_{\rm in}(\Omega)} \geq \frac{\pi}{2d} \Haus^{d-1}(\partial\Omega)\,.
	\end{equation*}

	Therefore, we conclude that for all $\lambda \geq 0$ the inequality in \ref{itm: equiv 3D} of Theorem~\ref{thm: improved inequality above critical gamma Dir} holds with
	\begin{align*}
		c_{\gamma,d} &= \min\biggl\{A, \frac{\pi L_{\gamma_d^{\rm D},d}^{\rm sc}}{2 dB(1+\gamma_d^{\rm D}, \gamma-\gamma_d^{\rm D})}\int_0^{\frac{\pi^2}{4B}} (1-s)^{\gamma-\gamma_d^{\rm D}-1}s^{\gamma_d^{\rm D}+\frac{d}{2}}\,ds\biggr\}\,,
	\end{align*}
    which completes the proof.
\end{proof}

For the Neumann case, instead of the Hersch--Protter bound, we shall use the inequality from Lemma \ref{lem: small energy improved Kroger}.

\begin{proof}[Proof of Theorem~\ref{thm: improved inequality above critical gamma Neu}]
    The implications \ref{itm: equiv 2N}$\implies$\ref{itm: equiv 1N} and \ref{itm: equiv 3N}$\implies$\ref{itm: equiv 2N} can be shown as in the Dirichlet case.

    It thus remains to prove the implication \ref{itm: equiv 1N}$\implies$\ref{itm: equiv 3N}. By the same argument as in the Dirichlet case, \cite[Theorem 1.2]{FrankLarson_24} implies that for any $A< \frac{L_{\gamma,d-1}^{\rm sc}}{4}$ there exists $B$ so that it suffices to prove the claimed inequality in the range $0\leq \lambda < \frac{B}{r_{\rm in}(\Omega)^2}$ when $\gamma >\gamma_d^{\rm N}$. To accomplish this we apply Lemma~\ref{lem: small energy improved Kroger} together with the Aizenman--Lieb identity. 
    
    Let $c_1 := \frac{1}2C_{\gamma_d^{\rm N},d}/L_{\gamma_d^{\rm N}, d}^{\rm sc}$, where $C_{\gamma,d}$ denote the constants appearing in Lemma~\ref{lem: small energy improved Kroger}. If $\lambda \leq \frac{c_1^2}{r_{\rm in}(\Omega)^2}$, then by Lemma~\ref{lem: small energy improved Kroger} and \eqref{eq: inradius bound}
	\begin{equation}\label{eq: improved bound small lambda critical}
	\begin{aligned}
		\Tr(-\Delta_\Omega^{\rm N}-\lambda)_\limminus^{\gamma_d^{\rm N}} 
		&\geq 
			C_{\gamma_d^{\rm N},d} \frac{|\Omega|}{r_{\rm in}(\Omega)}\lambda^{\gamma_d^{\rm N} + \frac{d-1}{2}}\\
		&= 
			L_{\gamma_d^{\rm N}, d}^{\rm sc} |\Omega|\lambda^{\gamma_d^{\rm N}+ \frac{d}{2}} + \Bigl(\frac{C_{\gamma_d^{\rm N},d}|\Omega|}{r_{\rm in}(\Omega)}- L_{\gamma_d^{\rm N},d}^{\rm sc}|\Omega|\sqrt{\lambda}\Bigr)\lambda^{\gamma_d^{\rm N} + \frac{d-1}{2}}\\
		&\geq
			L_{\gamma_d^{\rm N}, d}^{\rm sc} |\Omega|\lambda^{\gamma_d^{\rm N}+ \frac{d}{2}} + \frac{C_{\gamma_d^{\rm N},d}|\Omega|}{2r_{\rm in}(\Omega)}\lambda^{\gamma_d^{\rm N} + \frac{d-1}{2}}\\
		&\geq
			L_{\gamma_d^{\rm N}, d}^{\rm sc} |\Omega|\lambda^{\gamma_d^{\rm N}+ \frac{d}{2}} + \frac{C_{\gamma_d^{\rm N},d}}{2d}\Haus^{d-1}(\partial\Omega)\lambda^{\gamma_d^{\rm N} + \frac{d-1}{2}}\,.
	\end{aligned}
	\end{equation}

	Now assume that $0\leq \lambda \leq \frac{B}{r_{\rm in}(\Omega)^2}$. Then, by~\eqref{eq: Aizenman-Lieb} with $\gamma'=\gamma_d^{\rm N}$, the inequality that holds by the definition of $\gamma_d^{\rm N}$ for $\tau \geq \frac{c_1^2}{r_{\rm in}(\Omega)^2}$, and using \eqref{eq: improved bound small lambda critical} for $\tau \leq \frac{c_1^2}{r_{\rm in}(\Omega)^2}$, we obtain
	\begin{align*}
		\Tr(-\Delta_\Omega^{\rm N} -\lambda)_\limminus^\gamma 
		&\geq 
		B(1+\gamma_d^{\rm N}, \gamma-\gamma_d^{\rm N})^{-1}\int_{0}^\lambda (\lambda-\tau)^{\gamma-\gamma_d^{\rm N}-1}L_{\gamma_{d}^{\rm N},d}|\Omega|\tau^{\gamma_d^{\rm N}+\frac{d}{2}}\,d\tau\\
		&\quad +\frac{C_{\gamma_d^{\rm N},d}\Haus^{d-1}(\partial\Omega)}{2d B(1+\gamma_d^{\rm N}, \gamma-\gamma_d^{\rm N})} \int_{0}^{\min\{\lambda, c_1^2/r_{\rm in}(\Omega)^2\}} (\lambda-\tau)^{\gamma-\gamma_d^{\rm N}-1}\tau^{\gamma_d^{\rm N}+ \frac{d-1}{2}}\,d\tau\\
		&=
		L_{\gamma, d}^{\rm sc}|\Omega|\lambda^{\gamma+\frac{d}{2}}\\
		&\quad +\frac{C_{\gamma_d^{\rm N},d}\Haus^{d-1}(\partial\Omega)\lambda^{\gamma+ \frac{d-1}{2}}}{2dB(1+\gamma_d^{\rm N}, \gamma-\gamma_d^{\rm N})} \int_{0}^{\min\{1, c_1^2/(\lambda r_{\rm in}(\Omega)^2)\}} (1-s)^{\gamma-\gamma_d^{\rm N}-1}s^{\gamma_d^{\rm N}+ \frac{d-1}{2}}\,ds\\
		&\geq
		L_{\gamma, d}^{\rm sc}|\Omega|\lambda^{\gamma+\frac{d}{2}}\\
		&\quad +\frac{C_{\gamma_d^{\rm N},d}\Haus^{d-1}(\partial\Omega)\lambda^{\gamma+ \frac{d-1}{2}}}{2d B(1+\gamma_d^{\rm N}, \gamma-\gamma_d^{\rm N})} \int_{0}^{\min\{1, c_1^2/B\}} (1-s)^{\gamma-\gamma_d^{\rm N}-1}s^{\gamma_d^{\rm N}+ \frac{d-1}{2}}\,ds\,.
	\end{align*}

	Therefore, we conclude that for all $\lambda \geq 0$ the inequality in \ref{itm: equiv 3N} of Theorem~\ref{thm: improved inequality above critical gamma Neu} holds with
	\begin{align*}
		c_{\gamma,d} &= \min\biggl\{A, \frac{C_{\gamma_d^{\rm N},d}}{2d B(1+\gamma_d^{\rm N}, \gamma-\gamma_d^{\rm N})} \int_{0}^{\min\{1,c_1^2/B\}} (1-s)^{\gamma-\gamma_d^{\rm N}-1}s^{\gamma_d^{\rm N}+ \frac{d-1}{2}}\,ds\biggr\}\,,
	\end{align*}
    which completes the proof.
\end{proof}

%---------------------------------------
%---------------------------------------

\section{The optimization problems}

In this section we mostly consider the case $\gamma_d^\sharp>0$, that is, the case when P\'olya's conjecture fails among convex sets. We study the optimization problems associated to the inequalities in \eqref{eq: Polya conjecture gamma} for exponents $0\leq\gamma\leq\gamma_d^\sharp$. In particular, we will prove Proposition \ref{dimred} as well as Theorem~\ref{thm: Main conclusions gamma_d}. The latter concerns the case $\gamma=\gamma_d^\sharp$, and in Theorem \ref{thm: compactness via r} we complement this with a corresponding result for $\gamma<\gamma_d^\sharp$.

Our proofs are formulated in terms of the so-called excess factors which are defined as the optimal constants $r_{\gamma,d}^\sharp$ so that, for all $\Omega\in \mathcal{C}_d$ and all $\lambda\geq 0$,
\begin{equation}\label{eq: sharp ineq.s in terms of r}
    \Tr(-\Delta_\Omega^{\rm D}-\lambda)_\limminus^\gamma  \leq r_{\gamma,d}^{\rm D}L_{\gamma, d}^{\rm sc}|\Omega|\lambda^{\gamma+\frac{d}2}\quad  \mbox{and} \quad \Tr(-\Delta_\Omega^{\rm N}-\lambda)_\limminus^\gamma  \geq r_{\gamma,d}^{\rm N}L_{\gamma, d}^{\rm sc}|\Omega|\lambda^{\gamma+\frac{d}2}\,.
\end{equation}
Let us summarize some facts about these constants. All these facts are valid also for the sharp constants in the corresponding inequalities without the convexity assumption on the underlying domain.
\begin{enumerate}
    \item[(a)] $r_{\gamma,d}^{\rm D} <\infty$ and $r_{\gamma,d}^{\rm N}>0$; this is well known, see \cite[Corollaries 3.30 and 3.39]{FrankLaptevWeidl}.
    \item[(b)] $r_{\gamma, d}^{\rm D}\geq 1 \geq r_{\gamma,d}^{\rm N}$; this is a consequence of Weyl's law.
    \item[(c)] $\gamma\mapsto r_{\gamma,d}^\sharp$ is non-increasing (for $\sharp=$ D) and non-decreasing (for $\sharp=$ N); this is a consequence of the Aizenman--Lieb argument.
    \item[(d)] $\gamma\mapsto r_{\gamma,d}^\sharp$ is continuous; this follows from Lemmas~\ref{lem: extrapolating improved inequality small lambda Dir} and~\ref{lem: extrapolating improved inequality small lambda Neu}.
    \item[(e)] $r_{\gamma, d}^{\rm D} = 1 = r_{\gamma,d}^{\rm N}$ for $\gamma\geq 1$; this is a consequence of the Berezin--Li--Yau and Kr\"oger inequalities and the monotonicity in (c).
\end{enumerate}

The critical exponent $\gamma_d^\sharp$ can be characterized as the unique smallest number so that
$$
r_{\gamma,d}^\sharp =1
\qquad\text{for all}\
\gamma\geq \gamma^\sharp_d \,.
$$
In fact, Proposition \ref{dimred} will be deduced as a corollary of the following result for the excess factors.

\begin{proposition}\label{dimred r}
    Fix $d\geq 2$ and $\sharp\in\{ \rm D, \rm N\}$. Then, for all $\gamma\geq 0$,
    $$
    r_{\gamma, d}^{\rm D}\geq r_{\gamma+ \frac{1}2,d-1}^{\rm D}
    \qquad\text{and}\qquad
    r_{\gamma, d}^{\rm N}\leq r_{\gamma+ \frac{1}2,d-1}^{\rm N} \,.
    $$
\end{proposition}

%---------------------------------------

The proofs in this section will rely, among other things, on Theorem \ref{thm: Asymptotics degenerating convex sets}. We take this for granted and will prove it in Section \ref{sec:asymptoticscollapsing} below. (The proof there is independent of the remaining sections of this paper.)

Besides the asymptotic result for collapsing convex sets (Theorem \ref{thm: Asymptotics degenerating convex sets}) an important role is played by quantitative error estimates in Weyl asymptotics for non-collapsing convex sets from our previous paper~\cite{FrankLarson_24} for Riesz exponents $\gamma>0$. In addition we need the following result for $\gamma=0$, which provides the correct leading order term in the semiclassical limit, together with a remainder term. The remainder term is (probably) off by a logarithm, but the point of the result is that the remainder depends on the underlying set only through its inradius.

\begin{theorem}\label{thm: quantitative Weyl law}
    Let $d\geq 2$. There exists a constant $C>0$ so that for all open, bounded, and convex $\Omega \subset \R^d$ and all $\lambda > 0$ we have
    \begin{equation*}
        \Bigl|\Tr(-\Delta_\Omega^{\rm D}-\lambda)_\limminus^0 - L_{0,d}^{\rm sc}|\Omega|\lambda^{\frac{d}2}\Bigr| \leq C \Haus^{d-1}(\partial\Omega) \lambda^{\frac{d-1}2}\bigl(1+\ln_\limplus\bigl(r_{\rm in}(\Omega)\sqrt{\lambda}\bigr)\hspace{-1.5pt}\bigr) 
    \end{equation*}
    and
    \begin{equation*}
        \Bigl|\Tr(-\Delta_\Omega^{\rm N}-\lambda)_\limminus^0 - L_{0,d}^{\rm sc}|\Omega|\lambda^{\frac{d}2}\Bigr| \!\leq \!C \Haus^{d-1}(\partial\Omega) \lambda^{\frac{d-1}2}\!\bigl(\max\bigl\{\hspace{-1.7pt}1, \hspace{-2pt}\bigl(r_{\rm in}(\Omega)\sqrt{\lambda}\bigr)^{\!1-d}\bigr\}+\ln_\limplus\!\bigl(r_{\rm in}(\Omega)\sqrt{\lambda}\bigr)\hspace{-1.5pt}\bigr). 
    \end{equation*}
\end{theorem}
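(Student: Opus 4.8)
The plan is to split into two regimes according to the size of the scale‑invariant quantity $t:=r_{\rm in}(\Omega)\sqrt\lambda$, with a dimensional threshold $t_0$ fixed in relation to the quantitative Weyl asymptotics for $\gamma>0$ proved in \cite{FrankLarson_24}. Throughout I would use without comment that $|\Omega|\lambda^{d/2}\asymp_d t\,\Haus^{d-1}(\partial\Omega)\lambda^{(d-1)/2}$, which follows from \eqref{eq: inradius bound}.

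First, the regime $t\le t_0$. Here both $\Tr(-\Delta_\Omega^\sharp-\lambda)_\limminus^0$ and the Weyl term $L_{0,d}^{\rm sc}|\Omega|\lambda^{d/2}$ will be shown to be $O_{d,t_0}(\Haus^{d-1}(\partial\Omega)\lambda^{(d-1)/2})$ times the asserted geometric factor, so it is enough to bound each quantity on its own; for the Weyl term this is the comparison above. For the Dirichlet count I would invoke the Hersch--Protter inequality $\lambda_1^{\rm D}(\Omega)\ge\pi^2/(4r_{\rm in}(\Omega)^2)$, which makes the count vanish when $t<\pi/2$, and for $\pi/2\le t\le t_0$ combine the Berezin--Li--Yau inequality with the Aizenman--Lieb identity \eqref{eq: Aizenman-Lieb} to conclude $\Tr(-\Delta_\Omega^{\rm D}-\lambda)_\limminus^0\lesssim_d|\Omega|\lambda^{d/2}$. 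For the Neumann count I would pass through the John‑ellipsoid normalization used in the proof of Lemma~\ref{lem: small energy improved Kroger}: writing $Q\subset\Omega\subset d^{3/2}Q$ with $Q$ an axis‑parallel box, a cubical grid of mesh $\asymp r_{\rm in}(\Omega)$ partitions $\Omega$ into $\asymp_d\Haus^{d-1}(\partial\Omega)\,r_{\rm in}(\Omega)^{1-d}$ convex pieces of diameter $\lesssim_d r_{\rm in}(\Omega)$; Dirichlet--Neumann bracketing then bounds $\Tr(-\Delta_\Omega^{\rm N}-\lambda)_\limminus^0$ by the sum of the Neumann counts of the pieces, and since each piece has diameter $\lesssim_d r_{\rm in}(\Omega)$ (so $r_{\rm in}(\Omega)^d\lambda^{d/2}=t^d\le t_0^d$), Lemma~\ref{lem: upper bound N counting} bounds each such count by $O_{d,t_0}(1)$, giving $\Tr(-\Delta_\Omega^{\rm N}-\lambda)_\limminus^0\lesssim_{d,t_0}\Haus^{d-1}(\partial\Omega)\lambda^{(d-1)/2}\,t^{1-d}$. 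The matching lower bounds are trivial, counting functions being nonnegative.

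Second, the regime $t\ge t_0$. Here I would apply \cite[Theorem~1.2]{FrankLarson_24} at a small exponent $\gamma_0>0$, to be chosen: for $\mu\, r_{\rm in}(\Omega)^2\ge B(\gamma_0)$ it yields $\Tr(-\Delta_\Omega^\sharp-\mu)_\limminus^{\gamma_0}=L_{\gamma_0,d}^{\rm sc}|\Omega|\mu^{\gamma_0+d/2}$ up to a subleading surface term of the correct sign and an error $\mathcal E_{\gamma_0}(\mu)=o(\Haus^{d-1}(\partial\Omega)\mu^{\gamma_0+\frac{d-1}2})$. To descend to $\gamma=0$ I would use the elementary two‑sided inequalities of Lemma~\ref{lem: Laptev extrapolation}, $\lambda^{-\gamma_0}\Tr(-\Delta_\Omega^\sharp-\lambda)_\limminus^{\gamma_0}\le\Tr(-\Delta_\Omega^\sharp-\lambda)_\limminus^0\le\delta^{-\gamma_0}\Tr(-\Delta_\Omega^\sharp-(\lambda+\delta))_\limminus^{\gamma_0}$, plug in the bounds of \cite{FrankLarson_24}, and take $\delta=\tfrac{2\gamma_0}{d}\lambda$ (the minimizer of $\delta^{-\gamma_0}(1+\delta/\lambda)^{\gamma_0+d/2}$). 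Since that minimum is $1+O_d(\gamma_0\ln(1/\gamma_0))$ and $L_{\gamma_0,d}^{\rm sc}=L_{0,d}^{\rm sc}(1+O_d(\gamma_0))$, this produces $|\Tr(-\Delta_\Omega^\sharp-\lambda)_\limminus^0-L_{0,d}^{\rm sc}|\Omega|\lambda^{d/2}|\lesssim_d\gamma_0\ln(1/\gamma_0)|\Omega|\lambda^{d/2}+\Haus^{d-1}(\partial\Omega)\lambda^{\frac{d-1}2}+\lambda^{-\gamma_0}\mathcal E_{\gamma_0}(\lambda)$. Using $|\Omega|\lambda^{d/2}\asymp_d t\,\Haus^{d-1}(\partial\Omega)\lambda^{\frac{d-1}2}$ and choosing $\gamma_0\asymp 1/t$, the first term becomes $\lesssim_d(\ln_\limplus t)\,\Haus^{d-1}(\partial\Omega)\lambda^{\frac{d-1}2}$ — precisely the logarithm of the statement — while the admissibility constraint $t^2\ge B(\gamma_0)$ is met provided $B(\gamma)\lesssim\gamma^{-2}$, which one extracts from the interpolation in \cite{FrankLarson_24}, and $\lambda^{-\gamma_0}\mathcal E_{\gamma_0}(\lambda)$ is of lower order. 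The Neumann case is the same, the sign of the subleading term being immaterial once absorbed into the remainder.

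The hard part is this last descent: to reach the counting function from a positive‑order Riesz mean while keeping the leading Weyl constant sharp, the Laptev‑type shift must inflate that constant by roughly $(\lambda\gamma_0/(d\delta))^{\gamma_0}$, which can only be offset by letting $\gamma_0\to0$; but the threshold $B(\gamma_0)$ of \cite{FrankLarson_24} keeps $\gamma_0$ from being taken below $\asymp 1/t$, and the logarithm in the theorem is exactly the residue of this trade‑off (and, as the authors note, is presumably not sharp). A secondary point — visible in the factor $(r_{\rm in}(\Omega)\sqrt\lambda)^{1-d}$ for the Neumann count — is that along collapsing sequences $\Tr(-\Delta_\Omega^{\rm N}-\lambda)_\limminus^0$ genuinely dominates the Weyl term, so a volume bound for $\Omega$ is useless and one must instead exploit the product structure of the John box via the partition above.
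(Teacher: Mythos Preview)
Your strategy differs from the paper's. The paper does not write out a proof but states that the theorem follows by repeating the arguments of \cite[Theorems~3.1 and 3.3]{FrankLarson_24} directly in the case $\gamma=0$; that is, one goes back to the localization machinery of that paper and tracks what changes for counting functions, the logarithm emerging from a specific step of that argument. You instead treat \cite[Theorem~1.2]{FrankLarson_24} as a black box at a small positive exponent $\gamma_0$ and descend to $\gamma=0$ via Lemma~\ref{lem: Laptev extrapolation}, choosing $\gamma_0\asymp 1/t$ so that the inflation of the Weyl constant costs only a logarithm.

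Your small-$t$ regime is handled cleanly; the John-box partition for the Neumann count is the right idea and produces the $t^{1-d}$ factor. The gap is in the large-$t$ regime: the argument rests on two unverified quantitative claims about the $\gamma$-dependence of the constants in \cite{FrankLarson_24}. First, you assert $B(\gamma_0)\lesssim\gamma_0^{-2}$, citing only ``the interpolation in \cite{FrankLarson_24}''; note that the present paper, in the proof of Propositions~\ref{prop: extrapolation of improved inequality Dir}--\ref{prop: extrapolation of improved inequality Neu}, records only that $B(\gamma')$ is uniformly bounded on compact subsets of $(0,1]$, which is strictly weaker and suggests the endpoint behavior is delicate. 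Second, you dismiss $\lambda^{-\gamma_0}\mathcal{E}_{\gamma_0}(\lambda)$ as ``of lower order'' without checking that the implicit constant $C_{\gamma_0,d}$ in the remainder of \cite[Theorem~1.2]{FrankLarson_24} (which in this paper appears with decay rate $t^{-\gamma_0/22}$) does not blow up as $\gamma_0\to 0$ faster than any gain you obtain. Both points require opening up \cite{FrankLarson_24} and tracking the $\gamma$-dependence explicitly---at which point one is essentially doing what the paper prescribes, namely rerunning the argument at $\gamma=0$. Your extrapolation heuristic is a persuasive explanation for \emph{why} the logarithm should appear, but as written it is not a proof.
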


We omit the proof of this theorem. It follows by repeating the arguments used to prove \cite[Theorems 3.1 \& 3.3]{FrankLarson_24} in the case when $\Omega\subset \R^d$ is bounded and convex and $\gamma=0$.

%-----------------------------------

\subsection{Comparison between dimensions {\it d} and {\it d} {\rm -- 1}}

Our goal in this subsection is to prove Propositions \ref{dimred} and \ref{dimred r}. The main step is contained in the following lemma, which will be useful again later on.

\begin{lemma}\label{lem: cylinder lift}
    Fix $d\geq 2, \gamma \geq 0$. If $\omega \subset \R^{d-1}$ is a bounded open set and if $\lambda, \ell >0$, then
    \begin{align*}
        0\geq \frac{\Tr(-\Delta_{\omega \times (0, \ell)}^{\rm D}-\lambda)_\limminus^\gamma}{L_{\gamma, d}^{\rm sc}\ell \Haus^{d-1}(\omega)\lambda^{\gamma+ \frac{d}2}}- \frac{\Tr(-\Delta_{\omega}^{\rm D}-\lambda)_\limminus^{\gamma+\frac{1}2}}{L_{\gamma+\frac{1}2, d-1}^{\rm sc}\Haus^{d-1}(\omega)\lambda^{\gamma+ \frac{d}2}}
        &\geq
        -\frac{\Tr(-\Delta_\omega^{\rm D}-\lambda)_\limminus^\gamma}{L_{\gamma, d}^{\rm sc}\ell \Haus^{d-1}(\omega)\lambda^{\gamma+ \frac{d}2}}\,,
    \end{align*}
    and if, in addition, the spectrum of $-\Delta_\omega^{\rm N}$ in $[0, \lambda]$ is discrete then
    \begin{align*}
        0\leq \frac{\Tr(-\Delta_{\omega \times (0, \ell)}^{\rm N}-\lambda)_\limminus^\gamma}{L_{\gamma, d}^{\rm sc}\ell \Haus^{d-1}(\omega)\lambda^{\gamma+ \frac{d}2}}-\frac{\Tr(-\Delta_{\omega}^{\rm N}-\lambda)_\limminus^{\gamma+\frac{1}2}}{L_{\gamma+\frac{1}2, d-1}^{\rm sc}\Haus^{d-1}(\omega)\lambda^{\gamma+ \frac{d}2}}
        &\leq
        \frac{\Tr(-\Delta_\omega^{\rm N}-\lambda)_\limminus^\gamma}{L_{\gamma, d}^{\rm sc}\ell \Haus^{d-1}(\omega)\lambda^{\gamma+ \frac{d}2}}\,.
    \end{align*}
\end{lemma}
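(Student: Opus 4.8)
The plan is to exploit the separation of variables for the Laplacian on the product $\omega\times(0,\ell)$. For the Dirichlet case, the eigenvalues of $-\Delta_{\omega\times(0,\ell)}^{\rm D}$ are exactly $\lambda_k^{\rm D}(\omega) + \frac{\pi^2 n^2}{\ell^2}$ for $k,n\geq 1$, so that
$$
\Tr(-\Delta_{\omega\times(0,\ell)}^{\rm D}-\lambda)_\limminus^\gamma = \sum_{n\geq 1}\Tr\bigl(-\Delta_\omega^{\rm D} - (\lambda - \tfrac{\pi^2 n^2}{\ell^2})\bigr)_\limminus^\gamma \,.
$$
The idea is to compare this sum over $n$ to the integral $\int_0^\infty \Tr(-\Delta_\omega^{\rm D}-(\lambda-\tfrac{\pi^2 t^2}{\ell^2}))_\limminus^\gamma\,dt$, which after the substitution $s = \pi t/\ell$ and recognizing the one-dimensional semiclassical integral equals $\frac{\ell}{\pi}\int_0^{\sqrt\lambda}\Tr(-\Delta_\omega^{\rm D}-(\lambda-s^2))_\limminus^\gamma\,ds$. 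A standard computation using the Beta-function identity (essentially the fact that $L^{\rm sc}_{\gamma,d} = L^{\rm sc}_{\gamma+\frac12,d-1}\cdot L^{\rm sc}_{0,1}$ together with the formula $\int_0^1 (1-u^2)_\limplus^{\gamma+\frac12}\,du$ reproducing the Gamma quotient) shows that this integral equals $\frac{\ell}{L^{\rm sc}_{1/2,1}/L^{\rm sc}_{0,1}}$-type constant times $\Tr(-\Delta_\omega^{\rm D}-\lambda)_\limminus^{\gamma+\frac12}$; more precisely it reproduces exactly $\frac{L_{\gamma,d}^{\rm sc}}{L_{\gamma+\frac12,d-1}^{\rm sc}}\,\ell\,\Tr(-\Delta_\omega^{\rm D}-\lambda)_\limminus^{\gamma+\frac12}$ up to the normalization by $\lambda$-powers appearing in the statement. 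So the two-term difference in the lemma is precisely the difference between a sum and an integral of the monotone nonincreasing function $t\mapsto \Tr(-\Delta_\omega^{\rm D}-(\lambda-\tfrac{\pi^2 t^2}{\ell^2}))_\limminus^\gamma$.

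**Key steps.** First I would write down the separation-of-variables spectral decomposition on the cylinder and the resulting identity expressing the $d$-dimensional Riesz mean as a sum over $n$ of shifted $(d-1)$-dimensional Riesz means. Second, I would compute the continuum analogue: $\frac{1}{\ell}\int_{\R}\Tr(-\Delta_\omega^{\rm D}-(\lambda - \tfrac{\pi^2 t^2}{\ell^2}))_\limminus^\gamma\,dt$ (integrating over all of $\R$ and using symmetry) equals the relevant multiple of $\Tr(-\Delta_\omega^{\rm D}-\lambda)_\limminus^{\gamma+\frac12}$; this is a one-variable change of variables plus the Aizenman--Lieb/Beta-function identity \eqref{eq: Aizenman-Lieb}, applied to each eigenvalue $\lambda_k^{\rm D}(\omega)$ separately. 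Third, for a nonincreasing nonnegative function $\phi$ on $[0,\infty)$ extended evenly, one has the elementary two-sided bound
$$
0 \leq \sum_{n\geq 1}\phi(n) - \int_0^\infty \phi(t)\,dt = \Bigl(\int_0^\infty\phi - \sum_{n\geq 0}\phi(n)\Bigr) + \phi(0) \in [-\phi(0), 0] + \phi(0)\,,
$$
but more simply $-\phi(0) \le \sum_{n\ge1}\phi(n) - \int_0^\infty\phi\le 0$ fails in that form; the correct elementary fact is $\int_1^\infty\phi\le\sum_{n\ge1}\phi(n)\le\int_0^\infty\phi$, hence $0\le\int_0^\infty\phi-\sum_{n\ge1}\phi(n)\le\int_0^1\phi\le\phi(0)$. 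Applying this with $\phi(t) = \Tr(-\Delta_\omega^{\rm D}-(\lambda-\tfrac{\pi^2 t^2}{\ell^2}))_\limminus^\gamma$ and noting $\phi(0) = \Tr(-\Delta_\omega^{\rm D}-\lambda)_\limminus^\gamma$ gives exactly the claimed inequalities after dividing by $L_{\gamma,d}^{\rm sc}\ell\,\Haus^{d-1}(\omega)\lambda^{\gamma+\frac d2}$. For the Neumann case the argument is identical except that the one-dimensional Neumann eigenvalues on $(0,\ell)$ are $\frac{\pi^2 n^2}{\ell^2}$ with $n\geq 0$ (including $n=0$), so the sum runs over $n\geq 0$ rather than $n\geq 1$; this flips the sum-vs-integral comparison to $\int_0^\infty\phi\le\sum_{n\ge0}\phi(n)\le\phi(0)+\int_0^\infty\phi$, yielding the reversed inequalities with the same error term $\phi(0)$. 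The discreteness hypothesis on $-\Delta_\omega^{\rm N}$ in $[0,\lambda]$ is what guarantees $\phi(0)<\infty$ and that the spectral sum makes sense.

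**Main obstacle.** The only genuinely nontrivial point is verifying that the continuum integral reproduces the $(d-1)$-dimensional Riesz mean of order $\gamma+\frac12$ with exactly the constant $L_{\gamma,d}^{\rm sc}/L_{\gamma+\frac12,d-1}^{\rm sc}$; this is a bookkeeping exercise with Gamma functions, using $L_{\gamma,d}^{\rm sc}=L_{\gamma+\frac12,d-1}^{\rm sc}\cdot\frac{\Gamma(\gamma+1)\Gamma(\gamma+\frac32+\frac{d-1}2)}{\Gamma(\gamma+\frac32)\Gamma(\gamma+1+\frac d2)}\cdot\frac{1}{\sqrt{4\pi}}\cdot\ldots$ and the identity $\int_0^1(1-u^2)^{\gamma+\frac12}\,du = \frac{\sqrt\pi}{2}\frac{\Gamma(\gamma+\frac32)}{\Gamma(\gamma+2)}$. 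I expect this to go through cleanly. A secondary point to handle carefully is the Dirichlet \emph{lower} bound: one must check that when $\lambda < \lambda_1^{\rm D}(\omega)$ all terms vanish and the inequalities hold trivially, and otherwise that $\phi$ is indeed finite and nonincreasing on $[0,\infty)$ — which follows since $\Tr(-\Delta_\omega^{\rm D}-\mu)_\limminus^\gamma$ is a finite, nondecreasing, continuous function of $\mu$ (the measure $|\omega|<\infty$ ensures discreteness of $-\Delta_\omega^{\rm D}$), and $\mu = \lambda - \pi^2 t^2/\ell^2$ is decreasing in $t$.
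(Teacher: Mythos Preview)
Your proposal is correct and takes essentially the same approach as the paper. The paper writes the cylinder trace via separation of variables and then invokes ``P\'olya's conjecture for the Dirichlet/Neumann Laplacian on an interval'' together with the identity $L_{\gamma,d}^{\rm sc}=L_{\gamma+\frac12,d-1}^{\rm sc}L_{\gamma,1}^{\rm sc}$; your monotone sum-versus-integral comparison $\int_1^\infty\phi\le\sum_{n\ge1}\phi(n)\le\int_0^\infty\phi$ applied to $\phi(t)=(\mu-\pi^2t^2/\ell^2)_\limplus^\gamma$ is precisely the content of one-dimensional P\'olya (and your Beta-function computation of the integral reproduces exactly $L_{\gamma,1}^{\rm sc}\ell\mu^{\gamma+1/2}$), so the two arguments are the same up to phrasing.
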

\begin{proof}
    By the product structure of $\omega \times (0, \ell)$ the eigenvalues of $-\Delta_{\omega \times (0, \ell)}^\sharp$ are given by $\lambda_k^{\sharp}(\omega) +\lambda_j^\sharp((0, \ell))$ with corresponding multiplicities. Recall that the spectrum of $-\Delta_{(0, \ell)}^{\rm N}$ differs from the spectrum of $-\Delta_{(0, \ell)}^{\rm D}$ only by the simple eigenvalue at $0$.

    Then for any $\ell>0$ using the validity of P\'olya's conjecture for the Neumann Laplacian on an interval and the fact that $L_{\gamma,d}^{\rm sc}=L_{\gamma+\frac{1}2,d-1}^{\rm sc}L_{\gamma,1}^{\rm sc}$,
    \begin{align*} 
    \frac{\Tr(-\Delta_{\omega \times (0, \ell)}^{\rm D}-\lambda)_\limminus^\gamma}{L_{\gamma, d}^{\rm sc}\ell \Haus^{d-1}(\omega)\lambda^{\gamma+ \frac{d}2}}
    &=
    \frac{\sum_{k\geq 1}\Bigl(\Tr(-\Delta_{(0, \ell)}^{\rm N}-\lambda+\lambda_k^{\rm D}(\omega))_\limminus^\gamma-(\lambda-\lambda_k^{\rm D}(\omega))_\limplus^\gamma\Bigr)}{L_{\gamma, d}^{\rm sc}\ell \Haus^{d-1}(\omega)\lambda^{\gamma+ \frac{d}2}}\\
    &\geq
    \frac{\Tr(-\Delta_{\omega}^{\rm D}-\lambda)_\limminus^{\gamma+\frac{1}2}}{L_{\gamma+\frac{1}2, d-1}^{\rm sc}\Haus^{d-1}(\omega)\lambda^{\gamma+\frac{d}2}}-\frac{\Tr(-\Delta_\omega^{\rm D}-\lambda)_\limminus^\gamma}{L_{\gamma, d}^{\rm sc}\ell \Haus^{d-1}(\omega)\lambda^{\gamma+ \frac{d}2}}\,,
    \end{align*}
    and 
    \begin{align*} 
    \frac{\Tr(-\Delta_{\omega \times (0, \ell)}^{\rm N}-\lambda)_\limminus^\gamma}{L_{\gamma, d}^{\rm sc}\ell \Haus^{d-1}(\omega)\lambda^{\gamma+ \frac{d}2}}
    &=
    \frac{\sum_{k\geq 1}\Tr(-\Delta_{(0, \ell)}^{\rm N}-\lambda+\lambda_k^{\rm N}(\omega))_\limminus^\gamma}{L_{\gamma, d}^{\rm sc}\ell \Haus^{d-1}(\omega)\lambda^{\gamma+ \frac{d}2}}\geq
    \frac{\Tr(-\Delta_{\omega}^{\rm N}-\lambda)_\limminus^{\gamma+\frac{1}2}}{L_{\gamma+\frac{1}2, d-1}^{\rm sc}\Haus^{d-1}(\omega)\lambda^{\gamma+\frac{d}2}}\,.
    \end{align*}
    
    The upper bounds are proved similarly, but using instead the validity of P\'olya's conjecture for the Dirichlet Laplacian on an interval. This completes the proof of the lemma.
\end{proof}

\begin{proof}[Proof of Proposition \ref{dimred r}]
    By the definition of $r_{\gamma, d}^{\sharp}$ together with Lemma~\ref{lem: cylinder lift}, for any open, convex, bounded $\omega \subset \R^{d-1}$ and any $\lambda , \ell >0$ we have that
\begin{align*}
     r_{\gamma, d}^{\rm D} 
    &\geq 
    \frac{\Tr(-\Delta_{\omega}^{\rm D}-\lambda)_\limminus^{\gamma+\frac{1}2}}{L_{\gamma+\frac{1}2, d-1}^{\rm sc}\Haus^{d-1}(\omega)\lambda^{\gamma+\frac{d}2}}-\frac{\Tr(-\Delta_\omega^{\rm D}-\lambda)_\limminus^\gamma}{L_{\gamma, d}^{\rm sc}\ell \Haus^{d-1}(\omega)\lambda^{\gamma+ \frac{d}2}}\,,
    \\
    r_{\gamma, d}^{\rm N} 
    &\leq 
    \frac{\Tr(-\Delta_{\omega}^{\rm N}-\lambda)_\limminus^{\gamma+\frac{1}2}}{L_{\gamma+\frac{1}2, d-1}^{\rm sc}\Haus^{d-1}(\omega)\lambda^{\gamma+\frac{d}2}}+\frac{\Tr(-\Delta_\omega^{\rm N}-\lambda)_\limminus^\gamma}{L_{\gamma, d}^{\rm sc}\ell \Haus^{d-1}(\omega)\lambda^{\gamma+ \frac{d}2}}\,.
\end{align*}
By taking the limit $\ell \to \infty$ we deduce that 
\begin{align*}
    \Tr(-\Delta_{\omega}^{\rm D}-\lambda)_\limminus^{\gamma + \frac12} &\leq r_{\gamma, d}^{\rm D} L_{\gamma+\frac{1}2, d-1}^{\rm sc}\Haus^{d-1}(\omega)\lambda^{\gamma+\frac{d}2}\,,\\
    \Tr(-\Delta_{\omega}^{\rm N}-\lambda)_\limminus^{\gamma + \frac12} &\geq r_{\gamma, d}^{\rm N} L_{\gamma+\frac{1}2, d-1}^{\rm sc}\Haus^{d-1}(\omega)\lambda^{\gamma+\frac{d}2}\,.
\end{align*}
As $(\omega, \lambda) \in \mathcal{C}_{d-1}\times (0, \infty)$ were arbitrary, this proves the desired inequalities.
\end{proof}

\begin{proof}[Proof of Proposition \ref{dimred}]
To deduce the inequality $\gamma_{d}^\sharp \geq (\gamma_{d-1}^\sharp-\frac{1}2)_\limplus$ we may assume that $\gamma_{d-1}^\sharp >\frac{1}2$, for otherwise the claimed inequality is trivial. If $\gamma+\frac{1}2<\gamma_{d-1}^\sharp$ then the fact that $r_{\gamma,d}^{\sharp}\neq 1$ if and only if $\gamma<\gamma_d^{\sharp}$, combined with the inequalities in Proposition~\ref{dimred r}, implies that $r_{\gamma,d}^{\rm D}>1$ if $\sharp = \rm D$ and $r_{\gamma, d}^{\rm N}<1$ if $\sharp =\rm N$. Using again the fact that $r_{\gamma,d}^{\sharp}\neq 1$ if and only if $\gamma<\gamma_d^{\sharp}$ we deduce that if $\gamma + \frac{1}2<\gamma_{d-1}^\sharp$ then $\gamma <\gamma_{d}^\sharp$. This proves the claimed inequality.
\end{proof}

%---------------------------------------

\subsection{Riesz means below the critical exponent}
\label{sec: below critical}

Before turning to Theorem \ref{thm: Main conclusions gamma_d}, which concerns the optimization problem defining $r_{\gamma,d}^\sharp$ for $\gamma=\gamma_d^\sharp$, we consider the slightly simpler case $\gamma<\gamma_d^\sharp$. 

We shall say that a pair $(\Omega, \lambda) \in \mathcal{C}_d\times (0, \infty)$ is an \emph{equality case} or \emph{extremal} for one of the two inequalities in~\eqref{eq: sharp ineq.s in terms of r} if
\begin{equation*}
    \Tr(-\Delta_\Omega^{\sharp}-\lambda)_\limminus^\gamma  = r_{\gamma,d}^{\sharp}L_{\gamma, d}^{\rm sc}|\Omega|\lambda^{\gamma+\frac{d}2}\,.
\end{equation*}
However, in the case $\gamma=0, \sharp = \rm D$ the above equality can never be valid. Indeed, both the left- and right-hand sides of the inequality $\Tr(-\Delta_\Omega^{\rm D}-\lambda)_\limminus^0  \leq r_{0,d}^{\rm D}L_{0, d}^{\rm sc}|\Omega|\lambda^{\frac{d}2}$ are increasing functions of $\lambda$ but the left-hand side is an integer valued lower semi-continuous step function and the right-hand side is strictly increasing. Therefore, we shall in the case $\gamma=0, \sharp ={\rm D}$ adopt the convention that $(\Omega, \lambda)\in \mathcal{C}_d\times (0, \infty)$ is an equality case or extremal if instead
\begin{equation*}
    \lim_{\epsilon \to 0^\limplus}\Tr(-\Delta_\Omega^{\rm D}-\lambda-\epsilon)_\limminus^0  = r_{0,d}^{\rm D}L_{\gamma, d}^{\rm sc}|\Omega|\lambda^{\frac{d}2}\,.
\end{equation*}
In other words, for equality cases for P\'olya's conjecture it is in the Dirichlet case most natural to consider the upper semi-continuous realization of the counting function.

Before stating and proving our next result, we recall that in Proposition \ref{dimred r} we have shown the inequalities $r_{\gamma,d}^{\rm D}\geq r_{\gamma+\frac12,d-1}^{\rm D}$ and $r_{\gamma,d}^{\rm N}\leq r_{\gamma+\frac12,d-1}^{\rm N}$. The nature of the optimization problem for Riesz means will depend on whether these inequalities are strict or not.

\begin{theorem}\label{thm: compactness via r}
    Fix $d\geq 2$, $\sharp\in\{ \rm D, \rm N\}$ and $0\leq\gamma<\gamma_d^\sharp$. It holds that 
	\begin{enumerate}[label=\textup{(}\hspace{-0.4pt}\alph*\textup{)}]
        \item\label{itm: r conditional compactness} 
        $r_{\gamma, d}^\sharp \neq r_{\gamma+ \frac{1}2,d-1}^\sharp$ if and only if all sequences $\{(\Omega_j, \lambda_j)\}_{j\geq 1} \subset \mathcal{C}_d\times (0, \infty)$ such that
        \begin{equation*}
            \lim_{j\to \infty} \frac{\Tr(-\Delta_{\Omega_j}^{\rm \sharp}-\lambda_j)_\limminus^\gamma}{L_{\gamma,d}^{\rm sc}|\Omega_j|\lambda_j^{\gamma+\frac{d}2}} = r_{\gamma,d}^\sharp
        \end{equation*}
        are precompact in the topology induced by the Hausdorff distance on convex sets modulo the symmetries induced by the scaling $(\Omega, \lambda) \mapsto (t\Omega, t^{-2}\lambda)$, $t>0$, and translation $(\Omega, \lambda) \mapsto (\Omega+x_0, \lambda)$, $x_0 \in \R^d$. In particular, if $r_{\gamma, d}^\sharp \neq r_{\gamma+ \frac{1}2,d-1}^\sharp$, then the corresponding inequality in~\eqref{eq: sharp ineq.s in terms of r} admits an equality case.

        \item\label{itm: r conditional noncompactness} 
        if $r_{\gamma, d}^\sharp = r_{\gamma+ \frac{1}2,d-1}^\sharp$ then there is a bounded convex open non-empty $\omega_*\subset \R^{d-1}$ and a $\lambda_*>0$ such that
        \begin{equation*}
            \Tr(-\Delta_{\omega_*}^{\sharp} -\lambda_*)^{\gamma+\frac{1}2}_\limminus = r_{\gamma+ \frac{1}2,d-1}^\sharp L_{\gamma+ \frac{1}2,d-1}^{\rm sc}\Haus^{d-1}(\omega_*) \lambda_*^{\gamma + \frac{d}2}
        \end{equation*}
        and, if
        $
            \Omega_{j} := \omega_*\times (0, j) \subset \R^d,
        $ 
        then
        \begin{equation*}
            \lim_{j\to \infty} \frac{\Tr(-\Delta_{\Omega_j}^{\rm \sharp}-\lambda_*)_\limminus^\gamma}{L_{\gamma,d}^{\rm sc}|\Omega_j|\lambda^{\gamma+\frac{d}2}} = r_{\gamma,d}^\sharp\,.
        \end{equation*}
        In particular, the sequence $\{(\Omega_j, \lambda_*)\}_{j\geq 1}\subset \mathcal{C}_d\times (0, \infty)$ is an optimizing sequence for $r_{\gamma, d}^\sharp$ which fails to be precompact in the sense of \ref{itm: r conditional compactness}.
    \end{enumerate}
\end{theorem}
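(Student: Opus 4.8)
The plan is to analyze an arbitrary optimizing sequence $\{(\Omega_j,\lambda_j)\}\subset\mathcal C_d\times(0,\infty)$ for $r_{\gamma,d}^\sharp$ by a normalize-then-dichotomize argument. First I would use the scaling $(\Omega,\lambda)\mapsto(t\Omega,t^{-2}\lambda)$ to arrange $\lambda_j\equiv1$. Since $\gamma<\gamma_d^\sharp$, monotonicity of $\gamma\mapsto r_{\gamma,d}^\sharp$ together with $r_{\gamma,d}^{\rm D}\ge1\ge r_{\gamma,d}^{\rm N}$ gives $r_{\gamma,d}^{\rm D}>1$ and $r_{\gamma,d}^{\rm N}<1$. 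The Hersch--Protter inequality (for $\sharp={\rm D}$) and Lemma~\ref{lem: small energy improved Kroger} (for $\sharp={\rm N}$) then force $\liminf_j r_{\rm in}(\Omega_j)>0$, since otherwise the normalized trace would vanish along a subsequence ($\sharp={\rm D}$) or blow up ($\sharp={\rm N}$). Conversely, if $r_{\rm in}(\Omega_j)\to\infty$ along a subsequence then $r_{\rm in}(\Omega_j)\sqrt{\lambda_j}\to\infty$, and the non-collapsing Weyl asymptotics (Theorem~\ref{thm: quantitative Weyl law} for $\gamma=0$, \cite[Theorem~1.2]{FrankLarson_24} for $\gamma>0$) would drive the normalized trace to $1\ne r_{\gamma,d}^\sharp$. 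Hence, after passing to a subsequence, $r_{\rm in}(\Omega_j)\asymp1$.

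Now I would split according to whether $\diam(\Omega_j)$ stays bounded. If it does, then after a translation the $\Omega_j$ are uniformly bounded, and Blaschke's selection theorem yields a subsequence converging in Hausdorff distance to some $\Omega_*\in\mathcal C_d$ with $r_{\rm in}(\Omega_*)>0$; continuity of the Dirichlet/Neumann eigenvalues (Corollary~\ref{cor: DN continuity of eigenvalues}), uniform a priori bounds (eigenvalue monotonicity for $\sharp={\rm D}$, Lemma~\ref{lem: upper bound N counting} for $\sharp={\rm N}$) and dominated convergence then show that $(\Omega_*,1)$ is an equality case for \eqref{eq: sharp ineq.s in terms of r}, exactly as in the proof of Lemma~\ref{exshapeoptneumann}; in particular the sequence is precompact modulo the symmetries. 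If instead $\diam(\Omega_j)\to\infty$, then \eqref{eq: inradius bound} gives $|\Omega_j|\lambda_j^{d/2}=|\Omega_j|\gtrsim_d\diam(\Omega_j)r_{\rm in}(\Omega_j)^{d-1}\to\infty$, so Theorem~\ref{thm: Asymptotics degenerating convex sets} applies (its hypothesis that the limit exists holds, the limit being $r_{\gamma,d}^\sharp$): there are $\Omega_*\in\mathcal C_d$ and $m\in\{1,\dots,d-1\}$ such that $r_{\gamma,d}^\sharp$ is a weighted average (over $y\in P^\perp\Omega_*$, with weights $\Haus^m(\Omega_*(y))/|\Omega_*|$) of the ratios $\Tr(-\Delta^\sharp_{\Omega_*(y)}-1)_\limminus^{\gamma+\frac{d-m}{2}}/(L^{\rm sc}_{\gamma+\frac{d-m}{2},m}\Haus^m(\Omega_*(y)))$. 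Each such ratio is $\le r_{\gamma+\frac{d-m}{2},m}^{\rm D}$ (resp.\ $\ge r_{\gamma+\frac{d-m}{2},m}^{\rm N}$), whereas iterating Proposition~\ref{dimred} $(d-m)$ times gives the reverse. Hence all these inequalities are equalities: $r_{\gamma,d}^\sharp=r_{\gamma+\frac12,d-1}^\sharp=\dots=r_{\gamma+\frac{d-m}{2},m}^\sharp$, for a.e.\ $y$ the slice $(\Omega_*(y),1)$ is an $m$-dimensional equality case, and $m\ge2$ (since $m=1$ would force $r_{\gamma,d}^\sharp=1$).

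Part~(a) follows: if $r_{\gamma,d}^\sharp\ne r_{\gamma+\frac12,d-1}^\sharp$ the second alternative is impossible, so every optimizing sequence is precompact modulo the symmetries and an equality case exists; if $r_{\gamma,d}^\sharp=r_{\gamma+\frac12,d-1}^\sharp$, part~(b) produces a spaghetti-like optimizing sequence, which is not precompact modulo the symmetries (any rescaling either keeps the diameter unbounded, or sends $\lambda$ to $0$ or $\infty$, or sends the inradius to $0$). For part~(b), granting an equality case $(\omega_*,\lambda_*)\in\mathcal C_{d-1}\times(0,\infty)$ for $r_{\gamma+\frac12,d-1}^\sharp$, Lemma~\ref{lem: cylinder lift} applied with $\ell=j$ gives $\Tr(-\Delta^\sharp_{\omega_*\times(0,j)}-\lambda_*)_\limminus^\gamma/(L^{\rm sc}_{\gamma,d}\,|\omega_*\times(0,j)|\,\lambda_*^{\gamma+\frac d2})\to r_{\gamma+\frac12,d-1}^\sharp=r_{\gamma,d}^\sharp$ as $j\to\infty$, since $|\omega_*\times(0,j)|=j\,\Haus^{d-1}(\omega_*)$ and the error term in Lemma~\ref{lem: cylinder lift} is $O(1/j)$; these cylinders have fixed inradius $r_{\rm in}(\omega_*)$ and diameter $\to\infty$, hence collapse in a spaghetti-like manner.

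The hard part is producing the equality case $\omega_*$ for part~(b), and I would do this by induction on $d$. For $d=2$ the hypothesis $r_{\gamma,2}^\sharp=r_{\gamma+\frac12,1}^\sharp=1$ contradicts $\gamma<\gamma_2^\sharp$, so there is nothing to prove. For $d\ge3$ one has $\gamma+\frac12<\gamma_{d-1}^\sharp$ (as the common value $r_{\gamma,d}^\sharp=r_{\gamma+\frac12,d-1}^\sharp$ is $\ne1$), so part~(a) in dimension $d-1$ at Riesz exponent $\gamma+\frac12$ delivers $\omega_*$ as soon as $r_{\gamma+\frac12,d-1}^\sharp\ne r_{\gamma+1,d-2}^\sharp$; in the remaining case one descends one further dimension via part~(a)/(b) in dimension $d-2$, using Theorem~\ref{thm: Asymptotics degenerating convex sets} and Lemma~\ref{lem: cylinder lift} to control the intervening cylinders, and the chain $r_{\gamma,d}^\sharp=r_{\gamma+\frac12,d-1}^\sharp=r_{\gamma+1,d-2}^\sharp=\dots$ must break at some dimension $m_0\ge2$ (dimension $1$ would force the value to be $1$), where part~(a) applies and the resulting $m_0$-dimensional equality case is carried back up. Keeping exact track of this descent while ruling out an unbounded one is the delicate step; everything else (the dichotomy, the Blaschke compactness, the degeneration analysis, and the cylinder computation) is more standard once Theorem~\ref{thm: Asymptotics degenerating convex sets}, Theorem~\ref{thm: quantitative Weyl law}, and the results of \cite{FrankLarson_24} are available.
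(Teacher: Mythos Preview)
Your approach to part~(a) is correct and essentially the same as the paper's, modulo a different (but equivalent) normalization: you fix $\lambda_j\equiv1$ and track $r_{\rm in}$ and $\diam$, while the paper fixes $|\Omega_j|=1$ and tracks $r_{\rm in}(\Omega_j)\sqrt{\lambda_j}$ and $\lambda_j$. Your handling of the degenerate case via Theorem~\ref{thm: Asymptotics degenerating convex sets} and the chain of equalities through Proposition~\ref{dimred} is also fine.

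For part~(b), however, you make the argument much harder than it needs to be and introduce a gap. The descent you describe is unnecessary: the ``remaining case'' $r_{\gamma+\frac12,d-1}^\sharp=r_{\gamma+1,d-2}^\sharp$ \emph{never occurs}. Indeed, $\gamma+1\ge1$ and the Berezin--Li--Yau/Kr\"oger inequalities (or equivalently Theorems~\ref{thm: Extended range of semiclassical ineq Dir} and~\ref{thm: Extended range of semiclassical ineq Neu}) force $r_{\gamma+1,d-2}^\sharp=1$, whereas $r_{\gamma+\frac12,d-1}^\sharp=r_{\gamma,d}^\sharp\neq1$ by the hypothesis $\gamma<\gamma_d^\sharp$. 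Thus part~(a) in dimension $d-1$ applies directly and yields $\omega_*$; this is exactly what the paper does. Your proposed ``carrying back up'' of an $m_0$-dimensional equality case to dimension $d-1$ is also problematic: Lemma~\ref{lem: cylinder lift} produces optimizing \emph{sequences} on cylinders, not equality cases, so there is no obvious way to manufacture a $(d-1)$-dimensional equality case from a lower-dimensional one. Fortunately this gap is moot once you observe that the descent terminates immediately at $d-1$.
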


\begin{remark}
    We remark that the corresponding optimization problems in the regime $\gamma >\gamma_d^\sharp$ display drastically different behavior. Indeed, the two-term inequalities in Theorem~\ref{thm: improved inequality above critical gamma Dir} and \ref{thm: improved inequality above critical gamma Neu} together with \eqref{eq: inradius bound} and \cite[Theorem 1.2]{FrankLarson_24} show that in this case sequences $\{(\Omega_j, \lambda_j)\}_{j\geq 1}\subset \mathcal{C}_d\times (0, \infty)$ satisfy $\lim_{j\to \infty}r_{\rm in}(\Omega_j)\sqrt{\lambda_j}=\infty$ if and only if
    \begin{equation*}
        \lim_{j\to \infty} \frac{\Tr(-\Delta_{\Omega_j}^{\rm \sharp}-\lambda_j)_\limminus^\gamma}{L_{\gamma,d}^{\rm sc}|\Omega_j|\lambda_j^{\gamma+\frac{d}2}} = 1=r_{\gamma,d}^\sharp\,.
    \end{equation*}
    In particular, the collection of such sequences contains no sequence which is precompact in the sense of \ref{itm: r conditional compactness}.
\end{remark}

\begin{proof}
We split the proof into parts corresponding to the different statements of the theorem. 
\medskip

{\noindent\it Part 1: Proof of \ref{itm: r conditional compactness} for $\sharp = \rm D$ when $r_{\gamma, d}^{\rm D}>r_{\gamma+ \frac{1}2,d-1}^{\rm D}$}. Fix $0 \leq \gamma <\gamma_d^{\rm D}$ and let $\{(\Omega_j, \lambda_j)\}_{j \geq 1} \subset \mathcal{C}_d\times (0, \infty)$ be a maximizing sequence for $r_{\gamma,d}^{\rm D}$ (i.e.\ having the limiting property in the statement). Since the quotient 
\begin{equation*}
    \frac{\Tr(-\Delta_{\Omega_j}^{\rm D}-\lambda_j)_\limminus^\gamma}{L_{\gamma,d}^{\rm sc}|\Omega_j|\lambda_j^{\gamma+ \frac{d}2}}
\end{equation*}
is unaltered if we replace the pair $(\Omega_j, \lambda_j)$ by $(|\Omega_j|^{-\frac{1}d}\Omega_j + x_0, |\Omega_j|^{\frac{2}d}\lambda_j)$ for any $x_0\in \R$ we assume without loss of generality that $|\Omega_j|=1$ and $0\in \Omega_j$ for each $j$. 

By the Hersch--Protter inequality we have that $\lambda_1^{\rm D}(\Omega_j) \geq \frac{\pi^2}{4r_{\rm in}(\Omega_j)^2}$. Therefore, $\liminf_{j\to \infty}r_{\rm in}(\Omega_j)\sqrt{\lambda_j} =0$ would imply that $\lambda_1^{\rm D}(\Omega_j)>\lambda_j$ along a sequence of $j \to \infty$, thus
\begin{equation*}
    r_{\gamma, d}^{\rm D} =\liminf_{j\to \infty} \frac{\Tr(-\Delta_{\Omega_j}^{\rm D}-\lambda_j)_\limminus^\gamma}{L_{\gamma, d}^{\rm sc}|\Omega_j|\lambda_j^{\gamma+ \frac{d}2}} =0\,,
\end{equation*}
which contradicts that $r_{\gamma, d}^{\rm D}>1$. We conclude that
\begin{equation}\label{eq: a priori inradius bound}
    \liminf_{j\to \infty}r_{\rm in}(\Omega_j)\sqrt{\lambda_j}>0\,.
\end{equation}

We next aim to prove that $0 <\liminf_{j\to \infty}\lambda_j< \limsup_{j\to \infty}\lambda_j <\infty$.

\medskip

Assume towards a contradiction that $\liminf_{j\to \infty} \lambda_j =0$. By the Faber--Krahn inequality $\lambda_1^{\rm D}(\Omega_j) \geq |B|^{\frac{2}{d}}\lambda_1^{\rm D}(B)>0$ and so for all $j$ such that $\lambda_j<|B|^{\frac{2}{d}}\lambda_1^{\rm D}(B)$ we have $\Tr(-\Delta_{\Omega_j}^{\rm D}-\lambda_j)_\limminus^\gamma =0$. Therefore, if $\liminf_{j\to \infty} \lambda_j =0$ then
\begin{equation*}
   \liminf_{j\to \infty} \frac{\Tr(-\Delta_{\Omega_j}^{\rm D}-\lambda_j)_\limminus^\gamma}{L_{\gamma,d}^{\rm sc}|\Omega|\lambda_j^{\gamma+ \frac{d}2}} =0\,,
\end{equation*}
this contradicts that $\{(\Omega_j, \lambda_j)\}_{j\geq 1}$ was a maximizing sequence ($r_{\gamma,d}^{\rm D}\geq 1$). Consequently, we have reached the desired conclusion, $\liminf_{j\to \infty} \lambda_j>0$.

\medskip

Assume towards contradiction that $\limsup_{j\to \infty} \lambda_j =\infty$. We split into two cases.

\smallskip

{\it Case 1} $\limsup_{j\to \infty}r_{\rm in}(\Omega_j)\sqrt{\lambda_j}=\infty.$ By \eqref{eq: inradius bound} $\Haus^{d-1}(\partial\Omega_j) \leq \frac{d|\Omega_j|}{r_{\rm in}(\Omega_j)}$ and thus by \cite[Theorem 1.2]{FrankLarson_24} if $\gamma>0$ and Theorem~\ref{thm: quantitative Weyl law} if $\gamma=0$ we conclude that
\begin{equation*}
    \liminf_{j\to \infty} \frac{\Tr(-\Delta_{\Omega_j}^{\rm D}-\lambda_j)_\limminus^\gamma}{L_{\gamma,d}^{\rm sc}|\Omega_j|\lambda_j^{\gamma+ \frac{d}2}} = 1\,.
\end{equation*}
Since $r_{\gamma, d}^{\rm D}>1$ as $\gamma <\gamma_d^{\rm D}$ this contradicts the assumption that $\{(\Omega_j, \lambda_j)\}_{j\geq 1}$ was a maximizing sequence.

\smallskip

{\it Case 2} $\limsup_{j\to \infty} r_{\rm in}(\Omega_j)\sqrt{\lambda_j}<\infty$. By~\eqref{eq: a priori inradius bound}, the assumption, and since $|\Omega_j|=1$, Theorem~\ref{thm: Asymptotics degenerating convex sets} implies that there exists an open non-empty, bounded, and convex set $\Omega_*'\subset \R^d$ and an integer $1\leq m\leq d-1$ such that
	$$
	r_{\gamma, d}^{\rm D}=\limsup_{j\to\infty} \frac{\Tr(-\Delta_{\Omega_j}^{\rm D}-\lambda_j)_\limminus^{\gamma_d^{\rm D}}}{L_{\gamma_d^{\rm D}, d}^{\rm sc}\lambda_j^{\gamma_d^{\rm D}+\frac d2} |\Omega_j|} = \frac{1}{L^{\rm sc}_{\gamma_d^{\rm D}+ \frac{d-m}{2}, m}|\Omega_*'|}\int_{P^\perp \Omega_*'} \Tr(-\Delta_{\Omega_*'(y)}^{\rm D}-1)_\limminus^{\gamma_d^{\rm D}+ \frac{d-m}{2}}\,dy \,.
	$$
    %(The notations $P^\bot\Omega_*'$ and $\Omega_*'(y)$ are explained in Theorem~\ref{thm: Asymptotics degenerating convex sets}.) 
    By definition of $r_{\gamma, d}^{\rm D}$ the trace in the integral is bounded by $r_{\gamma+ \frac{d-m}2,m}^{\rm sc}L_{\gamma+ \frac{d-m}2,m}^{\rm sc}\Haus^{m}(\Omega_*'(y))$ for each $y \in P^\perp \Omega_*'$. Since
    \begin{equation*}
        \frac{1}{|\Omega_*'|}\int_{P^\perp \Omega_*'}\Haus^m(\Omega_*'(y))\,dy = 1 \,,
    \end{equation*}
    it follows that
    \begin{equation}\label{eq: reverse inequality rs}
        r_{\gamma, d}^{\rm D} \leq r_{\gamma+ \frac{d-m}2,m}^{\rm D}\,.
    \end{equation}
    Since $\gamma + \frac{d-m}2 \geq 1$ for $m\leq d-2$, Theorem~\ref{thm: Extended range of semiclassical ineq Dir} and the fact that $r_{\gamma',d'}^{\rm D}=1$ if $\gamma' \geq \gamma_{d'}^{\rm D}$ implies that $r_{\gamma+ \frac{d-m}2,m}^{\rm D}=1$ for $m \leq d-2$, in which case \eqref{eq: reverse inequality rs} contradicts that $r_{\gamma,d}^{\rm D}>1$. In the remaining case $m=d-1$ the inequality \eqref{eq: reverse inequality rs} is contradicted by the assumption $r_{\gamma, d}^{\rm D} > r_{\gamma+ \frac{1}2,d-1}^{\rm D}$.

    \medskip

    Combining the two cases we finally conclude that $\limsup_{j\to \infty}\lambda_j <\infty$.

    We have shown that $0 <\liminf_{j\to \infty}\lambda_j< \limsup_{j\to \infty}\lambda_j <\infty$. Therefore, by passing to a subsequence we assume without loss of generality that $\lim_{j \to \infty} \lambda_j = \lambda_*>0$. By~\eqref{eq: a priori inradius bound} it follows that $\liminf_{j\to \infty}r_{\rm in}(\Omega_j)>0$. Consequently, $\Omega_j$ satisfies that $|\Omega_j|=1$, $r_{\rm in}(\Omega_j)\geq \frac{1}2\liminf_{k\to \infty}r_{\rm in}(\Omega_k)>0$ for all $j$ sufficiently large. In particular, by~\eqref{eq: inradius bound} the sets $\Omega_j$ are uniformly bounded. The Blaschke selection theorem allows us to extract a subsequence along which $\Omega_j$ converges to an open, convex, non-empty and bounded set $\Omega_*$ with respect to the Hausdorff distance. This completes the proof of precompactness of maximizing sequences.

    \medskip

    For $\gamma>0$ the existence of $(\Omega, \lambda)\in \mathcal{C}_d\times(0, \infty)$ that realizes the supremum defining $r_{\gamma,d}^{\rm D}$ follows from  the precompactness of maximizing sequences above and the joint continuity $(\Omega, \lambda) \mapsto \frac{\Tr(-\Delta_{\Omega}^{\rm D}-\lambda)_\limminus^{\gamma}}{L_{\gamma,d}^{\rm sc}|\Omega|\lambda^{\gamma+ \frac{d}2}}$ (see Proposition~\ref{prop: DN continuity of eigenvalues}). For $\gamma=0$ the map $(\Omega, \lambda) \mapsto \frac{\Tr(-\Delta_{\Omega}^{\rm D}-\lambda)_\limminus^{0}}{L_{0,d}^{\rm sc}|\Omega|\lambda^{\frac{d}2}}$ is only semi-continuous (see Proposition~\ref{prop: DN continuity of eigenvalues}) and for $(\Omega_*, \lambda_*) \in \mathcal{C}_d\times (0, \infty)$ the limit of a maximizing sequence we only know that
    \begin{equation*}
        \lim_{\epsilon \to 0^+} \Tr(-\Delta_{\Omega_*}^{\rm D}-\lambda_*-\epsilon)_\limminus^0 = L_{0, d}^{\rm sc}|\Omega_*|\lambda_*^{\frac{d}2}\,.
    \end{equation*}
    Thus $(\Omega_*, \lambda_*)$ is an equality case in the sense discussed before the statement of the theorem. 

   \medskip
{\noindent\it Part 2: Proof of \ref{itm: r conditional compactness} for $\sharp = \rm N$ when $r_{\gamma, d}^{\rm N}<r_{\gamma+ \frac{1}2,d-1}^{\rm N}$}. Fix $0 \leq \gamma <\gamma_d^{\rm N}$ and let $\{(\Omega_j, \lambda_j)\}_{j \geq 1} \subset \mathcal{C}_d\times (0, \infty)$ be a minimizing sequence for $r_{\gamma,d}^{\rm N}$. As in the Dirichlet case we assume without loss of generality that $|\Omega_j|=1$ and $0\in \Omega_j$ for each $j$. 

By the Lemma~\ref{lem: small energy improved Kroger} we have that $\Tr(-\Delta_{\Omega_j}^{\rm N}-\lambda_j)_\limminus^\gamma \geq \frac{C_{\gamma,d}}{r_{\rm in}(\Omega_j)}|\Omega_j|\lambda_j^{\gamma+ \frac{d-1}{2}}$. Therefore, $\liminf_{j\to \infty}r_{\rm in}(\Omega_j)\sqrt{\lambda_j} =0$ would imply that
\begin{equation*}
    r_{\gamma, d}^{\rm N} =\lim_{j\to \infty} \frac{\Tr(-\Delta_{\Omega_j}^{\rm N}-\lambda_j)_\limminus^\gamma}{L_{\gamma, d}^{\rm sc}|\Omega_j|\lambda_j^{\gamma+ \frac{d}2}}  \geq \limsup_{j\to \infty} \frac{C_{\gamma,d}}{L_{\gamma, d}^{\rm sc}r_{\rm in}(\Omega_j)\sqrt{\lambda_j}}=\infty\,,
\end{equation*}
which contradicts that $r_{\gamma, d}^{\rm N}<1$. We conclude that
\begin{equation}\label{eq: a priori inradius bound N}
    \liminf_{j\to \infty}r_{\rm in}(\Omega_j)\sqrt{\lambda_j}>0\,.
\end{equation}

Again we want to prove that $0 <\liminf_{j\to \infty}\lambda_j< \limsup_{j\to \infty}\lambda_j <\infty$.

\medskip

Assume towards contradiction that $\liminf_{j\to \infty} \lambda_j =0$. Since $0$ is an eigenvalue of the Neumann Laplacian we have $\Tr(-\Delta_{\Omega_j}^{\rm N}-\lambda_j)_\limminus^\gamma \geq \lambda_j^{\gamma}$ and so, if $\liminf_{j\to \infty}\lambda_j=0$,
\begin{equation*}
   \limsup_{j\to \infty} \frac{\Tr(-\Delta_{\Omega_j}^{\rm N}-\lambda_j)_\limminus^\gamma}{L_{\gamma,d}^{\rm sc}|\Omega|\lambda_j^{\gamma+ \frac{d}2}} =\infty\,,
\end{equation*}
this contradicts that $\{(\Omega_j, \lambda_j)\}_{j\geq 1}$ was a minimizing sequence ($r_{\gamma,d}^{\rm D}\leq 1$). Therefore, we have reached the desired conclusion, $\liminf_{j\to \infty} \lambda_j>0$.

\medskip

Assume towards contradiction that $\limsup_{j\to \infty} \lambda_j =\infty$. We split into two cases.

\smallskip

{\it Case 1} $\limsup_{j\to \infty}r_{\rm in}(\Omega_j)\sqrt{\lambda_j}=\infty.$ By \eqref{eq: inradius bound} $\Haus^{d-1}(\partial\Omega_j) \leq \frac{d|\Omega_j|}{r_{\rm in}(\Omega_j)}$ and thus by \cite[Theorem 1.2]{FrankLarson_24} if $\gamma>0$ and Theorem~\ref{thm: quantitative Weyl law} if $\gamma=0$ we conclude that
\begin{equation*}
    \limsup_{j\to \infty} \frac{\Tr(-\Delta_{\Omega_j}^{\rm N}-\lambda_j)_\limminus^\gamma}{L_{\gamma,d}^{\rm sc}|\Omega_j|\lambda_j^{\gamma+ \frac{d}2}} = 1\,.
\end{equation*}
Since $r_{\gamma, d}^{\rm N}<1$ as $\gamma <\gamma_d^{\rm N}$ this contradicts the assumption that $\{(\Omega_j, \lambda_j)\}_{j\geq 1}$ was a minimizing sequence.

\smallskip

{\it Case 2} $\limsup_{j\to \infty} r_{\rm in}(\Omega_j)\sqrt{\lambda_j}<\infty$. By~\eqref{eq: a priori inradius bound N}, the assumption, and since $|\Omega_j|=1$, Theorem~\ref{thm: Asymptotics degenerating convex sets} implies that there exists an open non-empty, bounded, and convex set $\Omega_*'\subset \R^d$ and an integer $1\leq m\leq d-1$ such that
	$$
	r_{\gamma, d}^{\rm N}=\limsup_{j\to\infty} \frac{\Tr(-\Delta_{\Omega_j}^{\rm N}-\lambda_j)_\limminus^{\gamma_d^{\rm N}}}{L_{\gamma_d^{\rm N}, d}^{\rm sc}\lambda_j^{\gamma_d^{\rm N}+\frac d2} |\Omega_j|} = \frac{1}{L^{\rm sc}_{\gamma_d^{\rm N}+ \frac{d-m}{2}, m}|\Omega_*'|}\int_{P^\perp \Omega_*'} \Tr(-\Delta_{\Omega_*'(y)}^{\rm N}-1)_\limminus^{\gamma_d^{\rm N}+ \frac{d-m}{2}}\,dy \,.
	$$
    By arguing as in the Dirichlet case this leads to a contradiction. 

    \medskip

    Combining the two cases we finally conclude that $\limsup_{j\to \infty}\lambda_j <\infty$.

    We have shown that $0 <\liminf_{j\to \infty}\lambda_j< \limsup_{j\to \infty}\lambda_j <\infty$. Therefore, by passing to a subsequence we assume without loss of generality that $\lim_{j \to \infty} \lambda_j = \lambda_*>0$. By~\eqref{eq: a priori inradius bound N}, we thus have $\liminf_{j\to \infty}r_{\rm in}(\Omega_j)>0$.
    Consequently, $\Omega_j$ satisfies that $|\Omega_j|=1$, $r_{\rm in}(\Omega_j)\geq \frac{1}2\liminf_{k\to \infty}r_{\rm in}(\Omega_k)>0$ for all $j$ sufficiently large. As in the Dirichlet case, the Blaschke selection theorem allows us to extract a subsequence along which $\Omega_j$ converges to a convex non-empty and bounded set $\Omega_*$ with respect to the Hausdorff distance. 
    This completes the proof of precompactness of minimizing sequences.
\medskip

    The existence of $(\Omega, \lambda)\in \mathcal{C}_d\times(0, \infty)$ that realizes the infimum defining $r_{\gamma,d}^{\rm N}$ follows from  the precompactness of minimizing sequences above and the joint lower semi-continuity of $(\Omega, \lambda) \mapsto \frac{\Tr(-\Delta_{\Omega}^{\rm N}-\lambda)_\limminus^{\gamma}}{L_{\gamma,d}^{\rm sc}|\Omega|\lambda^{\gamma+ \frac{d}2}}$ (see Proposition~\ref{prop: DN continuity of eigenvalues}).

\medskip
{\noindent\it Part 3: Proof of \ref{itm: r conditional noncompactness}}. Next, we verify the existence of non-compact optimizing sequences in the case $\gamma<\gamma_d^{\sharp}$ and $r_{\gamma,d}^\sharp=r_{\gamma+ \frac{1}2,d-1}^\sharp$. (Note that this will also prove the remaining implication in part \ref{itm: r conditional compactness}.)

Since $\gamma_1^\sharp=0$ there is nothing to prove for $d\leq 2$, as $r_{0,1}^{\sharp}=1$ and $\gamma <\gamma_{d}^{\sharp}$ implies $r_{\gamma, d}^{\rm D}>1$ and $r_{\gamma, d}^{\rm N}<1$. Thus, let $d\geq 3$. Since $\gamma_d^{\sharp}<1$ for all $d$ by Theorems \ref{thm: Extended range of semiclassical ineq Dir} and \ref{thm: Extended range of semiclassical ineq Neu}, we conclude that under the assumptions on $\gamma$ it holds by the inequality in Proposition~\ref{dimred r} that
\begin{equation*}
    r_{\gamma, d}^{\rm D} = r_{\gamma+ \frac{1}2,d-1}^{\rm D} >r_{\gamma +1, d-2}^{\rm D} =1 \quad \mbox{and}\quad r_{\gamma, d}^{\rm N} = r_{\gamma+ \frac{1}2,d-1}^{\rm N} <r_{\gamma +1, d-2}^{\rm N} =1\,.
\end{equation*}
By what we just proved there exists an optimizing pair $(\omega_*, \lambda_*)\in \mathcal{C}_{d-1}\times (0, \infty)$ for the variational problem defining $r_{\gamma+ \frac{1}2,d-1}^{\sharp}$. The sequence $\{(\omega_* \times (0, j), \lambda_*)\}_{j\geq 1} \subset \mathcal{C}_d\times (0, \infty)$ is an optimizing sequence for $r_{\gamma,d}^{\sharp}$ since by Lemma~\ref{lem: cylinder lift} and the assumption $r_{\gamma,d}^\sharp=r_{\gamma+\frac12,d-1}^\sharp$ we have
\begin{align*}
    \Biggl| \frac{\Tr(-\Delta_{\omega_*\times (0,j)}^{\rm \sharp}-\lambda_*)_\limminus^\gamma}{L_{\gamma,d}^{\rm sc}|\omega_*\times(0,j)|\lambda_*^{\gamma+\frac{d}2}}- r_{\gamma,d}^{\sharp}\Biggr| \leq j^{-1}\frac{\Tr(-\Delta_{\omega_*}^{\sharp}-\lambda_*)_\limminus^\gamma}{L_{\gamma,d}^{\rm sc}\Haus^{d-1}(\omega_*)\lambda_*^{\gamma + \frac{d}2}}\,.
\end{align*}
Furthermore, as $|\omega_*\times (0, j)| \to \infty$ but $r_{\rm in}(\omega_*\times (0, j)) \to r_{\rm in}(\omega_*)<\infty$, this optimizing sequence has no converging subsequence modulo the symmetries of the problem.
\end{proof}

%---------------------------------------

\subsection{Riesz means at the critical exponent}

Our goal in this subsection is to prove Theorem \ref{thm: Main conclusions gamma_d}. There are two main ingredients in the proof: First, the characterization of $\gamma_d^\sharp$ in terms of semiclassical two-term inequalities (Theorems \ref{thm: improved inequality above critical gamma Dir} and \ref{thm: improved inequality above critical gamma Neu}) and second, the (partial) Weyl asymptotics on collapsing convex sets (Theorem \ref{thm: Asymptotics degenerating convex sets}).

The overall strategy of the proof of Theorem \ref{thm: Main conclusions gamma_d} is similar in the Dirichlet and Neumann cases, but there are differences in carrying out the individual steps. We will provide full details in the Dirichlet case and only focus on the differences in the Neumann case. 

\begin{proof}[Proof of Theorem~\ref{thm: Main conclusions gamma_d}]
	The proof is split into parts according to the two statements in the theorem. We begin by proving \ref{itm: Main thm existence sup} first for $\sharp = \rm D$ and then for $\sharp = \rm N$. After \ref{itm: Main thm existence sup} is established, we prove that the validity of \ref{itm: Main thm cylinder sequence} in dimension $d$ follows from the validity of \ref{itm: Main thm existence sup} in dimension $d-1$ by applying Lemma~\ref{lem: cylinder lift}.
 
    \medskip
 
     {\noindent \it Proof of \ref{itm: Main thm existence sup} for $\sharp = {\rm D}$}: By assumption we have $\gamma_d^{\rm D} > (\gamma_{d-1}^{\rm D}-\frac{1}2)_\limplus \geq 0$. In particular, according to Theorem \ref{thm: improved inequality above critical gamma Dir} there is no two-term inequality at $\gamma_d^{\rm D}$. Consequently, there exists a sequence $\{\lambda_j\}_{j\geq 1}\subset (0, \infty)$ and a sequence of non-empty, bounded, convex sets $\{\Omega_j\}_{j\geq 1}$ such that
	\begin{equation}\label{eq: failure of improved ineq}
		\Tr(-\Delta_{\Omega_j}^{\rm D}-\lambda_j)_\limminus^{\gamma_d^{\rm D}} > \Bigl(L_{\gamma_d^{\rm D}, d}^{\rm sc} |\Omega_j|\lambda_j^{\gamma_d^{\rm D}+\frac{d}{2}}- j^{-1}\Haus^{d-1}(\partial\Omega_j)\lambda_j^{\gamma_d^{\rm D}+\frac{d-1}{2}}\Bigr)_\limplus \,.
	\end{equation}
	By scaling and translation we may without loss of generality assume $|\Omega_j|=1$ and $0 \in \Omega_j$ for each $j$.
	Our aim is to show that both these sequences must have well-defined limits $\lambda_*, \Omega_*$, and that these limits constitute an equality case as in the statement of the theorem. Keys to proving that these limits exist are bounds for $r_{\rm in}(\Omega_j)$ and $\lambda_j$, which we now prove.
    
    \medskip
    
    \emph{Step 1. Bounds on the inradius.} Let us show that
    \begin{equation}
        \label{eq:inradd}
        0< \inf_j r_{\rm in}(\Omega_j) \sqrt{\lambda_j} \leq \sup_j r_{\rm in}(\Omega_j) \sqrt{\lambda_j} <\infty \,.
    \end{equation}
    Since the right-hand side in~\eqref{eq: failure of improved ineq} is non-negative, the left-hand side is positive. Therefore, by the Hersch--Protter inequality we have $r_{\rm in}(\Omega_j) \sqrt{\lambda_j} \geq \frac{\pi}{2}$ for all $j$, which proves the left inequality in \eqref{eq:inradd}.
    By~\eqref{eq: inradius bound}, $|\Omega| \geq d^{-1} r_{\rm in}(\Omega) \mathcal H^{d-1}(\partial\Omega)$ for any convex set $\Omega\subset \R^d$ and so
	\begin{equation}\label{eq: Dir bound on second term}
	 |\Omega_j| \lambda_j^{\gamma_d^{\rm D}+\frac d2}
	\geq d^{-1} r_{\rm in}(\Omega_j) \mathcal H^{d-1}(\partial\Omega_j) \lambda_j^{\gamma_d^{\rm D}+\frac d2} \geq d^{-1}  \frac\pi 2 \mathcal H^{d-1}(\partial\Omega_j) \lambda_j^{\gamma_d^{\rm D}+\frac {d-1}2} \,,
	\end{equation}
	and therefore, for all sufficiently large $j$, 
	$$
		L_{\gamma_d^{\rm D},d}^{{\rm sc}} |\Omega_j| \lambda_j^{\gamma_d^{\rm D}+\frac d2}\geq  j^{-1} \mathcal H^{d-1}(\partial\Omega_j) \lambda_j^{\gamma+\frac{d-1}{2}}\,.
	$$
	It follows that the positive part in~\eqref{eq: failure of improved ineq} can be dropped for $j$ large enough, so that
	$$
	\Tr(-\Delta_{\Omega_j}^{\rm D}-\lambda_j)_\limminus^{\gamma_d^{\rm D}} > L_{\gamma_d^{\rm D},d}^{{\rm sc}} |\Omega_j| \lambda_j^{\gamma_d^{\rm D}+\frac d2} - j^{-1} \mathcal H^{d-1}(\partial\Omega_j) \lambda_j^{\gamma_d^{\rm D}+\frac{d-1}{2}} \,.
	$$
	Meanwhile, by \cite[Theorem~1.2]{FrankLarson_24}
	\begin{align*}
		\Tr(-\Delta_{\Omega_j}^{\rm D}-\lambda_j)_\limminus^{\gamma_d^{\rm D}} &\leq L_{\gamma_d^{\rm D},d}^{{\rm sc}} |\Omega_j| \lambda_j^{\gamma_d^{\rm D}+\frac d2} - \tfrac14 L_{\gamma_d^{\rm D},d-1}^{{\rm sc}} \mathcal H^{d-1}(\partial\Omega_j) \lambda_j^{\gamma_d^{\rm D}+\frac{d-1}{2}} \\
		&\quad 
	+ C_{\gamma_d^{\rm D},d} \mathcal H^{d-1}(\partial\Omega_j) \lambda_j^{\gamma_d^{\rm D}+\frac{d-1}{2}} ( r_{\rm in}(\Omega_j) \sqrt{\lambda_j})^{-\frac{\gamma_d^{\rm D}}{22}} \,.
	\end{align*}
	
	Combining this with the lower bound, we deduce that
	\begin{align*}
		- j^{-1} \mathcal H^{d-1}(\partial\Omega_j) \lambda_j^{\gamma_d^{\rm D}+\frac{d-1}{2}} &< - \tfrac14 L_{\gamma_d^{\rm D},d-1}^{{\rm sc}} \mathcal H^{d-1}(\partial\Omega_j) \lambda_j^{\gamma_d^{\rm D}+\frac{d-1}{2}} \\
		&\quad
	+ C_{\gamma_d^{\rm D},d} \mathcal H^{d-1}(\partial\Omega_j) \lambda_j^{\gamma_d^{\rm D}+\frac{d-1}{2}} ( r_{\rm in}(\Omega_j) \sqrt{\lambda_j})^{-\frac{\gamma_d^{\rm D}}{22}} \,,
	\end{align*}
	that is,
	$$
	- j^{-1} < - \tfrac14 L_{\gamma_d^{\rm D},d-1}^{{\rm sc}} + C_{\gamma_d^{\rm D},d} ( r_{\textup{in}}(\Omega_j) \sqrt{\lambda_j})^{-\frac{\gamma_d^{\rm D}}{22}} \,.
	$$
    This proves the right inequality in \eqref{eq:inradd}.

    Before proceeding, let us note that the above arguments imply that
    \begin{equation}\label{eq: limsup bound by def of gammad}
		\lim_{j\to \infty} \frac{\Tr(-\Delta_{\Omega_j}^{\rm D}-\lambda_j)_\limminus^{\gamma_d^{\rm D}}}{L_{\gamma_d^{\rm D}, d}^{\rm sc} |\Omega_j|\lambda_j^{\gamma_d^{\rm D}+\frac{d}{2}}} = 1\,.
	\end{equation}
    Indeed, the fact that the limit is $\leq 1$ follows from the definition of $\gamma_d^{\rm D}$ and the fact that the limit is $\geq 1$ follows from \eqref{eq: failure of improved ineq} and \eqref{eq: Dir bound on second term}.

    \medskip

    \emph{Step 2. Bounds on $\lambda_j$.} Let us show that
    \begin{equation}
        \label{eq:lambdad}
        0< \inf_j \lambda_j \leq \sup_j \lambda_j <\infty \,.
    \end{equation}
    Since the right-hand side in~\eqref{eq: failure of improved ineq} is non-negative, the left-hand side is positive. Therefore, by the Faber--Krahn inequality we have $\lambda_j \geq \lambda_1(B_1)|B_1|^{-2/d}$, which proves the left inequality in \eqref{eq:lambdad}. To prove the right inequality we argue by contradiction and assume that $\limsup_{j\to \infty} \lambda_j =\infty$. Our goal will be to reach a contradiction to \eqref{eq: limsup bound by def of gammad}.
	
    Because of $\limsup_{j\to \infty} |\Omega_j|\lambda_j^{d/2}=\infty$ (by the contradiction assumption) and \eqref{eq:inradd}, Theorem~\ref{thm: Asymptotics degenerating convex sets} implies that there exists an open non-empty, bounded, and convex set $\Omega_*'\subset \R^d$ and an integer $1\leq m\leq d-1$ such that
	$$
	\limsup_{j\to\infty} \frac{\Tr(-\Delta_{\Omega_j}^{\rm D}-\lambda_j)_\limminus^{\gamma_d^{\rm D}}}{L_{\gamma_d^{\rm D}, d}^{\rm sc}\lambda_j^{\gamma_d^{\rm D}+\frac d2} |\Omega_j|} = \frac{1}{L^{\rm sc}_{\gamma_d^{\rm D}+ \frac{d-m}{2}, m}|\Omega_*'|}\int_{P^\perp \Omega_*'} \Tr(-\Delta_{\Omega_*'(y)}^{\rm D}-1)_\limminus^{\gamma_d^{\rm D}+ \frac{d-m}{2}}\,dy \,.
	$$
    %(The notations $P^\bot\Omega_*'$ and $\Omega_*'(y)$ are explained in Theorem~\ref{thm: Asymptotics degenerating convex sets}.) 
    Independently of the value of $m$, our assumptions imply that $\gamma_d^{\rm D} + \frac{d-m}{2}>\gamma_m^{\rm D}$. Indeed, if $1\leq m \leq d-2$ then $\gamma_{d}^{\rm D}+ \frac{d-m}{2}> 1$ and therefore $\gamma_{d}^{\rm D}+ \frac{d-m}{2}>\gamma_m^{\rm D}$ by the Berezin--Li--Yau inequality. If instead $m = d-1$ then the assumption $\gamma_d^{\rm D}>(\gamma_{d-1}^{\rm D}-\frac{1}{2})_\limplus$ entails that $\gamma_d^{\rm D} + \frac{1}{2}> \gamma_{d-1}^{\rm D}$. 
    
    By Theorem~\ref{thm: improved inequality above critical gamma Dir}, the inequality $\gamma_d^{\rm D} + \frac{d-m}{2}>\gamma_m^{\rm D}$ implies that for any $y\in P^\perp \Omega_*'$ we have
	\begin{equation*}
		\Tr(-\Delta_{\Omega_*'(y)}^{\rm D}-1)_\limminus^{\gamma_d^{\rm D} + \frac{d-m}{2}} < L_{\gamma_d^{\rm D}+ \frac{d-m}{2},m}^{\rm sc} \Haus^m(\Omega_*'(y))\,.
	\end{equation*}
	Consequently, using
	\begin{equation*}
		 \frac{1}{|\Omega_*'|}\int_{P^\perp \Omega_*'} \Haus^m(\Omega_*'(y))\,dy = 1 \,,
	\end{equation*}
	it follows that
	$$
	\limsup_{j\to\infty} \frac{\Tr(-\Delta^{\rm D}_{\Omega_j}-\lambda_j)_\limminus^{\gamma_d^{\rm D}}}{L_{\gamma_d^{\rm D}, d}^{\rm sc}\lambda_j^{\gamma_d^{\rm D}+\frac d2} |\Omega_j|}<1\,,
	$$
	which contradicts~\eqref{eq: limsup bound by def of gammad}. This proves the right inequality in \eqref{eq:lambdad}.

    \medskip

    We can now complete the proof of \ref{itm: Main thm existence sup} for $\sharp ={\rm D}$. By \eqref{eq:lambdad} we can pass to a subsequence along which $\lambda_j$ has a non-zero limit $\lambda_*$. Along this subsequence $|\Omega_j|=1$, $\inf_{j} r_{\rm in}(\Omega_j) > 0$ and $0 \in \Omega_j$. Therefore, by \eqref{eq: inradius bound} and the Blaschke selection theorem we can extract a further subsequence along which $\Omega_j$ converges to an open, non-empty, bounded, and convex set $\Omega_*$ with respect to the Hausdorff distance.

	Recall that the maps $\Omega \mapsto |\Omega|$ and $(\lambda, \Omega) \to \Tr(-\Delta_\Omega^{\rm D}-\lambda)_\limminus^{\gamma}$, for $\gamma>0$, are continuous with respect to Hausdorff convergence of convex sets (see \cite{HenrotPierre_18} and Proposition~\ref{prop: DN continuity of eigenvalues} below). By assumption $\gamma_d^{\rm D}>0$, therefore
    \begin{equation*}
        \lim_{j\to \infty}\Tr(-\Delta_{\Omega_j}^{\rm D}-\lambda_j)_\limminus^{\gamma_d^{\rm D}}= \Tr(-\Delta_{\Omega_*}^{\rm D}-\lambda_*)_\limminus^{\gamma_d^{\rm D}}\,.
    \end{equation*}
    The definition of $\gamma_d^{\rm D}$ together with~\eqref{eq: failure of improved ineq} yields
	\begin{equation*}
		\Tr(-\Delta_{\Omega_*}^{\rm D}-\lambda_*)_\limminus^{\gamma_d^{\rm D}} = L_{\gamma_d^{\rm D}, d}^{\rm sc} |\Omega_*|\lambda_*^{\gamma_d^{\rm D}+\frac{d}{2}}\,.
	\end{equation*}
    This completes the proof of \ref{itm: Main thm existence sup} for $\sharp ={\rm D}$ and any $d\geq 2$.

\medskip

 {\noindent \it Proof of \ref{itm: Main thm existence sup} for $\sharp ={\rm N}$}: We now turn to the Neumann case. The basic strategy is similar as in the Dirichlet case and we will focus on the differences. By assumption we have $\gamma_d^{\rm N} > (\gamma_{d-1}^{\rm N}-\frac{1}2)_\limplus \geq 0$. In particular, according to Theorem \ref{thm: improved inequality above critical gamma Neu} there is no two-term inequality at $\gamma_d^{\rm N}$. Consequently there exists a sequence $\{\lambda_j\}_{j\geq 1}\subset (0, \infty)$ and a sequence of non-empty, bounded, convex sets $\{\Omega_j\}_{j\geq 1}$ satisfying
	\begin{equation}\label{eq: failure of improved ineq Neu}
		\Tr(-\Delta_{\Omega_j}^{\rm N}-\lambda_j)_\limminus^{\gamma_d^{\rm N}} < L_{\gamma_d^{\rm N}, d}^{\rm sc} |\Omega_j|\lambda_j^{\gamma_d^{\rm N}+\frac{d}{2}}+ j^{-1}\Haus^{d-1}(\partial\Omega_j)\lambda_j^{\gamma_d^{\rm N}+\frac{d-1}{2}}
	\end{equation}
	After scaling and translating we may assume without loss of generality that $|\Omega_j|=1$ and $0 \in \Omega_j$ for each $j$.
	As in the Dirichlet case we shall show that these sequences have limits $\lambda_*, \Omega_*$ that attain the equality claimed in the theorem and again a key step will be to establish bounds on $r_{\rm in}(\Omega_j)$ and $\lambda_j$.

    \medskip

    \emph{Step 1. Bounds on the inradius.} Let us show that the bounds \eqref{eq:inradd} hold in the present situation as well. By Lemma~\ref{lem: small energy improved Kroger}, \eqref{eq: failure of improved ineq Neu}, and \eqref{eq: inradius bound} we have
    \begin{align*}
        C_{\gamma_d^{\rm N},d} \frac{|\Omega_j|}{r_{\rm in}(\Omega_j)}\lambda_j^{\gamma^{\rm N}_d+ \frac{d-1}2} &<L_{\gamma_d^{\rm N}, d}^{\rm sc} |\Omega_j|\lambda_j^{\gamma_d^{\rm N}+\frac{d}{2}}+ j^{-1}\Haus^{d-1}(\partial\Omega_j)\lambda_j^{\gamma_d^{\rm N}+\frac{d-1}{2}}\\
        &\leq L_{\gamma_d^{\rm N}, d}^{\rm sc} |\Omega_j|\lambda_j^{\gamma_d^{\rm N}+\frac{d}{2}}+ j^{-1} \frac{d|\Omega_j|}{r_{\rm in}(\Omega_j)}\lambda_j^{\gamma_d^{\rm N}+\frac{d-1}{2}} \,.
    \end{align*}
    After rearranging the terms we deduce the left inequality in \eqref{eq:inradd}. To prove the right inequality, we note that by \cite[Theorem~1.2]{FrankLarson_24}, and since $\gamma_d^{\rm N}<1$ by Theorem~\ref{thm: Extended range of semiclassical ineq Neu}, we have
	\begin{align*}
		\Tr(-&\Delta^{\rm N}_{\Omega_j}-\lambda_j)_\limminus^{\gamma_d^{\rm N}} \\
        &\geq L_{\gamma_d^{\rm N},d}^{{\rm sc}} |\Omega_j| \lambda_j^{\gamma_d^{\rm N}+\frac d2} 
        + \tfrac14 L_{\gamma_d^{\rm N},d-1}^{{\rm sc}} \mathcal H^{d-1}(\partial\Omega_j) \lambda_j^{\gamma_d^{\rm N}+\frac{d-1}{2}} \\
		&\quad 
	- C_{\gamma_d^{\rm N},d} \mathcal H^{d-1}(\partial\Omega_j) \lambda_j^{\gamma_d^{\rm N}+\frac{d-1}{2}} \Bigl[\bigl(1+\ln_\limplus\bigl(r_{\rm in}(\Omega)\sqrt{\lambda_j}\bigr)\bigr)^{-\frac{\gamma^{\rm N}_d}{2}}+\bigl(r_{\rm in}(\Omega_j) \sqrt{\lambda_j}\bigr)^{1-d}\Bigr]  \,.
	\end{align*}
	Combining this with the upper bound~\eqref{eq: failure of improved ineq Neu}, we deduce that
	\begin{align*}
		j^{-1} &> \tfrac14 L_{\gamma_d^{\rm N},d-1}^{{\rm sc}}  
	- C_{\gamma_d^{\rm N},d}  \Bigl[\bigl(1+\ln_\limplus\bigl(r_{\rm in}(\Omega)\sqrt{\lambda_j}\bigr)\bigr)^{-\frac{\gamma^{\rm N}_d}{2}}+\bigl(r_{\rm in}(\Omega_j) \sqrt{\lambda_j}\bigr)^{1-d}\Bigr] \,.
	\end{align*}
    This proves the right inequality in \eqref{eq:inradd}.
 
    Before proceeding, let us note that the above arguments imply that
    \begin{equation}\label{eq: Weyl law extremal sequence Neu}
        \lim_{j\to \infty} \frac{\Tr(-\Delta_{\Omega_j}^{\rm N}-\lambda_j)_\limminus^{\gamma_d^{\rm N}}}{L_{\gamma^{\rm N}_d,d}^{\rm sc}|\Omega_j|\lambda_j^{\gamma_d^{\rm N}+ \frac{d}2} }= 1\,.
    \end{equation}
    Indeed, the fact that the limit is $\geq 1$ follows from the definition of $\gamma_d^{\rm N}$ and the fact that the limit is $\leq 1$ follows from \eqref{eq: failure of improved ineq Neu}, \eqref{eq: inradius bound} and the left inequality in \eqref{eq:inradd}.

    \medskip
    
    \emph{Step 2. Bounds on $\lambda_j$.} Let us show that the bounds \eqref{eq:lambdad} hold in the present situation as well. We shall argue that $\liminf_{j\to \infty}\lambda_j>0$ and $\limsup_{j\to \infty} \lambda_j <\infty$ by reaching a contradiction to~\eqref{eq: Weyl law extremal sequence Neu} in both cases.
    
    If $\liminf_{j\to 0}\lambda_j=0$, then since $\Tr(-\Delta_{\Omega_j}^{\rm N}-\lambda_j)_\limminus^{\gamma_d^{\rm N}} \geq \lambda_j^{\gamma_d^{\rm N}}$ one finds
    \begin{equation*}
        \limsup_{j\to \infty} \frac{\Tr(-\Delta_{\Omega_j}^{\rm N}-\lambda_j)_\limminus^{\gamma_d^{\rm N}}}{L_{\gamma^{\rm N}_d,d}^{\rm sc}|\Omega_j|\lambda_j^{\gamma_d^{\rm N}+ \frac{d}2} }\geq \limsup_{j\to \infty} \frac{1}{L_{\gamma^{\rm N}_d,d}^{\rm sc}|\Omega_j|\lambda_j^{\frac{d}2} } = \infty\,.
    \end{equation*}
    This contradicts \eqref{eq: Weyl law extremal sequence Neu}.

	Assuming that $\limsup_{j\to \infty} \lambda_j=\infty$, we argue as in the Dirichlet case by invoking Theorem~\ref{thm: Asymptotics degenerating convex sets} and deducing
	$$
	\limsup_{j\to\infty} \frac{\Tr(-\Delta_{\Omega_j}^{\rm N}-\lambda_j)_\limminus^{\gamma_d^{\rm N}}}{L_{\gamma_d^{\rm N}, d}^{\rm sc}\lambda_j^{\gamma_d^{\rm N}+\frac d2} |\Omega_j|}>1\,,
	$$
	which contradicts~\eqref{eq: Weyl law extremal sequence Neu}. Thus, we have shown the validity of \eqref{eq:lambdad}.

    \medskip

    We can now complete the proof of \ref{itm: Main thm existence sup} for $\sharp =N$ in the same way as in the Dirichlet case, using the continuity of the Riesz means of the Neumann Laplacian with respect to Hausdorff convergence of convex sets (see Proposition~\ref{prop: DN continuity of eigenvalues}). We omit the details.

    \medskip

	{\noindent \it Proof of \ref{itm: Main thm cylinder sequence}}: By the validity of P\'olya's conjecture in one dimension $\gamma_1^\sharp=0$ and so there is nothing to prove when $d=2$. Assume that $\gamma_{d-1}^{\sharp}\geq \frac{1}{2}$ for some value of $d\geq 3$. By Theorems~\ref{thm: Extended range of semiclassical ineq Dir} and~\ref{thm: Extended range of semiclassical ineq Neu} we have $\gamma_{d-2}^{\sharp}<1$ and thus $(\gamma_{d-2}^{\sharp}-\frac{1}{2})_\limplus< \frac{1}2 \leq \gamma_{d-1}^{\sharp}$. Consequently, the validity of \ref{itm: Main thm existence sup} in dimension $d-1$ implies that there exists an open non-empty convex set $\omega_*\subset \R^{d-1}$ and a $\lambda_*>0$ such that
	\begin{equation}\label{eq: equality case in d-1}
		\Tr(-\Delta_{\omega_*}^{\sharp}-\lambda_*)_\limminus^{\gamma_{d-1}^{\sharp}} = L^{\rm sc}_{\gamma_{d-1}^{\sharp},d-1}\Haus^{d-1}(\omega_*)\lambda_*^{\gamma_{d-1}^{\sharp}+\frac{d-1}{2}}\,.
	\end{equation}
    We shall show that the pair $(\omega_*, \lambda_*)$ satisfies the desired conclusions. 

    Using Lemma~\ref{lem: cylinder lift}, \eqref{eq: equality case in d-1}, and the assumption that $\gamma_{d}^\sharp = \gamma_{d-1}^\sharp -\frac{1}2$ it follows that
	\begin{align*}
		\Biggl|\frac{\Tr(-\Delta_{\Omega(\lambda)}^{\sharp}-\lambda)_\limminus^{\gamma_d^{\sharp}}}{L_{\gamma_d^{\sharp}, d}^{\rm sc} |\Omega(\lambda)|\lambda^{\gamma_d^\sharp + \frac{d}2}}- 1 \Biggr| 
        &= \Biggl|\frac{\Tr(-\Delta_{\Omega(\lambda)}^{\sharp}-\lambda)_\limminus^{\gamma_d^{\sharp}}}{L_{\gamma_d^{\sharp}, d}^{\rm sc} |\Omega(\lambda)|\lambda^{\gamma_d^\sharp+ \frac{d}2}}- \frac{\Tr(-\Delta_{\omega_*}^{\sharp}-\lambda_*)_\limminus^{\gamma_{d-1}^{\sharp}}}{L_{\gamma_{d-1}^{\sharp}, d-1}^{\rm sc} \Haus^{d-1}(\omega_*)\lambda_*^{\gamma_{d-1}^\sharp+ \frac{d-1}2}}\Biggr| \\
        &\leq 
        \lambda^{-\frac{d}2}\frac{\Tr(-\Delta_{\omega_*}^\sharp-\lambda_*)_\limminus^{\gamma_d^\sharp}}{L_{\gamma_d^\sharp,d}^{\rm sc}\Haus^{d-1}(\omega_*)\lambda_*^{\gamma_d^\sharp}}\,,
	\end{align*}
	which completes the proof of \ref{itm: Main thm cylinder sequence}.

	\medskip

	This completes the proof of Theorem~\ref{thm: Main conclusions gamma_d}.
\end{proof}

%---------------------------------------
%---------------------------------------

\section{Asymptotic shape optimization problems}
\label{sec: proof shape opt}

We now turn our attention to the shape optimization problems and prove Proposition~\ref{prop:shapeoptasymp} and Theorem~\ref{thm: shape optimization convex}.

Theorem \ref{thm: Asymptotics degenerating convex sets} describing eigenvalue asymptotics for collapsing convex sets is an important ingredient in our proofs. As in the previous section, we take this theorem for granted and defer its proof to the following section.

%---------------------------------------

We first turn to the proof of Proposition \ref{prop:shapeoptasymp} concerning the asymptotics of the shape optimization problems $M_\gamma^\sharp(\lambda)$ as $\lambda\to\infty$. In fact, we shall prove the following stronger result which implies Proposition \ref{prop:shapeoptasymp}.
\begin{proposition}\label{prop:shapeoptasymp r}
    Let $d\geq 2$, $\gamma\geq 0$ and $\sharp \in \{{\rm D}, {\rm N}\}$. Then, 
    $$
    \lim_{\lambda \to \infty}\frac{M_{\gamma}^{\sharp}(\lambda)}{L_{\gamma,d}^{\rm sc}\lambda^{\gamma+\frac{d}2}} = r_{\gamma+ \frac{1}2,d-1}^\sharp\,.
    $$
\end{proposition}
% ------------------

\begin{proof}[Proof of Proposition \ref{prop:shapeoptasymp r}]
    Taking a ball $B\subset\R^d$ of unit measure as a competitor and using the Weyl asymptotics, we find
    $$
    M^{\rm D}_\gamma(\lambda) \geq \Tr(-\Delta_{B}^{\rm D}-\lambda)_\limminus^\gamma  = L_{\gamma, d}^{\rm sc} \lambda^{\gamma+\frac{d}{2}} \left(1 + o(1) \right).
    $$
    The reverse inequality holds in the Neumann case. Thus,
    \begin{equation}
        \label{eq:shapoptasymproof1}
        \liminf_{\lambda\to\infty} \frac{M^{\rm D}_\gamma(\lambda)}{L_{\gamma, d}^{\rm sc}\lambda^{\gamma+ \frac{d}2}}\geq 1 \geq\limsup_{\lambda\to\infty} \frac{M^{\rm N}_\gamma(\lambda)}{L_{\gamma, d}^{\rm sc}\lambda^{\gamma+ \frac{d}2}} \,.
    \end{equation} 
    While we aim to show that these bounds are sharp for $\gamma \geq \gamma_{d-1}^\sharp -\frac{1}2$ we shall see that when $\gamma <\gamma_{d-1}^\sharp - \frac{1}2$ we can construct better trial sequences.

   We begin with the Dirichlet case. By the definition of $r_{\gamma+ \frac{1}2, d-1}^{\rm D}$, for any $\delta>0$ there exists a bounded convex set $\omega_*\subset \R^{d-1}$ and a $\lambda_* >0$ so that
   \begin{equation}\label{eq: almost equality case}
    \Tr(-\Delta_{\omega_*}^{\rm D}-\lambda_*)_\limminus^{\gamma + \frac{1}2} \geq (1-\delta)r_{\gamma+ \frac{1}2,d-1}^{\rm D}L_{\gamma+ \frac{1}2,d-1}^{\rm sc}\Haus^{d-1}(\omega_*)\lambda_*^{\gamma+ \frac{d}2}\,.
   \end{equation}
    By scaling we assume without loss of generality that $\Haus^{d-1}(\omega_*)=1$.

    Set $\kappa_\lambda := \bigl(\frac{\lambda_{*}}{\lambda}\bigr)^{\frac{1}2}$ and define a family of convex sets $\Omega'_\lambda \subset \R^d$ by
	 \begin{equation*}
	 	\Omega'_\lambda := (\kappa_\lambda \omega_*)\times (0, \kappa_\lambda^{1-d})\,.
	 \end{equation*}
	 Observe that $|\Omega'_\lambda|=1$. By Lemma~\ref{lem: cylinder lift} and~\eqref{eq: almost equality case}, it follows that
	 \begin{align*}
	 	\Tr(-\Delta_{\Omega'_\lambda}^{\rm D}-\lambda)_\limminus^\gamma 
	 	&\geq
	 	L^{\rm sc}_{\gamma, 1}\lambda^{\gamma+\frac{d}{2}} \frac{\Tr(-\Delta_{\omega_*}^{\rm D}-\lambda_{*})_\limminus^{\gamma+\frac{1}{2}}}{\lambda_{*}^{\gamma+\frac{d}{2}}} 
	 	- \lambda^{\gamma}\frac{\Tr(-\Delta_{\omega_*}^{\rm D}-\lambda_{*})_\limminus^{\gamma}}{\lambda_{*}^{\gamma}}\\
	 	&\geq 
	 	(1-\delta)r_{\gamma+ \frac{1}2,d-1}^{\rm D}L^{\rm sc}_{\gamma, d}\lambda^{\gamma+\frac{d}{2}}
	 	- \lambda^{\gamma}\frac{\Tr(-\Delta_{\omega_*}^{\rm D}-\lambda_{*})_\limminus^{\gamma}}{\lambda_{*}^{\gamma}}\,.
	 \end{align*}
    This is the desired asymptotic lower bound that shows that
    $$
    \liminf_{\lambda\to \infty}\frac{M_\gamma^{\rm D}(\lambda)}{L_{\gamma,d}^{\rm sc}\lambda^{\gamma+ \frac{d}2}} \geq \liminf_{\lambda\to \infty}\frac{\Tr(-\Delta_{\Omega'_\lambda}^{\rm D}-\lambda)_\limminus^\gamma}{L_{\gamma,d}^{\rm sc}\lambda^{\gamma+ \frac{d}2}} \geq  (1-\delta)r_{\gamma+ \frac{1}2,d-1}^{\rm D}\,.
    $$
    Since $\delta>0$ was arbitrary this shows that
    \begin{equation}\label{eq: optimal collapsing sequence Dir}
    \liminf_{\lambda\to \infty}\frac{M_\gamma^{\rm D}(\lambda)}{L_{\gamma,d}^{\rm sc}\lambda^{\gamma+ \frac{d}2}}  \geq r_{\gamma+ \frac{1}2,d-1}^{\rm D}\,.
    \end{equation}
    For the Neumann case it can be proved in an analogous manner that
    \begin{equation}\label{eq: optimal collapsing sequence Neu}
    \limsup_{\lambda\to \infty}\frac{M_\gamma^{\rm N}(\lambda)}{L_{\gamma,d}^{\rm sc}\lambda^{\gamma+ \frac{d}2}}  \leq r_{\gamma+ \frac{1}2,d-1}^{\rm N}\,.
    \end{equation}
    Note that since $r_{\gamma_d}^{\rm D}\geq 1$ and $r_{\gamma,d}^{\rm N}\leq 1$ for all $\gamma, d$ the inequalities in \eqref{eq:shapoptasymproof1} can be deduced from \eqref{eq: optimal collapsing sequence Dir} and \eqref{eq: optimal collapsing sequence Neu}.

    We now show that the reverse of the inequalities \eqref{eq: optimal collapsing sequence Dir} and \eqref{eq: optimal collapsing sequence Neu} hold. Let $\{\lambda_j\}_{j\geq 1}\subset (0,\infty)$ be a sequence with $\lambda_j\to\infty$ and let $\{\Omega_j\}_{j\geq 1}$ be a sequence of bounded, convex, open sets with $|\Omega_j|=1$ such that
    $$
    \lim_{j\to\infty} \frac{\Tr(-\Delta_{\Omega_j}^\sharp-\lambda_j)_\limminus^\gamma}{M_\gamma^\sharp(\lambda_j)} = 1 \,.
    $$

    We begin by showing that
    \begin{equation}
        \label{eq:inradshaped0}
        \liminf_{j\to\infty} r_{\rm in}(\Omega_j)\sqrt{\lambda_j} > 0 \,.
    \end{equation}
    For $\sharp=$ D we note that by the Hersch--Protter inequality, $\Tr(-\Delta_{\Omega}^{\rm D}-\lambda)_\limminus^\gamma=0$ if $\lambda\leq\frac{\pi^2}{4 r_{\rm in}(\Omega)^2}$ and any convex set $\Omega\subset\R^d$. Thus, $r_{\rm in}(\Omega_j) \sqrt{\lambda_j} \leq\frac{\pi}{2}$ for all sufficiently large $j$, proving \eqref{eq:inradshaped0} for $\sharp=$ D. For $\sharp=$ N, by Lemma~\ref{lem: small energy improved Kroger} $\Tr(-\Delta_\Omega^{\rm N}-\lambda)_\limminus^\gamma \geq C_{\gamma,d} \frac{|\Omega|}{r_{\rm in}(\Omega)}\lambda^{\gamma+ \frac{d-1}{2}}$ for any convex set $\Omega \subset \R^d, \lambda \geq 0$. From this and \eqref{eq:shapoptasymproof1} we deduce that $r_{\rm in}(\Omega_j)\sqrt{\lambda_j}\geq \frac{2 C_{\gamma,d}}{L_{\gamma,d}^{\rm sc}}$ for all $j$ sufficiently large, proving \eqref{eq:inradshaped0} for $\sharp=$ N.
    
    Passing to a subsequence, we may assume that
    \begin{equation}
        \label{eq:inradalternative}
        \text{either}\quad \lim_{j\to\infty} r_{\rm in}(\Omega_j)\sqrt{\lambda_j} = \infty
        \quad
        \text{or}\quad
        \limsup_{j\to\infty} r_{\rm in}(\Omega_j)\sqrt{\lambda_j}<\infty \,.
    \end{equation}
    In the first case we can apply Theorem \ref{thm: quantitative Weyl law} (when $\gamma=0$) or \cite[Theorem 1.2]{FrankLarson_24} (when $\gamma>0$) and deduce that
    $$
    \Tr(-\Delta_{\Omega_j}^\sharp-\lambda_j)_\limminus^\gamma = L_{\gamma, d}^{\rm sc} \lambda_j^{\gamma+\frac{d}{2}} \left(1 + o(1) \right).
    $$
    (Here we also used $\Haus^{d-1}(\partial\Omega)\leq \frac{d|\Omega|}{r_{\rm in}(\Omega)}$ by~\eqref{eq: inradius bound}.) Therefore, in this case we conclude that
    \begin{equation*}
        \lim_{j\to \infty} \frac{M_\gamma^\sharp(\lambda_j)}{L_{\gamma,d}^{\rm sc}\lambda_j^{\gamma+ \frac{d}2}} = 1\,.
    \end{equation*}
    This is the desired reverse of \eqref{eq: optimal collapsing sequence Dir} and \eqref{eq: optimal collapsing sequence Neu} if $r_{\gamma+ \frac{1}2,d-1}^\sharp = 1$, that is if $\gamma + \frac{1}2\geq \gamma_{d-1}^\sharp$. If $\gamma + \frac{1}2< \gamma_{d-1}^\sharp$ then $r_{\gamma+ \frac{1}2,d-1}^{\sharp} > 1$ when $\sharp ={\rm D}$ and $r_{\gamma+ \frac{1}2,d-1}^{\sharp} < 1$ when $\sharp ={\rm N}$. Therefore the above limit contradicts \eqref{eq: optimal collapsing sequence Dir} resp.\ \eqref{eq: optimal collapsing sequence Neu} and so for $\gamma <\gamma_{d-1}^\sharp-\frac{1}{2}$ we must have $\limsup_{j\to\infty} r_{\rm in}(\Omega_j)\sqrt{\lambda_j} < \infty$.
    
    In the second case in \eqref{eq:inradalternative}, recalling \eqref{eq:inradshaped0}, we can apply Theorem~\ref{thm: Asymptotics degenerating convex sets} and we deduce that there exists an integer $m \in [1, d-1]$ and a non-empty bounded open convex set $\Omega_*\subset \R^d$ such that, along a suitably chosen subsequence (specified below),
	 \begin{equation}\label{eq: collapsing asymptotics shape optasymp}
	 	\lim_{j\to \infty} \frac{\Tr(-\Delta_{\Omega_j}^\sharp-\lambda_j)_\limminus^\gamma}{L^{{\rm sc}}_{\gamma, d}\lambda_j^{\gamma+\frac{d}2}}  = \frac{1}{L^{\rm sc}_{\gamma + \frac{d-m}{2},m}|\Omega_*|}\int_{P^\perp \Omega_*}\Tr(-\Delta_{\Omega_*(y)}^\sharp-1)_\limminus^{\gamma+\frac{d-m}{2}}\,dy\,.
	 \end{equation}
    %The notation $P^\bot\Omega_*$ and $\Omega_*(y)$ is explained in Theorem \ref{thm: Asymptotics degenerating convex sets}. 
    When $\sharp ={\rm D}$ we choose the subsequence above so that the limit on the left side of \eqref{eq: collapsing asymptotics shape optasymp} coincides with the corresponding $\limsup$ along the original sequence, and for $\sharp ={\rm N}$ we instead choose a subsequence along which the corresponding $\liminf$ is realized.

    By the definition of $r_{\gamma,d}^\sharp$, the trace under the integral in \eqref{eq: collapsing asymptotics shape optasymp} can be bounded from above (for $\sharp=\rm D$) or below (for $\sharp=\rm N$) by 
    $$
    r_{\gamma+\frac{d-m}2,m}^\sharp L_{\gamma+\frac{d-m}2,m}^{\rm sc} \mathcal H^m(\Omega_*(y)) \,.
    $$
    Thus, bounding the trace under the integral in \eqref{eq: collapsing asymptotics shape optasymp} by the claimed quantity and using
    $$
    \frac{1}{L^{\rm sc}_{\gamma + \frac{d-m}{2},m}|\Omega_*|}\int_{P^\perp \Omega_*} L_{\gamma+\frac{d-m}2,m}^{\rm sc} \mathcal H^m(\Omega_*(y)) \,dy = 1 \,,
    $$
    we deduce that
    \begin{align*}
    \limsup_{j\to\infty} \frac{\Tr(-\Delta_{\Omega_j}^{\rm D} -\lambda_j)_\limminus^\gamma}{L^{{\rm sc}}_{\gamma, d}\lambda_j^{\gamma+\frac{d}2}} \leq \max_{m=1, \ldots, d-1}\Bigl\{r_{\gamma+ \frac{d-m}2,m}^{\rm D}\Bigr\}\,,\\
    \liminf_{j\to\infty} \frac{\Tr(-\Delta_{\Omega_j}^{\rm N}-\lambda_j)_\limminus^\gamma}{L^{{\rm sc}}_{\gamma, d}\lambda_j^{\gamma+\frac{d}2}} \geq \min_{m=1, \ldots, d-1}\Bigl\{r_{\gamma+ \frac{d-m}2,m}^{\rm N}\Bigr\}\,.
    \end{align*}
    From the Berezin--Li--Yau and Kr\"oger inequalities it follows that for any $m\leq d-2$ we have $r_{\gamma+ \frac{d-m}2,m}^\sharp =1$ as $\gamma+\frac{d-m}{2} \geq \gamma +1 \geq 1\geq \gamma_m^\sharp$. Therefore, by the monotonicity of $\gamma \mapsto r_{\gamma,d}^\sharp$,
    \begin{align*}
        \limsup_{j\to\infty} \frac{M_\gamma^{\rm D}(\lambda_j)}{L^{{\rm sc}}_{\gamma, d}\lambda_j^{\gamma+\frac{d}2}} =\limsup_{j\to\infty} \frac{\Tr(-\Delta_{\Omega_j}^{\rm D} -\lambda_j)_\limminus^\gamma}{L^{{\rm sc}}_{\gamma, d}\lambda_j^{\gamma+\frac{d}2}} \leq r_{\gamma+ \frac{1}2,d-1}^{\rm D}\,,\\
        \liminf_{j\to\infty} \frac{M_\gamma^{\rm N}(\lambda_j)}{L^{{\rm sc}}_{\gamma, d}\lambda_j^{\gamma+\frac{d}2}}=\liminf_{j\to\infty} \frac{\Tr(-\Delta_{\Omega_j}^{\rm N}-\lambda_j)_\limminus^\gamma}{L^{{\rm sc}}_{\gamma, d}\lambda_j^{\gamma+\frac{d}2}} \geq r_{\gamma+ \frac{1}2,d-1}^{\rm N}\,.
    \end{align*}
    As $\{\lambda_j\}_{j\geq 1}$ was arbitrary, we have proved the desired converse of inequalities \eqref{eq: optimal collapsing sequence Dir} and \eqref{eq: optimal collapsing sequence Neu} and therefore completed the proof of the proposition.
\end{proof}

\begin{proof}[Proof of Proposition \ref{prop:shapeoptasymp}]
    Since $r_{\gamma+ \frac{1}2,d-1}^{\rm N}\leq 1 \leq r_{\gamma+ \frac{1}2,d-1}^{\rm D}$ for all $\gamma$, and $r_{\gamma+\frac{1}2,d-1}^{\sharp}$ is equal to $1$ if and only if $\gamma+\tfrac{1}2 \geq \gamma_{d-1}^\sharp$ Proposition \ref{prop:shapeoptasymp} follows from Proposition \ref{prop:shapeoptasymp r}.
\end{proof}

We now describe the behavior of optimizing sequences for the shape optimization problems $M_\gamma^\sharp(\lambda)$. To do so, we carefully revisit the argument leading to Proposition~\ref{prop:shapeoptasymp r}.

\begin{proof}[Proof of Theorem~\ref{thm: shape optimization convex}]
    {\noindent \it Cases \ref{itm: Shape opt thm super critical} and \ref{itm: Shape opt thm counting super critical}}:
    We show that in both cases we have
    \begin{equation}
        \label{eq:inradinfty}
        \lim_{j\to\infty} r_{\rm in}(\Omega_j)\sqrt{\lambda_j}=\infty \,.
    \end{equation}
    
    Similarly as in the proof of Proposition \ref{prop:shapeoptasymp r} we argue by contradiction and consider a subsequence along which $r_{\rm in}(\Omega_j)\sqrt{\lambda_j}$ remains finite. Applying Theorem~\ref{thm: Asymptotics degenerating convex sets} we obtain \eqref{eq: collapsing asymptotics shape optasymp} with an integer $m \in [1, d-1]$ and a non-empty bounded open convex set $\Omega_*\subset \R^d$. Here we used the fact that \eqref{eq:inradshaped0} is valid in the present setting by the same argument as in the proof of Proposition \ref{prop:shapeoptasymp r}. As before, the limit in \eqref{eq: collapsing asymptotics shape optasymp} is computed along a subsequence realizing the $\limsup$ for $\sharp=\rm D$ and the $\liminf$ for $\sharp=\rm N$.

    We claim that 
    \begin{equation}
        \label{eq:strictineq}
        \gamma + \frac{d-m}{2} > \gamma_{m}^\sharp \,.
    \end{equation}
    Indeed, this is clear when $m\leq d-2$, because then, by Theorems~\ref{thm: Extended range of semiclassical ineq Dir} and \ref{thm: Extended range of semiclassical ineq Neu}, we have $\gamma + \frac{d-m}{2}\geq 1 > \gamma_{m}^\sharp$. When $m=d-1$, we have \eqref{eq:strictineq} since the assumptions of cases \ref{itm: Shape opt thm super critical} and \ref{itm: Shape opt thm counting super critical} imply that $\gamma + \frac{1}{2} > \gamma_{d-1}^\sharp$. 
    
    As a consequence of \eqref{eq:strictineq}, Theorem~\ref{thm: improved inequality above critical gamma Dir} implies that the traces under the integral in \eqref{eq: collapsing asymptotics shape optasymp} satisfy a two-term semiclassical inequality, and so
	\begin{align*}
	 	\limsup_{j\to \infty} \frac{\Tr(-\Delta_{\Omega_j}^{\rm D}-\lambda_j)_\limminus^\gamma}{L^{{\rm sc}}_{\gamma, d}\lambda_j^{\gamma+\frac{d}2}}  &\leq \frac{1}{|\Omega_*|}\int_{P^\perp \Omega_*}\Bigl(\mathcal H^{m}(\Omega_*(y))-c_{\gamma,m,d}\mathcal H^{m-1}(\partial \Omega_*(y))\Bigr)_\limplus\,dy\\
	 	&<1\,.
	\end{align*}
	This contradicts the trivial inequality \eqref{eq:shapoptasymproof1} obtained by comparison with a ball. Thus, we have proved \eqref{eq:inradinfty} for $\sharp=\rm D$. The proof in the case $\sharp=\rm N$ is similar, using Theorem~\ref{thm: improved inequality above critical gamma Neu} instead of Theorem~\ref{thm: improved inequality above critical gamma Dir}. This completes the proof of \ref{itm: Shape opt thm counting super critical}. 
 
    To complete the proof of \ref{itm: Shape opt thm super critical}, we note that by \cite[Theorem~1.2]{FrankLarson_24} and \eqref{eq:inradinfty}
    $$
    \Tr(-\Delta_{\Omega_j}^{\rm D}-\lambda_j)_\limminus^\gamma = L_{\gamma,d}^{\rm sc} \lambda_j^{\gamma+\frac d2} - \frac14 L_{\gamma,d-1}^{\rm sc} \mathcal H^{d-1}(\partial\Omega_j) \lambda_j^{\gamma+\frac{d-1}{2}} + o(\lambda_j^{\gamma+\frac{d-1}2})
    \quad\text{as}\ j\to\infty \,.
    $$
    Similar asymptotics hold when $\Omega_j$ is replaced by $B$, a ball of unit measure. Since $M^{\rm D}_\gamma(\lambda)\geq \Tr(-\Delta_B^{\rm D}-\lambda)_\limminus^\gamma$, we deduce that
	 \begin{equation*}
	 	\liminf_{j\to \infty} \frac{\Tr(-\Delta_{\Omega_j}^{\rm D}-\lambda_j)_\limminus^\gamma-M^{\rm D}_\gamma(\lambda_j)}{\lambda_j^{\gamma+ \frac{d-1}{2}}} \leq \frac{L_{\gamma,d-1}^{\rm sc}}{4}\liminf_{j\to \infty}\bigl(\Haus^{d-1}(\partial B)-\Haus^{d-1}(\partial\Omega_j)\bigr)\,.
	 \end{equation*}
	 Therefore, the assumption in the theorem implies that $$\limsup_{j \to \infty} \Haus^{d-1}(\partial\Omega_j) \leq \Haus^{d-1}(\partial B)\,.$$
	 In particular, by~\eqref{eq: inradius bound} we conclude that $\{\Omega_j\}_{j\geq 1}$ is a uniformly bounded sequence. By the Blaschke selection theorem we can extract a non-empty, open, convex limit (up to translations) with respect to the Hausdorff distance. Since volume and perimeter are continuous functions with respect to this convergence, the isoperimetric inequality (including characterisation of the equality case) implies that the limiting set must be a ball of unit volume. Therefore, the proof shows that $M_\gamma^{\rm D}(\lambda_j)= \Tr(-\Delta_B^{\rm D}-\lambda_j)_\limminus^\gamma+o(\lambda_j^{\gamma+ \frac{d-1}2})$. As the sequence $\{\lambda_j\}_{j \geq 1}$ was arbitrary, this proves statement \ref{itm: Shape opt thm super critical} of the theorem for $\sharp = \rm D$. The proof of \ref{itm: Shape opt thm super critical} for $\sharp = \rm N$ is analogous.

\medskip
    {\noindent \it Case \ref{itm: Shape opt thm sub critical case}}:
    We show that in both cases we have
    \begin{equation}
        \label{eq:inradfinite}
        \limsup_{j\to\infty} r_{\rm in}(\Omega_j)\sqrt{\lambda_j}<\infty
    \end{equation}
    and that the sequence $\{\Omega_j\}_{j\geq 1}$ collapses in a spaghetti-like manner.

    We argue by contradiction assuming the $\limsup$ in \eqref{eq:inradfinite} is infinite. Then by Theorem~\ref{thm: quantitative Weyl law} (for $\gamma=0$) and \cite[Theorem 1.2]{FrankLarson_24} (for $\gamma>0$) we have
    $$
    \Tr(-\Delta_{\Omega_j}^\sharp -\lambda_j)_\limminus^\gamma = L^{{\rm sc}}_{\gamma, d}\lambda_j^{\gamma+\frac{d}2} \left( 1+ o(\lambda_j) \right)
    \qquad\text{as}\ j\to\infty \,.
    $$
    This contradicts the assertion of Proposition \ref{prop:shapeoptasymp} if $\gamma<\gamma_{d-1}^\sharp-\frac12$. This proves \eqref{eq:inradfinite} in case~\ref{itm: Shape opt thm sub critical case} under the assumption that $\gamma<\gamma_{d-1}^\sharp-\frac12$.

    Note that here we only used the leading order term in the asymptotics for Riesz means. To deal with the case~\ref{itm: Shape opt thm sub critical case} when $0<\gamma = \gamma_{d-1}^\sharp-\frac12$ we use the two-term asymptotics in \cite[Theorem 1.2]{FrankLarson_24}. Then, by arguing as in our proof of \ref{itm: Shape opt thm super critical}, one finds that $\{\Omega_j\}_{j\geq 1}$ converges (along a subsequence and up to translations) to a ball $B\subset\R^d$ of unit volume in Hausdorff distance and that
	 \begin{equation}\label{eq: assumed asymptotics of Mgamma}
     \begin{aligned}
	 		M^{\rm D}_\gamma(\lambda_j) &= \Tr(-\Delta_B^{\rm D}-\lambda_j)_\limminus^\gamma+o(\lambda_j^{\gamma+ \frac{d-1}2})\\
            & =L_{\gamma, d}^{\rm sc} \lambda_j^{\gamma +\frac{d}{2}}- \frac{L_{\gamma, d-1}^{\rm sc}}{4}\Haus^{d-1}(\partial B)\lambda_j^{\gamma+ \frac{d-1}{2}}+ o(\lambda_j^{\gamma+ \frac{d-1}{2}})\,.
     \end{aligned}
	 \end{equation}
    Meanwhile, by definition we have $M^{\rm D}_\gamma(\lambda_j)\geq \Tr(-\Delta_{\Omega}^{\rm D}-\lambda_j)_\limminus^\gamma$ for any convex $\Omega\subset \R^d$ with $|\Omega|=1$. By choosing $\Omega = \Omega(\lambda_j)$ as in Theorem \ref{thm: Main conclusions gamma_d} \ref{itm: Main thm cylinder sequence}, the lower bound in Lemma~\ref{lem: cylinder lift} contradicts~\eqref{eq: assumed asymptotics of Mgamma} in the limit $j \to \infty$. The argument in the Neumann case is similar. This proves \eqref{eq:inradfinite} in case~\ref{itm: Shape opt thm sub critical case} and $0<\gamma = \gamma_{d-1}^\sharp-\frac12$.

   To complete the proof of the theorem, we need to show that $\{\Omega_j\}_{j\geq 1}$ collapses in a spaghetti-like manner. To do so, we argue as in the proof of Proposition \ref{prop:shapeoptasymp r} and use \eqref{eq:inradshaped0} (which is valid in the present setting by the same argument as in the proof of Proposition \ref{prop:shapeoptasymp r}) and \eqref{eq:inradfinite}. By Theorem \ref{thm: Asymptotics degenerating convex sets} and along any subsequence along which the limit exists we obtain \eqref{eq: collapsing asymptotics shape optasymp} with an integer $m \in [1, d-1]$ and a non-empty bounded open convex set $\Omega_*'\subset \R^d$. The case $m\leq d-2$ is excluded in the same way as in the proof of cases~\ref{itm: Shape opt thm super critical} and \ref{itm: Shape opt thm counting super critical}. Consequently, $m=d-1$ and looking at the proof of Theorem \ref{thm: Asymptotics degenerating convex sets} we obtain the claimed spaghetti-like degeneration.
\end{proof}

%---------------------------------------

\begin{remark}
By following the same strategy, using the fact that by Theorems~\ref{thm: improved inequality above critical gamma Dir} and~\ref{thm: improved inequality above critical gamma Neu} two-term semiclassical inequalities hold for $\gamma >\gamma_d^{\sharp}$, we are also able to prove similar results regarding the corresponding shape optimization problem among sets that are disjoint unions of convex sets. The obtained results show that for $\gamma >\gamma_d^{\sharp}$ sequences of (almost) optimizing sets converge to a single ball as $\lambda \to \infty$ and that the corresponding convergence fails if $\gamma <\gamma_d^\sharp$. Remarkably, in this setting the critical value at which the asymptotic nature of the problem changes is $\gamma_d^\sharp$ rather than $\gamma_{d-1}^\sharp-\frac12$ as in Theorem \ref{thm: shape optimization convex}. That is, the dimensional reduction that is observed in Propositions~\ref{prop:shapeoptasymp}, \ref{prop:shapeoptasymp r} and Theorem \ref{thm: shape optimization convex} disappears. For detailed statements and proofs the reader is referred to the preprint version of this paper available on arXiv \cite{FrankLarson_24b}.

\end{remark}

%---------------------------------------
%---------------------------------------

\section{Quasiclassical asymptotics for collapsing convex domains}
\label{sec:asymptoticscollapsing}

\subsection{Proof of Theorem~\ref{thm: Asymptotics degenerating convex sets}}

In this subsection we want to compute the asymptotics of $\Tr(-\Delta^\sharp_{\Omega_j}-\lambda_j)_\limminus^\gamma$ for a sequence $\{\Omega_j\}_{j\geq 1}$ of bounded open convex subsets of $\R^d$ and a corresponding sequence $\{\lambda_j\}_{j\geq 1}$ of positive numbers such that
\begin{equation*}
	0< \inf_{j\geq 1}r_{\rm in}(\Omega_j)\sqrt{\lambda_j}\leq \sup_{j\geq 1}r_{\rm in}(\Omega_j)\sqrt{\lambda_j} <\infty
	\quad \mbox{and}\quad
 	\lim_{j\to \infty}|\Omega_j|\lambda_j^{\frac{d}2}=\infty\,.
\end{equation*} 
Before embarking into the rather lengthy proof, let us give a rough description of how we will proceed.

The strategy is perhaps best illustrated by the example
$$
\Omega_j = \omega^{(1)} \times (\ell_j \omega^{(2)} ) \,,
\qquad
\lambda_j = 1 \,, 
$$
where $1\leq m\leq d-1$, where $\omega^{(1)}\subset\R^m$, $\omega^{(2)}\subset\R^{d-m}$ are bounded open convex sets and where $\lim_{j\to\infty} \ell_j=\infty$. Here $m$ directions, corresponding to variables $x\in\R^m$, remain of order 1, while $d-m$ directions, corresponding to variables $y\in\R^{d-m}$ tend to infinity. It is the $y$-directions that become semiclassical, while the other directions retain their `quantum' character. In the example this means concretely that the partial trace with respect to the semiclassical directions is replaced by the corresponding phase space integral, that is,
\begin{align*}
    \Tr(-\Delta^\sharp_{\Omega_j}-1)_\limminus^\gamma & \approx
\iint_{(\ell_j\omega^{(2)})\times\R^{d-m}} \Tr(-\Delta_{\omega^{(1)}}^\sharp + |\xi|^2 -1)_\limminus^\gamma \ \frac{dx\,d\xi}{(2\pi)^{d-m}} \\
& = L_{\gamma,d-m}^{\rm sc} \ell_j^{d-m} |\omega^{(2)}| \Tr(-\Delta_{\omega^{(1)}}^\sharp -1)_\limminus^{\gamma+\frac{d-m}2} \,.
\end{align*}
Setting $\Omega_* = \omega^{(1)}\times\omega^{(2)}$, the right side can be written as
$$
L_{\gamma,d}^{\rm sc} |\Omega_j| \lambda_j^{\gamma+\frac d2} \times \frac{1}{L_{\gamma+\frac{d-m}2,m}^{\rm sc} |\Omega_*|} \int_{P^\bot \Omega_*} \Tr(-\Delta_{\Omega_*(y)}^\sharp -1)_\limminus^{\gamma+\frac{d-m}2} \,dy \,,
$$
which is what Theorem \ref{thm: Asymptotics degenerating convex sets} claims.

Needless to say, the case of general convex sets $\Omega_j$ is quite a bit more involved, but the basic mechanism is the same as in the example. A first step is to find, using John's theorem, an integer $m$ and $m$ directions in which the domain is of size $\sim 1/\!\sqrt{\lambda_j}$, while in the remaining $d-m$ directions the domain is of size $\gg 1/\!\sqrt{\lambda_j}$. (In contrast to the above example, the sizes in the latter directions need not all be the same.)

To prove the claimed asymptotics we use the technique of Dirichlet--Neumann bracketing. More precisely, we will impose additional Dirichlet and Neumann boundary conditions in the long directions. In the above example these boundary conditions will be imposed on the sets $\Omega_j \cap \{ (x,z):\ x\in\R^m\}$ where $z$ runs through $M \Z^{d-m}$ with a parameter $M$ that will eventually be sent to infinity. This allows us to pass to the semiclassical limit in the $y$-variables in the example.

In the general case, $M/\!\sqrt{\lambda_j}$ is an auxiliary length scale that is chosen much larger than the short directions, but much shorter than the large directions.  In this manner the spectral problem after bracketing retains both the semiclassical nature of the expanding directions and the 'quantum nature' in the cross-sections orthogonal to the expanding directions.

After this motivation we now turn to the details.

\begin{proof}[Proof of Theorem~\ref{thm: Asymptotics degenerating convex sets}]
    We want to compute
    \begin{equation*}
		\lim_{j\to \infty} \frac{\Tr(-\Delta^\sharp_{\Omega_j}-\lambda_j)_\limminus^\gamma}{L^{\rm sc}_{\gamma, d}|\Omega_j|\lambda_j^{\frac{d}2}}\,,
	\end{equation*}
    whose existence we are assuming.
	
	Define $l_j^{(1)} \leq l^{(2)}_j \leq \ldots \leq l_j^{(d)}$ to be the axes of the John ellipsoid $\mathcal{E}_j$ of $\Omega_j$. After a translation and an orthogonal transformation we may assume that $\mathcal{E}_j$ is centered at the origin and has semiaxes parallel to the coordinate axes with
	\begin{equation*}
		\mathcal{E}_j \subset \prod_{k=1}^d [-l_j^{(k)}, l_j^{(k)}]\,.
	\end{equation*}
	We know that $l_j^{(1)} \sim r(\Omega_j)$, $l_j^{(d)} \sim \mathrm{diam}(\Omega_j)$, and $\prod_{k=1}^d l_j^{(k)} \sim |\Omega_j|$. 
	
    By assumption, we have $l_j^{(1)}\sqrt{\lambda_j} \sim r(\Omega_j)\sqrt{\lambda_j}\lesssim 1$ and
    $$
    \prod_{k=2}^d (l_j^{(k)}\sqrt{\lambda_j}) \sim \lambda_j^{\frac{d}2}|\Omega_j|/(l_j^{(1)}\sqrt{\lambda_j}) \to \infty.
    $$
    The latter implies $\limsup_{j \to \infty} l_j^{(d)}\sqrt{\lambda_j} = \infty$. Thus there is an integer $1\leq m< d$ such that
    \begin{equation*}
		\limsup_{j\to \infty} l_j^{(m)}\sqrt{\lambda_j}  < \infty
	\end{equation*}
    and
	\begin{equation*}
		\limsup_{j\to \infty} l_j^{(k)}\sqrt{\lambda_j}  = \infty
        \qquad\text{for all}\ k>m \,.
	\end{equation*}
	
	By passing to a subsequence we can without loss of generality assume that
	\begin{equation}\label{eq: limit assumptions}
		\lim_{j\to \infty} l_j^{(m+1)}\sqrt{\lambda_j}=\infty\quad \mbox{and} \quad \lim_{j\to \infty}\{(l_j^{(1)}\sqrt{\lambda_j}, \ldots, l_j^{(m)}\sqrt{\lambda_j})\}_{j\geq 1} = (a_1, \ldots, a_{m})
	\end{equation}
	for some $(a_1, \ldots, a_{m})\in (0, \infty)^{m}$.

	Define the diagonal matrices
    \begin{align*}
		T_j^\parallel &= \mathrm{diag}\bigl(1/l_j^{(1)}, \ldots,  1/l^{(m)}_j\bigr)
		\,, & \hspace{-30pt} 
		T^\perp_j &= \mathrm{diag}\bigl(1/l_j^{(m+1)}, \ldots, 1/l^{(d)}_j\bigr)\,,\\
		T_j &= \left(\begin{matrix}
			T_j^\parallel & 0\\
			0 & T_j^\perp
		\end{matrix}\right)\,, & \hspace{-30pt} 
		\tilde T_j &= \left(\begin{matrix}
			\sqrt{\lambda_j}\, \mathrm{I}_{m} & 0\\
			 0& T_j^\perp
		\end{matrix}\right)\,,
	\end{align*}
	where $\mathrm{I}_m$ is the $m\times m$ identity matrix. Note that by~\eqref{eq: limit assumptions} the matrices $T_j, \tilde T_j$ are comparable in the limit in the sense that 
	\begin{equation}\label{eq: comparability of renormalization maps}
		\tilde T_j T_j^{-1} \to A:=
        \mathrm{diag}(a_1, \ldots, a_m, 1, \ldots, 1) 
        \quad \mbox{as }j \to \infty\,.
	\end{equation}

	By construction the convex sets $T_j\Omega_j$ all contain $B_1$ and are contained in $B_d$ by~\cite[Theorem 10.12.2]{Schneider_book14}. By the Blaschke selection theorem $T_j \Omega_j$ has a subsequence that converges with respect to the Hausdorff distance to a non-empty open convex set $\Omega_\infty$. From here on we work with such a subsequence. By~\eqref{eq: comparability of renormalization maps} the sequence $\tilde T_j \Omega_j$ also converges, we denote its limit by
    $$
    \Omega_*= A\Omega_\infty \,.
    $$ 
    (This is the set that appears in the statement of the theorem).

	We say that the sequence $\Omega_j$ becomes long in the directions corresponding to the indexes $\{m+1, \ldots, d\}$ and collapses in the directions $\{1, \ldots, m\}$.

	For a convex set $U \subset \R^d$ and with $m$ as defined above we set
	\begin{align*}
		P^\perp U &= \{y \in \R^{d-m}: (x, y) \in U\quad \mbox{for some }x \in \R^{m}\}\,,\\
		P U &= \{x \in \R^{m}: (x, y) \in U\quad \mbox{for some }y \in \R^{d-m}\}\,.
	\end{align*}
	Note that $P^\perp \tilde T_j = P^\perp T_j= T_j^\perp P^\perp$ and by continuity of the projection map we note that $T_j^\perp P^\perp \Omega_j \to P^\perp \Omega_\infty=P^\perp \Omega_*$ in the Hausdorff distance. Here we used the fact that $A$ acts as the identity in the last $d-m$ coordinates.
	
	For $\ell>0$ to be chosen and $z \in \R^{d-m}$ set $Q_{z,\ell} := z + (-\ell/2, \ell/2)^{d-m}$. For a convex set $U \subset \R^d$ define
	\begin{align*}
		U[z, \ell] &:=  U \cap (\R^{m}\times Q_{z,\ell})\,.
	\end{align*}
	Note that $U[z, \ell]$ is itself a convex set and
	note that 
	\begin{equation*}
		P^\perp(U[z, \ell]) = (P^\perp U) \cap Q_{z,\ell}\,.
	\end{equation*}
	The boundary of $U[z, \ell]$ is naturally split into two parts:
	\begin{align*}
		\partial U[z, \ell]_\perp & := \partial U[z, \ell] \setminus U\,,\\
		\partial U[z, \ell]_\parallel & := \partial U[z, \ell] \cap U\,.
	\end{align*}
    Note that $\partial U[z, \ell]_\parallel$ is a subset of $\R^m \times \partial Q_{z, \ell}$.
	
	In addition to the Laplace operators with Dirichlet resp.\ Neumann boundary conditions on $U[z, \ell]$ we shall need to consider the following operators with mixed boundary conditions. We write 
	\begin{equation*}
		-\Delta^{{\rm D}_\perp, {\rm N}_\parallel}_{U[z, \ell]}\,,\ -\Delta^{{\rm N}_\perp, {\rm D}_\parallel}_{U[z, \ell]} 
	\end{equation*}
	for the Laplace operators in $L^2(U[z, \ell])$ with Dirichlet boundary conditions on the $\perp$-part of the boundary and Neumann on the $\parallel$-part, or vice versa. 
	
	For any $\Omega\subset \R^d$ the variational principle implies that
	\begin{equation}\label{eq: bracketing bound}
		\sum_{z \in \ell\Z^{d-m}} \Tr(-\Delta_{\Omega[z, \ell]}^{\sharp_\perp, {\rm D}_\parallel}-\lambda)_\limminus^\gamma\leq \Tr(-\Delta^{\sharp}_{\Omega}-\lambda)_\limminus^\gamma \leq \sum_{z \in \ell\Z^{d-m}} \Tr(-\Delta_{\Omega[z, \ell]}^{\sharp_\perp, {\rm N}_\parallel}-\lambda)_\limminus^\gamma\,,
	\end{equation}
	for all $\lambda\geq 0, \gamma \geq 0$.

	For $M>0, j\geq 1$, and $\sharp, \flat \in \{{\rm D}, {\rm N}\}$ define $F^{\sharp, \flat}_{j, M}\colon \R^{d-m}\to [0, \infty)$ by
	\begin{align*}
		F^{\sharp, \flat}_{j, M}(y) &= \frac{\prod_{k=m+1}^d l_j^{(k)}}{|\Omega_j|\lambda_j^{\gamma+m/2}M^{d-m}}\sum_{z \in M\lambda_j^{-1/2}\Z^{d-m}} \Tr(-\Delta_{\Omega_j[z, M\lambda_j^{-1/2}]}^{\sharp_\perp, {\flat}_\parallel}-\lambda_j)_\limminus^\gamma \1_{T_j^\perp Q_{z, M\lambda_j^{-1/2}}}(y)\,.
	\end{align*}
	Since the functions $F_{j,M}^{\sharp, \flat}$ are supported in the set
    $$\bigl\{y\in \R^{d-m}: \dist((T^{\perp}_j)^{-1}y, P^\perp\Omega_j)\leq \sqrt{d}M/(2\sqrt{\lambda_j})\bigr\}$$ and since $P^\perp T_j\Omega_j \to P^\perp \Omega_*$ in Hausdorff distance, all of the functions are zero outside of a sufficiently large ball as long as $M$ remains fixed. Indeed, if $y_j\in\R^{d-m}$ satisfies $\dist((T^{\perp}_j)^{-1}y_j, P^\perp\Omega_j)\leq \sqrt{d}M/(2\lambda_j^{1/2})$, then
	\begin{align*}
		\frac{d M^2}{4\lambda_j}&\geq \dist((T_j^\perp)^{-1}y_j, P^\perp \Omega_j)^2\\
		& = \inf_{y\in P^\perp \Omega_j} \sum_{k=1}^{d-m}|l_j^{({m+k})}y_j^k-y^k|^2\\
		& = \inf_{y\in P^\perp T_j\Omega_j} \sum_{k=1}^{d-m}(l_j^{({m+k})})^2|y_j^k-y^k|^2\\
		&\geq (l_j^{(m+1)})^2\dist(y_j, P^\perp T_j\Omega_j)^2\,.
	\end{align*}
	Consequently, $\dist(y_j, P^\perp T_j\Omega_j) \leq \frac{\sqrt{d}M}{2l_j^{(m+1)}\sqrt{\lambda_j}}$. Since $P^\perp T_j \Omega_j \to P^\perp \Omega_*$ (which is a bounded set) and $l_j^{(m+1)}\sqrt{\lambda_j}\to \infty$ by~\eqref{eq: limit assumptions}, we conclude that $\{y_j\}_{j\geq 1}$ is a bounded sequence. Moreover, we see that
	\begin{equation*}
		\bigcap_{j\geq 1}\supp F_{j,M}^{\sharp, \flat} = \overline{P^\perp \Omega_*}\,.
	\end{equation*}

	By a change of variables we see that
	\begin{align*}
		\int_{\R^{d-m}}F^{\sharp, \flat}_{j, M}(y)\,dy = \frac{1}{|\Omega_j|\lambda_j^{\gamma+\frac{d}2}}\sum_{z \in M\lambda_j^{-1/2}\Z^{d-m}} \Tr(-\Delta_{\Omega_j[z, M\lambda_j^{-1/2}]}^{\sharp_\perp, {\flat}_\parallel}-\lambda_j)_\limminus^\gamma 
	\end{align*}
	so by~\eqref{eq: bracketing bound}
	\begin{equation}\label{eq: int bracketing bound}
		\int_{\R^{d-m}}F^{\sharp, \rm D}_{j, M}(y)\,dy \leq \frac{\Tr(-\Delta_{\Omega_j}^\sharp-\lambda_j)_\limminus^\gamma}{|\Omega_j|\lambda_j^{\gamma+\frac{d}2}}\leq \int_{\R^{d-m}}F^{\sharp, \rm N}_{j, M}(y)\,dy\,.
	\end{equation}
	We claim that the integrands are uniformly bounded in $j$ and $y$. As the integrands are non-negative it suffices to prove a uniform bound from above.
	
	By scaling of Laplacian eigenvalues, we have
	\begin{align*}
		\Tr(-\Delta_{\Omega_j[z, M\lambda_j^{-1/2}]}^{\sharp_\perp, {\flat}_\parallel}-\lambda_j)_\limminus^\gamma = \lambda_j^\gamma \Tr(-\Delta_{\lambda_j^{1/2}\Omega_j[z, M\lambda_j^{-1/2}]}^{\sharp_\perp, {\flat}_\parallel}-1)_\limminus^\gamma \,.
	\end{align*}
	Using this together with the fact that $$|\Omega_j| = \det(\tilde T_j)^{-1}|\tilde T_j\Omega| = \lambda_j^{-\frac{m}2}\Bigl(\prod_{k=m+1}^dl_j^{(k)}\Bigr)|\tilde T_j\Omega_j|$$ we can equivalently write
	\begin{align*}
		F^{\sharp, \flat}_{j, M}(y) &= \frac{1}{|\tilde T_j\Omega_j|M^{d-m}}\sum_{z \in M\lambda_j^{-1/2}\Z^{d-m}} \Tr(-\Delta_{\lambda_j^{1/2}\Omega_j[z, M\lambda_j^{-1/2}]}^{\sharp_\perp, {\flat}_\parallel}-1)_\limminus^\gamma \1_{T_j^\perp Q_{z, M\lambda_j^{-1/2}}}(y)\,.
	\end{align*}
	Since
	\begin{equation*}
		\lim_{j\to \infty}|\tilde T_j \Omega_j| = |\Omega_*|>0 \,,
	\end{equation*}
	the factor in front of the sum in the above expression for the $F$'s remains uniformly bounded in $j$ as long as $M$ is fixed.

    To see that the traces in the sum remain bounded from above we first observe that by the variational principle
	\begin{equation*}
		F^{\sharp, \flat}_{j, M}(y) \leq F^{{\rm N}, {\rm N}}_{j, M}(y)
	\end{equation*}
	for all $y, j, M$. By construction the set $\sqrt{\lambda_j}\Omega_j[z, M/\!\sqrt{\lambda_j}]$ is, up to translation, contained in the cuboid 
	\begin{equation*}
		\biggl(\prod_{k=1}^m(-dl_j^{(k)}\sqrt{\lambda_j}, dl_j^{(k)}\sqrt{\lambda_j})\biggr) \times (-M/2, M/2)^{d-m} \,. 
	\end{equation*}
	Since $\sup_{j\geq 1}l_j^{(k)}\sqrt{\lambda_j}<\infty$ for all $k\leq m$ there exist $R>0$ and a sequence $\{a_j\}_{j\geq 1}\subset \R^d$ so that for all $j\geq 1$
	\begin{equation*}
		\sqrt{\lambda_j}\Omega_j[z, M/\!\sqrt{\lambda_j}]+ a_j \subset (-R, R)^{m} \times (-M/2, M/2)^{d-m} =: \mathcal{Q}\,.
	\end{equation*}
	By Lemma~\ref{lem: upper bound N counting},
	\begin{align*}
		\Tr(-\Delta_{\lambda_j^{1/2}\Omega_j[z, M\lambda_j^{-1/2}]}^{\rm N}-1)_\limminus^\gamma
		&\leq 
        \Tr(-\Delta_{\lambda_j^{1/2}\Omega_j[z, M\lambda_j^{-1/2}]}^{\rm N}-1)_\limminus^0\\
        &\leq C_d \diam(\sqrt{\lambda_j}\Omega_j[z, M/\!\sqrt{\lambda_j}])^d + 1\\
        &\leq  C_d \diam(\mathcal{Q})^d + 1\,.
	\end{align*}
	This proves the claim that the $F^{\sharp, \flat}_{j,M}$ are uniformly bounded with respect to $j$.

	By construction $\sqrt{\lambda_j}\Omega_j[z, M/\!\sqrt{\lambda_j}]= (\sqrt{\lambda_j}\Omega_j)[\sqrt{\lambda_j}z, M]$, and
	\begin{align*}
		\sqrt{\lambda_j}\Omega_j[z, M/\!\sqrt{\lambda_j}]
		&=
		\sqrt{\lambda_j}\Bigl\{ (x,y) \in \Omega_j: y\in Q_{z, M\lambda_j^{-1/2}}\Bigr\}\\
		&=
		\Bigl\{(x',y')\in \R^d: x' \in \sqrt{\lambda_j}\Omega_j(y'/\!\sqrt{\lambda_j}),\, y'\in Q_{z\lambda_j^{1/2}, M}\Bigr\}\,.
	\end{align*}
	In particular, since the cross-sections $\sqrt{\lambda_j}\Omega_j(\cdot)$ are uniformly bounded, also the sets $\sqrt{\lambda_j}\Omega_j[z, M/\!\sqrt{\lambda_j}]$ are uniformly bounded convex sets. Thus after translation and passing to a subsequence these sets converge in the Hausdorff distance to some (possibly empty) convex set.

	Fix $y \in P^\perp \Omega_\infty$. Let $\{z_j\}_{j\geq 1}\subset P^\perp \Omega_j$ with $z_j \in M/\!\sqrt{\lambda_j}\,\Z^{d-m}$ be a sequence such that $T^\perp_jz_j \to y$. We first note that the sequence of sets $T^\perp_j Q_{z_j, M\lambda_j^{-1/2}}$ concentrates at $y$. Indeed, for any sequence $\{z'_j\}_{j\geq 1}$ with $z_j' \in Q_{z_j, M\lambda_j^{-1/2}}$ for each $j$ we have
	\begin{align*}
	 	|y-T^\perp_jz_j'| 
	 	&= |y-T^\perp z_j-T^\perp_j(z_j'-z_j)| \\
	 	&\leq |y-T^\perp_jz_j| + \|T^\perp_j\||z_j'-z_j|\\
	 	&\leq |y-T^\perp_jz_j| + \frac{\sqrt{d} M}{2l_j^{(m+1)}\sqrt{\lambda_j}} \to 0\,.
	\end{align*} 
	So, in particular, $P^\perp \Omega_j[z_j, M/\!\sqrt{\lambda_j}]\subset P^\perp \Omega_\infty$ for all $j$ large enough. This implies that $P^\perp \Omega_j[z_j, M/\!\sqrt{\lambda_j}]= Q_{z_j, M\lambda_j^{-1/2}}$ for all $j$ sufficiently large. 
 
    We next aim to prove that this implies that any cross-section of $\sqrt{\lambda_j}\Omega_j[z_j,M/\!\sqrt{\lambda_j}]$ in the first $m$-coordinates converges to the same, non-empty, open, convex set. To see this, we note that if $\{z_j'\}_{j\geq 1}$ is a sequence as before then
	\begin{align*}
		(\sqrt{\lambda_j}\Omega_j[z_j, M/\!\sqrt{\lambda_j}])&(\sqrt{\lambda_j}z_j')\\
		&=
		\Bigl\{x\in \R^{m}: x \in (\sqrt{\lambda_j}\Omega_j)(\sqrt{\lambda_j} z_j'), \sqrt{\lambda_j} z_j' \in \sqrt{\lambda_j}Q_{z_j, M\lambda_j^{-1/2}}\Bigr\}\\
		&=
		\Bigl\{x\in \R^{m}: x \in \sqrt{\lambda_j}(\Omega_j(z_j'))\Bigr\}\\
		&= \sqrt{\lambda_j}\Omega_j(z_j')\\
		&= (\tilde T_j \Omega_j)(T_j^\perp z_j')\,.
	\end{align*}

	As $j \to \infty$ it holds that
	\begin{equation*}
		(\tilde T_j \Omega_j)(T_j^\perp z_j')\to \Omega_*(y)\,,
	\end{equation*}
	with respect to the Hausdorff distance. Indeed, this follows from Lemma~\ref{lem: convergence of sections of sequence of convex sets} in the next subsection by noting that the claim can equivalently be written as
	\begin{equation*}
		(\tilde T_j \Omega_j-(0, y-T_j^\perp z_j'))(y)\to \Omega_*(y)\,,
	\end{equation*}
	and since $\tilde T_j \Omega_j \to \Omega_*$ and $T_j^\perp z_j'\to y$ it follows that $\tilde T_j \Omega_j-(0, y-T_j^\perp z_j')\to \Omega_*$.

	Consequently, we conclude that up to a translation
	\begin{equation*}
		\sqrt{\lambda_j}\Omega_j[z_j, M/\!\sqrt{\lambda_j}] \to \Omega_*(y) \times Q_{0, M}
	\end{equation*}
	with respect to the Hausdorff distance. Note that this convergence holds for any fixed value of the parameter $M$. Therefore, by Propositions \ref{prop: DN continuity of eigenvalues} and \ref{prop: continuity of mixed eigenvalues} for any fixed $\gamma >0$ and $\sharp, \flat\in\{{\rm D}, {\rm N}\}$ it holds that
	\begin{align*}
		\lim_{j\to \infty}\Tr(-\Delta_{\lambda_j^{1/2}\Omega_j[z_j, M\lambda_j^{-1/2}]}^{\sharp_\perp, \flat_\parallel}-1)_\limminus^\gamma &= \Tr(-\Delta_{\Omega_*(y)\times Q_{0,M}}^{\sharp_\perp, \flat_\parallel}-1)_\limminus^\gamma \,.
	\end{align*}
	If $\gamma= 0$ and $\epsilon>0$, the same argument combined with the monotonicity of the counting functions yields
	\begin{align*}
		\limsup_{j\to \infty}\Tr(-\Delta_{\lambda_j^{1/2}\Omega_j[z_j, M\lambda_j^{-1/2}]}^{\sharp_\perp, {\rm N}_\parallel}-1)_\limminus^0 &\leq \Tr(-\Delta_{\Omega_*(x)\times Q_{0,M}}^{\sharp_\perp, {\rm N}_\parallel}-1-\epsilon)_\limminus^0 \,, \\
		\liminf_{j\to \infty}\Tr(-\Delta_{\lambda_j^{1/2}\Omega_j[z_j, M\lambda_j^{-1/2}]}^{\sharp_\perp, {\rm D}_\parallel}-1)_\limminus^0 &\geq \Tr(-\Delta_{\Omega_*(x)\times Q_{0,M}}^{\sharp_\perp, {\rm D}_\parallel}-1+\epsilon)_\limminus^0\,.
	\end{align*}

	Using again montonicity we conclude that, for all $\gamma \geq 0$, any fixed $M,\epsilon>0$, and almost every $y \in P^\perp\Omega_*$,
	\begin{align*}
		\liminf_{j\to \infty} F_{j, M}^{\sharp, \rm D}(y) &\geq \frac{\Tr(-\Delta_{\Omega_*(y)\times Q_{0,M}}^{\sharp_\perp, {\rm D}_\parallel}-1+\epsilon)_\limminus^\gamma}{|\Omega_*|M^{d-m}}\,,\\ 
        \limsup_{j\to \infty} F_{j, M}^{\sharp, \rm N}(y) &\leq \frac{\Tr(-\Delta_{\Omega_*(y)\times Q_{0,M}}^{\sharp_\perp, {\rm N}_\parallel}-1-\epsilon)_\limminus^\gamma}{|\Omega_*|M^{d-m}}\,.
	\end{align*}

	By the dominated convergence theorem we conclude from~\eqref{eq: int bracketing bound} that for every fixed $M, \epsilon>0$
	\begin{equation}\label{eq: dominated convergence 1}
    \begin{aligned}
		\lim_{j\to \infty} \frac{\Tr(-\Delta_{\Omega_j}^\sharp-\lambda_j)_\limminus^\gamma}{|\Omega_j|\lambda_j^{\gamma+\frac{d}2}} &\geq \frac{1}{|\Omega_*|} \int_{P^\perp\Omega_*} \frac{\Tr(-\Delta_{\Omega_*(y)\times Q_{0,M}}^{\sharp_\perp, {\rm D}_\parallel}-1+\epsilon)_\limminus^\gamma}{M^{d-m}}\,dy\,,\\
    \lim_{j\to \infty} \frac{\Tr(-\Delta_{\Omega_j}^\sharp-\lambda_j)_\limminus^\gamma}{|\Omega_j|\lambda_j^{\gamma+\frac{d}2}} &\leq \frac{1}{|\Omega_*|} \int_{P^\perp\Omega_*} \frac{\Tr(-\Delta_{\Omega_*(y)\times Q_{0,M}}^{\sharp_\perp, {\rm N}_\parallel}-1-\epsilon)_\limminus^\gamma}{M^{d-m}}\,dy \,.
	\end{aligned}
    \end{equation}
    
	To get to the expressions in the theorem we need to study the integrands and get rid of the parameters $M, \epsilon$. By the product structure of $\Omega_*(y)\times Q_{0, M}$ we have that for any combination of $\sharp, \flat \in \{{\rm D}, {\rm N}\}$,
	\begin{align*}
		\Tr(-\Delta_{\Omega_*(y)\times Q_{0,M}}^{{\sharp}_\perp, {\flat}_\parallel}-1 \pm \epsilon)_\limminus^\gamma
		&= \sum_{n\geq 1}\Tr(-\Delta_{Q_{0,M}}^\flat-1 \pm \epsilon+\lambda_n(-\Delta_{\Omega_*(y)}^\sharp))_\limminus^\gamma\\
		&= \sum_{n\geq 1}\Bigl(M^{d-m}L_{\gamma,d-m}^{\rm sc}(1\mp \epsilon-\lambda_n(-\Delta^{\sharp}_{\Omega_*(y)})_\limplus)^{\gamma+ \frac{d-m}{2}}\\
		&\quad + O\bigl(M^{d-m-1}(1\mp\epsilon-\lambda_n(-\Delta^{\sharp}_{\Omega_*(y)})_\limplus)^{\gamma+ \frac{d-m-1}{2}}\bigr)\Bigr)\\
		&= M^{d-m}L_{\gamma,d-m}^{\rm sc}\Tr(-\Delta^{\sharp}_{\Omega_*(y)}-1 \pm \epsilon)_\limminus^{\gamma+ \frac{d-m}{2}}\\
		&\quad + O\bigl(M^{d-m-1}\Tr(-\Delta^{\sharp}_{\Omega_*(y)}-1 \pm \epsilon)_\limminus^{\gamma+ \frac{d-m-1}{2}}\bigr)\,.
	\end{align*}
	
	Using again Lemma~\ref{lem: upper bound N counting} as above one deduces that the integrands in the right-hand side of~\eqref{eq: dominated convergence 1} are uniformly bounded with respect to $M$. Therefore, using again the dominated convergence theorem with $M \to \infty$ we deduce
	\begin{align*}
		\lim_{j\to \infty} \frac{\Tr(-\Delta_{\Omega_j}^\sharp-\lambda_j)_\limminus^\gamma}{|\Omega_j|\lambda_j^{\gamma+\frac{d}2}} &\geq \frac{1}{|\Omega_*|} \int_{P^\perp\Omega_*} L_{\gamma,d-m}^{\rm sc}\Tr(-\Delta^{\sharp}_{\Omega_*(y)}-1+\epsilon)_\limminus^{\gamma+ \frac{d-m}{2}}\,dy\,,\\
        \lim_{j\to \infty} \frac{\Tr(-\Delta_{\Omega_j}^\sharp-\lambda_j)_\limminus^\gamma}{|\Omega_j|\lambda_j^{\gamma+\frac{d}2}} &\leq \frac{1}{|\Omega_*|} \int_{P^\perp\Omega_*} L_{\gamma,d-m}^{\rm sc}\Tr(-\Delta^{\sharp}_{\Omega_*(y)}-1-\epsilon)_\limminus^{\gamma+ \frac{d-m}{2}}\,dy\,.
	\end{align*}
	Since $\gamma + \frac{d-m}{2}>0$, continuity of the Riesz means combined with the monotone convergence theorem now allows us to send $\epsilon \to 0$. Dividing by $L_{\gamma,d}^{\rm sc}$ completes the proof.
\end{proof}

%---------------------------------------
%---------------------------------------

\subsection{Technical ingredients}

In this subsection we discuss three technical results needed in the proof of Theorem \ref{thm: Asymptotics degenerating convex sets}. The first concerns cross-sections of convex sets and the latter two concern the continuity of Riesz means with respect to changing the underlying geometry.

In the first lemma we use the same notation as in the statement of Theorem \ref{thm: Asymptotics degenerating convex sets}, except that for clarity we now write $P_m^\perp\Omega$ instead of $P^\perp\Omega$.

\begin{lemma}\label{lem: convergence of sections of sequence of convex sets}
	Let $\Omega_j, \Omega \subset \R^d$ be non-empty, bounded, and open convex sets such that $\Omega_j \to \Omega$ in the Hausdorff topology, and let $1\leq m \leq d-1$. Then for any $y \in P_m^\perp \Omega$ it holds that $\Omega_j(y) \to \Omega(y)$ with respect to the Hausdorff distance.
\end{lemma}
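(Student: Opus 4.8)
The statement is a purely geometric fact about Hausdorff convergence of convex sets and their cross-sections, so the plan is to work directly with the definition of the (complementary) Hausdorff distance and with support-function-type estimates. Fix $y\in P_m^\perp\Omega$; since $\Omega$ is open and $y$ is in the (open) projection, there is a point $x_0\in\R^m$ with $(x_0,y)\in\Omega$, and by openness a whole ball $B_\rho((x_0,y))\subset\Omega$. The first step is to record the elementary consequence that $\Omega_j\to\Omega$ in Hausdorff distance forces $(x_0,y)\in\Omega_j$ for all large $j$, so that $\Omega_j(y)\neq\emptyset$ eventually and it makes sense to speak of $\Omega_j(y)\to\Omega(y)$. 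More quantitatively, $B_{\rho/2}((x_0,y))\subset\Omega_j$ for large $j$, hence $B_{\rho/2}(x_0)\subset\Omega_j(y)$, giving a uniform lower bound on the inradius of $\Omega_j(y)$ about $x_0$; combined with the uniform boundedness of the $\Omega_j$ this already gives precompactness of $\{\Omega_j(y)\}$ in the Hausdorff topology via Blaschke. It then suffices to show that \emph{every} Hausdorff-convergent subsequence of $\Omega_j(y)$ has limit $\Omega(y)$.

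\textbf{The two inclusions.} Suppose, along a subsequence, $\Omega_j(y)\to K$ for some convex compact $K$. For the inclusion $K\subset\overline{\Omega(y)}$: take $x\in K$; then there are $x_j\in\Omega_j(y)$ with $x_j\to x$, so $(x_j,y)\in\Omega_j$ and $(x_j,y)\to(x,y)$. Hausdorff convergence $\Omega_j\to\Omega$ implies any limit of points of $\Omega_j$ lies in $\overline\Omega$, so $(x,y)\in\overline\Omega$, i.e.\ $x\in\overline{\Omega(y)}$. For the reverse inclusion $\Omega(y)\subset K$: take $x\in\Omega(y)$, so $(x,y)\in\Omega$ and, $\Omega$ being open, $B_\delta((x,y))\subset\Omega$ for some $\delta>0$. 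For $j$ large, $d^H(\Omega_j,\Omega)<\delta/2$, which (by the definition of the complementary Hausdorff distance, together with $B_\delta((x,y))\subset\Omega$) forces $(x,y)\in\Omega_j$, hence $x\in\Omega_j(y)$; since this holds for all large $j$ in the subsequence, $x\in K$. Taking closures, $\overline{\Omega(y)}\subset K$. Combining the two inclusions with the fact that for an open bounded convex set $\Omega(y)$ equals the interior of its closure (and that a convex body is determined by its closure), we get $K=\overline{\Omega(y)}$, i.e.\ $\Omega_j(y)\to\Omega(y)$ in the Hausdorff topology. Since the subsequence was arbitrary, the whole sequence converges.

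\textbf{Main obstacle.} The only genuinely delicate point is the bookkeeping around the \emph{complementary} Hausdorff distance $d^H$ used in the paper (it is defined on complements relative to a fixed compact $K$, with convergence allowing the empty set), as opposed to the ordinary Hausdorff distance on compact sets. Concretely, one must check: (i) that $d^H(\Omega_j,\Omega)\to 0$ does imply, for any point with a full neighborhood inside $\Omega$, eventual membership in $\Omega_j$ — this is where $B_\delta((x,y))\subset\Omega$ is used, since $d^H$-smallness controls how far a point of $K\setminus\Omega_j$ can be from $K\setminus\Omega$; and (ii) that $d^H(\Omega_j,\Omega)\to 0$ implies that limits of sequences $z_j\in\Omega_j$ lie in $\overline\Omega$. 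Both are standard but need to be stated carefully to avoid confusion between the set, its closure, and its complement; I would handle this by first passing, as is legitimate for bounded open convex sets, between $d^H$-convergence of the sets and ordinary Hausdorff convergence of their closures (citing the basic properties in \cite{HenrotPierre_18}), after which the argument above is routine. A secondary, minor point is ensuring $\Omega(y)$ is nonempty and open so that the statement "$\Omega_j(y)\to\Omega(y)$" is not vacuous or ill-posed; this is exactly what the hypothesis $y\in P_m^\perp\Omega$ (with $\Omega$ open) provides, as noted in the first step.
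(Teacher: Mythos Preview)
Your argument is correct and follows the same approach as the paper: extract a Hausdorff-convergent subsequence of $\Omega_j(y)$ via Blaschke selection, then identify the limit by establishing the two inclusions $\Omega(y)\subset K\subset\overline{\Omega(y)}$. The paper phrases these steps through the distance functions $d_{\Omega_j}\to d_\Omega$ rather than directly with the Hausdorff metric, and your ``i.e.\ $x\in\overline{\Omega(y)}$'' hides the same short convexity step (open segment from an interior point $(x_0,y)\in\Omega$ to $(x,y)\in\overline\Omega$ stays in $\Omega$) that the paper also leaves implicit.
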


\begin{proof}
	Since $\{\Omega_j(y)\}_{j\geq 1}$ is a sequence of non-empty, bounded convex sets we can, by the Blaschke's selection theorem, extract a subsequence so that $\Omega_j(y)$ converges in the Hausdorff distance to an open convex set $\Omega_\infty$. The proof is complete if we can identify $\Omega_\infty$ as $\Omega(y)$.

	We use that for $y \in P_m^\perp\Omega$,
	\begin{equation*}
		\Omega(y) = \{x \in \R^m: d_\Omega((x,y))>0\}\,.
	\end{equation*}
	Since $\Omega_j \to \Omega$ it follows that $d_{\Omega_j} \to d_\Omega$ uniformly. In particular, if $y \in P^\perp_m \Omega$ then there exists an $x \in \R^m$ such that $\epsilon:= d_\Omega((x,y))>0$ and so for $j$ large enough $d_{\Omega_j}(( x,y)) \geq \epsilon/2 >0$ and so $y \in P^\perp_m\Omega_j$ and $x \in \Omega_j(y)$. Consequently, for every $\delta>0$, $\{x\in \R^m: d_\Omega((x,y))>\delta\} \subset \Omega_\infty$. Since $d_\Omega$ is continuous and $|\nabla d_\Omega|=1$ almost everywhere in $\Omega$ we conclude that $\Omega(y) \subset \Omega_\infty$.
 
    Note that if $U, U'$ are open sets and $x_0 \in U' \setminus U$ and $K$ is a compact set with $U, U' \subset K$, then
    \begin{equation*}
        d^H(U, U') \geq \sup_{x\in K\setminus U}\inf_{x'\in K\setminus U'} |x-x'| \geq \inf_{x'\in K\setminus U'} |x_0-x'| = d_{U'}(x_0)\,.
    \end{equation*}
    In particular, if $x \in \Omega_\infty$ then $x \in \Omega_j(y)$ for any $j$ so large that $d^H(\Omega_\infty, \Omega_j(y))<d_{\Omega_\infty}(x)$. Therefore, $(x, y) \in \Omega_j$ for large enough $j$. By uniform convergence $d_{\Omega_j}\to d_{\Omega}$ there exists $x_0\in \Omega$ so that $d_{\Omega_j}(x_0)\geq r_{\rm in}(\Omega)/2$ for $j$ large enough. Therefore, both $(x, y) \in \Omega_j$ and $B_{r_{\rm in}(\Omega)/2}(x_0)$ are contained in $\Omega_j$. Consequently, by convexity of $\Omega_j$ the convex hull of $B_{r_{\rm in}(\Omega)/2}(x_0)\cup\{(x,y)\}$ is also contained in $\Omega_j$ for each $j$ sufficiently large. As $d_\Omega$ is the uniform limit of the functions $d_{\Omega_j}$ we conclude that $d_\Omega$ is positive in this convex hull, therefore, $(x, y)$ belongs to $\overline{\Omega}=\overline{\{z\in \R^d: d_\Omega(z)>0\}}$. Since $y \in P^\perp\Omega$ it follows that $x \in \overline{\Omega(y)}$. We conclude that $\Omega_\infty \subset \overline{\Omega(y)}$.
 
    The two inclusions $\Omega(y)\subset \Omega_\infty, \Omega_\infty\subset \overline{\Omega(y)}$ implies (by convexity) that $\Omega_\infty = \Omega(y)$ and completes the proof.
\end{proof}

Next we state two results concerning sufficient conditions on a (convergent, in some sense) sequence of open sets to have convergence of the corresponding Riesz means. For complete proofs of the statements the reader is referred to the preprint version of this paper available on arXiv \cite{FrankLarson_24b}. The first result concerns semi-continuity of Riesz means of Dirichlet and Neumann Laplace operators along sequences of convex sets converging in the Hausdorff sense. The argument is based on the continuity of individual eigenvalues, which is well known; see, for instance, \cite{Ross_04,HenrotPierre_18}.

\begin{proposition}\label{prop: DN continuity of eigenvalues}
    Let $\{\Omega_j\}_{j\geq 0}$ be a collection of open, non-empty, and bounded convex subsets of $\R^d$ such that $\Omega_j \to \Omega_0$ with respect to the Hausdorff distance. 
    If $\{\lambda_j\}_{j\geq 1}\subset (0, \infty)$ satisfies $\lim_{j\to \infty}\lambda_j=\lambda_*$, then
    \begin{equation*}
        \lim_{j\to \infty}\Tr(-\Delta_{\Omega_j}^\sharp-\lambda_j)_\limminus^\gamma = \Tr(-\Delta_{\Omega_*}^\sharp-\lambda_*)_\limminus^\gamma \quad \mbox{for any }\gamma>0\,,
    \end{equation*}
    and
    \begin{align*}
       \Tr(-\Delta_{\Omega_*}^{\sharp}-\lambda_*)_\limminus^0
       &\leq \liminf_{j\to \infty}\Tr(-\Delta_{\Omega_j}^{\sharp}-\lambda_j)_\limminus^0\\
       &\leq \limsup_{j\to \infty}\Tr(-\Delta_{\Omega_j}^{\sharp}-\lambda_j)_\limminus^0 \leq \lim_{\epsilon\to 0^\limplus}\Tr(-\Delta_{\Omega_*}^{\sharp}-\lambda_*-\epsilon)_\limminus^0 \,.
    \end{align*}
     
\end{proposition}

The second result concerns semi-continuity of Riesz means of mixed eigenvalue problems in almost cylindrical domains. Here we use the notation introduced in the proof of Theorem~\ref{thm: Asymptotics degenerating convex sets}. 

\begin{proposition}\label{prop: continuity of mixed eigenvalues}
    Let $\{\Omega_j\}_{j\geq 1}$ be a collection of open, non-empty, and bounded convex subsets of $\R^d$. Fix an integer $1 \leq m\leq d-1$ and $\omega \subset \R^m$ an open non-empty, bounded, convex set. Let $Q_l:=(-\ell/2, \ell/2)^{d-m}$ and assume that for any fixed $\ell>0$ 
    \begin{equation*}
        \Omega_j[\ell]=\Omega_j \cap (\R^m \times Q_\ell) \to \omega\times   Q_\ell
    \end{equation*}
    with respect to the Hausdorff distance. 
    Fix $\sharp, \flat \in \{{\rm D}, {\rm N}\}$ with $\sharp \neq \flat$ and $\ell>0$. If $\{\lambda_j\}_{j\geq 1}\subset (0, \infty)$ satisfies $\lim_{j\to \infty}\lambda_j=\lambda_*$, then
    \begin{equation*}
        \lim_{j\to \infty}\Tr(-\Delta_{\Omega_j[\ell]}^{\sharp_\perp, \flat_\parallel}-\lambda_j)_\limminus^\gamma = \Tr(-\Delta_{\omega \times Q_\ell}^{\sharp_\perp, \flat_\parallel}-\lambda_*)_\limminus^\gamma \quad \mbox{for any }\gamma>0\,,
    \end{equation*}
    and
    \begin{align*}
       \Tr(-\Delta_{\omega \times Q_\ell}^{\sharp_\perp, \flat_\parallel}-\lambda_*)_\limminus^0
       &\leq \liminf_{j\to \infty}\Tr(-\Delta_{\Omega_j[\ell]}^{\sharp_\perp, \flat_\parallel}-\lambda_j)_\limminus^0\\
       &\leq \limsup_{j\to \infty}\Tr(-\Delta_{\Omega_j[\ell]}^{\sharp_\perp, \flat_\parallel}-\lambda_j)_\limminus^0 \leq \lim_{\epsilon\to 0^\limplus}\Tr(-\Delta_{\omega \times Q_\ell}^{\sharp_\perp, \flat_\parallel}-\lambda_*-\epsilon)_\limminus^0 \,.
    \end{align*}
\end{proposition}

% ------------------  Bibliography  --------------------

\end{document}